\documentclass[a4paper]{amsart}

\usepackage[margin=0.85in]{geometry}
\usepackage{amssymb,amsmath}
\usepackage{tikz-cd}
\usepackage{tikz}

\usepackage{hyperref}
\usepackage[all]{xy}
\usepackage{enumitem}
\usepackage{mathtools}
\usepackage{multirow}
\usepackage{floatrow}
\usepackage{graphicx}
\usepackage{color}
\usepackage{pdflscape}
\usepackage{xfrac}
\usepackage{array}
\usepackage{centernot}
\usepackage{mathdots}

\setcounter{MaxMatrixCols}{20}
\newdir{ >}{{}*!/-6pt/@{>}}

\usetikzlibrary{patterns}

\renewcommand{\arraystretch}{1.3}
\theoremstyle{plain}
\newtheorem{theorem}{Theorem}[section]
\newtheorem{lemma}[theorem]{Lemma}
\newtheorem{sublemma}[theorem]{Sublemma}
\newtheorem{proposition}[theorem]{Proposition}
\newtheorem{corollary}[theorem]{Corollary}
\theoremstyle{definition}
\newtheorem{definition}[theorem]{Definition}
\newtheorem{example}[theorem]{Example}
\newtheorem{remark}[theorem]{Remark}

\numberwithin{equation}{section}

\newcommand{\Ch}{\textnormal{Ch}}
\newcommand{\M}{\mathbf{M}}
\newcommand{\V}{\mathbf{V}}
\newcommand{\can}{\textnormal{can}}
\newcommand{\Pic}{\mathrm{Pic}}
\newcommand{\GW}{\mathrm{GW}}
\newcommand{\rank}{\mathrm{rank}\,\relax}
\newcommand{\Gr}{\mathrm{Gr}}
\newcommand{\LG}{\mathrm{LG}}
\newcommand{\Bl}{\mathrm{B}}
\newcommand{\SO}{\mathcal{O}}
\newcommand{\nhd}{\centernot{\subset}}

\newcommand{\LF}{\mathrm{LF}}
\newcommand{\LFo}{\mathrm{L}\mathring{\mathrm{F}}}

\newcommand{\LFb}{\mathbb{LF}}

\newcommand{\drawgray}[3]{%
	\draw[fill, opacity=0.2] (#1,#2) rectangle ++(1,-#3);
}

\newcommand{\drawalmostyoung}[4]{%
	\foreach \llambda [count=\i] in {#4}{
		\drawgray{#1 * #3 + \i-1}{#2}{\llambda}}
}

\newcounter{mydiag}
\newcommand{\diagram}{\refstepcounter{mydiag} {\rm \fbox{$\scriptstyle \themydiag$}}}
\newcommand{\diag}[1]{{\rm \fbox{$\scriptstyle #1$}}} 

\newcounter{myeq}

\DeclareFontFamily{U}{mathx}{}
\DeclareFontShape{U}{mathx}{m}{n}{<-> mathx10}{}
\DeclareSymbolFont{mathx}{U}{mathx}{m}{n}
\DeclareMathAccent{\widehat}{0}{mathx}{"70}
\DeclareMathAccent{\widecheck}{0}{mathx}{"71}

\makeatletter
\newcommand{\btg}{\mathpalette\btg@\relax}
\newcommand{\btg@}[2]{%
	\begingroup
	\sbox\z@{$\m@th#1\triangle$}%
	\makebox[\wd\z@]{%
		\raisebox{0.04\height}{%
			\resizebox{1.1\wd\z@}{0.96\ht\z@}{%
				$\m@th#1\blacktriangle$%
			}%
		}%
	}%
	\endgroup
}
\makeatother

\author{Tao Huang}
\address{Tao Huang, School of Mathematics, Sun Yat-sen University, No. 135 Xingang Xi Road, Guangzhou, 510275, China}
\email{huangt233@mail2.sysu.edu.cn}
\author{Heng Xie}
\address{Heng Xie, School of Mathematics, Sun Yat-sen University, No. 135 Xingang Xi Road, Guangzhou, 510275, China}
\email{xieh59@mail.sysu.edu.cn}

\title[Hermitian $K$-theory of Lagrangian Grassmannians]{Hermitian K-theory of Lagrangian Grassmannians via  reducible Gorenstein models}

\begin{document}
\begin{abstract}
    We construct a family of moduli spaces, called generalized Lagrangian flag schemes, that are reducible Gorenstein (hence singular), and that admit well-behaved pushforward and pullback operations in Hermitian $K$-theory. These schemes arise naturally in our computations. Using them, we prove that the Hermitian $K$-theory of a Lagrangian Grassmannian over a regular base splits as a direct sum of copies of the base's (Hermitian) $K$-theory, indexed by certain shifted Young diagrams. The isomorphism is realized via pullback to each generalized Lagrangian flag scheme followed by pushforward to the Lagrangian Grassmannian. This yields an unusual example in which both the base and the target are regular schemes,  while the intermediate reducible Gorenstein models remain sufficient to allow explicit computations in Hermitian K-theory of regular schemes.
\end{abstract}
\maketitle
\section{Introduction}
In Quillen's seminal work \cite{quillen1973higher}, the computation of the algebraic $K$-theory of projective bundles illustrates how geometric properties of projective bundles can be applied to compute abstract cohomology. This basic example already demonstrates that computing the algebraic $K$-theory of schemes drives the exploration of useful properties of geometric objects. Hermitian $K$-theory, introduced around the same time as Quillen's algebraic $K$-theory by Bass and Karoubi \cite{karoubi1974periodicite}, serves as a quadratic refinement of algebraic $K$-theory. In practice, Hermitian $K$-theory often yields finer invariants than algebraic $K$-theory, indicating that Hermitian $K$-theory is generally more difficult to compute. In our work \cite{huang2023the}, we constructed pushforward and pullback maps in Hermitian $K$-theory, and established several tools, including base-change, excess intersection, and projection formulas, based on the framework of Schlichting \cite{schlichting2010mayer} and \cite{schlichting2017hermitian}. Using these tools, we showed that pushforward and pullback through generalized flag varieties induce an isomorphism between the Hermitian $K$-theory of Grassmannians and a direct sum of copies of the $K$-theory and Hermitian $K$-theory of the base scheme (cf. \cite{huang2023the}). The generalized flag varieties involved are certain Schubert varieties in flag varieties, whose useful properties have been studied in detail in \cite{balmer2012witt} and further analyzed in \cite{huang2023the}.

In this paper, we explain how pushforward and pullback in \cite{huang2023the} can help to compute Hermitian $K$-theory of Lagrangian Grassmannians, and to expose so-called generalized Lagrangian flag schemes. These schemes arise naturally from our induction theorem (cf. Theorem \ref{thm:lg_induc}). Let us introduce them in the case where $S = \mathrm{Spec}(k)$, with $k$ a field of characteristic different from two. Assume that $(V,\beta)$ is a non-degenerate symplectic form with a complete flag
$
    0=V_{0} \subset V_{1} \subset V_{2} \subset \ldots \subset V_{n-1} \subset V_{n} = V_{n}^{\bot} \subset V_{n-1}^{\bot} \subset \ldots \subset V_{1}^{\bot} \subset V_{0}^{\bot} = V ,
$
such that $\rank(V_j) = j$. Let $\mathbf{d} = (d_0,d_1,d_2,\ldots, d_k)$, $\mathbf{e} = (e_0,e_1,\ldots,e_{k-1})$ and $\mathbf{t} =(t_0,t_1,\ldots,t_{k-1}) $ be fixed sequences of non-negative non-decreasing integers. The \textit{generalized Lagrangian flag scheme} $\LF_{\mathbf{d}}(\mathbf{e})_{\mathbf{t}}$ is the scheme parameterizing the following strata:
\[
    \setlength\arraycolsep{1pt} \renewcommand{\arraystretch}{1}  \left\{  \begin{matrix}
                    &         &                       &         & V_{e_{0}}       & \subset & P_{n-t_0}^{(0)}    & \subset & L_{n}^{(0)}      & \subset & V_{d_0}^{\bot} \\
                    &         &                       &         &                 &         & \cap                                                                       \\
                    &         & V_{e_1}               & \subset & P_{n-t_1}^{(1)} & \subset & L_{n}^{(1)}        & \subset & V_{d_{1}}^{\bot}                            \\
                    &         &                       &         & \vdots                                                                                                 \\
        V_{e_{k-1}} & \subset & P_{n-t_{k-1}}^{(k-1)} & \subset & L_{n}^{(k-1)}   & \subset & V_{d_{k-1}}^{\bot}                                                         \\
                    &         & \cap                                                                                                                                     \\
                    &         & L_{n}^{(k)}           & \subset & V_{d_k}^{\bot}
    \end{matrix}\right\}
\]
where $e_i \leq d_i \leq n, e_i \leq d_{i+1}$ and $e_i \leq n-t_i$ for all $i$, and where $L_n^{(j)}$ are Lagrangian subspaces of rank $n$ inside $V$ and $P_{n-t_i}^{(i)}$ are subspaces of rank $n-t_i$ inside both $L_n^{(i)}$ and $L_n^{(i+1)}$. If the length $k=0$ and $d_0=0$, then $\LF_{\mathbf{d}}(\mathbf{e})_{\mathbf{t}}$ is the usual Lagrangian Grassmannian $\LG(n,V)$.

Computing Hermitian $K$-theory leads us to show that \textit{the scheme $\LF_{\mathbf{d}}(\mathbf{e})_{\mathbf{t}}$ is Gorenstein if $\mathbf{d}^{< k} - \mathbf{e} \leq 1$} for any $\mathbf{t}$, where $\mathbf{d}^{< k}$ is the $k$-tuple obtained by removing $d_k$ from $\mathbf{d}$. It is not hard to see that $\LF_{\mathbf{d}}(\mathbf{e})_{\mathbf{t}}$ is regular if $\mathbf{d}^{< k} = \mathbf{e}$, which should be well-known to experts. To our best knowledge, the case $ \mathbf{e} \neq \mathbf{d}^{< k}$ has not been well-studied in the existing literature. To make sure that $\LF_{\mathbf{d}}(\mathbf{e})_{\mathbf{t}}$ can produce meaningful pushforward on Hermitian $K$-theory, we compute its canonical line bundle and dimension. All these properties are explored in Section \ref{sec:LagrangianGrass}.

\begin{theorem}[Theorem \ref{thm:main-theo}]\label{thm:main-theo-intro}
    Let $S$ be a regular scheme with $\frac{1}{2} \in \SO_S$. Suppose that $V$ is a trivial vector bundle over $S$ of rank $2n$ with a hyperbolic symplectic form $\beta$ (in fact, we deal more generally with a symplectic bundle that admits a complete flag).\	Let $\Delta_n$ be the determinant line bundle of the tautological bundle on the Lagrangian Grassmannian $\LG(n)$ associated to $V$.
    \begin{enumerate}[leftmargin=20pt,label={\rm (\alph*)},ref=(\alph*)]
        \item Suppose that $n$ is even, then the maps
              \[   (\Theta^l, \Omega^l): \bigoplus_{\Lambda \in \mathfrak{E}_n \backslash \mathfrak{A}_n^r } K(S) \oplus \bigoplus_{\Lambda \in \mathfrak{A}_{n}^r} \GW^{[m-|\Lambda|]}(S) \xrightarrow{ (\sum \mu_\Lambda^1, \sum\xi^1_\Lambda)} \GW^{[m]}(\LG(n), \Delta_n)
              \]
              \[
                  (\Theta^0, \Omega^0): \bigoplus_{\Lambda \in \mathfrak{E}_n \backslash \mathfrak{A}_n^c } K(S) \oplus \bigoplus_{\Lambda \in \mathfrak{A}_{n}^c} \GW^{[m-|\Lambda|]}(S) \xrightarrow{ (\sum \mu_\Lambda^0, \sum \xi_\Lambda^0)} \GW^{[m]}(\LG(n))
              \]
              are stable equivalences in the stable homotopy category of spectra.
        \item Suppose that $n$ is odd, then the maps
              \[
                  (\Theta^0, \Omega^0): \bigoplus_{\Lambda \in \mathfrak{E}_n \backslash \mathfrak{A}_n} K(S) \oplus \bigoplus_{\Lambda \in \mathfrak{A}_n} \GW^{[m-|\Lambda|]}(S) \xrightarrow{ (\sum \mu_\Lambda^0, \sum \xi_\Lambda^0)} \GW^{[m]}(\LG(n))
              \]
              \[
                  \Theta^1: \bigoplus_{\Lambda \in \mathfrak{E}_n } K(S) \xrightarrow{ \sum \mu_\Lambda^1} \GW^{[m]}(\LG(n), \Delta_n)
              \]
              are stable equivalences in the stable homotopy category of spectra.
    \end{enumerate}
    Here, $\mathfrak{A}_n$ (resp. $\mathfrak{E}_n$) is the set of almost even (resp. $K$-even) shifted Young diagrams in the shifted $n$-frame. Moreover, $\mathfrak{A}_n^r$ (resp.  $\mathfrak{A}_n^c$) is the subset of $\mathfrak{A}_n$ consisting of those almost even shifted Young diagrams such that the topmost row is full (resp. the rightmost column is empty). The map $\xi^i_\Lambda$ is defined by first pulling back from Hermitian $K$-theory of the base scheme $S$ to that of generalized Lagrangian flag scheme associated to $\Lambda$ and then pushing forward to that of $\LG(n,V)$. The map $\mu^i_\Lambda$ is defined analogously to $\xi^i_\Lambda$, but using $K$-theory and additionally composed with the hyperbolic functor.
\end{theorem}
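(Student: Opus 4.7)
The plan is to argue by induction on $n$, with Theorem \ref{thm:lg_induc} serving as the engine of the recursion. The base case $n=0$ is immediate: $\LG(0) = S$, the shifted $0$-frame contains only the empty diagram, and the claimed equivalence reduces to a tautology. For the inductive step, the induction theorem should split $\GW^{[m]}(\LG(n),\mathcal{L})$ (for $\mathcal{L} \in \{\SO, \Delta_n\}$) into two summands: a $K$-theoretic contribution arising from a generalized Lagrangian flag scheme $\LF_{\mathbf{d}}(\mathbf{e})_{\mathbf{t}}$ (passed through the hyperbolic functor to produce a $\mu$-term), and a Hermitian contribution from a smaller Lagrangian Grassmannian $\LG(n')$ equipped with an induced twist. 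Applying the inductive hypothesis to the second summand and assembling recovers the full decomposition, provided the combinatorics line up.

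The combinatorial matching proceeds as follows. Each iteration of the induction corresponds to adding one ``row'' to a shifted Young diagram, with the row's length governed by the parameters $(\mathbf{d},\mathbf{e},\mathbf{t})$ of the flag scheme cut out at that step. Iterating down to $n=0$, the leaves of the decomposition tree are in bijection with shifted Young diagrams in the shifted $n$-frame, and each leaf carries either a $K(S)$-label (producing a $\mu$-term) or a $\GW^{[m-|\Lambda|]}(S)$-label (producing a $\xi$-term), the label depending on which of the two summands of the induction theorem terminated the branch. The sets $\mathfrak{E}_n$, $\mathfrak{A}_n$, $\mathfrak{A}_n^r$ and $\mathfrak{A}_n^c$ should be shown to encode precisely which diagrams receive which label under each of the four parity/twist combinations. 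The shift $|\Lambda|$ on the Hermitian summands is tracked via the canonical-bundle and dimension computations carried out in Section \ref{sec:LagrangianGrass}, while the projection-formula and base-change results of \cite{huang2023the} ensure that the composition of pullbacks and pushforwards produced by the induction equals the asserted maps $\xi^i_\Lambda$ and $\mu^i_\Lambda$ on the nose.

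The principal obstacle is the parity bookkeeping that distinguishes $\mathfrak{A}_n^r$ (topmost row full, for the twisted even case) from $\mathfrak{A}_n^c$ (rightmost column empty, for the untwisted even case), and that forces the twisted odd case to contain no $\GW$-summand at all. Concretely, one must verify that the twist $\mathcal{L}$ on $\LG(n)$ evolves to the correct twist on the intermediate $\LG(n')$ at each induction step, and that this evolution, iterated, lands on exactly the claimed subset of shifted Young diagrams rather than on a shifted or augmented version. This is essentially a dimension-and-parity audit: conceptually routine given the tools built up in the paper, but operationally delicate, since a single miscount of the parity of an intermediate canonical bundle would interchange $\mathfrak{A}_n^r$ with $\mathfrak{A}_n^c$ or spread $\GW$-summands into the odd twisted case, collapsing the entire identification.
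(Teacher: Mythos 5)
Your plan follows essentially the same route as the paper: induction on $n$ driven by Theorem \ref{thm:lg_induc}, with the summands matched combinatorially to deletion of the topmost row ($\iota$) and rightmost column ($v$) of shifted Young diagrams, and commutativity of the resulting diagrams verified by base-change formulas applied to explicit fiber squares of generalized Lagrangian flag schemes, exactly as in the paper's proof of Theorem \ref{thm:main-theo}. Note only that the actual splitting supplied by Theorem \ref{thm:lg_induc} is parity-dependent rather than the uniform ``one $K$-piece plus one Hermitian piece'' shape you describe: for $n$ even both summands are Hermitian (twisted and untwisted) over $\LG(n-1)$, while for $n$ odd the recursion drops two steps to $\LG(n-2)$ with two Hermitian summands plus a genuine $K$-theory summand, the latter handled by the $K$-theoretic basis of Section \ref{sec:K_theory_of_lg}.
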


To understand the additive basis in Theorem \ref{thm:main-theo-intro}, we recall the definition of almost even shifted Young diagrams (cf. \cite{martirosian2021witt}) and introduce $K$-even Young diagrams. A \textit{strict partition} is a sequence of positive integers $\Theta:=(\theta_i, \ldots ,\theta_2, \theta_1)$ such that $\theta_{i} > \theta_{i-1} > \ldots > \theta_2 > \theta_1 > 0$.\ Let $n$ be a positive integer. A \textit{shifted $n$-frame} is the strict partition
$ \digamma_n  :=  (n,n-1,n-2,\ldots,2,1).$
A \textit{shifted Young diagram} $\Lambda$ in $\digamma_{n}$ is an $n$-tuple of non-negative integers
$\Lambda:=(\Lambda_n, \Lambda_{n-1}, \ldots, \Lambda_2, \Lambda_1)$
such that
$n \geq \Lambda_{n} \geq \Lambda_{n-1} \geq \ldots \geq \Lambda_2 \geq \Lambda_1 \geq 0$
with $\Lambda_i \leq i$, and $\Lambda_{i+1} > \Lambda_{i}$ whenever $\Lambda_{i} \neq 0$.
The \textit{boundary} $b_{\Lambda}$ of a shifted Young diagram $\Lambda$ in $\digamma_n$ is a lattice path that goes from the top-right to the bottom-left corner of $\digamma_n$ so that the Young diagram $\Lambda$ lies precisely in the upper-left region determined by the path. The boundary $b_{\Lambda}$ consists of the \textit{segments} $s_1, s_2, \ldots, s_{l_\Lambda}$, ordered from the top-right to the bottom-left. Write $|s_i|$ for the lattice length of the segment $s_i$. We always assume that $s_1$ is vertical, even if its length $|s_1|$ is zero, and we assume that $1 \leq |s_i| $ for $ 2\leq i \leq l_\Lambda$. Denote the terminal point of the segment $s_{t}$ by $a_{t+1}$ ($1 \leq t \leq l_{\Lambda}$) and let the initial point of the segment $s_1$ be the \textit{origin} $O$ of $\Lambda$. 
See Figure \ref{fig:shifted-young-boundary-vertex} for an explanation. Note that the length of the boundary ($\sum_{i} |s_i|$) is always equal to $n$. Note also that $s_i$ is always vertical (resp. horizontal) if $i$ is odd (resp. even).

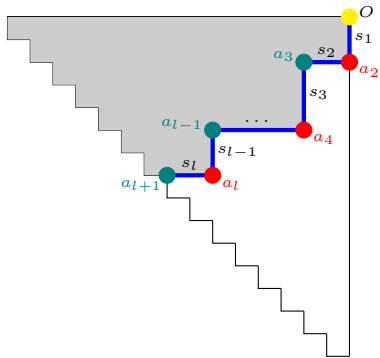
\begin{figure}[!ht]

    \begin{center}
        \begin{tikzpicture}[scale=0.3]
            \def\n{15}
            \foreach \i in {0,...,14} {
                    \draw  (\i+1,\n-1-\i) -- (\i,\n-1-\i) -- (\i,\n-\i) ;
                }
            \foreach \i in {0,...,14} {
                    \draw (\n,\n) -- (0,\n);
                }
            \draw (\n,0) -- (\n,\n);

            \path[fill=black!20] (15,15) -- (15, 13) -- (13,13) -- (13,10) -- (9,10) -- (9,8) -- (6,8)  -- (6,9) -- (5,9) -- (5,10) -- (4,10) -- (4,11) -- (3,11) -- (3,12) -- (2,12) -- (2,13) -- (1,13) -- (1,14) -- (0,14) -- (0,15) -- (15,15);

            \draw[blue, ultra thick] (15,15) -- (15,13);
            \draw[blue, ultra thick] (15,13) -- (13,13) -- (13,10) -- (9,10) -- (9,8) -- (7,8);

            \filldraw [yellow] (15,15) circle (10pt);
            \filldraw [red] (15,13) circle (10pt);
            \filldraw [teal] (13,13) circle (10pt);
            \filldraw [red] (13,10) circle (10pt);
            \filldraw [teal] (9,10) circle (10pt);
            \filldraw [red] (9,8) circle (10pt);
            \filldraw [teal] (7,8) circle (10pt);

            \path (15,15)
            node[text=black,anchor=base west] {\tiny $O$};
            \path (15-.2,14)
            node[text=black,anchor=base west] {\tiny $s_1$};
            \path (15,13-.5)
            node[text=red,anchor=base west] {\tiny $a_2$};
            \path (14,13-.2)
            node[text=black,anchor=south] {\tiny $s_2$};
            \path (13,13+.2)
            node[text=teal,anchor=base east] {\tiny $a_3$};
            \path (13-.2,11.5)
            node[text=black,anchor=base west] {\tiny $s_3$};
            \path (13,10-.5)
            node[text=red,anchor=base west] {\tiny $a_4$};
            \path (11,10-.2)
            node[text=black,anchor=south] {\tiny $\cdots$};
            \path (9,10+.2)
            node[text=teal,anchor=base east] {\tiny $a_{l-1}$};
            \path (9-.2,9)
            node[text=black,anchor=base west] {\tiny $s_{l-1}$};
            \path (9,8-.5)
            node[text=red,anchor=base west] {\tiny $a_{l}$};
            \path (8,8-.2)
            node[text=black,anchor=south] {\tiny $s_l$};
            \path (7+.2,8-.5)
            node[text=teal,anchor=base east] {\tiny $a_{l+1}$};

        \end{tikzpicture}
    \end{center}
    \caption{A shifted Young diagram. Its boundary, colored in blue, runs from the top-right to the bottom-left corner. The red (resp. teal) bullets are convex (resp. concave) corners. }
    \label{fig:shifted-young-boundary-vertex}
\end{figure}

\begin{definition}\label{def:shifted_young_index}
    The \textit{index} $w_{\Lambda}$ of a shifted Young diagram $\Lambda$ in $\digamma_n$ is the smallest integer $t$ with
    \[
        1\leq t \leq l_\Lambda, \quad |Oa_{t+1}|\neq 0, \quad \text{and} \quad |Oa_{t+1}| \equiv n \pmod 2,
    \]
    where $|Oa_t|$ is the lattice length along $b_\Lambda$ from $O$ to $a_t$ (i.e. $|Oa_t|=\sum_{i=1}^{t-1}|s_i|$).
    The index is well-defined, since $|Oa_{l_\Lambda+1}| = n$.
\end{definition}
\begin{definition}
    A shifted Young diagram $\Lambda$ in $\digamma_{n}$ is called \textit{almost even} if $w_{\Lambda}=l_\Lambda$. A shifted Young diagram $\Lambda$ in $\digamma_{n}$ is called \textit{$K$-even} if $w_{\Lambda}$ is even.
\end{definition}
In view of Figure \ref{fig:shifted-young-boundary-vertex}, a shifted Young diagram is almost even if all \textit{inner} segments have even length except the last inner segment (if it exists), which has odd length. If $n$ is odd, a shifted Young diagram $\Lambda$ is $K$-even if $|s_1|, \ldots, |s_{w_\Lambda -1}|$ are even and $|s_{w_\Lambda}|$ is odd and horizontal. There is a natural way to associate either an almost even or a $K$-even shifted Young diagram to a generalized Lagrangian scheme (cf. Section \ref{sec:K_theory_of_lg} and \ref{sec:proof_of_the_main_theorem}), and all the relevant generalized Lagrangian schemes satisfy the property $\mathbf{d}^{< k} - \mathbf{e} \leq 1$, and hence are Gorenstein.


If $\mathbf{d}^{< k} = \mathbf{e}$, the scheme $\LF_{\mathbf{d}}(\mathbf{e})_{\mathbf{t}}$ serves as a model for resolution of singularities of the Schubert varieties in the Lagrangian Grassmannians. One may wonder why we do not use this much simpler variety to define the additive basis in Hermitian $K$-theory. The condition $\mathbf{d}^{< k} = \mathbf{e}$ is indeed sufficient for computing $K$-theory of Lagrangian Grassmannian, cf. Section 3. However, when $\mathbf{d}^{< k} = \mathbf{e}$ the scheme $\LF_{\mathbf{d}}(\mathbf{e})_{\mathbf{t}}$ does not have the correct twist to pushforward in Hermitian $K$-theory, as the canonical line bundle is generally not trivial in the Picard group modulo two. Therefore, the additive basis in Hermitian $K$-theory becomes meaningless if we let $\mathbf{d}^{< k} = \mathbf{e}$. To remedy this problem, we introduce a \textit{padding} technique and show that the twist can be aligned if we make an appropriate choice of $\mathbf{d},\mathbf{t},\mathbf{e}$ by loosening the condition $ d_i -e_i = 0$ to $ d_i -e_i \leq 1$.

Since our maps factor through schemes which are not regular (or even not irreducible), we have to check carefully that our tools in \cite{huang2023the} are still valid, and the \textit{alignment} (cf. \cite{balmer2012bases}) do not cause any trouble, which turned out to be technical and have been written down in Appendix \ref{sec:base_change_gorenstein} and \ref{sec:pushforward}.

Our work is partly motivated by the two-step blow-up strategy introduced by Martirosian in his thesis, where he computed the Witt groups of Lagrangian Grassmannians \cite{martirosian2021witt}.  Martirosian also conjectured a geometric model for a possible additive basis of these Witt groups, obtained via an alternative construction inside the Bott–Samelson resolution. However, as noted in \cite[Remark 3.4.6]{martirosian2021witt}, a full proof was not supplied and no published work has carried out a complete realization of this idea. In the present paper, we introduce a different model, namely the generalized Lagrangian flag scheme, and prove that it induces an additive basis for the Hermitian $K$-theory of Lagrangian Grassmannians, which encompasses the result that Martirosian originally sought to establish. The details of the two-step blow-up construction, in the context of Hermitian $K$-theory, are provided in Appendix \ref{sec:two_step_blow_up}.

In fact, such generalized Lagrangian flag schemes emerged while we were computing the Hermitian $K$-theory of $\LG(2)$ (cf. Theorem \ref{thm:lg_induc}(b)), where an explicit splitting of the localization sequence must be required to establish the underlying isomorphism. This situation has no analogue in Witt-group calculations, because one term in the localization sequence vanishes: $W^*(\LG(1), \Delta) =W^*(\mathbb{P}^1, \mathcal{O}(1)) = 0$ (cf. \S\ref{sec:Witt-Q3}). Therefore, a purely Witt-theoretic approach would be unlikely to expose these flag schemes unless there are other intriguing reasons. This example also shows that Hermitian $K$-theory reveals structures invisible in Witt groups and can lead to new insights.

\subsection{Some consequences}
We now record some immediate consequences from our work.

\subsubsection{Compare to $KO$-theory}
$KO$-theory of complex Lagrangian Grassmannians in topology has been computed by Kono-Hara \cite{kono1992ko} for the case of trivial line bundles and \cite{zibrowius2011witt} for the case of twisted line bundles by means of Atiyah-Hirzebruch spectral sequence. From the work of Zibrowius  \cite{zibrowius2011witt}, we know that the comparison map
$$gw^m: \GW^{[m]}(\LG(n),L ) \to \mathrm{KO}^{2m}(\LG(n),L )$$
induces an isomorphism. Our proof under the special case $S = \mathrm{Spec}(\mathbb{C})$ recovers the computation of \cite{kono1992ko}  and \cite{zibrowius2011witt} provides an alternative way for the computation. It seems to us that it is difficult to write down geometrically the underlying map of the isomorphism in $KO$-theory, which is now available from the perspective of Hermitian $K$-theory by Theorem \ref{thm:main-theo-intro}.

\subsubsection{The Witt groups of $Q_3$}\label{sec:Witt-Q3} The Witt groups of $Q_3$  been computed by Nenashev \cite{nenashev2009on} (see also \cite{xie2019witt} for more general quadrics). Note that $Q_3 \cong \LG(2)$. Nenashev considered the localization sequence
$$0 \to W^{i-2}(\mathbb{P}^1, \Delta) \xrightarrow{\iota_{x*}} W^i(Q_3) \xrightarrow{\iota_y^*} W^i(\mathbb{P}^1) \to 0,  $$
and showed that $\iota^*_y$ is an isomorphism since $W^{i-2}(\mathbb{P}^1, \Delta) = 0$. However, it is more delicate to geometrically define an inverse of $\iota_y^*$. In Theorem \ref{thm:lg_induc} (b), we construct an inverse map of $\iota_y^*$ by the composition $$W^i(\mathbb{P}^1) \xrightarrow{\alpha^*} W^i(\LF_{1,2}(0;\mathring{V})_1) \xrightarrow{\pi_*} W^i(Q_3)$$
where $\alpha$  and $ \pi$ are both projections, and $\mathring{V} := V\oplus \SO_S^{\oplus 2}$ is the so-called padding. The interesting point is that the intermediate scheme $\LF_{1,2}(0;\mathring{V})_1$ is not regular, but rather reduced Gorenstein and consisting of two irreducible components of relative dimension zero over $Q_3$. We verify that the canonical line bundle of $\LF_{1,2}(0;\mathring{V})_1$ becomes trivial in the Picard group modulo two, guaranteeing that the pushforward map $\pi_*$ is well-defined.

\subsubsection{Semi-orthogonal decomposition} It is well-known that semi-orthogonal decompositions of derived categories can help to compute Hermitian $K$-theory by the work of Walter \cite{walter2003triangulated}. The point is that one needs to understand the symmetry of semi-orthogonal decompositions of derived categories under the Hom functor with values in a line bundle. A semi-orthogonal decomposition of derived category of coherent sheaves of Lagrangian Grassmannian $D^b(\LG(n))$ has been obtained by Fonarev \cite{fonarev2022full}, but with the assumption that $S=\mathrm{Spec}(k)$ for $k$ to be an algebraically closed field of characteristic zero. Since our work allows the base to be a field of positive characteristic, it is reasonable to conjecture that the semi-orthogonal decomposition of $D^b(\LG(n))$ is independent of the base.

\subsection*{Acknowledgement}
We thank Marcus Zibrowius for useful comments. The second author would like to thank Nicolas Perrin for a discussion on the two-step blow-up of Lagrangian Grassmannian some years ago. This work is partially supported by the National Key Research and Development Program of China No. 2023YFA1009800, NSFC Grant 12271529, NSFC Grant 12271500, the Fundamental Research Funds from the Central Universities, Sun Yat-sen University 34000-31610294. HX would also  like to acknowledge the EPSRC Grant EP/M001113/1, DFG Priority Programme SPP 1786 and the DFG-funded research training group GRK 2240: Algebro-Geometric Methods in Algebra, Arithmetic and Topology.

\subsection*{Convention}
Throughout this paper, we assume that every scheme is separated Noetherian of finite Krull dimension over $\mathbb{Z}[{\frac{1}{2}}]$.
For any map $f:Y \to X$, write $L_{Y/X}:=\omega_{f} \otimes f^{\ast} L$. $f^{\natural}L:=\omega_{f} \otimes f^{\ast} L$.

\section{Geometry of Lagrangian Grassmannians}\label{sec:LagrangianGrass}

Let $V$ be a vector bundle over a regular scheme $S$ of rank $2n$, and let $ \beta: V \to V^{\vee}$ be a non-degenerate skew-symmetric form, that is, $\beta$ is an isomorphism such that $-\beta^\vee\circ \eta = \beta $ where $\eta: V \to V^{\vee \vee}$ is the double dual identification.

\begin{definition}
    Define the Lagrangian Grassmannian $\LG(n,V)$ to be the subscheme of the Grassmannian $p:\Gr(n,V)\to S$ that parametrizes isotropic subbundles in the form $(V,\beta)$. More precisely, the subscheme $\LG(n,V)$ is defined as the locus where the composition
    \begin{equation}\label{eqn:lg_section_seq}
        \xymatrix{L_n \,\,\ar[r]^-{i} & p^*V \ar[r]^-{i^\vee \circ \beta} & L_n^\vee  }
    \end{equation}
    vanishes. Here $L_n$ is the universal rank $n$ bundle over $\Gr(n,V)$.
\end{definition}
Note that $\LG(n,V)$ is a closed subscheme of $\Gr(n,V)$.
Denote $q:\LG(n,V) \to S $ the canonical projection. It follows that, after restricting \eqref{eqn:lg_section_seq} to $\LG(n,V)$, the sequence
\begin{equation}\label{eqn:lg_tautological_bundle_exact}
    \xymatrix{0 \ar[r] & L_n \ar[r]^-{i} & q^*V \ar[r]^-{i^\vee \circ \beta} & L_n^\vee \ar[r] & 0  }
\end{equation}
becomes exact, where $L_n$ still stands for the universal rank $n$ bundle over $\LG(n,V)$, i.e. the pullback of the universal bundle of the Grassmannian $\Gr(n,V)$. If no confusion occurs, we will drop the mention of $q^*$ in the notation $q^*V $ and write $V$ for simplicity.
\begin{remark}
    The scheme $\LG(n,V)$ is smooth of relative dimension $\binom{n+1}{2}$ over $S$. To see this, the relative tangent sheaf $T_{\LG(n,V)/S}$ can be identified with the vector bundle $(S^2L_n)^\vee$, which is of rank $\binom{n+1}{2}$.
\end{remark}
\begin{remark}
    If the rank of $V$ is two, note that $\LG(1,V) \cong \mathbb{P}(V)$, since every line is automatically isotropic.
\end{remark}

\subsection{A closed embedding}
Let $W$ be an isotropic subbundle of $V$ with rank $r$. Consider the universal exact sequence \eqref{eqn:lg_tautological_bundle_exact}
which leads to the closed embedding
$$\iota_W: \LG^{W}(n,V) \to \LG(n,V)$$
defined as the zero locus of composition $W \to V \xrightarrow{i^\vee\beta} L_n^\vee$.\
That is equivalent to taking the zero locus of the composition $s: L_n \xrightarrow{i} V \to V /W^\perp$.
Thus, over $\LG^{W}(n,V)$, we have the canonical subbundle relations $W \subset L_n$ and $L_n \subset W^\perp$.

\begin{lemma}
    The quotient map
    \[
        \begin{aligned}
            \rho:  \LG^{W}(n,V) & \xrightarrow{\cong} \LG(n-r,W^\perp/W), \\
            L_n                 & \mapsto L_n/W
        \end{aligned}
    \]
    is an isomorphism.
\end{lemma}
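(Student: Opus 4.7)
The plan is to define $\rho$ and an inverse morphism $\sigma$ via their functors of points, then invoke Yoneda. For an $S$-scheme $T$, a $T$-point of $\LG^W(n,V)$ is (by the definition preceding the lemma) a rank-$n$ Lagrangian subbundle $L \subset V_T$ containing $W_T$; the condition $L \subset W_T^\perp$ is automatic, since $L = L^\perp$. A $T$-point of $\LG(n-r, W^\perp/W)$ is a rank-$(n-r)$ Lagrangian subbundle $M \subset (W^\perp/W)_T$ for the induced symplectic form $\bar\beta$ (which exists and is non-degenerate because $W$ is isotropic).

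The forward map sends $L$ to $L/W_T \subset W_T^\perp/W_T$. Well-definedness requires three checks: that $L/W_T$ is locally free of rank $n-r$ (from the exact sequence $0 \to L/W_T \to V_T/W_T \to V_T/L \to 0$, using that $W$ and $L$ are subbundles of $V$); that the inclusion $L/W_T \hookrightarrow W_T^\perp/W_T$ has locally free cokernel (identified with $W_T^\perp/L$); and that $L/W_T$ is isotropic for $\bar\beta$, which follows immediately from $\beta(L,L)=0$ and upgrades to Lagrangian by the rank count against $\rank(W_T^\perp/W_T) = 2(n-r)$. This yields a morphism $\rho\colon \LG^W(n,V) \to \LG(n-r, W^\perp/W)$ via the universal property of the target.

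The inverse $\sigma$ sends $M$ to its preimage $\tilde M := \pi^{-1}(M)$ under $\pi \colon W_T^\perp \twoheadrightarrow W_T^\perp/W_T$. The extension $0 \to W_T \to \tilde M \to M \to 0$ shows $\tilde M$ is locally free of rank $n$, and the sequence $0 \to (W_T^\perp/W_T)/M \to V_T/\tilde M \to V_T/W_T^\perp \to 0$ shows $\tilde M \hookrightarrow V_T$ is a subbundle. Isotropy of $\tilde M$ follows from $\tilde M \subset W_T^\perp$ (so $\tilde M \perp W_T$) together with the isotropy of $M$ modulo $W_T$; with rank $n$ in the rank-$2n$ symplectic bundle $V_T$, $\tilde M$ is Lagrangian. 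Since $W_T \subset \tilde M$ by construction, this produces a $T$-point of $\LG^W(n,V)$, and hence the morphism $\sigma$.

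The compositions $\rho\circ\sigma$ and $\sigma\circ\rho$ are the identity on $T$-points, since $\pi^{-1}(L/W_T) = L$ whenever $W_T \subset L$, and $\pi^{-1}(M)/W_T = M$ tautologically, so by Yoneda we conclude. The only real obstacle is the subbundle bookkeeping---verifying locally-free-quotient conditions at each step---but this reduces to standard snake-lemma arguments using that $W$, $W^\perp$, and $L_n$ are already subbundles of $V$ in the ambient Grassmannian, so no substantive difficulty should arise.
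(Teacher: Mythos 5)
Your argument is correct: the functor-of-points description of $\LG^W(n,V)$ (Lagrangian subbundles $L\subset V_T$ with $W_T\subset L$, the containment in $W_T^\perp$ being automatic from $L=L^\perp$), the two mutually inverse assignments $L\mapsto L/W_T$ and $M\mapsto \pi^{-1}(M)$, and the locally-free/subbundle checks via the exact sequences you write down all go through, and Yoneda then gives the isomorphism of schemes over $S$. The only difference from the paper is one of presentation rather than substance: the paper does not write out a proof at all, but defers to the analogous statement in the orthogonal/quadratic setting (Hudson--Xie, which in turn rests on Quebbemann--Scharlau--Schulte), where essentially this same quotient-and-preimage argument is carried out. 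So your write-up is a self-contained version of what the cited references supply; it buys independence from those sources at the cost of the routine subbundle bookkeeping, whereas the paper's citation keeps the exposition short and parallel to how the symmetric case is usually handled. One small point worth making explicit if you expand this further: the induced form $\bar\beta$ on $W^\perp/W$ is a non-degenerate skew form on a bundle of rank $2(n-r)$ precisely because $W^\perp=\ker(V\to W^\vee)$ is a subbundle (surjectivity of $V\to W^\vee$ uses that $\beta$ is an isomorphism and $W\subset V$ is admissible), which is what legitimizes speaking of $\LG(n-r,W^\perp/W)$ in the first place.
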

\begin{proof}
    The proof is similar to the symmetric case \cite[Lemma 2.12]{hudson2022witt}, which is based on \cite[Proposition 6.5]{quebbemann1979quadratic}.
\end{proof}
\subsection{One step blow-up}\label{subsec:lg-geometric}
In this subsection, we study the geometric blow-up setup of the Lagrangian Grassmannians. Assume that the form $(V,\beta)$ admits a complete flag, i.e., a filtration
\[
    V_{\bullet} = \{0=V_{0} \subset V_{1} \subset V_{2} \subset \ldots \subset V_{n-1} \subset V_{n} = V_{n}^{\bot} \subset V_{n-1}^{\bot} \subset \ldots \subset V_{1}^{\bot} \subset V_{0}^{\bot} = V\},
\]
with $\rank(V_j) = j$, and all inclusions are admissible (i.e., their quotients are locally free). Set $V^{j}:=V_{j}^{\bot}/V_{j}$, which admits an induced complete flag
\[
    V^{j}_{\bullet} = \{0=V^{j}_{0} \subset V^{j}_{1} \subset V^{j}_{2} \subset \ldots \subset V^{j}_{n-j-1} \subset V^{j}_{n-j} = V_{n-j}^{j \bot} \subset V_{n-j-1}^{j \bot} \subset \ldots \subset V_{1}^{j \bot} \subset V_{0}^{j \bot} = V^{j}\},
\]
where $V^{j}_{i} = V_{i+j}/V_{j}$ for $0\leq i+j\leq n$.

\begin{remark}
    Note that $\mathrm{det} V = \det V_n \otimes \det V_n^\vee = \SO$.
\end{remark}
Consider the Grassmannian $\Gr(n,V_1^\perp)$ and the closed immersion $\iota:\Gr(n,V_1^\perp) \to \Gr(n,V) $. Using functors of points, we see that the inclusion $L_n \subset V_1^\perp$ over $\LG^{V_1}(n,V)$ induces a morphism $\tilde{g}: \LG^{V_1}(n,V) \to \Gr(n, V_1^\perp)$, which yields the following fiber square:
\begin{equation}
    \xymatrix{
    & \LG^{V_1}(n,V) \ar[r]^-{\iota_1} \ar[d]^-{\tilde g} & \LG(n,V) \ar[d]^-{g}
    \\
    & \Gr(n,V_1^\bot) \ar[r]^-{\iota} & \Gr(n,V)
    }
\end{equation}
where we have $\iota_1 = \iota_{V_1}$. We further form the following cube diagram:
\begin{equation}\label{eqn:cube-lg}
    \xymatrix@R=25pt@C=0pt{
    & \LG^{V_1}(n,V) \ar[rr]^-{\iota_1} \ar'[d][dd]^-{\tilde g}
    & & \LG(n,V) \ar[dd]^-{g}
    \\
    \mathrm{E}_{1}(n,V) \ar[ur]^-{\tilde{\pi}_1}\ar[rr]^(.65){\tilde\iota_1}\ar[dd]^-{\imath}
    & & \Bl_1(n,V) \ar[ur]_-{\pi_1}\ar[dd]^(.3){\jmath}
    \\
    & \Gr(n,V_1^\bot) \ar'[r]^-{\iota}[rr]
    & & \Gr(n,V)
    \\
    \mathrm{Fl}_{n-1,n}(V_1^\bot,V_1^\bot) \ar[rr]^-{\tilde\iota}\ar[ur]_-{\tilde{\pi}}
    & & \mathrm{Fl}_{n-1,n}(V_1^\bot,V) \ar[ur]_-{\pi}
    }
\end{equation}
where the scheme $\Bl_1(n,V)$ (resp. $\mathrm{E}_1(n,V)$) is defined as the pullback of $g$ and $\pi$ (resp. $\tilde{\pi}$ and $\tilde{g}$) and the map $\tilde{\iota}_1$ is defined by the universal property of pullback. Here, recall that $\mathrm{Fl}_{n-1,n}(V_1^\bot,V_1^\bot)$ and $\mathrm{Fl}_{n-1,n}(V_1^\bot,V)$ are the generalized flag varieties defined in \cite{balmer2012witt}.
Every square in the cube diagram is a fiber product. Let $U_1(n,V)$ be the open complement $\LG(n,V) - \LG^{V_1}(n,V)$. Consider the square
\begin{equation}\label{eq:universal-element-Fl}
    \xymatrix{P_{n-1} \ar[d]\ar[r]& L_n \ar[d] \\
        V_1^\perp \ar[r] & V }
\end{equation}
which is the universal element of $\mathrm{Fl}_{n-1,n}(V_1^\bot,V)$.

\begin{lemma}\label{lem:birational-1} The morphism $\pi_1: \Bl_1(n,V) \to  \LG(n,V) $ is birational.
\end{lemma}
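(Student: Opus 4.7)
The plan is to exhibit a dense open subscheme of $\LG(n,V)$ over which $\pi_1$ admits an inverse section, thereby proving birationality. A natural candidate is the open complement $U_1(n,V) = \LG(n,V) \setminus \LG^{V_1}(n,V)$, over which the ``missing'' data $P_{n-1}$ in the fiber product can be recovered canonically from $L_n$.

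First I would unwind the moduli description of $\Bl_1(n,V) = \LG(n,V) \times_{\Gr(n,V)} \mathrm{Fl}_{n-1,n}(V_1^\perp, V)$. Its $T$-points are pairs $(L_n, P_{n-1})$ consisting of a Lagrangian subbundle $L_n \subset V_T$ together with a rank $n-1$ subbundle $P_{n-1}$ satisfying $P_{n-1} \subset L_n$ and $P_{n-1} \subset V_1^\perp$; the morphism $\pi_1$ is the forgetful map $(L_n, P_{n-1}) \mapsto L_n$. Recall that $\LG^{V_1}(n,V)$ is the zero locus of the composite $V_1 \hookrightarrow V \xrightarrow{i^\vee \beta} L_n^\vee$, equivalently the locus where $L_n \subset V_1^\perp$. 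Hence on $U_1(n,V)$ the composite $L_n \hookrightarrow V \twoheadrightarrow V/V_1^\perp$ is a surjection onto a rank one quotient, so its kernel $L_n \cap V_1^\perp$ is locally free of rank $n-1$ (this is the only point where one needs to verify local freeness of a kernel; it follows from the rank of the surjection being locally constant on $U_1$).

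Next I would define the inverse candidate $\sigma : U_1(n,V) \to \Bl_1(n,V)$ by the functor-of-points rule $L_n \mapsto (L_n,\, L_n \cap V_1^\perp)$, using the universal property of the fiber product. Clearly $\pi_1 \circ \sigma = \mathrm{id}_{U_1}$. Conversely, for any $(L_n, P_{n-1}) \in \pi_1^{-1}(U_1)$ the inclusions $P_{n-1} \subset L_n$ and $P_{n-1} \subset V_1^\perp$ give $P_{n-1} \subset L_n \cap V_1^\perp$, and since both are subbundles of the same rank $n-1$ they coincide; this shows $\sigma \circ \pi_1|_{\pi_1^{-1}(U_1)} = \mathrm{id}$, so $\pi_1$ restricts to an isomorphism over $U_1(n,V)$.

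Finally I would check that $U_1(n,V)$ is dense in $\LG(n,V)$. Its closed complement $\LG^{V_1}(n,V)$ is isomorphic to $\LG(n-1, V_1^\perp/V_1)$ by the preceding lemma, and thus has relative dimension $\binom{n}{2} < \binom{n+1}{2}$ over $S$; combined with smoothness (hence flatness) of $\LG(n,V)$ over the regular base $S$, this shows $U_1(n,V)$ contains every generic point of $\LG(n,V)$ and is therefore schematically dense. This gives birationality of $\pi_1$. I do not expect a serious obstacle: the only mildly technical point is to justify that $L_n \cap V_1^\perp$ is locally free of rank $n-1$ on $U_1$, which is routine once the surjectivity of $L_n \to V/V_1^\perp$ is pinned down by the defining condition of $U_1$.
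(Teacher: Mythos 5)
Your proposal is correct and follows essentially the same route as the paper: both identify the locus over which $\pi_1$ is an isomorphism as $U_1(n,V)$, where the surjectivity of $L_n \to V/V_1^\perp$ lets one recover the missing stratum as the rank $n-1$ kernel $L_n \cap V_1^\perp$, yielding the inverse section into the fiber product $\Bl_1(n,V)$. Your added density check for $U_1(n,V)$ via the dimension count $\binom{n}{2}<\binom{n+1}{2}$ is a harmless elaboration of a point the paper leaves implicit.
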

\begin{proof}
    Diagram \eqref{eq:universal-element-Fl}  induces a commutative diagram between two short exact sequences of vector bundles
    \begin{equation}\label{eqn:lg_bl_e_section}
        \xymatrix{
        0 \ar[r] & P_{n-1} \ar[r] \ar[d] & L_n \ar[r] \ar[dr]^-{s} \ar[d] & L_n / P_{n-1} \ar[r] \ar@{ -->}[d]_-{\tilde{s}}& 0\\
        0 \ar[r] & V_1^\bot \ar[r] & V \ar[r] & V / V_1^\bot \ar[r] & 0
        }
    \end{equation}
    over $\Bl_1(n,V)$. Note that $E_1(n,V)$ is the locus where the section $\tilde{s}: L_n / P_{n-1} \to V / V_1^\bot$ vanishes. Now, the open complement $\Bl_1(n,V)-\mathrm{E}_1(n,V)$ can be described as the locus where the morphism $\tilde{s}$ is surjective, cf. \cite[Proposition 8.4(1)]{gortz2020algebraic}. For a similar reason, the open complement $U_1(n,V)$ is the locus where the morphism $s: L_n \to V/V_1^\perp$ is surjective. This description leads to an isomorphism $U_1(n,V) \to \Bl_1(n,V)-\mathrm{E}_1(n,V)$ by taking the kernel $\ker(s)$, which is a vector bundle of rank $n-1$.
\end{proof}

\begin{proposition}\label{prop:blow-up-1}
    The scheme $\Bl_1(n,V)$ is the blow-up of $\LG(n,V)$ along $\LG^{V_1}(n,V)$ with exceptional fiber $\mathrm{E}_{1}(n,V)$.
\end{proposition}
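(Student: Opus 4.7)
The plan is to verify the universal property of the blow-up directly. The essential first step is to recognize $\LG^{V_1}(n,V) \subset \LG(n,V)$ as the zero scheme of a regular section. From \eqref{eqn:lg_tautological_bundle_exact}, the composition $V_1 \hookrightarrow V \xrightarrow{i^\vee \beta} L_n^\vee$ defines a section $s$ of the rank-$n$ bundle $V_1^\vee \otimes L_n^\vee$ whose zero scheme is $\LG^{V_1}(n,V)$; since $\LG^{V_1}(n,V) \cong \LG(n-1, V^1)$ has codimension exactly $n$ in the smooth scheme $\LG(n,V)$, the section is regular. I will then show (i) the scheme-theoretic preimage of $\LG^{V_1}(n,V)$ in $\Bl_1(n,V)$ is the effective Cartier divisor $\mathrm{E}_1(n,V)$, yielding by the universal property a canonical $\LG(n,V)$-morphism $\phi : \Bl_1(n,V) \to \mathrm{Bl}_{\LG^{V_1}(n,V)}\LG(n,V)$; and (ii) $\phi$ is an isomorphism.

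For (i), diagram \eqref{eqn:lg_bl_e_section} exhibits a factorization of $s$ on $\Bl_1(n,V)$ through the line bundle $L_n/P_{n-1}$ via $\tilde{s} : L_n/P_{n-1} \to V/V_1^\perp$, whose zero scheme is by definition $\mathrm{E}_1(n,V)$. Consequently the inverse-image ideal $\pi_1^{-1}(\mathcal{I}_{\LG^{V_1}}) \cdot \SO_{\Bl_1(n,V)}$ is the image of the induced line-bundle map $V_1 \otimes (L_n/P_{n-1}) \to \SO_{\Bl_1(n,V)}$, which is locally principal. The birational identification $\Bl_1(n,V) \setminus \mathrm{E}_1(n,V) \cong U_1(n,V)$ from Lemma \ref{lem:birational-1} shows that every irreducible component of $\Bl_1(n,V)$ meets the locus where $\tilde{s}$ is nonvanishing, so the local generator is a non-zero-divisor and $\mathrm{E}_1(n,V)$ is a genuine effective Cartier divisor.

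For (ii), I identify both $\mathrm{Bl}_{\LG^{V_1}(n,V)}\LG(n,V)$ and $\Bl_1(n,V)$ as the same closed subscheme of the projective bundle $\mathrm{Fl}_{n-1}(L_n) \to \LG(n,V)$ parametrizing hyperplanes $P_{n-1} \subset L_n$. The standard description of the blow-up of a regularly embedded zero locus embeds $\mathrm{Bl}_{\LG^{V_1}(n,V)}\LG(n,V)$ into $\mathbb{P}\bigl((V_1^\vee \otimes L_n^\vee)^\vee\bigr) \cong \mathrm{Fl}_{n-1}(L_n)$ as the closure of the graph of the rational map sending $L_n \in U_1(n,V)$ to the hyperplane $\ker(L_n \xrightarrow{s} V/V_1^\perp) = L_n \cap V_1^\perp$. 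By the fiber-product definition, $\Bl_1(n,V)$ is precisely the closed subscheme of $\mathrm{Fl}_{n-1}(L_n)$ cut out by the closed condition $P_{n-1} \subset V_1^\perp$. Both subschemes agree on the dense open $U_1(n,V)$ and both carry the full $\mathbb{P}^{n-1}$-bundle $\mathrm{Fl}_{n-1}(L_n)|_{\LG^{V_1}(n,V)}$ as their exceptional locus, so $\phi$ must be an isomorphism.

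The main obstacle will be the invertibility check in (i): the image of a line-bundle map to $\SO$ is only locally principal a priori, and to promote it to an invertible ideal one must rule out embedded components of $\Bl_1(n,V)$ supported entirely inside $\mathrm{E}_1(n,V)$. The birationality established in Lemma \ref{lem:birational-1} is exactly what is needed, but it must be used carefully, since $\Bl_1(n,V)$, being defined as a fiber product over a possibly non-flat base, is not a priori Cohen--Macaulay.
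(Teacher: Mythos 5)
Your overall strategy (show the preimage of $\LG^{V_1}(n,V)$ in $\Bl_1(n,V)$ is the Cartier divisor $\mathrm{E}_1(n,V)$, then identify $\Bl_1(n,V)$ with the blow-up) is close to the paper's, which also first proves $\mathrm{E}_1(n,V)$ is an effective Cartier divisor and then checks the universal property directly in the style of Balmer--Calm\`es; your second half instead compares $\Bl_1(n,V)$ with the standard model of the blow-up of a regular section's zero scheme inside the Grassmann bundle of hyperplanes of $L_n$, which is a legitimate alternative route (the paper's remark after Proposition \ref{prop:blow-up-1} sketches yet another, via tor-independence and Lemmas \ref{lma:meet-proper-tor-independent}, \ref{lma:tor-independent-eclate}). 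However, your argument has a genuine gap exactly at the step you yourself flag. In (i), the inference ``$\Bl_1(n,V)\setminus \mathrm{E}_1(n,V)\cong U_1(n,V)$, hence every irreducible component of $\Bl_1(n,V)$ meets the nonvanishing locus of $\tilde s$, hence the local generator is a non-zero-divisor'' is not valid: birationality describes only the open complement and says nothing about irreducible components, let alone embedded associated points, of $\Bl_1(n,V)$ supported inside $\mathrm{E}_1(n,V)$. The missing input is a dimension/regularity argument: $\Bl_1(n,V)$ sits inside the smooth Grassmann bundle $\Gr_{\LG(n,V)}(n-1,L_n)$ (relative dimension $n-1$) as the zero scheme of a section of the rank-$(n-1)$ bundle $P_{n-1}^\vee\otimes V/V_1^\perp$, so by Krull's height theorem every component has dimension at least $\dim \LG(n,V)$, while $\mathrm{E}_1(n,V)$, a $\mathbb{P}^{n-1}$-bundle over $\LG^{V_1}(n,V)$, has dimension $\dim\LG(n,V)-1$; hence no component lies in $\mathrm{E}_1(n,V)$, the section is regular, $\Bl_1(n,V)$ is a local complete intersection (in particular Cohen--Macaulay, answering your worry about the non-flat fiber product), and only then does ``codimension-one zero scheme of a line-bundle map'' give a Cartier divisor. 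Saying that the birationality of Lemma \ref{lem:birational-1} ``is exactly what is needed'' is therefore not accurate; it must be supplemented by this count (or replaced by the tor-independence of the back face of \eqref{eqn:cube-lg}).

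The same missing input undermines (ii) as written. From agreement on a dense open set and equality of the underlying exceptional sets you cannot conclude that two closed subschemes of $\mathrm{Fl}_{n-1}(L_n)$ coincide, nor even that your map $\phi$ is compatible with the two embeddings: for that you need $U_1(n,V)$ to be \emph{schematically} dense in $\Bl_1(n,V)$, i.e.\ again the absence of components or embedded points in $\mathrm{E}_1(n,V)$. Once that is in place, a clean finish is available along your lines: $\phi$ over the flag bundle gives $\Bl_1(n,V)\subset \mathrm{Bl}_{\LG^{V_1}}\LG(n,V)$ as closed subschemes, while the blow-up, being the schematic closure of the graph over $U_1(n,V)$, is contained in the closed condition $P_{n-1}\subset V_1^\perp$, whence equality; alternatively one can compare ideals locally, noting that the entries of $P_{n-1}\to V/V_1^\perp$ generate the Koszul syzygies $s_iT_j-s_jT_i$, which present the Rees algebra precisely because $s$ is a regular section. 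Either repair works, but the proof as proposed does not yet establish the key non-zero-divisor and scheme-equality claims.
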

\begin{proof}
    We verify that $\Bl_1(n,V)$ satisfies the universal property of blow-up. We claim that $\mathrm{E}_1(n,V)$ is an effective Cartier divisor of $\Bl_1(n,V)$. Observe that $\mathrm{E}_1(n,V)$ is a projective bundle of dimension $n-1$ over $\LG^{V_1}(n,V)$ since $\tilde{\pi}$ is so in view of the left face of Diagram \eqref{eqn:cube-lg}. Moreover, the codimension of $\iota_1$ is $n$, and thus the codimension of $\tilde{\iota}_1$ is $1$ by Lemma \ref{lem:birational-1}. Since $\tilde{\iota}_1$ can be identified with the zero locus of $\tilde{s}: L_n/P_{n-1} \to V/V_1^\perp$, which is a morphism of line bundles (cf. Diagram \eqref{eqn:lg_bl_e_section}), we see that $\mathrm{E}_1(n,V)$ must be an effective Cartier divisor of $\Bl_1(n,V)$.

    We need to check for any morphism $f:W\to \LG(n,V)$ such that $f^{-1}(\LG^{V_1}(n,V))$ is an effective Cartier divisor of $W$, then there is a unique morphism $\tilde{f}:W\to \Bl_1(n,V)$ such that $f = \pi_1 \circ \tilde{f}$. This follows along the same line as in the proof of \cite[Proposition 5.4]{balmer2012witt}.
\end{proof}

\begin{remark}
    One may also adopt an alternative proof by showing that the back face of the cube diagram \eqref{eqn:cube-lg} is tor-independent, which follows from Lemma \ref{lma:meet-proper-tor-independent} and a dimension count. By Lemma \ref{lma:tor-independent-eclate}, we get an alternative proof of Proposition 3.6.
\end{remark}

\subsection{Two step blow-up}
Consider the morphism $P_{n-1}^\perp/L_n \to V/V_1^\bot $ over $E_1(n,V)$ defined as the morphism induced on the cokernels in the following diagram:
\begin{equation}
    \xymatrix{
        0 \ar[r] & L_n \ar[r] \ar[d] & P_{n-1}^\bot \ar[r] \ar[d] & P_{n-1}^\bot/L_n  \ar[r] \ar@{-->}[d] & 0\\
        0 \ar[r] & V_1^\bot \ar[r] & V \ar[r] & V / V_1^\bot \ar[r] & 0
    }
\end{equation}
Let $Z_2(n,V)$ be the locus where the morphism $P_{n-1}^{\bot}/L_{n} \to V / V_{1}^{\bot}$ of line bundles vanishes. The induced embedding $\kappa:Z_2(n,V) \to E_1(n,V)$ is a regular embedding of codimension one.\ It follows that the composition
\[\iota_2:= \tilde{\iota}_1 \kappa: Z_2(n,V) \to E_1(n,V) \to \Bl_1(n,V)\]
is a regular embedding of codimension $2$. It is given by the vanishing locus of the canonical morphism $P_{n-1}^{\bot}/P_{n-1} \to V / V_{1}^{\bot}$ on $\Bl_1(n,V)$.

Let $\Bl_2(n,V)$ be the closed subscheme of $\Bl_1(n,V) \times \LG^{V_1}(n,V)$ defined as the vanishing locus of the canonical morphism $P_{n-1}\boxtimes \SO \to  V \boxtimes \SO = \SO \boxtimes V  \to  \SO \boxtimes V/L_n$. One has two projections:
\[ \xymatrix{\LG^{V_1}(n,V) & \ar[l]_-{\alpha} \Bl_2(n,V) \ar[r]^-{\pi_2} & \Bl_1(n,V). }  \]
\begin{remark}
    If no confusion occurs, we may drop the mention of $n$ and $V$ and write $B_1:= B_1(n,V)$ for instance.
\end{remark}
The following diagram summarizes the construction.
\begin{equation}\label{eqn:two_step_blow_up_lg}
    \xymatrix{
    &\LG^{V_1}   \ar[r]^-{\iota_1} &\LG   & \ar[l]_-{v_1} U_{1}   \ar[ld]_-{\tilde{v}_1} \ar@{=}[r]& U_{1}   \ar[dd]^-{\vartheta} \ar[ld]_-{w_1} \\
    Z_{2} \ar[r]^-{\kappa}
    & E_{1}   \ar[r]^-{\tilde{\iota}_1} \ar[u]^-{\tilde{\pi}_1} & \Bl_{1}   \ar[u]^-{\pi_1} & \ar[l]_-{v_2} U_{2}   \ar[ld]_-{\tilde{v}_2}\\
    E_{2}   \ar[r]^-{\check{\kappa}} \ar[u]^-{\tilde \pi_2}
    & F_2   \ar@{..>}[u]^-{\check{\pi}_2} \ar[r]^{\check{\iota}_2} &   \ar[u]^-{{\pi}_2}  \Bl_{2}   \ar@{-->}[rr]^-{\alpha}& &\LG^{V_1}
    }
\end{equation}
where $ \vartheta:=\alpha \circ \tilde{v}_2 \circ w_1$, $U_2  $ is the open complement of $\iota_2$, $F_2$ is the fiber product of $\pi_2$ and $\tilde{\iota}_1$ and $E_2  $ is the fiber product of $\pi_2$ and $\iota_2$. Every morphism in Diagram \ref{eqn:two_step_blow_up_lg} is canonically defined. The functors of points are drawn in Figure \ref{fig:lg_blow_up_functor_of_points}. Let
\[ \iota_2 := \tilde{\iota}_1 \kappa, \quad \tilde\iota_2 := \check{\iota}_2 \check{\kappa},  \quad \pi:= \pi_1 \pi_2, \quad \tilde{\pi}:=\tilde{\pi}_1 \kappa\tilde{\pi}_2, \quad \tilde{\alpha}:= \alpha \tilde{\iota}_2, \quad \check{\pi}:= \tilde{\pi}_1\check{\pi}_2, \quad \check{\alpha} := \alpha \check{\iota}_2   . \]

\begin{proposition}\label{prop:blow-up-two-step-setup}
    The scheme $\Bl_2 $ is the blow-up of $Z_2 $ inside $\Bl_1 $ with the exceptional fiber $E_2 $.  Moreover, the morphism $\vartheta$ is an affine bundle. In other words, Lagrangian Grassmannians fit Hypothesis \ref{sect:hypothesis}.
\end{proposition}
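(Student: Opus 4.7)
The plan is to verify the universal property of blow-up for $(\Bl_2, E_2 \to Z_2 \hookrightarrow \Bl_1)$ in parallel with the proof of Proposition \ref{prop:blow-up-1}, and then to analyse $\vartheta$ as an iterated affine bundle over $\LG^{V_1}$. First I would read $\pi_2\colon \Bl_2 \to \Bl_1$ through its functor of points: a point of $\Bl_2$ over $(P_{n-1} \subset L_n) \in \Bl_1$ records an additional Lagrangian $L_n^{(1)} \supset V_1$ containing $P_{n-1}$. Off $Z_2$ one has $V_1 \not\subset P_{n-1}$, so $L_n^{(1)} = V_1 + P_{n-1}$ is forced and $\pi_2$ is an isomorphism there. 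Over $Z_2$ we have $V_1 \subset P_{n-1} \subset P_{n-1}^{\perp} \subset V_1^{\perp}$, and the admissible $L_n^{(1)}$ are precisely the Lagrangian lines in the two-dimensional symplectic space $P_{n-1}^{\perp}/P_{n-1}$, forming a $\mathbb{P}^1$-bundle over $Z_2$. This matches exactly the expected shape for the blow-up of a codimension-two regular embedding.

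To upgrade this to a proof, I would exhibit $E_2$ as an effective Cartier divisor of $\Bl_2$. Pulling the defining morphism $P_{n-1} \to V/L_n^{(1)}$ of $\Bl_2 \subset \Bl_1 \times \LG^{V_1}$ back to $F_2 = \pi_2^{-1}(E_1)$, where $L_n \subset V_1^{\perp}$ forces a factorisation through $V/V_1^{\perp} \to V/L_n^{(1)}$, and chasing cokernels produces a morphism of line bundles whose vanishing locus is precisely $E_2$. With $E_2$ identified as a Cartier divisor, the universal property of the blow-up follows along the same lines as \cite[Proposition~5.4]{balmer2012witt} invoked for Proposition \ref{prop:blow-up-1}: given $f\colon W \to \Bl_1$ for which $f^{-1}(Z_2)$ is Cartier, the uniqueness of $L_n^{(1)} = V_1 + P_{n-1}$ off $Z_2$ combined with the Cartier hypothesis forces a unique lift $\tilde{f}\colon W \to \Bl_2$. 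Alternatively, one could argue through the tor-independence route sketched in the remark following Proposition \ref{prop:blow-up-1}.

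For the affine-bundle statement I would factor $\vartheta\colon U_1 \to \LG^{V_1}$ into two rank-one pieces. Geometrically $\vartheta$ sends a Lagrangian $L_n$ with $V_1 \not\subset L_n$ to $V_1 + (L_n \cap V_1^{\perp})$, so the fibre over a fixed $L_n^{(1)}$ parametrises pairs $(P, L_n)$ with $P \subset L_n^{(1)}$ a hyperplane complementary to $V_1$ and $L_n \supset P$ a Lagrangian satisfying $V_1 \not\subset L_n$. Such hyperplanes form a torsor under $\mathrm{Hom}(L_n^{(1)}/V_1, V_1)$, a vector bundle of rank $n-1$ over $\LG^{V_1}$; over any such $P$ the admissible Lagrangians form the $\mathbb{A}^1$-complement of the image of $V_1$ inside the $\mathbb{P}^1$ of Lagrangian lines in $P^{\perp}/P$, hence a rank-one affine bundle over the total space of the first step. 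Composing these globalises to an iterated affine bundle structure on $\vartheta$ of total rank $n = \dim \LG(n,V) - \dim \LG^{V_1}$. The main obstacle I foresee is coherent globalisation: promoting these pointwise fibre descriptions to genuine affine bundle structures over $\LG^{V_1}$, and keeping the line bundle sections in the Cartier analysis transparent enough to identify the normal bundle of $E_2$ inside $\Bl_2$. Once both are in hand, the concluding assertion that Lagrangian Grassmannians fit Hypothesis \ref{sect:hypothesis} follows by assembling Proposition \ref{prop:blow-up-1} with the two items just established.
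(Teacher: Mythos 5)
Your overall strategy is essentially the paper's: read $\pi_2$ off its functor of points (isomorphism off $Z_2$, a $\mathbb{P}^1$-bundle of Lagrangian lines in $P_{n-1}^\perp/P_{n-1}$ over $Z_2$), exhibit $E_2$ as an effective Cartier divisor of $\Bl_2$, invoke the universal-property argument of \cite[Proposition 5.4]{balmer2012witt}, and then describe $\vartheta$ as a two-step affine fibration. The "coherent globalisation" you flag as the main obstacle is exactly what the paper supplies by rebuilding $\Bl_2$ from the target of $\alpha$: with $Y=\LG^{V_1}$ and $G=\Gr_Y(n-1,\widetilde{L}_n)$ one has $\Bl_2\cong\mathbb{P}_G(P_{n-1}^\perp/P_{n-1})$; your torsor under $\mathrm{Hom}(L_n^{(1)}/V_1,V_1)$ is the open subscheme $W=G-\Gr_Y^{V_1}(n-1,\widetilde{L}_n)$, which is an affine bundle over $Y$, and your fibrewise $\mathbb{A}^1$ is globalised as $U_1\cong \mathbb{P}_W(P_{n-1}^\perp/P_{n-1})-\mathbb{P}_W(P_{n-1}^\perp/\widetilde{L}_n)$, an affine bundle over $W$. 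So the affine-bundle half of your plan is the paper's argument stated fibrewise, and the globalisation is immediate once phrased with the universal bundles on $G$.

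The one step that does not work as written is your Cartier verification. You propose to pull the defining morphism $P_{n-1}\to V/L_n^{(1)}$ of $\Bl_2$ back to $F_2$ and to factor through $V/V_1^{\perp}\to V/L_n^{(1)}$; but on $\Bl_2$ one has $L_n^{(1)}\subset V_1^{\perp}$, so the quotient map goes the other way, namely $V/L_n^{(1)}\to V/V_1^{\perp}$, and, more importantly, a line-bundle morphism defined only on $F_2$ would at best show $E_2$ locally principal inside $F_2$ (which is itself a divisor with two components), not inside $\Bl_2$. What is needed is a morphism of line bundles on all of $\Bl_2$ whose zero scheme is $E_2$; the natural choice is $V_1\to L_n^{(1)}/P_{n-1}$, which vanishes exactly where $V_1\subset P_{n-1}$, i.e.\ on $\pi_2^{-1}(Z_2)=E_2$. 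This is precisely the paper's section after the identification $\Bl_2\cong\mathbb{P}_G(P_{n-1}^\perp/P_{n-1})$: the divisor $\Gr^{V_1}_Y(n-1,\widetilde{L}_n)\subset G$ is cut out by $V_1\to\widetilde{L}_n/P_{n-1}$, and $E_2$ is its pullback along the flat morphism $p_2$, whence it is effective Cartier (one still needs the vanishing to be of a regular section, which the paper gets from this flat-pullback description). With that correction your argument coincides with the paper's; the tor-independence alternative you mention is likewise the route indicated in the remark following Proposition \ref{prop:blow-up-1}.
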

\begin{proof}
    Let $Y= \LG^{V_1} $ and $\widetilde{L}_n$ be the universal rank $n$ bundle over $Y$. We give another construction of $\Bl_2 $ through $Y$. Consider the Grassmann bundle $ p_1: G \to Y$, where $G:=\Gr_Y(n-1, \widetilde{L}_n)$. On $G$, we have the canonical admissible embeddings $P_{n-1} \to \widetilde{L}_n \to V$ starting with the universal bundle of rank $n-1$. It follows that the bundle $P_{n-1}^\perp/P_{n-1}$ is a rank two vector bundle over $G$. Moreover, the projective bundle $p_2 : \mathbb{P}_G(P_{n-1}^\perp/P_{n-1}) \to G$ can be identified with $\Bl_2 $, and $\alpha = p_1 \circ p_2$.

    Observe that $\tilde{\iota}_2: \mathrm{E}_2  \to \Bl_2 $ is an effective Cartier divisor.\ To see this, we form the following pullback square
    \[
        \xymatrix{
        \mathrm{E}_{2}  \ar[d]_-{}\ar[r]^-{\tilde\iota_2} & \Bl_2  \ar[d]_-{p_2}\\
        \Gr_{Y}^{V_1}(n-1, \widetilde{L}_n) \ar[r]^-{} & \Gr_{Y}(n-1,\widetilde{L}_n)
        }
    \]
    where $ \Gr^{V_1}_Y(n-1,\widetilde{L}_n)$ is the closed subscheme of $\Gr_Y(n-1,\widetilde{L}_n)$ given by the locus where the morphism $V_1 \to \widetilde{L}_n \to \widetilde{L}_n/P_{n-1}$ of line bundles vanishes. Thus, $ \Gr^{V_1}_Y(n-1,\widetilde{L}_n)$ is an effective Cartier divisor of $\Gr_Y(n-1,\widetilde{L}_n)$. The first claim follows from the fact that $p_2$ is flat. Following the argument as in the proof of \cite[Proposition 5.4]{balmer2012witt}, we see that the map $\pi_2:\Bl_2  \to \Bl_1 $ satisfies the universal property of blow-up.

    We show that $\vartheta$ is an affine bundle. Let $W = \Gr_Y(n-1,\widetilde{L}_n) - \Gr^{V_1}_Y(n-1,\widetilde{L}_n)$. On the scheme $W$, we see that the short exact sequence $0 \to P_{n-1} \to \widetilde{L}_n \to \widetilde{L}_n/ P_{n-1} \to 0 $ splits, where $\widetilde{L}_n/ P_{n-1} \cong V_1$. Thus, the canonical map $V_1 \oplus P_{n-1}  \to \widetilde{L}_n$ is an isomorphism on $W$. Moreover, consider the following diagram

    \[
        \xymatrix@C=10pt@R=35pt{
        \mathbb{P}_W(P_{n-1}^\perp/\widetilde{L}_n) \ar[r] \ar[dr]  & \mathbb{P}_W(P_{n-1}^\perp/P_{n-1}) \ar[r] \ar[d] & \mathbb{P}_G(P_{n-1}^\perp/P_{n-1}) \ar[d]_-{p_2} \ar@/^2pc/[dd]^-{\alpha}\\
        &W\ar[r] \ar[dr] & G \ar[d]_-{p_1}\\
        & & Y
        }
    \]
    where all the maps are canonical defined and the square is a fiber product. Observe that $U_1 $ can be identified with the affine bundle $$\mathbb{P}_W(P_{n-1}^{\perp}/P_{n-1}) - \mathbb{P}_W(P_{n-1}^{\perp}/\widetilde{L}_{n})$$ over $W$. The result follows since $W$ is an affine bundle over $Y$.
\end{proof}

\subsection{On the Picard groups} Let $\Delta_n$ denote the determinant of the universal bundle $\Lambda^nL_n$ on $\LG(n,V)$.
\begin{proposition} The map
    \begin{align*}
        \mathbb{Z} \oplus \Pic(S) & \to \Pic(\LG(n,V))                          \\
        (m, L)                    & \mapsto (\Delta_n)^{\otimes m} \otimes q^*L
    \end{align*}
    is an isomorphism of abelian groups.
\end{proposition}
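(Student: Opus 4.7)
The plan is to induct on $n$. In the base case $n=1$, we have $\LG(1,V) = \mathbb{P}(V)$, so the projective bundle formula gives $\Pic(\mathbb{P}(V)) \cong \mathbb{Z} \oplus \Pic(S)$ with $\mathbb{Z}$-generator $\SO_{\mathbb{P}(V)}(-1) = L_1 = \Delta_1$, matching the claim.

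For $n \geq 2$, I would exploit the open-closed decomposition from Section \ref{subsec:lg-geometric}. The closed subscheme $\iota_1: \LG^{V_1}(n,V) \hookrightarrow \LG(n,V)$ has codimension $n \geq 2$, while its open complement $U_1$ carries an affine bundle structure $\vartheta: U_1 \to \LG^{V_1}(n,V)$ by Proposition \ref{prop:blow-up-two-step-setup}. Since $\LG(n,V)$ is regular (being smooth over the regular base $S$), the restriction $v_1^*: \Pic(\LG(n,V)) \to \Pic(U_1)$ is an isomorphism, as line bundles extend uniquely across a regular closed subset of codimension $\geq 2$. Since $\vartheta$ is an affine bundle, $\vartheta^*: \Pic(\LG^{V_1}(n,V)) \to \Pic(U_1)$ is an isomorphism as well. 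Composing with $\rho: \LG^{V_1}(n,V) \cong \LG(n-1,V^1)$ and the inductive hypothesis yields
\[
\Pic(\LG(n,V)) \;\cong\; \Pic(\LG(n-1,V^1)) \;\cong\; \mathbb{Z} \oplus \Pic(S).
\]

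To match this abstract isomorphism with the explicit map in the statement, I would first check that $q^*: \Pic(S) \to \Pic(\LG(n,V))$ is injective, using $q_*\SO_{\LG(n,V)} = \SO_S$ (proper flat with geometrically connected fibers) and the projection formula. Then I would verify that $\Delta_n$ supplies a generator modulo $q^*\Pic(S)$ by tracking it through the chain: the short exact sequence $0 \to V_1 \to L_n \to L_n/V_1 \to 0$ on $\LG^{V_1}(n,V)$ yields $\iota_1^*\Delta_n \cong V_1 \otimes \rho^*\Delta_{n-1}$, where $V_1$ is viewed as a line bundle pulled back from $S$. Thus under the isomorphism $\Delta_n$ maps to the inductive generator $\Delta_{n-1}$ up to a twist pulled back from $S$, which is absorbed by the $\Pic(S)$-component.

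The main technical subtlety is matching $v_1^*\Delta_n \in \Pic(U_1)$ with $\vartheta^*\Delta_{n-1}$ up to a pullback from $S$, using the explicit description of $\vartheta$ from the two-step blow-up: on $U_1$ the universal bundle sits in an extension whose constituents are $V_1$ (fixed, from $S$) and a bundle pulled back from $\LG^{V_1}(n,V)$, from which the comparison of determinants follows. All remaining steps are standard Picard-group manipulations.
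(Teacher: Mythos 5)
Your proposal is correct and follows essentially the same route as the paper: the paper identifies $\Pic$ with $\mathrm{CH}^1$ and repeatedly applies Proposition \ref{prop:oriented-cohomology} (localization together with homotopy invariance along the affine bundle $\vartheta$) to descend through $\LG^{V_1}(n,V)\cong\LG(n-1,V^1)$ down to $\LG(1,V^{n-1})\cong\mathbb{P}(V^{n-1})$, which is exactly your open--closed decomposition and induction, with your purity of $\Pic$ in codimension $\geq 2$ on a regular scheme playing the role of the vanishing of $\mathrm{CH}^{1-n}(\LG^{V_1}(n,V))$ in the localization sequence. Your explicit tracking of $\Delta_n$ through the isomorphisms and the injectivity of $q^*$ is spelled out more carefully than in the paper, but the underlying argument coincides.
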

\begin{proof}
    The Picard group $\Pic(\LG(n,V))$ can be identified with the Chow group $\mathrm{CH}^1(\LG(n,V))$ (cf. \cite[Remark 11.41, p.311]{gortz2020algebraic}). Repeatedly applying Proposition \ref{prop:oriented-cohomology}, we obtain the following isomorphisms by pullbacks
    \[\mathrm{CH}^1(\LG(n,V)) \cong \mathrm{CH}^1(\LG^{V_1}(n,V))  \cong  \ldots \cong \mathrm{CH}^1(\LG^{V_{n-1}}(n,V)) \cong \mathrm{CH}^1(\LG(1,V^{n-1}))  \]
    Recall that $\LG(1,V^{n-1}) \cong \mathbb{P}(V^{n-1})$, and $\mathrm{CH}^1(\mathbb{P}(V^{n-1})) \cong \mathbb{Z} \oplus \Pic(S)$.
\end{proof}
\begin{remark}\label{rmk:LG-omega}
    Note that $\omega_{\LG/S} \cong \det(S^2L_n) \cong  (\Delta_n)^{\otimes n+1}$.
\end{remark}
\begin{proposition}
    Over $\LG^{V_1}(n,V)  $, we have that $\omega_{\iota_1} \cong (V_{1}^\vee)^{\otimes n} \otimes (\Delta_{n}^\vee).$
\end{proposition}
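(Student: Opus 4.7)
The plan is to identify $\iota_1$ as the zero locus of a regular section of a known rank-$n$ vector bundle on $\LG(n,V)$, and then compute the determinant of the resulting normal bundle. By the preceding lemma we have $\LG^{V_1}(n,V) \cong \LG(n-1, V_1^\perp/V_1)$, which is smooth over $S$ of relative dimension $\binom{n}{2}$, while $\LG(n,V)$ is smooth over $S$ of relative dimension $\binom{n+1}{2}$. Hence $\iota_1$ is a closed immersion between smooth $S$-schemes of codimension $n$, and is therefore a regular immersion. Under the conventions of the paper, $\omega_{\iota_1} \cong \det N_{\iota_1}$, so it suffices to identify the normal bundle $N_{\iota_1}$.

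Next, I would interpret the definition of $\iota_W$ (with $W = V_1$) as saying that $\iota_1$ is the vanishing locus of the map $V_1 \to L_n^\vee$ obtained by composing the inclusion $V_1 \to V$ with $i^\vee \circ \beta$. Since $V_1$ is a line bundle, this morphism of $\mathcal{O}_{\LG(n,V)}$-modules corresponds to a global section $s$ of the rank-$n$ vector bundle
\[
\mathcal{E} := \mathcal{H}om(V_1, L_n^\vee) = V_1^\vee \otimes L_n^\vee.
\]
Because $\rank \mathcal{E} = n$ equals the codimension of $\iota_1$ just computed, $s$ is a regular section in the sense of Koszul, and the normal bundle of its vanishing locus is $N_{\iota_1} \cong \mathcal{E}|_{\LG^{V_1}(n,V)} = V_1^\vee \otimes L_n^\vee$.

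Finally, I would take determinants. Using the standard formula $\det(E \otimes F) \cong (\det E)^{\otimes \rank F} \otimes (\det F)^{\otimes \rank E}$ with $E = V_1^\vee$ of rank $1$ and $F = L_n^\vee$ of rank $n$, one obtains
\[
\omega_{\iota_1} \cong \det N_{\iota_1} \cong (V_1^\vee)^{\otimes n} \otimes \det(L_n^\vee) \cong (V_1^\vee)^{\otimes n} \otimes \Delta_n^\vee,
\]
which is the claim.

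The hard part is essentially nothing conceptual; the only points requiring care are (i) checking that $\iota_1$ really is a regular embedding so that the Koszul-style identification of $N_{\iota_1}$ applies, which I handle by the smoothness–codimension argument above, and (ii) matching the convention for $\omega_{\iota}$ of a closed immersion with $\det N_\iota$ rather than $\det N_\iota^\vee$, which is the convention consistent with the normalization of $f^\natural$ used elsewhere in the paper and in the authors' companion work on pushforward in Hermitian $K$-theory.
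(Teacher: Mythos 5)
Your proposal is correct and follows essentially the same route as the paper: identify $\iota_1$ as the zero locus of a regular section of the rank-$n$ bundle $V_1^\vee \otimes L_n^\vee$, identify $N_{\iota_1}$ with the restriction of that bundle, and take determinants to get $(V_1^\vee)^{\otimes n}\otimes \Delta_n^\vee$. The only difference is cosmetic: where the paper cites \cite[Section 7.2]{calmes2011push} for the regularity and the normal-bundle identification, you justify it directly by the smoothness and codimension count, which is a fine substitute.
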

\begin{proof}
    Recall that section $s: \SO_{\LG  } \to V_1^\vee \otimes  L_n^\vee$ induces a regular embedding $\iota_1: \LG^{V_1}   \to \LG  $ of codimension $n$, which is precisely the rank of the bundle $V_1^\vee \otimes  L_n^\vee$. It follows that the normal bundle $N_{\iota_1} $ is isomorphic to $ V_1^\vee \otimes  L_n^\vee$ by \cite[Section 7.2]{calmes2011push} (see also \cite[Remark 6.8]{huang2023the}). Therefore, we conclude $\omega_{\iota_1} = \Lambda^n N_{\iota_1} \cong \Lambda^n(V_1^\vee \otimes  L_n^\vee) \cong (V_1^\vee)^{\otimes n} \otimes \Lambda^nL_n^\vee $. The result follows. Alternatively, one may use Remark \ref{rmk:LG-omega} to conclude another proof.
\end{proof}

\begin{lemma}\label{lem:picard-bl} The identity holds
    \[
        \alpha^* (\vartheta^*)^{-1}  v_1^*(\Delta_{n}) = (\pi_1 \circ \pi_2)^*(\Delta_{n}) \otimes \pi_2^*\SO(E_1) \otimes \SO(E_2)
    \]
    in the Picard group $\Pic(\Bl_2(n,V))$.
\end{lemma}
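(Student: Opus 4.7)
The identity lives in $\Pic(\Bl_2(n,V))$. My plan is to express every line bundle appearing on either side as a tensor product in a small list of elementary bundles on $\Bl_2$---namely $\det P_{n-1}$, the line bundle $L_n/P_{n-1}$, the line bundle $\alpha^*(\widetilde{L}_n/P_{n-1})$ (where $\widetilde{L}_n$ is the universal Lagrangian containing $V_1$ on $\LG^{V_1}(n,V)$), and the pullback of $V_1$ from $S$---and then check the equality by tensor bookkeeping.

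First I would simplify the left-hand side. By Proposition \ref{prop:blow-up-two-step-setup}, $\vartheta\colon U_1 \to \LG^{V_1}(n,V)$ is an affine bundle, so $\vartheta^*$ is an isomorphism on Picard groups. I would describe $\vartheta$ geometrically as the Lagrangian projection along $V_1$: on $U_1$ we have $V_1 \not\subset L_n$, so $K := L_n \cap V_1^\perp$ has rank $n-1$, $V_1 \cap K = 0$, and $\vartheta^*\widetilde{L}_n = V_1 \oplus K$ (which is Lagrangian since $V_1$ and $K$ are both isotropic and mutually perpendicular). This yields two short exact sequences on $U_1$,
\[
0 \to K \to L_n \to V/V_1^\perp \to 0, \qquad 0 \to V_1 \to \vartheta^*\widetilde{L}_n \to K \to 0,
\]
using the identification $V/V_1^\perp \cong V_1^\vee$ coming from the symplectic form. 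Taking determinants and cancelling $\det K$ gives $v_1^*(\Delta_n) = \vartheta^*(\det\widetilde{L}_n)\otimes V_1^{\otimes -2}$, so $(\vartheta^*)^{-1}v_1^*(\Delta_n) = \det\widetilde{L}_n\otimes V_1^{\otimes -2}$ in $\Pic(\LG^{V_1}(n,V))$, and pulling back via $\alpha$ gives the left-hand side as $\det(\alpha^*\widetilde{L}_n)\otimes V_1^{\otimes -2}$ on $\Bl_2$.

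Next I would compute the divisor classes on the right. From diagram \eqref{eqn:lg_bl_e_section}, $E_1 \subset \Bl_1$ is the zero locus of the line-bundle map $\tilde{s}\colon L_n/P_{n-1} \to V/V_1^\perp \cong V_1^\vee$, so $\SO(E_1) = (L_n/P_{n-1})^\vee\otimes V_1^\vee$. Similarly, by the proof of Proposition \ref{prop:blow-up-two-step-setup}, $E_2 \subset \Bl_2$ is the zero locus of the line-bundle map $V_1 \to \alpha^*\widetilde{L}_n/P_{n-1}$, so $\SO(E_2) = V_1^\vee\otimes(\alpha^*\widetilde{L}_n/P_{n-1})$.

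Finally I match the two sides. On $\Bl_2$, by its defining equations as a subscheme of $\Bl_1\times\LG^{V_1}(n,V)$, the inclusions $P_{n-1}\subset L_n$ and $P_{n-1}\subset\alpha^*\widetilde{L}_n$ are admissible, so
\[
\pi_1^*\Delta_n = \det P_{n-1}\otimes(L_n/P_{n-1}), \qquad \det(\alpha^*\widetilde{L}_n) = \det P_{n-1}\otimes(\alpha^*\widetilde{L}_n/P_{n-1}).
\]
Substituting these into the two sides of the claimed identity reduces it to the tautological cancellation $(L_n/P_{n-1})\otimes(L_n/P_{n-1})^\vee = \SO$, and the proof is complete. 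The only nontrivial geometric input lies in Step~1---checking that $\vartheta$ is the Lagrangian projection along $V_1$ (so the two displayed exact sequences really pull back from $\LG^{V_1}$) and that the symplectic form furnishes $V/V_1^\perp\cong V_1^\vee$; once these are granted, the rest is tensor bookkeeping.
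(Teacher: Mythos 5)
Your proof is correct and follows essentially the same route as the paper: both arguments are determinant bookkeeping in $\Pic(\Bl_2)$ using the tautological flag $P_{n-1}\subset L_n$, $P_{n-1}\subset\alpha^*\widetilde{L}_n$ and the defining line-bundle sections of $E_1$ and $E_2$, with the left-hand side pinned down over $U_1$ and transported through the affine bundle $\vartheta$. The only (cosmetic) differences are that you identify $\vartheta$ explicitly as $L_n\mapsto V_1\oplus(L_n\cap V_1^\perp)$ and use $V/V_1^\perp\cong V_1^\vee$, where the paper instead restricts the sections cutting out $E_1,E_2$ to $U_1$ and keeps the twist in the form $(V/V_1^\perp)^{\otimes 2}\otimes\det V^\vee$, writing $\SO(E_2)$ via $P_{n-1}^\perp/\widetilde{L}_n$ rather than $\widetilde{L}_n/P_{n-1}$.
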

\begin{proof}
    Note that    $\SO(E_1) = (L_n/P_{n-1})^\vee \otimes V/V_1^\perp$ and  $\SO(E_2) = (P_{n-1}^\perp/\widetilde{L}_{n})^\vee \otimes V/V_1^\perp$. Moreover, we have
    \[ L_n/P_{n-1} \cong V/V_1^\perp \quad \textnormal{ and }  \quad \quad P_{n-1}^\perp/\widetilde{L}_n \cong V/V_1^\perp \]
    on $U_1(n,V)$, which implies the formula
    \[v_1^*\Delta_n = \alpha^*\widetilde{\Delta}_n \otimes (V/V_1^\perp)^{\otimes 2} \otimes \det V^\vee\]
    by taking the determinant.
    The result follows from the identity $\det V = \det P_{n-1}^\perp \otimes \det P_{n-1}^\vee$, which is a consequence of the exact sequence $0 \to P_{n-1}^\perp \to V \to P_{n-1}^\vee \to 0$ on $\Bl_2$.
\end{proof}

\begin{remark}\label{rmk:lg_lambda}
    In view of Equation \eqref{eqn:two_step_blow_up_lambda}, Lemma \ref{lem:picard-bl} implies that $\lambda_1(\Delta_{n}) =1$ and $\lambda_2(\Delta_{n})=1$.
\end{remark}

\subsection{Generalized Lagrangian flag schemes}
In this section, we introduce the (generalized) Lagrangian flag varieties, which arise as intermediate schemes in the construction of an additive basis for the (Hermitian) $K$-theory of Lagrangian Grassmannians.

\begin{definition}
    Let $k\geq 0$ be an integer, $\mathbf{d} = (d_0,d_1,d_2,\ldots, d_k) $ be a $(k+1)$-tuple of non-negative non-decreasing integers. Let $\mathbf{e} = (e_0,e_1,\ldots,e_{k-1}) $ (resp. $\mathbf{t} = (t_0,t_1,\ldots,t_{k-1})$) be a $k$-tuple of non-negative (resp. positive) non-decreasing integers (empty () when $k=0$) such that $d_j \leq n$ for any $0\leq j \leq k$, and $e_i \leq d_i, e_i \leq d_{i+1}$ and $e_i \leq n-t_i$ for any $0\leq i \leq k-1$. Assume that $V_\bullet$ is a complete (Lagrangian) flag. Define the \textit{generalized Lagrangian flag scheme} as the functor $LF_{\mathbf{d}}(\mathbf{e}; V_\bullet)_{\mathbf{t}}: \mathrm{Sch}_S \to \mathrm{Sets}$ by sending any morphism $ f: Y \to S$ to the set
    \[
        \setlength\arraycolsep{1pt} \renewcommand{\arraystretch}{1} LF_{\mathbf{d}}(\mathbf{e}; V_\bullet)_{\mathbf{t}}(Y)= \left\{ \begin{matrix}
            L_n^{(k)}, (P_{n-t_i
                }^{(i)},L_n^{(i)})_{k \geq i \geq 0} \,
        \end{matrix} \,\middle \vert\, \begin{matrix}
                            &         &                       &         & f^*V_{e_{0}}      & \subset & P_{n-t_0}^{(0)}       & \subset & L_{n}^{(0)}         & \subset & f^*V_{d_0}^{\bot} \\
                            &         &                       &         &                   &         & \cap                                                                                \\
                            &         & f^*V_{e_1}            & \subset & P_{n-t_1}^{(1)}   & \subset & L_{n}^{(1)}           & \subset & f^*V_{d_{1}}^{\bot}                               \\
                            &         &                       &         & \vdots                                                                                                            \\
            f^* V_{e_{k-1}} & \subset & P_{n-t_{k-1}}^{(k-1)} & \subset & L_{n}^{(k-1)}     & \subset & f^*V_{d_{k-1}}^{\bot}                                                               \\
                            &         & \cap                                                                                                                                                \\
                            &         & L_{n}^{(k)}           & \subset & f^*V_{d_k}^{\bot}
        \end{matrix}\right\}
    \]
    where $L_n^{(j)}$ are Lagrangian subbundles of rank $n$ inside $f^*V$ and where $P_{n-t_i}^{(i)}$ are admissible subbundles of rank $n-t_i$ inside both $L_n^{(i)}$ and $L_n^{(i+1)}$. If $V_\bullet$ is understood, we may drop the mention of $V_\bullet$ for simplicity.
\end{definition}

\begin{theorem}\label{prop:lf_regular_gorenstein} Let $k \geq 1$. The following statements hold true:
    \begin{enumerate}
        \item 	The functor $LF_{\mathbf{d}}(\mathbf{e})_{\mathbf{t}}$ is represented by a scheme $p:\mathrm{LF}_{\mathbf{d}}(\mathbf{e})_{\mathbf{t}} \to S$ with the universal bundles
              $P_{n-t_i}^{(i)}$ and $L_{n}^{(j)}$ ($0\leq i \leq k-1$ and $0\leq j \leq k$) satisfying the relations:
              \[ p^*V_{d_{i+1}}^\perp \supset  L_{n}^{(i+1)} \supset P_{n-t_i}^{(i)} \subset L_{n}^{(i)} \subset p^*V_{d_i}^\perp \quad  \textnormal{ and } \quad  p^*V_{e_i} \subset P_{n-t_i}^{(i)} \]
              where $L_n^{(j)}$ are Lagrangian subbundles of rank $n$ inside $p^*V$ and where $P_{n-t_i}^{(i)}$ are admissible subbundles inside $p^*V$ of rank $n-t_i$.
        \item  If $d_i = e_{i}$ for any $ 0 \leq i \leq k-1$, then the scheme $\mathrm{LF}_{\mathbf{d}}(\mathbf{e})_{\mathbf{t}}$ is regular. The relative dimension of $\mathrm{LF}_{\mathbf{d}}(\mathbf{e})_{\mathbf{t}}  \to S$ is equal to
              \begin{equation}\label{eq:dimension-LF}
                  \binom{n-d_{k}+1}{2} + \sum_{i=0}^{k-1}[(n-t_{i} - d_i)t_i + \binom{t_{i}+1}{2}].
              \end{equation}
        \item If $d_i -e_{i} \leq 1$ for any $ 0 \leq i \leq k-1$, then the scheme $\mathrm{LF}_{\mathbf{d}}(\mathbf{e})_{\mathbf{t}}$ is equidimensional Gorenstein of relative dimension over $S$ satisfying the formula \eqref{eq:dimension-LF}.
    \end{enumerate}
\end{theorem}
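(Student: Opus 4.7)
The plan is to construct $\mathrm{LF}_{\mathbf{d}}(\mathbf{e})_{\mathbf{t}}$ by descending induction on the layer index $i$, starting from the bottom of the diagram. For representability in (1), I first set $X_k := \LG^{V_{d_k}}(n, V)$, the Lagrangian Grassmann bundle parametrizing $L_n^{(k)}$, smooth of relative dimension $\binom{n-d_k+1}{2}$. Having built $X_{i+1}$ with universal bundles $L_n^{(i+1)},\ldots,L_n^{(k)}$ and $P^{(i+1)},\ldots,P^{(k-1)}$, I would construct $X_i$ in two stages: first the relative Grassmannian $Y_i := \mathrm{Gr}_{X_{i+1}}(n-t_i-e_i,\, L_n^{(i+1)}/V_{e_i}) \to X_{i+1}$ parametrizing $P^{(i)}$ (the containment $V_{e_i} \subset L_n^{(i+1)}$ is automatic from $V_{e_i} \subset V_{d_{i+1}} \subset L_n^{(i+1)}$), and then $X_i$ as the closed subscheme of the Lagrangian Grassmann bundle $\LG^{V_{d_i}}(n,V) \times_S Y_i$ cut out by the incidence $P^{(i)} \subset L_n^{(i)}$. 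Iterating produces a projective $S$-scheme representing the functor.

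For (2), when $d_i = e_i$ for every $i < k$, the equality $V_{d_i} = V_{e_i} \subset P^{(i)}$ makes $V_{d_i} \subset L_n^{(i)}$ automatic once $P^{(i)} \subset L_n^{(i)}$ is imposed. The second stage then becomes the smooth Lagrangian Grassmann bundle $\LG^{P^{(i)}}(n,V) \cong \LG(t_i,\, (P^{(i)})^\perp/P^{(i)})$ of relative dimension $\binom{t_i+1}{2}$, and the first stage is a smooth Grassmann bundle of relative dimension $(n-t_i-d_i)t_i$. By induction, the total scheme is smooth of the dimension stated in \eqref{eq:dimension-LF}.

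For (3), the hard case arises when $d_i = e_i + 1$, where the line $V_{d_i}/V_{e_i}$ may or may not lie in $P^{(i)}/V_{e_i}$. I would stratify $Y_i$ into the open locus $U_i := \{V_{d_i} \not\subset P^{(i)}\}$ and the Schubert-type divisor $D_i := \{V_{d_i} \subset P^{(i)}\}$, the latter of codimension $t_i$ in $Y_i$. Over $U_i$, the fiber of $X_i \to Y_i$ is the Lagrangian Grassmannian of Lagrangians containing the rank-$(n-t_i+1)$ isotropic $V_{d_i}+P^{(i)}$, smooth of dimension $\binom{t_i}{2}$; over $D_i$, it is the Lagrangian Grassmannian of Lagrangians containing $P^{(i)}$ alone, smooth of dimension $\binom{t_i+1}{2}$. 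The identity $\binom{t_i+1}{2} - \binom{t_i}{2} = t_i$ exactly compensates for the codimension drop of $D_i$ in $Y_i$, giving equidimensionality and the formula \eqref{eq:dimension-LF}. For the Gorenstein property, the key observation is that locally near a point $(P_0, L_0)$ with $V_{d_i} \subset P_0$, the scheme should split (after choosing suitable coordinates) as a product of a regular factor with a nodal factor of the form $R[y_1, y_2]/(y_1 y_2)$: the two branches meeting at the crossing correspond to ``keep $V_{d_i} \subset P$ and vary $L$ in the larger Lagrangian family'' versus ``release $V_{d_i} \not\subset P$ and force $L$ into the smaller Lagrangian family''. Such a hypersurface is Gorenstein, and a tensor product of Gorenstein local rings is Gorenstein, so iterating over all layers with $d_i = e_i + 1$ establishes the Gorenstein property.

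The main obstacle lies in the local analysis in (3). One must introduce explicit coordinates on both $Y_i$ (adapted to the divisor $D_i$) and the Lagrangian Grassmann bundle (adapted to the two Lagrangian families), write down the defining equations of the incidence $P^{(i)} \subset L_n^{(i)}$, and verify that modulo the smooth part of the defining conditions one is left with a single relation of the form $y_1 y_2 = 0$. The hypothesis $d_i - e_i \leq 1$ is essential here: it ensures that only a single line $V_{d_i}/V_{e_i}$ can collapse into $P^{(i)}$, yielding the simplest nodal singularity; a larger gap would force several lines to be tracked simultaneously, the defining ideal would generally fail to be principal, and Gorenstein-ness would break. A secondary subtlety is that several layers may exhibit crossings at a common point, so one must verify that the corresponding local equations remain transverse to one another.
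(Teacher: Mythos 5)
Your parts (i) and (ii), and the equidimensionality count in (iii), follow essentially the paper's route: the paper also builds the scheme one layer at a time as a Grassmann bundle $\Gr_Y^{V_{e_0}}(n-t_0,L_n^{(1)})$ followed by a Lagrangian Grassmann bundle over the inductively constructed scheme $Y$, and in the case $d_0=e_0+1$ it likewise proves equidimensionality by exhibiting two components of equal dimension (its $X_1=\LG_Y(d_0,t_0)$, and a second component dominated birationally by $\Gr^{V_{e_0}}_{\widetilde{X}}(n-t_0,P^{(0)}_{n-t_0+1})$ with $\widetilde{X}=\LG_Y(d_0,t_0-1)$), which is the same dimension bookkeeping as your $U_i$/$D_i$ stratification.

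The genuine gap is the Gorenstein property itself, which is the heart of (iii). Your argument rests on the claim that near a crossing point the scheme is, in suitable coordinates, a regular factor times the node $y_1y_2=0$, together with a transversality statement when several layers degenerate simultaneously; you explicitly defer the coordinate computation needed to verify this, so what remains is a plan rather than a proof, and it is precisely the hard content. Moreover, your ambient presentation (a closed subscheme of $\LG^{V_{d_i}}(n,V)\times_S Y_i$ cut out by the incidence $P^{(i)}\subset L_n^{(i)}$) imposes far more equations than the codimension, so no complete-intersection structure is visible from it, and the auxiliary fact you invoke (``a tensor product of Gorenstein local rings is Gorenstein'') requires hypotheses and is delicate over a general regular base $S$. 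The paper avoids all local analysis: it takes as ambient the smooth $Y$-scheme $\LG_Y(e_0,t_0)$, in which the incidence $P^{(0)}_{n-t_0}\subset L_n^{(0)}$ is already built in, so that the only remaining condition $L_n^{(0)}\subset V_{d_0}^\perp$ is the vanishing of a section of the rank $t_0$ bundle $(L_n^{(0)}/P^{(0)}_{n-t_0})^\vee\otimes (V_{e_0}^\perp/V_{d_0}^\perp)$. Your own dimension count then shows the zero locus has codimension exactly $t_0$; since the ambient is Cohen--Macaulay and Gorenstein (by the inductive hypothesis on $Y$ and smoothness of the bundle construction), the section is locally a regular sequence and the zero scheme is Gorenstein by the standard facts the paper cites (\cite[Theorem 2.1.2(c), Proposition 3.1.19(b)]{bruns1993cohen}). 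If you replace your ambient by $\LG_Y(e_0,t_0)$ in this way, your equidimensionality computation completes the proof and the nodal normal form (and the multi-layer transversality) is never needed; without that change, it would still have to be established.
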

\begin{remark}
    If $t_i =1$ for any $i$, we write $\LF_{\mathbf{d}}(\mathbf{e}):= \LF_{\mathbf{d}}(\mathbf{e})_{\mathbf{t}}$. Note that if $k=0$, $\mathbf{e} = (~)$ is empty, then we let $LF_{d_0}:= LF_{\mathbf{d}}(\mathbf{e}) $, which is precisely the functor represented by the scheme $\LG^{V_{d_0}}(n,V)$.
\end{remark}
\begin{proof}
    All the statements can be proved by induction on $k$. Set $\mathbf{{d}}_{\widehat{0}} = (d_1,d_2,\ldots, d_{k})$. $\mathbf{{t}}_{\widehat{0}} = (t_{1},\ldots,t_{k-1})$ and $\mathbf{{e}}_{\widehat{0}} = (e_{1},\ldots,e_{k-1})$. Assume that the functor $LF_{\mathbf{d}_{\widehat{0}}}(\mathbf{e}_{\widehat{0}} )_{\mathbf{t}_{\widehat{0}}}$
    is represented by a scheme $Y:=\LF_{\mathbf{d}_{\widehat{0}}}(\mathbf{e}_{\widehat{0}})_{\mathbf{t}_{\widehat{0}}}$ with universal bundles
    $$(P_{n-t_i}^{(i)})_{1\leq i\leq k-1} \textnormal{ and } (L_{n}^{(j)})_{1\leq j \leq k}$$
    with the desired properties. Let $t, e$ be positive integers such that $V_e \subset L_n^{(1)}$ and $e \leq n-t$. Let $G_Y(e,t) $ be the Grassmann bundle $\Gr_{Y}^{V_{e}}(n-t,L_{n}^{(1)})$ and let $P_{n-t}^{(0)}$ be its universal bundle of rank $n-t$ over $G_Y(e,t)$. Define the scheme $\LG_{Y}(e,t)$ as the Lagrangian Grassmann bundle $\LG_{G_Y(e,t)}(t, P_{n-t}^{(0)\perp} / P_{n-t}^{(0)})$. Let $L_{n}^{(0)}$ be the universal bundle on $D:=\LG^{V_{d_0}}(n,V)$. Then we form the following fiber square
    \[
        \xymatrix{
        \mathrm{LF}_{\mathbf{d}}(\mathbf{e})_{\mathbf{t}} \ar[d] \ar[r] & \Gr_{D}^{V_{e_0}}(n-t_{0},L_{n}^{(0)}) \ar[d]^-{q}\\
        G_Y(e_0,t_0) \ar[r]^-{q'} & \Gr_{S}^{V_{e_0}}(n-t_{0}, V)
        }
    \]
    where $q$ and $q'$ are projections. The functor of points of the scheme $\mathrm{LF}_{\mathbf{d}}(\mathbf{e})_{\mathbf{t}}$ is precisely the functor $LF_{\mathbf{d}}(\mathbf{e})_{\mathbf{t}}$. This proves (i).

    For (ii), we assume that the scheme $Y$ is regular by the induction hypothesis. By identifying the functor of points, we obtain a functorial isomorphism
    \[
        \LF_{\mathbf{d}}(\mathbf{e})_{\mathbf{t}} \cong \LG_Y(e_0,t_0)
    \]
    since $d_0 = e_0$.\ The claim that $\mathrm{LF}_{\mathbf{d}}(\mathbf{e})_{\mathbf{t}}$ is a regular scheme follows from the fact that any (Lagrangian) Grassmann bundle over a regular base is smooth; hence, regular. Note that $\LG_{Y}(e_0,t_0)$ is of relative dimension
    \[
        (n_0 - t_0 - e_0)t_0 + \binom{t_0+1}{2}
    \]
    over $Y$. The dimension formula follows inductively.

    It suffices to prove (iii) when $d_0 = e_0 +1 $, and when the scheme $Y$ is equidimensional Gorenstein by the induction hypothesis.
    Note that the scheme
    $\LG_Y(e_0,t_0) \to Y$ is smooth over $Y$.
    Furthermore, observe that $\mathrm{LF}_{\mathbf{d}}(\mathbf{e})_{\mathbf{t}}$ can be identified with the closed subscheme of $\LG_Y(e_0,t_0) $ defined as the vanishing locus of the composition of the following canonical morphisms
    \begin{equation}\label{eq:section-s}
        s: L_n^{(0)}/P_{n-t_0}^{(0)} \hookrightarrow P_{n-t_0}^{(0)\perp} / P_{n-t_0}^{(0)} \to   P_{n-t_0}^{(0)\perp}/L_n^{(1)} \to V_{e_0}^\perp/L_n^{(1)} \to V_{e_0}^\perp/V_{d_0}^\perp.
    \end{equation}
    We claim that, as the zero locus of the section $s$, $\mathrm{LF}_{\mathbf{d}}(\mathbf{e})_{\mathbf{t}}$ is regularly embedded into $\LG_Y(e_0,t_0)$ of codimension $t_0$, which implies that $\mathrm{LF}_{\mathbf{d}}(\mathbf{e})_{\mathbf{t}}$ is Gorenstein, cf. \cite[Proposition 3.1.19(b)]{bruns1993cohen}. Since $s$ is a section of a rank $t_0$ vector bundle and $\LG_Y(e_0,t_0)$ is Gorenstein (hence Cohen-Macaulay), it is enough to show that  $\mathrm{LF}_{\mathbf{d}}(\mathbf{e})_{\mathbf{t}}$ is equidimensional of codimension $t_0$ inside $\LG_Y(e_0,t_0)$, cf. \cite[Theorem 2.1.2(c)]{bruns1993cohen}. To count the dimension, since $Y$ is equidimensional by the induction hypothesis, we may assume it is irreducible when computing the dimension. The scheme $\mathrm{LF}_{\mathbf{d}}(\mathbf{e})_{\mathbf{t}}$ consists of two irreducible components. The first irreducible component is given by $X_1 = \LG_Y(d_0,t_0)$.  It follows also that the relative dimension of $X_1$ over $Y$ is $(n - t_0-d_0)t_0 + \binom{t_{0}+1}{2}$. To construct the second irreducible component, we construct a projective  morphism
    $$ \Phi: \Gr^{V_{e_0}}_{\widetilde{X}}(n-t_0, P_{n-t_0+1}^{(0)}) \to  \mathrm{LF}_{\mathbf{d}}(\mathbf{e})_{\mathbf{t}} $$
    by only forgetting the strata $P^{(0)}_{n-t_0+1}$ where $\widetilde{X}:= \LG_Y(d_0,t_0-1)$, and the second irreducible component is given by the scheme-theoretical image $X_2$ of $\Phi$.
    To see that $\mathrm{LF}_{\mathbf{d}}(\mathbf{e})_{\mathbf{t}} = X_1 \cup X_2$, we note that $\mathrm{LF}_{\mathbf{d}}(\mathbf{e})_{\mathbf{t}} - X_1 \subset X_2$. Moreover, $\Phi$ induces an isomorphism
    \[ \Gr^{V_{e_0}}_{\widetilde{X}}(n-t_0, P_{n-t_0+1}^{(0)})  - \Gr_{\widetilde{X}}^{V_{d_0}}(n-t_0, P_{n-t_0+1}^{(0)}) \cong  \mathrm{LF}_{\mathbf{d}}(\mathbf{e})_{\mathbf{t}} - X_1, \] and therefore $\Phi$ is birational onto its (scheme theoretical) image. Observe that the relative dimension of $\Gr^{V_{e_0}}_{\widetilde{X}}(n-t_0, P_{n-t_0+1}^{(0)}) $ over $Y$ is equal to
    \[
        \begin{aligned}
              & (n - t_0 - d_0 + 1)(t_0-1) + \binom{t_{0}}{2} + n-t_0-e_0                               \\
            = & (n - t_0 - d_0 )t_0 - (n - t_0 - d_0) + t_0 -1 + \binom{t_{0}}{2} + (n - t_0 - d_0) + 1 \\
            = & (n - t_0 - d_0 )t_0 + \binom{t_{0} + 1}{2}
        \end{aligned}
    \]
    It follows that $X_2$ is of the same dimension as $X_1$, and therefore $\mathrm{LF}_{\mathbf{d}}(\mathbf{e})_{\mathbf{t}}$ is equidimensional of codimension $t_0$ inside $\LG_Y(e_0,t_0)$.
\end{proof}

Denote $\Delta_n^{(j)} := \det(L_n^{(j)})$ for $0\leq j \leq k$ and $\nabla_{n-t_i}^{(i)} := \det(P_{n-t_i}^{(i)})$ for $0\leq i\leq k-1$.
\begin{corollary}\label{coro:lfl_canonical_divisor}
    Suppose that $d_i -e_{i} \leq 1$ for any $ 0 \leq i \leq k-1$. Then, the relative canonical sheaf  $\omega_{\mathrm{LF}_{\mathbf{d}}(\mathbf{e})_{\mathbf{t}}/S}$ (See \cite[p.144]{hartshorne1966residues} for definition) is isomorphic to the following line bundle
    \[
        \begin{aligned}
            (\Delta_n^{(k)})^{n - d_k + 1} \det(V_{d_k})^{ d_k-n - 1}
            \prod_{i=0}^{k-1} \left( (\Delta_n^{(i)})^{t_i + e_i + 1 - d_i} (\Delta_n^{(i+1)})^{t_i + e_i - n} (\nabla_{n-t_i}^{(i)})^{n + d_i  - 2 e_i - t_i - 1} \det(V_{d_i})^{-t_i} \right).
        \end{aligned}
    \]
\end{corollary}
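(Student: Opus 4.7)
The plan is to proceed by induction on $k$. In the base case $k = 0$, the scheme $\LF_{d_0}$ equals $\LG^{V_{d_0}}(n, V)$, which is isomorphic to the Lagrangian Grassmannian $\LG(n - d_0, V_{d_0}^\perp/V_{d_0})$ by the lemma following the closed embedding $\iota_W$. Under this isomorphism the universal Lagrangian subbundle corresponds to $L_n^{(0)}/V_{d_0}$, so Remark \ref{rmk:LG-omega} yields $\omega = (\Delta_n^{(0)}/\det V_{d_0})^{n - d_0 + 1}$, which after distributing matches the claimed formula for $k=0$.

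For the inductive step, set $Y := \LF_{\mathbf{d}_{\widehat{0}}}(\mathbf{e}_{\widehat{0}})_{\mathbf{t}_{\widehat{0}}}$ and factor the structure morphism as
\[
\LF_{\mathbf{d}}(\mathbf{e})_{\mathbf{t}} \longrightarrow \LG_Y(e_0,t_0) \longrightarrow G_Y(e_0,t_0) \longrightarrow Y \longrightarrow S,
\]
following the construction in the proof of Theorem \ref{prop:lf_regular_gorenstein}. I will then compute each relative canonical bundle and multiply. First, via the identification $G_Y(e_0,t_0) \cong \Gr_Y(n - t_0 - e_0,\, L_n^{(1)}/V_{e_0})$, the standard formula $\omega_{\Gr(r,E)/X} = \det(U)^{\rank E} \otimes \det(E)^{-r}$ applied to the universal subbundle $P_{n-t_0}^{(0)}/V_{e_0}$ gives
\[
\omega_{G_Y/Y} \cong (\nabla_{n-t_0}^{(0)})^{n - e_0} \otimes (\Delta_n^{(1)})^{-(n - t_0 - e_0)} \otimes (\det V_{e_0})^{-t_0}.
\]
Second, since the universal Lagrangian subbundle of $\LG_Y(e_0,t_0) \to G_Y(e_0,t_0)$ is $L_n^{(0)}/P_{n-t_0}^{(0)}$, Remark \ref{rmk:LG-omega} gives $\omega_{\LG_Y(e_0,t_0)/G_Y(e_0,t_0)} \cong (\Delta_n^{(0)}/\nabla_{n-t_0}^{(0)})^{t_0 + 1}$. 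When $d_0 = e_0$, we have $\LF = \LG_Y(e_0, t_0)$ and $\det V_{d_0} = \det V_{e_0}$, so the product of these two factors collapses exactly to the $i=0$ term in the claimed formula.

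When $d_0 = e_0 + 1$, there is an additional contribution from the conormal bundle of the regular embedding $\LF \hookrightarrow \LG_Y(e_0,t_0)$. The proof of Theorem \ref{prop:lf_regular_gorenstein}(iii) realizes this embedding as the zero locus of the section $s$ of Equation \eqref{eq:section-s}, equivalently a global section of the rank-$t_0$ bundle $\mathcal{E} := (L_n^{(0)}/P_{n-t_0}^{(0)})^\vee \otimes (V_{e_0}^\perp / V_{d_0}^\perp)$. Hence $\omega_{\LF/\LG_Y(e_0,t_0)} \cong \det(\mathcal{E})|_{\LF}$, and using $\det V_{j}^\perp \cong \det V_j$ (which follows from the triviality $\det V \cong \mathcal{O}$ of the symplectic bundle), one computes this determinant explicitly. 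Adding these exponents to those of $\omega_{\LG_Y(e_0,t_0)/Y}$ recovers the $i = 0$ term of the claimed formula in the case $d_0 - e_0 = 1$. Finally, combining with $\omega_{Y/S}$ supplied by the inductive hypothesis (applied to $Y$, which satisfies $d_i - e_i \le 1$ for $1 \le i \le k - 1$) and pulling back along $\LF \to Y$ — which sends the universal bundles on $Y$ to their counterparts on $\LF$ — finishes the computation.

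The main obstacle will be careful bookkeeping of the determinantal twists: in particular, tracking the discrepancy between the $\det V_{e_0}^{-t_0}$ that arises naturally from the Grassmann-bundle canonical formula and the $\det V_{d_0}^{-t_0}$ appearing in the claim. The reconciliation is precisely the extra $\det V_{e_0}^{t_0} \otimes \det V_{d_0}^{-t_0}$ factor in $\det(\mathcal{E})$ when $d_0 = e_0 + 1$, which — combined with the observation that $\det V_{e_0}^{-t_0}$ is already present when $d_0 = e_0$ — yields a uniform answer in the two cases and cleanly matches the stated exponents $t_i + e_i + 1 - d_i$ on $\Delta_n^{(i)}$, $n + d_i - 2e_i - t_i - 1$ on $\nabla_{n-t_i}^{(i)}$, and $-t_i$ on $\det V_{d_i}$.
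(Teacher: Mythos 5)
Your proposal is correct and follows essentially the same route as the paper's proof: induction on $k$, factoring one step of the tower through $G_Y(e_0,t_0)$ and $\LG_Y(e_0,t_0)$, computing $\omega_{G_Y/Y}$ and $\omega_{\LG_Y/G_Y}$ (your formulas agree with the paper's \eqref{1.2}--\eqref{1.4}), and in the case $d_0=e_0+1$ adding $\det$ of the rank-$t_0$ bundle whose regular section from \eqref{eq:section-s} cuts out $\LF_{\mathbf{d}}(\mathbf{e})_{\mathbf{t}}$, exactly as in \eqref{1.6}--\eqref{1.7}. The reconciliation of $\det(V_{e_0})^{-t_0}$ versus $\det(V_{d_0})^{-t_0}$ that you highlight is precisely how the paper merges the two cases into the uniform exponents.
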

\begin{proof}
    Let us keep notation in the proof of Theorem \ref{prop:lf_regular_gorenstein}, and futher introduce notation:
    \[\mathrm{X} = \LG_Y(e_0,t_0), \mathrm{G} = G_Y(e_0,t_0), \mathrm{Z}= \LF_{\mathbf{d}}(\mathbf{e})_{\mathbf{t}}. \]
    Let us compute $\omega_{\mathrm{Z}/Y}$. Note that
    \begin{align}
        \label{1.1} 	\omega_{\mathrm{X}  /Y}         & = \omega_{\mathrm{X} /\mathrm{G}} \otimes \omega_{\mathrm{G}/Y}                                                                                                         \\
        \label{1.2} 	\omega_{\mathrm{X} /\mathrm{G}} & = \det(L_n^{(0)} / P_{n-t_0}^{(0)})^{\otimes t_0+1}                                                                                                                     \\
        \label{1.3} 	 \omega_{\mathrm{G}/Y}          & = \det(L_n^{(1)} / V_{e_0})^{\otimes -(n-t_0-e_0)} \otimes \det(P_{n-t_0}^{(0)} / V_{e_0})^{\otimes n-e_0}                                                              \\
        \label{1.4} 	\omega_{\mathrm{X} /Y}          & =  \Delta_n^{(0)\otimes t_0 + 1} \otimes \Delta_n^{(1)\otimes t_0 + e_0 - n} \otimes \nabla_{n-t_0}^{(0)\otimes n - t_0 - e_0 - 1} \otimes \det(V_{e_0})^{\otimes -t_0}
    \end{align}
    where \eqref{1.4} is obtained from putting \eqref{1.2} and \eqref{1.3} into \eqref{1.1}.

    There are two cases to consider: (i). If $d_0 = e_0$, then $\mathrm{Z} = \mathrm{X}$, and $\omega_{\mathrm{Z}/Y} = \omega_{\mathrm{X}/Y}$. (ii). If $d_0 = e_0 + 1$, then $\mathrm{Z}$ is a closed subscheme in $\mathrm{X}$ defined by the vanishing locus of the section defined in Equation~\eqref{eq:section-s}.
    \[ s: L_n^{(0)}/P_{n-t_0}^{(0)} \otimes (V_{e_0}^\perp/V_{d_0}^\perp)^\vee \to \SO, \]
    and we have shown that $s$ is a regular section. Therefore, we get that
    \begin{align}
        \label{1.5} 	\omega_{\mathrm{Z} /Y}           & = \omega_{\mathrm{Z} /\mathrm{X} } \otimes \omega_{\mathrm{X} /Y}                                                                                              \\
        \label{1.6} 	\omega_{\mathrm{Z} /\mathrm{X} } & = \det(L_n^{(0)}/P_{n-t_0}^{(0)})^{\vee} \otimes (V_{d_0} / V_{e_0})^{\otimes -t_0}                                                                            \\
        \label{1.7} \omega_{\mathrm{X} /Y}            & = \Delta_n^{(0)\otimes t_0} \otimes \Delta_n^{(1)\otimes t_0 + e_0 - n} \otimes \nabla_{n-t_0}^{(0)\otimes n - t_0 - e_0} \otimes \det(V_{d_0})^{\otimes -t_0}
    \end{align}
    where \eqref{1.7} is obtained from putting \eqref{1.4} and \eqref{1.6} into \eqref{1.5}.
    Combining these two cases together, we have
    \[
        \omega_{\mathrm{Z} /Y} = \Delta_n^{(0)\otimes t_0 + e_0 + 1 - d_0} \otimes \Delta_n^{(1)\otimes t_0 + e_0 - n} \otimes \nabla_{n-t_0}^{(0)\otimes n + d_0 - 2e_0 - t_0 - 1} \otimes \det(V_{d_0})^{\otimes -t_0}
    \]
    The result follows from induction.
\end{proof}

\begin{example}\label{exm:LF012-132-022}
    If $k=0$, we have $ \omega_{\LF_{d}/S} \cong (\Delta_n^{(0)})^{n-d+1} \otimes \det(V_d)^{d-1-n}$. If $k =1$, we obtain that $\omega_{\LF_{d_0,d_1 ;t_0}(e_0)/S}$ is isomorphic to the following line bundle:
    \[
        \Delta_n^{(0)\otimes t_0 + e_0 + 1 - d_0} \cdot \Delta_n^{(1)\otimes t_0 + e_0 +1 - d_1} \cdot \nabla_{n-t_0}^{(0)\otimes n + d_0 - 2e_0 - t_0 - 1} \cdot \det(V_{d_0})^{\otimes -t_0} \cdot \det(V_{d_1})^{\otimes d_1-n-1}.
    \]
    In particular, we compute a few useful examples of relative canonical sheaf for $\LF_{\mathbf{d}}(\mathbf{e})_{\mathbf{t}}$:
    \[ \begin{aligned}
            \omega_{\LF_{1,2}(0)/S}   & = \Delta_n^{(0)} \otimes (\nabla_{n-1}^{(0)})^{n-1} \otimes \det(V_2)^{1-n}
            \otimes \det(V_1)^{-1} ,                                                                                                             \\
            \omega_{\LF_{1,3}(0)_2/S} & = (\Delta_n^{(0)})^2 \otimes (\nabla_{n-2}^{(0)})^{n-2}  \otimes \det(V_3)^{2-n}
            \otimes \det(V_1)^{-2},                                                                                                              \\
            \omega_{\LF_{0,2}(0)_2/S} & =  (\Delta_n^{(0)})^3 \otimes \Delta_n^{(1)} \otimes (\nabla_{n-2}^{(0)})^{n-3}  \otimes \det(V_3)^{2-n}
            \otimes \det(V_1)^{-2}.
        \end{aligned}\]
\end{example}

\begin{remark}\label{rmk:lf_irr_comp}
    The interesting point is that the scheme $\mathrm{LF}_{\mathbf{d}}(\mathbf{e})_{\mathbf{t}} $ may not be irreducible. For example, if $d_i -e_i \leq 1$, the number of irreducible components of $\mathrm{LF}_{\mathbf{d}}(\mathbf{e})_{\mathbf{t}}$ is $2^s$ provided that there are $s$ pairs of integers $(d_i,e_{i+1})$ such that $d_i -e_{i+1} =1$.
\end{remark}

\begin{lemma}\label{lma:lf_de_structure_sheaf}
    If $d_i -e_{i} \leq 1$ for any $ 0 \leq i \leq k-1$, the scheme $\mathrm{LF}_{\mathbf{d}}(\mathbf{e})$ is reduced and the canonical map
    \[
        p^{\sharp}:\SO_S \xrightarrow{\cong} p_* \SO_{\mathrm{LF}_{\mathbf{d}}(\mathbf{e})}
    \]
    is an isomorphism.
\end{lemma}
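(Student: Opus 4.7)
The plan is induction on the length $k$. The base case $k = 0$ is the smooth Lagrangian Grassmannian $\LF_{d_0} = \LG^{V_{d_0}}(n,V) \cong \LG(n-d_0, V^{d_0})$, for which reducedness is immediate and $p_*\SO = \SO_S$ is standard. For the inductive step, set $Y := \LF_{\mathbf{d}_{\widehat{0}}}(\mathbf{e}_{\widehat{0}})$ and let $\pi: \LF_{\mathbf{d}}(\mathbf{e}) \to Y$ be the forgetful map, so $p = q \circ \pi$; by induction $Y$ is reduced with $q_*\SO_Y = \SO_S$, and it suffices to show $\LF_{\mathbf{d}}(\mathbf{e})$ is reduced with $\pi_*\SO_{\LF_{\mathbf{d}}(\mathbf{e})} = \SO_Y$. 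When $d_0 = e_0$, Theorem \ref{prop:lf_regular_gorenstein}(ii) realizes $\LF_{\mathbf{d}}(\mathbf{e}) = \LG_Y(e_0, 1)$ as an iterated Grassmann-/Lagrangian-Grassmann bundle over $Y$; smoothness over the reduced base $Y$ gives reducedness, and the projection formula gives $\pi_*\SO = \SO_Y$.

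The substantive case is $d_0 = e_0 + 1$. From the proof of Theorem \ref{prop:lf_regular_gorenstein}(iii), $\LF_{\mathbf{d}}(\mathbf{e})$ is the zero scheme of a regular section $s$ of the line bundle $\mathcal{L} := (L^{(0)}_n/P^{(0)}_{n-1})^\vee \otimes (V_{e_0}^\perp/V_{d_0}^\perp)$ on the smooth $Y$-scheme $\LG_Y(e_0, 1)$, with two irreducible components $X_1 = \LG_Y(d_0, 1)$ and $X_2$. For the pushforward, I would exploit the Koszul resolution
\[ 0 \to \mathcal{L}^\vee \to \SO_{\LG_Y(e_0, 1)} \to \SO_{\LF_{\mathbf{d}}(\mathbf{e})} \to 0, \]
together with the factorization $f: \LG_Y(e_0, 1) \to G_Y(e_0, 1) \to Y$ (in which the first factor is a $\mathbb{P}^1$-bundle with tautological line bundle $L^{(0)}_n/P^{(0)}_{n-1} \cong \SO(-1)$) and the fact that $(V_{e_0}^\perp/V_{d_0}^\perp)^\vee$ is pulled back from $S$. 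The projection formula combined with the vanishing $H^*(\mathbb{P}^1, \SO(-1)) = 0$ gives $Rf_*\mathcal{L}^\vee = 0$, so $\pi_*\SO_{\LF_{\mathbf{d}}(\mathbf{e})} = f_*\SO_{\LG_Y(e_0, 1)} = \SO_Y$ and then $p_*\SO = q_*\SO_Y = \SO_S$ by induction.

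For reducedness, since $\LF_{\mathbf{d}}(\mathbf{e})$ is Gorenstein (hence Cohen-Macaulay, and in particular satisfies Serre's condition $S_1$), it suffices to show generic reducedness. I would achieve this via a Picard-class computation identifying $\LF_{\mathbf{d}}(\mathbf{e})$ with the effective Cartier divisor $X_1 + X_2$ on $\LG_Y(e_0, 1)$. Here $X_1$ is cut out by the natural map $V_{d_0}/V_{e_0} \to L^{(1)}_n/P^{(0)}_{n-1}$ of line bundles, while $X_2$ is the section of the $\mathbb{P}^1$-bundle $\LG_Y(e_0, 1) \to G_Y(e_0, 1)$ selecting $L^{(0)}_n = L^{(1)}_n$. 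Using the identification $V_{e_0}^\perp/V_{d_0}^\perp \cong (V_{d_0}/V_{e_0})^\vee$ and the vanishing of $c_1$ of the rank-$2$ symplectic bundle $P^{(0)\perp}_{n-1}/P^{(0)}_{n-1}$, a direct Chern class calculation gives $[X_1] + [X_2] = [\mathcal{L}]$ in $\Pic(\LG_Y(e_0, 1))$. Since $[X_1]$ and $[X_2]$ are linearly independent (the hyperplane class $\xi$ of the $\mathbb{P}^1$-bundle appears only in $[X_2]$), the effective Cartier divisor $\LF_{\mathbf{d}}(\mathbf{e})$ with support $X_1 \cup X_2$ must equal $X_1 + X_2$ with both multiplicities one. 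This exhibits $\LF_{\mathbf{d}}(\mathbf{e})$ as the reduced union $X_1 \cup X_2$, completing the proof.

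The main obstacle will be handling the edge cases in which the natural map $V_{d_0}/V_{e_0} \to L^{(1)}_n/P^{(0)}_{n-1}$ is not globally defined on $\LG_Y(e_0, 1)$ (which occurs when the inclusion $V_{d_0} \subset L^{(1)}_n$ only holds on a proper closed subscheme of $Y$): in such situations the Picard-class identification has to be performed irreducible-component-wise on $Y$, or alternatively, reducedness can be checked directly by verifying order-$1$ vanishing of $s$ at the generic points of $\LF_{\mathbf{d}}(\mathbf{e})$ using the explicit five-term composition defining $s$ in Equation \eqref{eq:section-s}.
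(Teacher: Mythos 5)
Your pushforward argument is correct but genuinely different from the paper's. The paper never computes a relative (derived) pushforward over $Y$: it first proves reducedness, then invokes Lemma \ref{lma:proper_flat_regular_section} (fiberwise regularity of the section $s$ of \eqref{eq:section-s} gives flatness of $p$ over $S$) and concludes $\SO_S\cong p_*\SO_{\LF_{\mathbf d}(\mathbf e)}$ from the Stacks reference cited for the smooth case, via properness, flatness and reduced, connected geometric fibers. Your route --- the Koszul resolution $0\to\mathcal L^\vee\to\SO\to\SO_{\LF_{\mathbf d}(\mathbf e)}\to 0$ on $\LG_Y(e_0,1)$, the vanishing $Rg_*\SO(-1)=0$ along the $\mathbb{P}^1$-bundle $g:\LG_Y(e_0,1)\to G_Y(e_0,1)$, the projection formula (using that $V_{e_0}^\perp/V_{d_0}^\perp$ is pulled back from $S$), and then composition with the inductive hypothesis over $Y$ --- is sound, since the regularity of $s$ established in Theorem \ref{prop:lf_regular_gorenstein}(iii) is exactly what makes the Koszul complex a resolution. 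It buys a stronger, derived statement relative to $Y$ and avoids any analysis of geometric fibers; the paper's route instead packages everything into the flatness criterion plus the fiberwise statement.

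The reducedness half is where there is a genuine soft spot. The reduction ``Gorenstein $\Rightarrow$ Cohen--Macaulay $\Rightarrow S_1$, so generic reducedness suffices'' is fine, but the Picard-class argument as stated does not pin down both multiplicities. Writing $Z(s)=aX_1+bX_2$ with $a,b\ge 1$, comparing coefficients of the relative hyperplane class $\xi$ (i.e. degrees on fibers of the $\mathbb{P}^1$-bundle) only forces $b=1$; to exclude $a\ge 2$ you must additionally know that $\SO(X_1)$ is non-torsion in the relevant Picard group (e.g. by restricting to a fiber of $G_Y(e_0,1)\to Y$, where it becomes the ample generator of a Grassmannian), and when $Y$ is reducible or singular --- which does occur under the hypotheses of this lemma, since later entries may also satisfy $d_i-e_i=1$ --- both this and the irreducibility of $X_1,X_2$ must be handled component by component. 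The edge case you flag is also real: $V_{d_0}\subset L_n^{(1)}$ holds over all of $Y$ only when $d_0\le e_1$ (automatic for $k=1$ but not in general), so the section $V_{d_0}/V_{e_0}\to L_n^{(1)}/P^{(0)}_{n-1}$ need not be globally defined. The paper avoids all of this by arguing directly: the section $s$ of \eqref{eq:section-s} is the composition (locally the product) of the two sections cutting out $X_1$ and $X_2$, which are regular and meet transversally, so $s$ vanishes to order one along each component and $X=X_1\cup X_2$ is reduced. That is precisely your stated fallback of checking order-one vanishing at the generic points, and it is the cleaner way to finish; with that substitution your proof closes the gap and otherwise proceeds by a different method than the paper's.
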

\begin{proof}
    If $d_i = e_{i}$ for any $ 0 \leq i \leq k-1$, the scheme $\mathrm{LF}_{\mathbf{d}}(\mathbf{e})$ is regular and smooth over $S$. By \cite[\href{https://stacks.math.columbia.edu/tag/0E0L}{Lemma 0E0L}]{stacks-project} (see also \cite[Exercise 9.3.11, p.260]{FAG}), the map $p^{\sharp}:\SO_S \xrightarrow{\cong} p_* \SO_{\mathrm{LF}_0}$ is an isomorphism.

    Using the notation and the induction process in the proof of Theorem \ref{prop:lf_regular_gorenstein}, it suffices to prove for the case where $d_0 = e_0 + 1$ and $d_i = e_i$ for $i\neq 0$. The scheme $X := \mathrm{LF}_{\mathbf{d}}(\mathbf{e})$ is a closed codimension one subscheme of $\LG_Y(e_0,1)$, defined as the zero locus of the section $s$ given in \eqref{eq:section-s}. Note that $X$ is the union of two codimension one subschemes $X_1$ and $X_2$. Moreover, $X_1$ and $X_2$ intersect transversely in $X$ and are both regular. The section $s$ defined in Equation~\eqref{eq:section-s}, is the composition of two sections that determine $X_1$ and $X_2$, respectively. It follows that $X$ is reduced.

    The fact that $X$ is reduced and the section $s$ is regular, is independent of the choice of the base scheme. Then we can apply Lemma \ref{lma:proper_flat_regular_section} to conclude that the map $p:\mathrm{LF}_{\mathbf{d}}(\mathbf{e}) \to S$ is flat and has reduced and connected geometric fibers. Then the isomorphism is a direct consequence of \textit{loc. cit}.
\end{proof}
\begin{remark}
    Note that
    $E_2 = \mathrm{LF}_{1,1}(1), F_2 = \mathrm{LF}_{1,1}(0), $ and $ \Bl_2 = \mathrm{LF}_{0,1}(0)$, where $F_2$ is not irreducible.
\end{remark}

\subsection{Canonical maps between Lagrangian flag varieties}
\subsubsection{} \label{sec:tuples} Let $\epsilon_i^k = (0, \ldots, 0, 1, 0 , \ldots ,0 ) \in \mathbb{Z}^{k+1}$ be the standard $k+1$-tuple with $1$ in the $(i+1)$-th place and $0$ in other places. Let $\mathbf{1}_k:= \sum_{i=0}^k  \epsilon_i^k = (1, \ldots, 1)$ and $\mathbf{2}_k:= \sum_{i=0}^k  2 \epsilon_i^k = (2, \ldots, 2)$. If $\mathbf{d} = (d_0, d_1, \ldots, d_{k-1},d_k)$ is a $(k+1)$-tuple, let $\mathbf{d}^{<k} = (d_0, d_1, \ldots, d_{k-1})$ be the $k$-tuple obtained by deleting the last digit $d_k$ from the original $(k+1)$-tuple $\mathbf{d}$.

\subsubsection{} Suppose that we have another $(k+1)$-tuple $\mathbf{\tilde{d}} = (\tilde d_0,\tilde d_1,\ldots,\tilde d_k)$ and another $k$-tuple $\mathbf{\tilde e} = (\tilde e_0,\ldots,\tilde e_{k-1})$ such that $\tilde e_i \leq \tilde d_i$ and $\tilde e_i \leq \tilde d_{i-1}$ for any $1\leq i \leq k$. Suppose further that $d_i \geq \tilde d_i$ and $e_i \geq \tilde e_i$. Then, we have a closed embedding
\[  \iota: \mathrm{LF}_{\mathbf{d}}(\mathbf{e}) \hookrightarrow \mathrm{LF}_{\mathbf{\tilde d}}(\mathbf{ \tilde e}) \]
by the inclusion of each stratum on the functor of points.
\subsubsection{}
Let $t,s$ be integers such that $1\leq t, s\leq k$. Let $I=\{1,\ldots,t\}$ (resp.\ $J=\{s,\ldots,k\}$). Set $\mathbf{d}_I: = (d_0, \ldots, d_t)$ and $\mathbf{e}_I:=(e_1, \ldots, e_t) $ (resp. $\mathbf{d}_J: = (d_s, \ldots, d_k)$ and $\mathbf{e}_J:=(e_{s+1}, \ldots, e_k) $). Then, one can construct a natural projection
\[  \pi: \mathrm{LF}_{\mathbf{d}}(\mathbf{e})  \to \mathrm{LF}_{\mathbf{d}_I}(\mathbf{e}_I) \quad \textnormal{ (resp.\ $\alpha: \mathrm{LF}_{\mathbf{d}}(\mathbf{e})  \to \mathrm{LF}_{\mathbf{d}_J}(\mathbf{e}_J)$)} \]
by forgetting the strata which range out $I$ (resp.\ $J$).
\subsubsection{ } Suppose that $e_i,d_j \geq 1$. Then, we have an isomorphism
\[ \rho: \mathrm{LF}_{\mathbf{d}}(\mathbf{e};V_\bullet) \to \mathrm{LF}_{\mathbf{d}-\mathbf{1}_{k+1}}(\mathbf{e}-\mathbf{1}_{k}; V^1_\bullet) \]
by taking the bundles $P_{n-1}^{(i)}, L_{n-1}^{(j)}$ to the quotients $P_{n-1}^{(i)}/V_1, L_{n-1}^{(j)}/V_1$.

\begin{lemma}\label{lma:lf_hat_pic}
    Let $n\geq 3$. Consider the closed embeddings $\LF_{1,2}(1)  \xrightarrow{\iota} \LF_{1,2}(0)$, $\LF_{1,2}(0)  \xrightarrow{\iota_b} \mathrm{LF}_{0,2}(0) $, and $\LF_{1,2}(1)  \xrightarrow{\iota_a} \mathrm{LF}_{0,2}(0)$
    with $ \iota_b \circ \iota = \iota_a$. Then, the map $$\iota_a^*:\Pic(\mathrm{LF}_{0,2}(0))/2 \to \Pic(\LF_{1,2}(1))/2$$ is an isomorphism, and the map $\iota_b^*:\Pic(\mathrm{LF}_{0,2}(0))/2 \to \Pic(\mathrm{LF}_{1,2}(0))/2$ is injective.
\end{lemma}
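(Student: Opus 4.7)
The plan is to prove (a) by a direct computation of both Picard groups as iterated projective / Lagrangian bundle towers, and then to deduce (b) immediately from the factorization $\iota_a = \iota_b \circ \iota$.

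For (a), I would first exhibit both schemes as towers over the common base $Y := \LG^{V_2}(n,V) \cong \LG(n-2,V^2)$. Following the inductive description in the proof of Theorem~\ref{prop:lf_regular_gorenstein}, setting $G := \Gr_Y(n-1,L_n^{(1)})$ and $G' := \Gr_Y^{V_1}(n-1,L_n^{(1)})$, we have
\[
    \LF_{0,2}(0) = \LG_G(1,P^\perp/P), \qquad \LF_{1,2}(1) = \LG_{G'}(1,P^\perp/P),
\]
so both are $\mathbb{P}^1$-bundles (Lagrangian Grassmannians of rank-$2$ symplectic bundles) over a $\mathbb{P}^{n-1}$-bundle, resp.\ a $\mathbb{P}^{n-2}$-bundle, over $Y$. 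The hypothesis $n \geq 3$ is precisely what ensures that $\Gr_Y^{V_1}(n-1,L_n^{(1)}) \to Y$ is a genuine $\mathbb{P}^{n-2}$-bundle.

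Applying the projective bundle formula iteratively, and starting from $\Pic(Y) = \mathbb{Z}\langle\Delta_n^{(1)}\rangle \oplus \Pic(S)$ (from the earlier $\Pic$-computation for Lagrangian Grassmannians, absorbing the factor $\det V_2$ into $\Pic(S)$), I obtain
\[
    \Pic(\LF_{0,2}(0)) \;=\; \mathbb{Z}\langle\Delta_n^{(0)}\rangle \oplus \mathbb{Z}\langle\nabla_{n-1}^{(0)}\rangle \oplus \mathbb{Z}\langle\Delta_n^{(1)}\rangle \oplus \Pic(S),
\]
and the identical formula, with the same generators, for $\Pic(\LF_{1,2}(1))$. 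The only care needed is for the middle $\mathbb{P}^{n-2}$-bundle $G' \to Y$, whose relative $\SO(1)$ class is represented by $\det(P_{n-1}^{(0)}/V_1) = \nabla_{n-1}^{(0)} \otimes \det(V_1)^{-1}$, which equals $\nabla_{n-1}^{(0)}$ modulo $\Pic(S)$; the analogous identification for the topmost $\mathbb{P}^1$-bundle uses $\det(L_n^{(0)}/P_{n-1}^{(0)}) = \Delta_n^{(0)} \otimes (\nabla_{n-1}^{(0)})^{-1}$. Because the universal subbundles $L_n^{(0)}, L_n^{(1)}, P_{n-1}^{(0)}$ on $\LF_{1,2}(1)$ are by construction the restrictions of the corresponding bundles on $\LF_{0,2}(0)$, the pullback $\iota_a^*$ sends each listed generator to the one of the same name, and is the identity on the $\Pic(S)$-summand. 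Hence $\iota_a^*$ is even an integral isomorphism, a fortiori an isomorphism modulo $2$, which proves (a).

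Part (b) is then formal: from $\iota_a = \iota_b \circ \iota$ one gets $\iota_a^* = \iota^* \circ \iota_b^*$, and the same identity holds after reducing modulo $2$. Since $\iota_a^* \bmod 2$ is injective (being an isomorphism by (a)), the factor $\iota_b^* \bmod 2$ is injective as well. The main technical point is the bookkeeping that matches each tautological generator correctly across the two towers and tracks which $\det V_i$ contributions land in $\Pic(S)$; once this identification is in place, no further geometric input is required.
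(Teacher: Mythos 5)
Your proposal is correct, and it rests on the same geometric skeleton as the paper's proof: both arguments exhibit $\LF_{0,2}(0)$ and $\LF_{1,2}(1)$ as the $\mathbb{P}^1$-bundles $\mathbb{P}(P^{\perp}/P)$ over $G=\Gr_{\LF_2}(n-1,L_n^{(1)})$ and $G'=\Gr^{V_1}_{\LF_2}(n-1,L_n^{(1)})$, with $\iota_a$ obtained by base change from the inclusion $G'\hookrightarrow G$. The difference lies in the key input. The paper quotes Balmer--Calm\`es (Proposition 1.3 of \cite{balmer2012witt}) for the mod-2 Picard isomorphism $\iota_c^*$ of the Grassmann-bundle pair and then transports it through the fiber square using the projective bundle structure; you instead compute both Picard groups outright as towers over $Y=\LG^{V_2}(n,V)\cong \LG(n-2,V^2)$, using the projective bundle formula together with the paper's own computation of $\Pic(\LG)$, and match the generators $\Delta_n^{(0)},\nabla_{n-1}^{(0)},\Delta_n^{(1)}$ and the $\Pic(S)$-summand under $\iota_a^*$. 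This is more self-contained (no appeal to \cite{balmer2012witt}) and yields the stronger conclusion that $\iota_a^*$ is an integral isomorphism, at the cost of some generator bookkeeping; your deduction of the injectivity of $\iota_b^*$ from $\iota_a^*=\iota^*\circ\iota_b^*$ is the same as in the paper. One clarification: the hypothesis $n\geq 3$ is used in two places in your computation, not only to make $G'\to Y$ a projective bundle of positive fiber dimension but also to ensure that $Y$ is a positive-dimensional Lagrangian Grassmannian, so that $\Delta_n^{(1)}$ genuinely contributes a free summand of $\Pic(Y)$ rather than being absorbed into $\Pic(S)$ (as happens for $n=2$); both facts hold under the stated hypothesis, so this is a remark on your phrase ``precisely what ensures,'' not a gap.
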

\begin{proof}
    Note that closed embeddings $\iota_a$ and $\iota_b$ can be identified with the zero locus of $V_1 \to L_n^{(1)}/P^{(0)}_{n-1}$ and $L_n^{(0)}/P^{(0)}_{n-1} \to V/V_{1}^{\bot}$, respectively.
    The result follows from the commutative diagram
    \[
        \xymatrix{
        \LF_{1,2}(1)  \ar[r] \ar[d]_-{{\iota_a}} & \Gr^{V_1}(n-1,L_n^{(1)}) \ar[d]_-{\iota_c} \ar[dr] \\
        \mathrm{LF}_{0,2}(0) \ar[r] & \Gr(n-1,L_n^{(1)}) \ar[r] & \LF_{2}
        }
    \]
    where $L_n^{(1)}$ is the tautological bundle on $\LF_{2}$ and where the left square is a fiber square. Since $n\geq 3$, the map
    \[
        \iota_c^*: \Pic(\Gr(n-1,L_n^{(1)}))/2 \to \Pic(\Gr^{V_1}(n-1,L_n^{(1)}))/2
    \]
    is an isomorphism by \cite[Proposition 1.3]{balmer2012witt}. The map $\iota_a^*$ is then an isomorphism on the Picard group modulo two, as the morphism $\mathrm{LF}_{0,2}(0) \to \Gr(n-1,L_n^{(1)})$ is a projective bundle.
\end{proof}

\section{\texorpdfstring{$K$}{K}-theory of Lagrangian Grassmannians} \label{sec:K_theory_of_lg}
The additive basis of the $K$-theory of Lagrangian Grassmannians can be constructed via Schubert varieties, which is well known. In this section, we study the $K$-theory of Lagrangian Grassmannians via our generalized Lagrangian flag schemes associated to shifted Young diagrams. This serves as a warm-up for our computation for the Hermitian $K$-theory of Lagrangian Grassmannians, and is written in terms of generalized Lagrangian flags. Recall that $\LF_i := \LG^{V_i}(n, V)$.
\subsection{Generalized Lagrangian flag schemes}
\begin{definition}
    A \textit{special marked point} is a lattice point $a$ that sits in some horizontal segment $s_{t}$ ($t$ even) that does not touch the terminal point of $s_t$.
\end{definition}

Consider all the special marked points on each horizontal segment $s_t$, ordered from right to left
\[
    a_{t,1},\, a_{t,2},\, \ldots,\, a_{t,|s_t|},
\]
where $a_{t,1} := a_t$ is a convex corner and $a_{t,|s_t|}$ is the last special marked point on $s_t$. Observe that the lattice distance between consecutive special marked points on the same horizontal segment satisfies $|a_{t,i}a_{t,i+1}|=1$. On a horizontal segment $s_t$, the number of special marked points is $|s_t|$.
This is illustrated in Figure \ref{fig:boundary-shifted-young}.

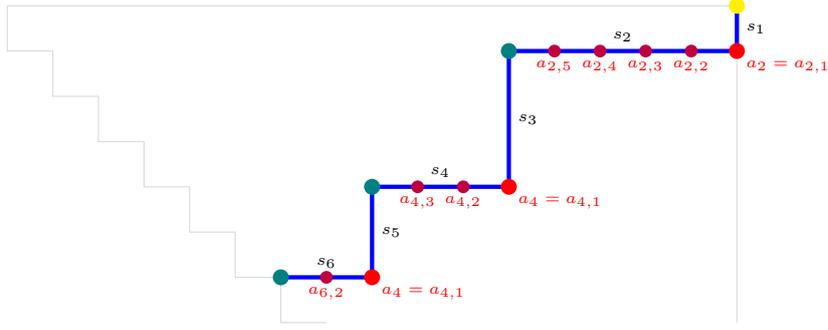
\begin{figure}[!ht]
    \centering
    \begin{tikzpicture}[scale=0.6,>=Stealth]
        \def\n{14}
        \foreach \i in {0,...,6} {
                \draw[gray!30] (\i+1 - 2,\n-1-\i) -- (\i - 2,\n-1-\i) -- (\i - 2,\n-\i);
            }
        \draw[gray!30] (\n,7) -- (\n,\n);
        \draw[gray!30] (-2,\n) -- (\n,\n);

        \coordinate (O)  at (14,14);
        \coordinate (a2) at (14,13);
        \coordinate (a3) at (9,13);
        \coordinate (a4) at (9,10);
        \coordinate (a5) at (6,10);
        \coordinate (a6) at (6,8);
        \coordinate (a7) at (4,8);

        \draw[blue,ultra thick] (O) -- (a2) -- (a3) -- (a4) -- (a5) -- (a6) -- (a7);

        \fill[yellow] (O)  circle (5pt);
        \foreach \P in {a2,a4,a6} \fill[red]  (\P) circle (5pt);
        \foreach \P in {a3,a5,a7} \fill[teal] (\P) circle (5pt);

        \node[anchor=north west,color=red] at (a2) {\tiny $a_2=a_{2,1}$};
        \foreach \x in {2,3,4,5} { \fill[purple] (15-\x,13) circle (4pt); \node[anchor=north,color=red] at (15-\x,13) {\tiny $a_{2,\x}$};}
        \node[anchor=north west,color=red] at (a4) {\tiny $a_4=a_{4,1}$};
        \foreach \x in {2,3} { \fill[purple] (10 - \x,10) circle (4pt); \node[anchor=north,color=red] at (10-\x,10) {\tiny $a_{4,\x}$};}
        \node[anchor=north west,color=red] at (a6) {\tiny $a_4=a_{4,1}$};
        \fill[purple] (5,8) circle (4pt);
        \node[anchor=north,color=red] at (5,8) {\tiny $a_{6,2}$};

        \node[anchor=west] at (14,13.5) {\tiny $s_1$};
        \node[anchor=south] at (11.5,13) {\tiny $s_2$};
        \node[anchor=west] at (9,11.5) {\tiny $s_3$};
        \node[anchor=south] at (7.5,10) {\tiny $s_4$};
        \node[anchor=west] at (6,9) {\tiny $s_5$};
        \node[anchor=south] at (5,8) {\tiny $s_6$};
    \end{tikzpicture}
    \caption{Boundary $b_\Lambda$ and special marked points.}
    \label{fig:boundary-shifted-young}
\end{figure}

Let $\tau_{\Lambda}:= 2 \cdot \big\lfloor \tfrac{l_\Lambda}{2} \big\rfloor$ (the largest even integer $\le l_\Lambda$). Then the total number of special marked points is $\Sigma_\Lambda:= \sum_{j=1}^{\tau_\Lambda/2} |s_{2j}|$.

\begin{definition}\label{def:d-lambda}
    Let $\Lambda$ be a \textit{shifted} Young diagram in $\mathfrak{A}_n$. Define
    $\mathbf{{d}}_{\Lambda} = (d_0, d_1, \ldots d_{\Sigma-1})$ (resp. $\mathbf{{d}}_{\Lambda} = (d_0, d_1, \ldots d_\Sigma)$) with $0 \leq d_i \leq n$
    to be the following $\Sigma_\Lambda$-tuple (resp.\ $(\Sigma_\Lambda + 1)$-tuple)
    \[
        \mathbf{d}_\Lambda:=
        \begin{cases}
            \big(|Oa_{2,1}|,\ldots, |Oa_{2,|s_2|}|,\ |Oa_{4,1}|,\ldots, |Oa_{4,|s_4|}|, \ldots, |Oa_{\tau_\Lambda,1}|,\ldots, |Oa_{\tau_\Lambda,|s_{\tau_\Lambda}|}|\big)       & \textnormal{if $l_\Lambda$ is even}, \\
            \big(|Oa_{2,1}|,\ldots, |Oa_{2,|s_2|}|,\ |Oa_{4,1}|,\ldots, |Oa_{4,|s_4|}|, \ldots, |Oa_{\tau_\Lambda,1}|,\ldots, |Oa_{\tau_\Lambda,|s_{\tau_\Lambda}|}|,\, n \big) & \textnormal{if $l_\Lambda$ is odd}.
        \end{cases}
    \]
\end{definition}

\begin{definition}
    Define the scheme
    \[
        \mathrm{LF}(\Lambda; {V}_\bullet):=\mathrm{LF}_{\mathbf{{d}}_{\Lambda}}(\mathbf{e}_{\Lambda}; {V}_\bullet),		\]
    where $\mathbf{e}_{\Lambda} := \mathbf{d}^{<k}_\Lambda = (d_0, \cdots, d_{k-1})$, with $\mathbf{d}_\Lambda$ a $(k+1)$-tuple. If ${V}_\bullet$ is understood, we simply write $\mathrm{LF}({\Lambda})$.
\end{definition}

The natural projection $p_\Lambda: \mathrm{LF}({\Lambda}) \to S$ is flat. Moreover, one can define a projective morphism
\begin{align*}
    f_\Lambda: \mathrm{LF}({\Lambda}) & \to \LG(n,V)
\end{align*}
by setting $f_\Lambda\big( L_n^{(0)}, (P_{n-1}^{(i)},L_n^{(i)})_{k \geq i \geq 1} \big) = L_n^{(0)} $.

\subsection{Combinatorial operations on Young diagrams}
\subsubsection{ }
Let $\Lambda \in \mathfrak{U}_n$. Define two operations
\[
    {\iota}(\Lambda) = (\Lambda_{n-1}, \Lambda_{n-2}, \ldots, \Lambda_1) \quad \textnormal{ if $\Lambda_n = n$}, \quad \quad
    v(\Lambda) = (\Lambda_{n}, \Lambda_{n-1}, \ldots, \Lambda_2) \quad  \textnormal{ if $\Lambda_n < n$}.
\]
Note that if $\Lambda_n < n$, then we must have $\Lambda_1=0$.
\subsubsection{} \label{sect:iota-c}
The set $\mathfrak{U}_n$ splits into two disjoint parts
\[ \mathfrak{U}_n = \mathfrak{U}_n^r \bigsqcup \mathfrak{U}_n^c \]
where $\mathfrak{U}_n^r:=  \{\Lambda \in \mathfrak{U}_{n} \mid \Lambda_n = n\} $ (resp. $
    \mathfrak{U}_n^c:= \{\Lambda \in \mathfrak{U}_{n} \mid \Lambda_n < n\}  $) denotes the subset of shifted Young diagrams whose topmost rows is full (resp. whose rightmost columns is empty). Note that we have a bijection of sets
\[\begin{array}{ll}
        {\iota}:\mathfrak{U}_n^r \xrightarrow{\cong} \mathfrak{U}_{n-1}   \quad
        {v}: \mathfrak{U}_n^c \xrightarrow{\cong} \mathfrak{U}_{n-1}
    \end{array} \]
where the operation $\iota$ (resp. $v$) corresponds to deleting the topmost row (resp. the rightmost column) of a shifted Young diagram.

\begin{lemma}\label{lem:lf_lambda}
    Let $\Lambda \in \mathfrak{U}_n$. The diagrams below are fiber squares
    \begin{equation} \xymatrix{
        \mathrm{LF}(\Lambda;V_\bullet)\ar[d]^-{\iota \pi} \ar[r]^-{\rho\alpha} & \mathrm{LF}(v(\Lambda);V^1_\bullet) \ar[d]^-{f_{v(\Lambda)}} \\
        \mathrm{LF}_{0,1}(0;  V_\bullet) \ar[r]^-{\rho\alpha} &\LF_1(V)
        } \quad
        \xymatrix{
        \mathrm{LF}({\iota}(\Lambda);V^1_\bullet)\ar[d]^-{f_{{\iota}(\Lambda)}} \ar[r]^-{\rho^{-1}}_{\cong} & \mathrm{LF}(\Lambda;V_\bullet) \ar[d]^-{f_{\Lambda}} \\
        \LG(n-1,V^1) \ar[r]^-{\iota \rho^{-1}} &\LG(n, V)
        }
    \end{equation}
    where $\Lambda_n < n$ for the first diagram and $\Lambda_n =n$ for the second one.
\end{lemma}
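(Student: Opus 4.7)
My plan is to verify both fiber squares by comparing functors of points. The schemes $\mathrm{LF}(\Lambda;V_\bullet)$, $\mathrm{LF}(v(\Lambda);V^1_\bullet)$, $\mathrm{LF}(\iota(\Lambda);V^1_\bullet)$, $\mathrm{LF}_{0,1}(0;V_\bullet)$, and $\LF_1(V)$ all represent explicit moduli functors, and each morphism appearing in the squares ($f_\Lambda$, $\pi$, $\alpha$, $\iota$, $\rho$) is described by a natural operation on configurations -- forgetting strata, taking quotients by $V_1$, or applying the isomorphism $\rho:\LG^{V_1}(n,V)\cong\LG(n-1,V^1)$ of Section~2.2. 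It therefore suffices to check, for each diagram, that the $T$-points of the four corners biject correctly for every $S$-scheme $T$.

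For the second diagram (case $\Lambda_n=n$), the condition $|s_1|=0$ forces $d_0=e_0=0$ in $\mathbf{d}_\Lambda,\mathbf{e}_\Lambda$ by Definition~\ref{def:d-lambda}, so the leading Lagrangian $L_n^{(0)}$ is unconstrained in $V$. A $T$-point of the pullback $\mathrm{LF}(\Lambda;V_\bullet)\times_{\LG(n,V)}\LG(n-1,V^1)$ is a configuration in $\mathrm{LF}(\Lambda;V_\bullet)(T)$ satisfying the extra condition $V_1\subset L_n^{(0)}$. The key point is that on this closed locus every $L_n^{(i)}$ and every $P_{n-t_i}^{(i)}$ automatically contains $V_1$: for $L_n^{(i)}$, the Lagrangian property combined with $L_n^{(i)}\subset V_{d_i}^\perp\subset V_1^\perp$ (using $d_i\geq 1$ for $i\geq 1$, and the pullback constraint for $i=0$) yields $V_1\subset L_n^{(i)}$ by the maximal-isotropic argument used for $\LF_{d_0}$ in Section~\ref{sec:LagrangianGrass}; for $P^{(i)}$ with $i\geq 1$, the containment $V_{e_i}\subseteq P^{(i)}$ with $e_i\geq 1$ gives $V_1\subset P^{(i)}$; the remaining case $P^{(0)}$ uses admissibility together with $P^{(0)}\subset L_n^{(0)}\cap L_n^{(1)}$ and Lemma~\ref{lma:lf_de_structure_sheaf} on reducedness. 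Taking quotients by $V_1$ then produces a $T$-point of $\mathrm{LF}(\iota(\Lambda);V^1_\bullet)$, and the inverse lifts a configuration in $V^1$ to its preimage in $V_1^\perp$; the combinatorial shift from $(\mathbf{d}_\Lambda,\mathbf{e}_\Lambda)$ to $(\mathbf{d}_{\iota(\Lambda)},\mathbf{e}_{\iota(\Lambda)})$ under $\iota$ matches exactly under this quotient.

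For the first diagram (case $\Lambda_n<n$), one has $|s_1|\geq 1$ and $d_0\geq 1$, so the leading stratum $L_n^{(0)}$ sits in $V_{d_0}^\perp\subset V_1^\perp$. A $T$-point of $\mathrm{LF}(\Lambda;V_\bullet)$ decomposes canonically into: (a) the initial triple $(L_n^{(0)},P_{n-1}^{(0)},L_n^{(1)})\in\mathrm{LF}_{0,1}(0;V_\bullet)(T)$; and (b) the tail strata $(P^{(i)},L_n^{(i+1)})_{i\geq 1}$, which, after applying the isomorphism $\rho$ of Section~2.10.4 (valid because the tail's tuples are all $\geq 1$), give a $T$-point of $\mathrm{LF}(v(\Lambda);V^1_\bullet)$. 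These two pieces share the middle Lagrangian $L_n^{(1)}\in\LF_1(V)=\LG^{V_1}(n,V)$ (which lies in $\LF_1$ since $L_n^{(1)}\subset V_{d_1}^\perp\subset V_1^\perp$ combined with the Lagrangian condition forces $V_1\subset L_n^{(1)}$), providing the gluing datum for the fiber product $\mathrm{LF}_{0,1}(0;V_\bullet)\times_{\LF_1(V)}\mathrm{LF}(v(\Lambda);V^1_\bullet)$. The maps $\iota\pi$ and $\rho\alpha$ are precisely the two projections.

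The main obstacle is the containment $V_1\subset P^{(0)}$ on the pullback locus in the second diagram: this does not follow tautologically from the moduli constraints (since $e_0=0$), and requires a careful admissibility argument using the reduced scheme structure (Lemma~\ref{lma:lf_de_structure_sheaf}), together with the fact that $V_1\not\subset P^{(0)}$ forces $L_n^{(0)}=L_n^{(1)}$ so the corresponding locus is controlled by the inverse construction from $\mathrm{LF}(\iota(\Lambda))$. Once this is settled, the remaining verifications are combinatorial bookkeeping from Definition~\ref{def:d-lambda} against the operations $\iota$ and $v$ from Section~\ref{sect:iota-c}.
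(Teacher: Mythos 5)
Your strategy (comparison of functors of points) is the same as the paper's, but you have inverted the combinatorial dictionary between $\Lambda_n$ and the boundary data, and this is a substantive error, not a relabeling: if $\Lambda_n=n$ (topmost row full, the case of the \emph{second} diagram) the boundary must start down the right edge, so $|s_1|\geq 1$ and $d_0=|Oa_{2,1}|=|s_1|\geq 1$, while $\Lambda_n<n$ (the \emph{first} diagram) forces $|s_1|=0$ and $d_0=e_0=0$; you assert the opposite in both paragraphs. With the correct values the second square needs none of the machinery you invoke: since $e_0=d_0\geq 1$ one has $V_1\subseteq V_{e_0}\subseteq P^{(0)}_{n-1}$ tautologically and $V_1\subseteq L_n^{(0)}$ from $L_n^{(0)}\subseteq V_{d_0}^\perp\subseteq V_1^\perp$ and isotropy, all entries of $\mathbf{d}_\Lambda,\mathbf{e}_\Lambda$ are $\geq 1$, so the quotient map of \S 2.10.4 gives the isomorphism $\rho:\LF(\Lambda;V_\bullet)\cong\LF(\iota(\Lambda);V^1_\bullet)$ (deleting the top row shifts the origin one step, $\mathbf{d}_{\iota(\Lambda)}=\mathbf{d}_\Lambda-\mathbf{1}$), and commutativity together with the facts that the top map is an isomorphism and $\iota\rho^{-1}$ is a closed immersion already yields the Cartesian property. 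Conversely, the ``main obstacle'' your last paragraph is devoted to is an artifact of your wrong value $e_0=0$, and your proposed resolution is unsound: on the locus $\{V_1\subset L_n^{(0)}\}$ inside a scheme with $e_0=0$ it is simply false that $V_1\subset P^{(0)}_{n-1}$ holds automatically. Already for $B_2=\LF_{0,1}(0)$ the fiber over $\LF_1$ is $F_2=\LF_{1,1}(0)$, which is reducible and strictly contains $E_2=\LF_{1,1}(1)$, the locus where $V_1\subset P^{(0)}_{n-1}$; Lemma~\ref{lma:lf_de_structure_sheaf}, which concerns reducedness of $\LF_{\mathbf d}(\mathbf e)$ itself, says nothing about this fiber, and the claim that $V_1\nhd P^{(0)}$ forces $L_n^{(0)}=L_n^{(1)}$ does not repair it.

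The first-diagram half has the mirror-image defect: your premise ``$d_0\geq 1$, so $L_n^{(0)}\subset V_{d_0}^\perp\subset V_1^\perp$'' is false and would in fact contradict the statement, since in the fiber product $\LF_{0,1}(0;V_\bullet)\times_{\LF_1(V)}\LF(v(\Lambda);V^1_\bullet)$ the Lagrangian $L_n^{(0)}$ is an arbitrary Lagrangian containing $P^{(0)}_{n-1}$, so any genuine constraint $L_n^{(0)}\subset V_{d_0}^\perp$ would make $\LF(\Lambda;V_\bullet)$ strictly smaller than the fiber product. The correct mechanism, as in the paper, is precisely that $d_0=e_0=0$: the pair $(P^{(0)}_{n-1}\subset L_n^{(0)})$ together with $L_n^{(1)}$ carries exactly the data of a point of $\LF_{0,1}(0;V_\bullet)$, the tail strata all contain $V_1$ and correspond under $\rho$ to a point of $\LF(v(\Lambda);V^1_\bullet)$ because deleting the rightmost column moves the origin one lattice step left, so that $(d_1,\ldots,d_k)=\mathbf d_{v(\Lambda)}+\mathbf 1$, and gluing over the common $L_n^{(1)}\in\LF_1(V)$ gives the Cartesian square. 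Your decomposition idea for this square is the right one, but only after the values of $d_0,e_0$ are corrected; as written, both halves of the argument rest on the inverted dictionary and the second half additionally relies on a false containment.
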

\begin{proof}
    The first diagram is a fiber square because $s_1=0$ and $d_0 = |Oa_{2,1}| =0$. This can be verified by explicitly describing the map $\pi:\LF(\Lambda) \to \LF_{0,d_1}(0)$. The operation $v(\Lambda)$ moves the origin to the left by one lattice length. It follows that removing $d_1$ from $\mathbf{d}_\Lambda$ yields $\mathbf{d}_{v(\Lambda)} + \mathbf{1}_{k-1}$, where $\mathbf{d}_\Lambda$ is a $(k+1)$-tuple and the addition is performed component-wise.
    The second diagram is commutative, as the universal bundle $L^{(0)}_n$ over $\LF(\Lambda; V_\bullet)$ is contained in $V_1^\perp$.
\end{proof}
\subsection{\texorpdfstring{$K$}{K}-theory of Lagrangian Grassmannian}
For any $\Lambda \in \mathfrak{U}_n$, we construct a map
\[ \phi_\Lambda : K(S) \xrightarrow{p_\Lambda^*} K(\mathrm{LF_{\Lambda}}) \xrightarrow{f_{\Lambda*}}  K(\LG(n,V))\]
of $K$-theory spectra.
\begin{theorem}
    Let $S$ be a regular scheme. The map
    \[
        \sum \phi_\Lambda:  \bigoplus_{\Lambda \in \mathfrak{U}_n} K(S) \xrightarrow{\simeq} K\big(\LG(n,V)\big)
    \]
    is an equivalence of spectra.
\end{theorem}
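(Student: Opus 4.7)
The plan is to proceed by induction on $n$, with the trivial base case $n=0$. For the inductive step, Quillen's localization applied to the closed-open decomposition $\LG^{V_1}(n,V) \hookrightarrow \LG(n,V) \hookleftarrow U_1$ yields the fiber sequence
\[
K(\LG^{V_1}(n,V)) \xrightarrow{\iota_{1*}} K(\LG(n,V)) \xrightarrow{v_1^*} K(U_1).
\]
By Proposition~\ref{prop:blow-up-two-step-setup}, the map $\vartheta:U_1\to \LG^{V_1}(n,V)$ is an affine bundle, so $\vartheta^*$ is an equivalence; combined with the isomorphism $\rho:\LG^{V_1}(n,V)\xrightarrow{\cong}\LG(n-1,V^1)$, both endpoints of the fiber sequence become equivalent to $K(\LG(n-1,V^1))$, where the induction hypothesis applies.

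The next step is to split this fiber sequence using the two-step blow-up of Diagram~\eqref{eqn:two_step_blow_up_lg}. Set $\sigma:=\pi_*\alpha^*:K(\LG^{V_1}(n,V))\to K(\LG(n,V))$, where $\pi=\pi_1\pi_2:\Bl_2(n,V)\to \LG(n,V)$ is the proper composition of two blow-ups and $\alpha:\Bl_2(n,V)\to \LG^{V_1}(n,V)$ is the remaining projection. Since $\pi$ restricts to an isomorphism on $\pi^{-1}(U_1)$, identified with $U_1$ via the open immersion $\tilde{v}_2 w_1:U_1\hookrightarrow \Bl_2$, flat base change along $v_1$ gives $v_1^*\pi_*=(\tilde{v}_2 w_1)^*$, so $v_1^*\sigma=(\alpha\tilde{v}_2 w_1)^*=\vartheta^*$ is an equivalence. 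Together with $v_1^*\iota_{1*}=0$ from localization, this produces a splitting of the fiber sequence and the equivalence
\[
\iota_{1*}\oplus \sigma:\ K(\LG^{V_1}(n,V))^{\oplus 2}\xrightarrow{\simeq} K(\LG(n,V)).
\]
Applying the induction hypothesis to each summand and invoking the bijections $\iota:\mathfrak{U}_n^r\xrightarrow{\cong}\mathfrak{U}_{n-1}$ and $v:\mathfrak{U}_n^c\xrightarrow{\cong}\mathfrak{U}_{n-1}$ from \S\ref{sect:iota-c} produces the decomposition of $K(\LG(n,V))$ indexed by $\mathfrak{U}_n=\mathfrak{U}_n^r\sqcup \mathfrak{U}_n^c$.

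It remains to check that $\sum\phi_\Lambda$ agrees with this equivalence. For $\Lambda\in\mathfrak{U}_n^r$, the second fiber square of Lemma~\ref{lem:lf_lambda} has an isomorphism as its top arrow, so pushforward compatibility gives $\phi_\Lambda=\iota_{1*}\circ \rho^*\circ \phi_{\iota(\Lambda)}$. For $\Lambda\in\mathfrak{U}_n^c$, the first fiber square of Lemma~\ref{lem:lf_lambda} has the smooth projection $\alpha:\Bl_2\to \LG^{V_1}(n,V)$ as its bottom arrow (a composition of Grassmann and projective bundles), so flat base change yields $\phi_\Lambda=\pi_*\alpha^*\circ \rho^*\circ \phi_{v(\Lambda)}=\sigma\circ \rho^*\circ \phi_{v(\Lambda)}$, matching the two summands of the splitting above. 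The main obstacle is that $\mathrm{LF}(\Lambda)$ is only equidimensional Gorenstein (Theorem~\ref{prop:lf_regular_gorenstein}(iii)) and may be reducible (Remark~\ref{rmk:lf_irr_comp}), so one must verify that $f_{\Lambda*}$ is defined on $K$-theory and that the base-change identities above hold at the spectrum level; this is ensured because $f_\Lambda$ is projective with Gorenstein source over a regular target, so $Rf_{\Lambda*}$ preserves perfect complexes, placing the calculation within the push/pull framework from \cite{huang2023the} used throughout the paper.
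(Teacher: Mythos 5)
Your proposal is correct and follows essentially the same route as the paper: induction on $n$, splitting the localization sequence for $\LG^{V_1}(n,V)\hookrightarrow \LG(n,V)$ by $\pi_*\alpha^*$ through the two-step blow-up (using that $\vartheta$ is an affine bundle together with flat base change, exactly as in Proposition \ref{prop:oriented-cohomology} and Proposition \ref{prop:blow-up-two-step-setup}), and then matching $\sum\phi_\Lambda$ with the resulting decomposition via the two fiber squares of Lemma \ref{lem:lf_lambda}, which is precisely the paper's verification of equations \eqref{eqn:lg_k_ind_1} and \eqref{eqn:lg_k_ind_2}. The only slip is harmless and in your favor: since $\mathbf{e}_\Lambda=\mathbf{d}_\Lambda^{<k}$ in this $K$-theoretic statement, Theorem \ref{prop:lf_regular_gorenstein}(ii) shows $\mathrm{LF}(\Lambda)$ is regular (smooth over $S$), so the Gorenstein/reducibility concern you raise at the end only arises after padding in the Hermitian setting, not here.
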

\begin{proof}
    We prove this result by induction on $n$. When $n=1$, $\LG(n,V) \cong \mathbb{P}(V)$. The map
    \[
        (\iota_*, p^*): K(S) \oplus K(S) \xrightarrow{\simeq} K(\mathbb{P}(V))
    \]
    is an equivalence of spectra.\ Now suppose that the statement is true for $n-1$. The result follows from the commutativity of the following diagram
    \begin{equation}\label{eq:$K$-theory-LG-induction}
        \xymatrix@R=60pt{
        \Big(\bigoplus\limits_{\Lambda \in \mathfrak{U}_{n}^r} K(S)\Big) \oplus \Big(\bigoplus\limits_{\Lambda \in \mathfrak{U}_{n}^c} K(S)\Big)  \ar[d]_-{\begin{pmatrix}
            \scriptstyle \sum \rho\phi_{\iota(\Lambda)} & \scriptstyle 0 \\ \scriptstyle 0 & \scriptstyle \sum \rho \phi_{v(\Lambda)}
        \end{pmatrix}}^-{\simeq} \ar[rr]^-{=} &&   \bigoplus\limits_{\Lambda \in \mathfrak{U}_{n}} K(S)  \ar[d]^-{\sum \phi_\Lambda}  \\
        K(\LG^{V_1}(n,V)) \oplus K(\LG^{V_1}(n,V)) \ar[rr]^-{ \begin{pmatrix}
            \scriptstyle \iota_{1*} & \scriptstyle \pi_* \alpha^*
        \end{pmatrix}}_-{\simeq} && K(\LG(n,V))
        }
    \end{equation}
    where the bottom map is an equivalence of spectra by Proposition \ref{prop:oriented-cohomology} and \ref{prop:blow-up-two-step-setup}.     The commutativity of Diagram \ref{eq:$K$-theory-LG-induction} follows from the following equations
    \begin{align}
        \iota_{1*} \rho f_{{\iota}(\Lambda)*} p_{{\iota}(\Lambda)}^* & = f_{\Lambda*} p_{\Lambda}^*, \label{eqn:lg_k_ind_1} \\
        \pi_* \alpha^* \rho f_{v(\Lambda)*} p_{v(\Lambda)}^*         & = f_{\Lambda*} p_{\Lambda}^*,\label{eqn:lg_k_ind_2}
    \end{align}
    for any $\Lambda \in \mathfrak{U}_{n}^{r}$ and any $\Lambda \in \mathfrak{U}_{n}^{c}$ respectively. By Lemma \ref{lem:lf_lambda}, Equation \eqref{eqn:lg_k_ind_1} holds. To verify Equation \eqref{eqn:lg_k_ind_2}, observe that for any $\Lambda \in \mathfrak{U}_{n}^{c}$, by Lemma \ref{lem:lf_lambda} we have the following commutative diagram
    $$
        \xymatrix{
        &S \\
        \mathrm{LF}(v(\Lambda); V^1) \ar[ur]^-{p_{v(\Lambda)}} \ar[d]_-{\rho  f_{v(\Lambda)}} \ar@{}[dr]|-{\diagram \label{diag:lg_k}}  & \mathrm{LF}({\Lambda}; V) \ar[d] \ar@{=}[r] \ar[l] & \mathrm{LF}({\Lambda};V) \ar[d]_-{f_{\Lambda}} \ar[ul]_-{p_{\Lambda}} \\
        \LF_1(V) & \mathrm{LF}_{0,1}(0;V) \ar[l]_-{\alpha} \ar[r]^-{\pi} &  \LF_0(V)
        }
    $$
    where Diagram $\diag{\ref{diag:lg_k}}$ is a fiber square by Lemma \ref{lem:lf_lambda}. (Recall that $B_2= \mathrm{LF}_{0,1}(0;V)$.) Although the right square in the diagram is not a fiber square, it still commutes. Since $\alpha$ is flat, Equation \eqref{eqn:lg_k_ind_2} is true by the base change formula (cf. \cite[Proposition 3.18]{thomason1990higher}).
\end{proof}

\section{On the induction theorem}
The aim of this section is to use Theorem \ref{thm:two_step_blow_up} to decompose the Hermitian $K$-theory of Lagrangian Grassmannians into smaller pieces using localization sequences. Throughout this section, we may assume that $V_i$ are all trivial.
\subsection{Notation}

Let $L$  be a line bundle over the regular scheme $S$. It might be more organized to write the proof of the induction theorem using some notation of generalized Lagrangian flags.

\subsubsection{} We introduce the following notation on the pairs of schemes with line bundles:

$$\mathbb{LF}_i := (\LF_i, L), \quad \widetilde{\mathbb{LF}}_i := (\LF_i, L \otimes \Delta)  $$

\subsubsection{} We will use the canonical embeddings $ \iota_1': \LF_1 \to \LF_2 $ and the following natural projections
\[\begin{array}{lll}
        \hat{\pi}:\mathrm{LF}_{0,2}(0) \to \LF_0       & \grave{\pi}:\mathrm{LF}_{0,2}(0)_2 \to \LF_0       & \pi':\mathrm{LF}_{1,2}(1) \to \LF_1       \\
        \hat{\alpha}: \mathrm{LF}_{0,2}(0) \to \LF_{2} & \grave{\alpha}: \mathrm{LF}_{0,2}(0)_2 \to \LF_{2} & \alpha': \mathrm{LF}_{1,2}(1) \to \LF_{2}
    \end{array}\]
\subsubsection{}\label{sect:padding}    We will also need the idea of \textit{padding} to construct the maps. Let $\mathring{V}:= V \bot \SO^{\oplus 2}$ with $\SO^{\oplus 2}$ equipped with the trivial symplectic form. Construct a complete flag
\[
    \mathring{V}_{\bullet} = \{0=\mathring{V}_{0} \subset \mathring{V}_{1} \subset \mathring{V}_{2} \subset \ldots \subset \mathring{V}_{n} \subset \mathring{V}_{n+1} = \mathring{V}_{n+1}^{\bot} \subset \mathring{V}_{n}^{\bot} \subset \ldots \subset \mathring{V}_{1}^{\bot} \subset \mathring{V}_{0}^{\bot} = \mathring{V}\},
\]
with $\mathring{V}_i = V_{i-1} \perp \SO $. Note that $\mathring{V}^{1} \cong V$, and we then define
$       \mathrm{L}\mathring{\mathrm{F}}_{\mathbf{d}}(\mathbf{e})_{\mathbf{t}}:= \mathrm{L}{\mathrm{F}}_{\mathbf{d}}(\mathbf{e};\mathring{V}_\bullet)_{\mathbf{t}}$ and $
    \mathrm{L}\mathring{\mathrm{F}}_{\mathbf{d}}(\mathbf{e}):= \mathrm{L}\mathring{\mathrm{F}}_{\mathbf{d}}(\mathbf{e})_{\mathbf{t}}$ if $\mathbf{t} = (1, \ldots, 1)$. Moreover, we will use the natural projections
\[\begin{array}{lll}
        \bar{\pi}:\mathrm{L}\mathring{\mathrm{F}}_{1,2}(0) \to \LFo_1 \cong \LF_0      & \acute{\pi}:\mathrm{L}\mathring{\mathrm{F}}_{1,3}(0)_2 \to \LFo_1 \cong \LF_0       & \bar{\pi}':\LFo_{2,3}(1) \to \LFo_2\cong \LF_1       \\
        \bar{\alpha}:\mathrm{L}\mathring{\mathrm{F}}_{1,2}(0) \to \LFo_{2} \cong \LF_1 & \acute{\alpha}: \mathrm{L}\mathring{\mathrm{F}}_{1,3}(0)_2 \to \LFo_{3} \cong \LF_2 & \bar{\alpha}': \LFo_{2,3}(1) \to \LFo_3\cong \LF_{2}
    \end{array}\]
where we use the canonical isomorphism $\rho: \mathrm{LF}_{i+1}(~;\mathring{V}_\bullet) \cong \LF_{i}$.

\subsection{The induction theorem}

The following theorem is the key for the proof of the main theorem.
\begin{theorem}\label{thm:lg_induc}
    Let $n\geq 2$. The following statements hold true:
    \begin{enumerate}[leftmargin=20pt,label={\rm (\alph*)},ref=(\alph*)]
        \item Suppose that $n$ is even, then the map
              \[     (\iota_{1*} \,\,\, \pi_*\alpha^*):   \begin{array}{cll}
                      \GW^{[m-n]}(\mathbb{LF}_{1}) \oplus \GW^{[m]}(\widetilde{\mathbb{LF}}_{1})  \xrightarrow{  }
                      \GW^{[m]}(\widetilde{\mathbb{LF}}_0)
                  \end{array}
              \]
              is an equivalence.
        \item Suppose that $n$ is even, then the map
              \[
                  (\iota_{1*} \,\,\, \bar{\pi}_*\bar{\alpha}^*): \GW^{[m-n]}(\widetilde{\mathbb{LF}}_{1}) \oplus  \GW^{[m]}(\mathbb{LF}_{1})      \xrightarrow{ } \GW^{[m]}(\mathbb{LF}_{0})
              \]
              is an equivalence.
        \item Suppose that $n$ is odd, then the map
              \[
                  (\iota_{*} \,\,\, \hat{\pi}_* H \hat{\alpha}^* \,\,\, \acute{\pi}_* \acute{\alpha}^*):  \GW^{[m-2n+1]}(\mathbb{LF}_{2}) \oplus K(\LF_{2}) \oplus \GW^{[m]}(\mathbb{LF}_{2}) \xrightarrow{ } \GW^{[m]}(\mathbb{LF}_0)
              \]
              is an equivalence.
        \item Suppose that $n$ is odd, then the map
              \[
                  (\iota_{*} \,\,\, \hat{\pi}_* H \hat{\alpha}^* \,\,\,  \grave{\pi}_* \grave{\alpha}^*): 	\GW^{[m-2n+1]}(\widetilde{\mathbb{LF}}_{2}) \oplus K(\LF_{2}) \oplus \GW^{[m]}(\widetilde{\mathbb{LF}}_{2})
                  \xrightarrow{ }  \GW^{[m]}(\widetilde{\mathbb{LF}}_0)
              \]
              is an equivalence.
    \end{enumerate}
\end{theorem}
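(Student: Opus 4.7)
The plan is to derive all four equivalences from the localization sequences in Hermitian $K$-theory attached to $\iota_1: \LF_1 \to \LF_0$ (of codimension $n$) and, when $n$ is odd, also to $\iota_1': \LF_2 \to \LF_1$ (of codimension $n-1$). By Proposition \ref{prop:blow-up-two-step-setup}, the open complement $U_1 = \LF_0 \setminus \LF_1$ is an affine bundle over $\LF_1$ via $\vartheta$, so homotopy invariance identifies $\GW^{[m]}(U_1, L')$ with $\GW^{[m]}(\LF_1, \vartheta_* L')$. The task is therefore to split the resulting cofiber sequences into direct sums by constructing explicit sections via pushforward--pullback through the two-step blow-up geometry of Diagram \eqref{eqn:two_step_blow_up_lg} (and its padded analogues from \S\ref{sect:padding}).

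For parts (a) and (b), with $n$ even: since $\omega_{\iota_1} \cong (V_1^\vee)^{\otimes n} \otimes \Delta_n^\vee$ reduces to $\Delta_n^\vee$ modulo $\Pic/2$, the twists required by $\iota_{1*}$ align directly with either $\mathbb{LF}_0$ or $\widetilde{\mathbb{LF}}_0$. I would take $\pi_*\alpha^*$ (resp.\ $\bar{\pi}_*\bar{\alpha}^*$) as the splitting section: Lemma \ref{lem:picard-bl} together with Remark \ref{rmk:lg_lambda} ensure that $\pi$ carries the correct relative twist for pushforward in Hermitian $K$-theory, while flat base change and the projection formula from \cite{huang2023the} applied to \eqref{eqn:two_step_blow_up_lg} identify the restriction of this composition to $U_1$ with $\vartheta^*$ up to a twist-preserving unit. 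This splits the localization sequence and yields the claimed equivalence.

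For parts (c) and (d), with $n$ odd: the odd power of $V_1^\vee$ in $\omega_{\iota_1}$ obstructs any single-step alignment. The strategy is to combine two localizations, producing three summands. A direct computation gives $\omega_\iota \cong \omega_{\iota_1} \otimes \iota_1^*\omega_{\iota_1'}$, which contains only even tensor powers of $V_1^\vee$ and $(V_2/V_1)^\vee$ modulo $\Pic/2$; this supplies the $\GW^{[m-2n+1]}$ summand via $\iota_*$ of codimension $2n - 1$. The intermediate open stratum $\LF_1 \setminus \LF_2$, by contrast, inherits a non-squareable twist, so the relevant $\GW$-spectrum becomes equivalent to $K$-theory via the hyperbolic functor $H$, accounting for the middle summand $K(\LF_2)$ and the map $\hat{\pi}_* H \hat{\alpha}^*$ through $\mathrm{LF}_{0,2}(0)$. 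For the outer open stratum $U_1$, the padding of \S\ref{sect:padding} is essential: the padded scheme $\mathrm{L}\mathring{\mathrm{F}}_{1,3}(0)_2$ (resp.\ $\mathrm{LF}_{0,2}(0)_2$) has relative canonical sheaf aligning modulo $2$ by Corollary \ref{coro:lfl_canonical_divisor}, permitting a well-defined pushforward $\acute{\pi}_*$ (resp.\ $\grave{\pi}_*$) that realizes the splitting of the final piece.

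The main obstacle will be verifying that the triples of maps in (c) and (d) assemble into an equivalence, i.e., that the off-diagonal compositions with restriction to the various strata vanish while the diagonal ones are equivalences. This requires a delicate interplay of base change, projection formulas, and Picard-modulo-$2$ computations on the reducible Gorenstein models $\mathrm{L}\mathring{\mathrm{F}}_{1,3}(0)_2$, $\mathrm{LF}_{0,2}(0)_2$, and $\mathrm{LF}_{0,2}(0)$, leveraging Lemma \ref{lma:lf_hat_pic}, Remark \ref{rmk:lf_irr_comp}, Lemma \ref{lma:lf_de_structure_sheaf}, and the material of Appendix \ref{sec:base_change_gorenstein} and \ref{sec:pushforward}, which justify that the standard tools of \cite{huang2023the} remain valid on these non-regular, possibly reducible, intermediate schemes.
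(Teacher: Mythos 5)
Your treatment of (a) and (b) matches the paper's: (a) is precisely the split localization sequence of Theorem \ref{thm:two_step_blow_up}(i), using $\lambda_1(\Delta)=\lambda_2(\Delta)=1$ from Remark \ref{rmk:lg_lambda}, and (b) is the padded splitting $\bar{\pi}_*\bar{\alpha}^*$ through $\mathrm{L}\mathring{\mathrm{F}}_{1,2}(0)$, with $v_1^*\bar{\pi}_*\bar{\alpha}^*=\vartheta^*$ by base change. One small correction: in case (b) the twist needed for $\bar{\pi}_*$ is supplied by Corollary \ref{coro:lfl_canonical_divisor} (Example \ref{exm:LF012-132-022}), not by Lemma \ref{lem:picard-bl}; this case lies outside Theorem \ref{thm:two_step_blow_up}(i), which is exactly why the padded model is introduced.

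For (c) and (d) there is a genuine gap in your mechanism for the summand $K(\LF_2)$. It is not the case that the Grothendieck--Witt spectrum of the intermediate stratum $\LF_1\setminus\LF_2$ "becomes $K$-theory via the hyperbolic functor": with the $V_i$ trivial, each stratum contributes a GW-spectrum of $\LF_1$ or $\LF_2$ with some twist, and no single stratum produces $K(\LF_2)$. In the paper the $K$-term arises globally: the connecting map $\partial_1$ of the localization sequence for $\LF_1\subset\LF_0$ is computed geometrically by Theorem \ref{thm:two_step_blow_up}(ii)/(iii) as $\eta\cup\tilde{\pi}_*\tilde{\alpha}^*$ (resp.\ $\eta\cup\check{\pi}_*\check{\alpha}^*$); after decomposing the two outer terms by cases (a) and (b) applied to $\LF_1\cong\LG(n-1)$ (note $n-1$ is even), $\partial_1$ couples the two $\widetilde{\mathbb{LF}}_2$-summands by cup product with $\eta$, and the fiber of $\eta\cup$ is $K(\LF_2)$ by the Karoubi/Bott sequence $\GW\xrightarrow{F}K\xrightarrow{H}\GW$. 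Concretely, the paper assembles the map of distinguished triangles \eqref{eq:map-of-dis-trian}, whose outer vertical maps are the equivalences from (a) and (b) and whose middle column is the claimed triple. Your proposed check that "off-diagonal compositions with restriction to the strata vanish while the diagonal ones are equivalences" cannot work as stated, because $\GW^{[m]}(\mathbb{LF}_0)$ does not split along the stratification --- the nonvanishing of $\partial_1$ is exactly why $K(\LF_2)$ appears. What must be verified instead is the homotopy-commutativity of the three squares in \eqref{eq:map-of-dis-trian} (Sublemma \ref{sublem:case-(c)}); its middle square requires the alignment unit $\langle u\rangle$ constructed on the reducible Gorenstein correspondence $W$ together with the surjectivity of $\wp^*\colon\mathbb{G}_m(\LF_2)\to\mathbb{G}_m(W)$, a step your outline does not anticipate and which is the technical heart of cases (c) and (d).
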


\begin{proof}  Proposition \ref{prop:blow-up-two-step-setup} implies that Lagrangian Grassmannians fit into Hypothesis \ref{sect:hypothesis}, and further observe that $c_1 = n$ and $c_2 = 2$.

    (a)   Note that $\lambda_1(\Delta) = \lambda_2(\Delta) =1 $ by Remark \ref{rmk:lg_lambda}. It follows that $\lambda_1(\Delta) \equiv c_1-1 \mod 2 $ and $\lambda_2(\Delta) \equiv c_2-1 \mod 2 $. Therefore,  Theorem \ref{thm:two_step_blow_up}(i) can be applied in this case. Hence, the following localization sequence
    \[
        \GW^{[m-n]}(\mathbb{LF}_1) \xrightarrow{\iota_{1\ast}} \GW^{[m]}(\widetilde{\mathbb{LF}}_0) \xrightarrow{(\vartheta^*)^{-1} v_{1}^*} \GW^{[m]}(\widetilde{\mathbb{LF}}_1)
    \]
    is split with a right splitting given by $\pi_*\alpha^*$.

    (b) Although this case is excluded from Theorem \ref{thm:two_step_blow_up}, we can still construct an explicit splitting using generalized Lagrangian flag schemes. In view of Section \ref{sect:padding}, we  form the following diagram
    \begin{equation}\label{eqn:n-odd-hermitian-X(1)}
        \xymatrix{
        \LF_0  & \ar[l]_-{\bar{\pi}} \mathrm{L}\mathring{\mathrm{F}}_{1,2}(0) \ar[r]^-{\bar{\alpha}} & \LF_1 \\
        U_1  \ar[u]^-{v_1} \ar@{=}[r] & U_1 \ar[u]^-{v_a} \ar[ur]_-{\vartheta}
        }
    \end{equation}
    where $v_a$ is the open complement $\mathrm{L}\mathring{\mathrm{F}}_{1,2}(0) - \mathrm{L}\mathring{\mathrm{F}}_{2,2}(0)$ under the following identification   $$U_1 = \LF_0 - \LF_1 \cong \mathrm{L}\mathring{\mathrm{F}}_1 - \mathrm{L}\mathring{\mathrm{F}}_2 \cong   \mathrm{L}\mathring{\mathrm{F}}_{1,2}(1) - \mathrm{L}\mathring{\mathrm{F}}_{2,2}(1) \cong  \mathrm{L}\mathring{\mathrm{F}}_{1,2}(0) - \mathrm{L}\mathring{\mathrm{F}}_{2,2}(0), $$
    and where the left square is a fiber square.
    By Example \ref{exm:LF012-132-022}, the line bundle $$\omega_{\bar{\pi}} \cong \omega_{\mathrm{L}\mathring{\mathrm{F}}_{1,2}(0)/S} \otimes  \omega_{\mathrm{L}\mathring{\mathrm{F}}_{1}/S}^\vee$$
    is trivial in the Picard group of $\mathrm{L}\mathring{\mathrm{F}}_{1,2}(0)$ modulo two. Therefore, the composition
    \[\GW^{[m]}(\mathbb{LF}_1) \xrightarrow{\bar{\alpha}^*} \GW^{[m]}(\mathrm{L}\mathring{\mathrm{F}}_{1,2}(0), \omega_{\bar{\pi}}) \xrightarrow{\bar{\pi}_*} \GW^{[m]}(\mathbb{LF}_0) \]
    is well-defined. The result follows from the fact that $\bar{\pi}_*\bar{\alpha}^*$ is a right splitting of the following localization sequence
    \[
        \GW^{[m-n]}(\widetilde{\mathbb{LF}}_1) \xrightarrow{\iota^{}_{1\ast}} \GW^{[m]}(\mathbb{LF}_0) \xrightarrow{(\vartheta^*)^{-1} v_{1}^{*}} \GW^{[m]}(\mathbb{LF}_1),
    \]
    which can be proved by the identities
    $
        v_1^{*}  \bar{\pi}_* \bar{\alpha}^*  = v_a^* \bar{\alpha}^*	=  \vartheta^{*},
    $
    where we use the base change formula (cf. \cite[Section 5]{huang2023the})) for the first identity.

    The proofs of (c) and (d) are more complicated and will be proved below.
\end{proof}

\subsection{Proof of Theorem \ref{thm:lg_induc} (c)}
Suppose that
\begin{align}\label{eqn:lg_3_loc}
    \xymatrix@C=50pt{
    \mathbf{X}  \ar[r]^-{\iota_{1\ast}}     & \mathbf{Y} \ar[r]^-{(\vartheta^*)^{-1} v^*} & \mathbf{Z} \ar[r]^-{\partial}  & \mathbf{X}[1]} \\
    \label{eqn:lg_4_bott}
    \xymatrix@C=50pt{\mathbf{A} \ar[r]^-{F} & \mathbf{B} \ar[r]^-{H}                      & \mathbf{C} \ar[r]^-{\eta \cup} & \mathbf{A}[1]}
\end{align}
are the localization sequence and the algebraic Bott sequence, respectively, with
\begin{align*}
     & \mathbf{X} = 	\GW^{[m-n]}(\widetilde{\mathbb{LF}}_1), \quad	\mathbf{Y} =  \GW^{[m]}(\mathbb{LF}_0),  \quad	\mathbf{Z} =  \GW^{[m]}(\mathbb{LF}_1) ; \\
     & \mathbf{A} = \GW^{[m-n]}(\widetilde{\mathbb{LF}}_2), \quad \mathbf{B} = K(\LF_2), \quad \mathbf{C} = \GW^{[m-n+1]}(\widetilde{\mathbb{LF}}_2).
\end{align*}
Theorem \ref{thm:two_step_blow_up} (ii) applies to this case ($\lambda_1(\SO) \equiv c_1 -1 \mod 2, \, \lambda_2(\SO) \equiv c_2 \mod 2$). Hence, $\partial = \eta \cup \tilde{\pi}_{\ast}  \tilde{\alpha}^{\ast}.$
Sublemma \ref{sublem:case-(c)} below shows that there exist a unit $\langle u \rangle \in \GW_0^{[0]}(
    \mathbb{LF}_2
    )$ such that the following diagram
\begin{equation}\label{eq:map-of-dis-trian}
    {\xymatrix{
    \mathbf{D} \oplus \mathbf{A} \ar[rr]^-{\tiny\begin{pmatrix} 1 & 0  \\ 0 & F \\ 0 & 0 \end{pmatrix}} \ar[d]^-{\tiny\begin{pmatrix} \mathbf{d} & \mathbf{a} \end{pmatrix}} && \mathbf{D} \oplus \mathbf{B} \oplus \mathbf{E} \ar[rr]^-{\tiny\begin{pmatrix} 0 & H & 0 \\ 0 & 0 & \langle u\rangle \end{pmatrix}}  \ar[d]^-{\tiny\begin{pmatrix} \mathbf{g} & \mathbf{f} & \mathbf{h} \end{pmatrix}}  && \mathbf{C} \oplus \mathbf{E} \ar[rr]^-{\tiny\begin{pmatrix} 0 & 0  \\ \eta \cup & 0 \end{pmatrix}} \ar[d]^-{\tiny\begin{pmatrix} \mathbf{c} & \mathbf{e} \end{pmatrix}} &&   \mathbf{D}[1] \oplus \mathbf{A}[1] \ar[d]^-{\tiny\begin{pmatrix} \mathbf{d}[1] & \mathbf{a}[1] \end{pmatrix}} \\
    \mathbf{X}  \ar[rr]_-{\iota_{1*}} && \mathbf{Y} \ar[rr]_-{(\vartheta^*)^{-1} v_1^*} && \mathbf{Z} \ar[rr]_-{\partial} && \mathbf{X}[1].
    }}
\end{equation}
is a map of distinguished triangles in $\mathcal{SH}$ where
\begin{align*}
     & \mathbf{D} = \GW^{[m-2n+1]}(\mathbb{LF}_2), \quad\mathbf{E} = \GW^{[m]}(\mathbb{LF}_2),
\end{align*}
and where
\begin{align*}
     & \mathbf{a}=\pi'_{\ast} \alpha'^{*},\quad \mathbf{c} = \iota_{1*}',\quad \mathbf{d} = \iota_{1*}',\quad \mathbf{e} = \bar{\pi}'_* \bar{\alpha}'^{*}, \quad
    \mathbf{f}=\hat{\pi}_*H\hat{\alpha}^*,\quad \mathbf{g} = \iota_*,\quad \mathbf{h}=\acute{\pi}_* \acute{\alpha}^*.
\end{align*}
By Example \ref{exm:LF012-132-022}, the map $\mathbf{h}$ is well defined.\ The maps $(\mathbf{d} \quad \mathbf{a})$ and $(\mathbf{c} \quad \mathbf{e})$ are both equivalences by case (a) and (b) above, hence the middle map $(\mathbf{g} \quad \mathbf{f}\quad \mathbf{h})$ is also an equivalence.

\begin{sublemma}\label{sublem:case-(c)}
    The three squares in Diagram \eqref{eq:map-of-dis-trian} commute up to homotopy.
\end{sublemma}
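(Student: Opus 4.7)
The plan is to verify commutativity of the three squares entry by entry, reducing each to an identity among pushforward, pullback, forgetful $F$ and hyperbolic $H$ operations on the generalised Lagrangian flag schemes. The main tools are the base-change and projection formulas for Hermitian $K$-theory pushforwards developed in \cite{huang2023the} and in Appendices \ref{sec:base_change_gorenstein}--\ref{sec:pushforward}, together with the geometric relations among $\mathrm{LF}_{1,2}(1)$, $\mathrm{LF}_{0,2}(0)$, $\LFo_{1,3}(0)_2$ and $\LFo_{2,3}(1)$ collected in Section \ref{sect:padding}.

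For the leftmost square, the $\mathbf{D}$-entry reduces to $\iota_{1*}\iota_{1*}' = \iota_{*}$, which is pure functoriality of pushforward along the factorisation $\iota_1\circ\iota_1' = \iota$. The $\mathbf{A}$-entry requires $\iota_{1*}\pi'_{*}\alpha'^{*} = \hat{\pi}_{*} H \hat{\alpha}^{*} F$. For this I would invoke the closed embedding $j=\iota_a\colon \mathrm{LF}_{1,2}(1)\hookrightarrow\mathrm{LF}_{0,2}(0)$ from Lemma \ref{lma:lf_hat_pic} and the factorisations $\iota_1\pi' = \hat\pi j$, $\alpha' = \hat\alpha j$, so that the left-hand side rewrites as $\hat\pi_{*} j_{*} j^{*} \hat\alpha^{*}$; the remaining step is the identification $j_{*}j^{*}\simeq H\circ F$ (up to the appropriate twist), which I would deduce from the Koszul resolution for $j$ combined with the algebraic Bott sequence, since $j$ is a codimension-one regular embedding cut out by a section of a single line bundle.

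For the middle square, the $\mathbf{D}$-entry vanishes on both sides because $v_1^{*}\iota_{*}=0$ (as $\iota$ factors through $\LF_1$, the closed complement of $U_1$ in $\LF_0$). The $\mathbf{B}$-entry $(\vartheta^{*})^{-1}v_1^{*}\hat\pi_{*} H \hat{\alpha}^{*} = \iota_{1*}' H$ I would establish by base-changing $v_1^{*}\hat\pi_{*}$ along the pullback of $\hat\pi$ to $U_1$, using naturality of $H$ with respect to pullbacks. The $\mathbf{E}$-entry $(\vartheta^{*})^{-1}v_1^{*}\acute\pi_{*}\acute\alpha^{*} = \bar{\pi}'_{*}\bar{\alpha}'^{*}\langle u\rangle$ I would produce by matching the two padded open descriptions of $U_1$ afforded by Section \ref{sect:padding}, with the unit $\langle u\rangle\in\GW_0^{[0]}(\mathbb{LF}_2)$ extracted from the canonical-bundle discrepancy recorded in Example \ref{exm:LF012-132-022} and Lemma \ref{lem:picard-bl}. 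For the rightmost square I would substitute $\partial=\eta\cup\tilde\pi_{*}\tilde\alpha^{*}$ from Theorem \ref{thm:two_step_blow_up}(ii) and reduce the two non-trivial components to base-change identities on the two-step blow-up.

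The principal obstacle is the identification $j_{*}j^{*}\simeq H\circ F$ at the level of spectra (rather than merely in $\mathcal{SH}$), since $\mathrm{LF}_{0,2}(0)$ is Gorenstein but not regular and the projection formula must be invoked in that generality. A parallel twist reconciliation is required to pin down the unit $\langle u\rangle$ in the middle square, where the padding of Section \ref{sect:padding} perturbs the ambient symplectic bundle and the comparison of canonical bundles between padded and unpadded flag schemes must be routed through the Picard-group computations underlying Lemma \ref{lem:picard-bl}.
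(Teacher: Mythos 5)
Your treatment of the left square and of the $\mathbf{B}$-component of the middle square is essentially the paper's own argument: the factorisation through the codimension-one regular embedding $\iota_a\colon \LF_{1,2}(1)\hookrightarrow\mathrm{LF}_{0,2}(0)$, the identity $\iota_{a*}(1)=e(\SO(E))=H(1)$ coming from the Koszul resolution, the projection formula, and the $\Pic/2$ control of Lemma \ref{lma:lf_hat_pic} are exactly what the paper packages as Lemma \ref{lma:fh_codim_one}. The genuine gap is the $\mathbf{E}$-component of the middle square, i.e.\ the identity $v_1^*\acute{\pi}_*\acute{\alpha}^*=\vartheta^*\bar{\pi}'_*\bar{\alpha}'^*(\langle u\rangle\cup-)$, which is the technical heart of the sublemma. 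This is not a matter of ``matching the two padded open descriptions of $U_1$'': the two sides are push--pulls through two genuinely different correspondences over $U_1$, namely $U_c=U_1\times_{\LF_0}\LFo_{1,3}(0)_2$ and $U_d=U_1\times_{\LF_1}\LFo_{2,3}(1)$, and no identification of these schemes over $U_1\times_S\LF_2$ exists. The paper must construct a common refinement $W\subset U_c\times_{U_1}U_d$ (imposing $\tilde{L}^{(2)}_{\mathring{n}}=L^{(2)}_{\mathring{n}}$), show by a dimension count that $W\to U_c$ and $W\to U_d$ are finite and cut out by regular sections, compute $\omega_{W/U_c}$ and $\omega_{W/U_d}$ and verify they are trivial in $\Pic(W)/2$, and then invoke the Gorenstein base-change machinery (Theorem \ref{thm:flat_pullback_finite_pushforward}) together with Theorem \ref{cor:push-pull-identiy} to rewrite both composites as $(\pi_a\pi_c)_*\big(\wp^*(-)\cup\langle a\rangle\big)$ and $(\pi_b\pi_d)_*\big(\wp^*(-)\cup\langle b\rangle\big)$ respectively. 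None of this is present in your proposal.

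Moreover, the unit $\langle u\rangle$ does not come from ``the canonical-bundle discrepancy recorded in Example \ref{exm:LF012-132-022} and Lemma \ref{lem:picard-bl}'': Lemma \ref{lem:picard-bl} only computes $\lambda_1,\lambda_2$ on $\Bl_2$ and plays no role here, and Example \ref{exm:LF012-132-022} only guarantees that $\acute{\pi}_*\acute{\alpha}^*$ and $\bar{\pi}'_*\bar{\alpha}'^*$ are defined. The unit arises from the duality data of the two finite pushforwards from $W$ (the alignment issue), and one must in addition prove that it is pulled back from $\LF_2$, which requires the surjectivity of $\wp^*\colon\mathbb{G}_m(\LF_2)\to\mathbb{G}_m(W)$, established via Lemma \ref{lma:lf_de_structure_sheaf}, Lemma \ref{lma:affine_bundle_unit} and Lemma \ref{lma:push-pull-identiy-2}; since $W$ is reducible and non-regular, this step cannot be replaced by the alignment results of \cite{balmer2012bases}. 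Without this construction the middle square is unproved, and so is the existence of $\langle u\rangle$, which is part of what the sublemma asserts. A smaller omission: for the right square you also need $\partial\circ\mathbf{e}=0$, which the paper deduces from the already-proved middle square combined with $\partial\circ(\vartheta^*)^{-1}v_1^*=0$, in addition to the base-change identity $\tilde{\pi}_*\tilde{\alpha}^*\iota'_{1*}=\pi'_*\alpha'^*$ that you correctly anticipate.
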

\begin{proof}
    (i) The left square in Diagram \eqref{eq:map-of-dis-trian} commutes. It is enough to show that the identity
    $$\iota_{1\ast} \circ \mathbf{a} = \mathbf{f} \circ F \quad(\textnormal{i.e.\ } \iota_{1\ast} \pi'_{\ast} \alpha'^{*} = \hat{\pi}_*H\hat{\alpha}^* F) $$
    holds, which is a direct consequence of Lemma \ref{lma:lf_hat_pic} and Lemma \ref{lma:fh_codim_one}, since $\hat{\alpha}^* F = F \hat{\alpha}^*$.

    (ii) Let us show that the middle square in Diagram \eqref{eq:map-of-dis-trian} commutes. The formula
    $$v_{1}^* \circ \mathbf{f} = \vartheta^* \circ  \mathbf{c}\circ H, \quad (\textnormal{i.e.}\quad v_{1}^* \hat{\pi}_*H\hat{\alpha}^* = \vartheta^* \iota'_{1*} H) $$
    holds and can be verified through the following commutative diagram
    $$\xymatrix@C=40pt{
        &\LF_2 \ar@{=}[dr] \\
        \mathrm{LF}_{0,2}(0) \ar[d]_-{\hat{\pi}} \ar[ur]^-{\hat{\alpha}} \ar@{}[dr]|{\diagram \label{diag:lg_odd_so_car_2}} & V \ar[d] \ar[r] \ar[l] \ar@{}[dr]|{\diagram \label{diag:lg_odd_so_car_3}} & \LF_2 \ar[d]_-{\iota_{1}'}\\
        \LF_0
        & U_{1} \ar[l]_-{v_1} \ar[r]^-{\vartheta}
        & \LF_1
        }
    $$
    where Diagram $\diag{\ref{diag:lg_odd_so_car_2}}$ and $ \diag{\ref{diag:lg_odd_so_car_3}}$ are both fiber squares.
    We observe, by \cite[Lemma 3.15]{huang2023the}, that
    \[ v_{1}^* \hat{\pi}_*H\hat{\alpha}^* = Hv_{1}^* \hat{\pi}_*\hat{\alpha}^*,  \quad  \vartheta^* \iota'_{1*} H = H\vartheta^* \iota'_{1*}.   \]
    Moreover, by applying the base change formula (cf. \cite[Proposition 3.18]{thomason1990higher}) to Diagram $\diag{\ref{diag:lg_odd_so_car_2}}$ and $\diag{\ref{diag:lg_odd_so_car_3}}$, we have $v_{1}^* \hat{\pi}_*\hat{\alpha}^* = \vartheta^* \iota'_{1*} $ in the $K$-theory $K(U_1)$ .\

    To complete the proof, we must show that the following formula holds in $\GW^{[m]}(U_1)$:
    $$v_1^*\circ \mathbf{h} = \vartheta^* \circ \mathbf{e} (\langle u\rangle \cup -), \quad ( \textnormal{i.e.\ } \quad v_{1}^* \acute{\pi}_* \acute{\alpha}^* = \vartheta^* \bar{\pi}'_* \bar{\alpha}'^{*} (\langle u\rangle \cup -)).$$
    Consider the following diagram
    $$
        \xymatrix@C=40pt{
        \LF_0 & U_{1} \ar[l]_-{v_1} \ar@{=}[r]& U_{1} \ar[r]^-{\vartheta} & \LF_1 \\
        \mathrm{L}\mathring{\mathrm{F}}_{1,3}(0)_2 \ar[u]^-{\acute{\pi}} \ar[ddr]_-{\acute{\alpha}} & {U}_c \ar[u]^-{\pi_a} \ar[l]_-{v_c} & {U}_d \ar[u]^-{\pi_b}\ar[r]^-{\vartheta_d} & \mathrm{L}\mathring{\mathrm{F}}_{2,3}(1) \ar[u]_-{\bar{\pi}'} \ar[ddl]^-{\bar{\alpha}'} \\
        & W \ar@{=}[r] \ar[u]^-{\pi_c} \ar[d]_-{\wp} & W \ar[u]^-{\pi_d} \ar[d]_-{\wp}\\
        & \LF_2 \ar@{=}[r] & \LF_2
        }
    $$
    where the top left and right squares are both fiber squares. Note that $U_1$ has different models
    \[U_1:= \LF_0- \LF_1  \cong  \LFo_1 -\LFo_2 \cong \LFo_{1,2}(1) - \LFo_{2,2}(1)\]
    where the latter is the one that we really want to use. Let $\mathring{n}:= n+1$. Let $P_{\mathring{n}-1}^{(0)}$, $L_{\mathring{n}}^{(0)}$ and $L_{\mathring{n}}^{(1)}$ be universal bundles on $U_1$ that satisfy
    \begin{equation}\label{eq:4.6'}
        \mathring{V}_2^\perp \not\supset  L_{\mathring{n}}^{(0)} \supset P_{\mathring{n}-1}^{(0)} \subset L_{\mathring{n}}^{(1)} \subset \mathring{V}_2^\perp, \quad  \mathring{V}_1 \subset  P_{\mathring{n}-1}^{(0)}
    \end{equation}
    where $P_{\mathring{n}-1}^{(0)} = L_{\mathring{n}}^{(0)} \cap \mathring{V}_2^\perp$.
    Let $P_{\mathring{n}-1}^{(1)}$, $L_{\mathring{n}}^{(1)}$ and $\tilde{L}_{\mathring{n}}^{(2)}$ denote universal bundles over $\LFo_{2,3}(1)$ such that
    \begin{equation}\label{eq:4.7'}
        \mathring{V}_1 \subset   P_{\mathring{n}-1}^{(1)} \subset  L_{\mathring{n}}^{(1)} \subset \mathring{V}_2^\perp,  \quad  P_{\mathring{n}-1}^{(1)} \subset \tilde{L}_{\mathring{n}}^{(2)} \subset \mathring{V}_3^\perp.  \end{equation}
    Combining the relations \eqref{eq:4.6'} and \eqref{eq:4.7'}, we see that $U_d = U_1 \times_{\LF_1} \LFo_{2,3}(1) $ is the scheme with universal bundles $P_{\mathring{n}-1}^{(0)}$, $L_{\mathring{n}}^{(0)}$,  $L_{\mathring{n}}^{(1)}$,  $P_{\mathring{n}-1}^{(1)}$, and $\tilde{L}_{\mathring{n}}^{(2)}$ such that
    \begin{equation}\label{eq:4.8'}  	\mathring{V}_2^\perp \not\supset  L_{\mathring{n}}^{(0)} \supset P_{\mathring{n}-1}^{(0)} \subset L_{\mathring{n}}^{(1)} \subset \mathring{V}_2^\perp, \quad    P_{\mathring{n}-1}^{(0)}  \supset 	\mathring{V}_1 \subset   P_{\mathring{n}-1}^{(1)} \subset  L_{\mathring{n}}^{(1)} ,  \quad  P_{\mathring{n}-1}^{(1)} \subset \tilde{L}_{\mathring{n}}^{(2)} \subset \mathring{V}_3^\perp.\end{equation}
    Furthermore, let $P_{\mathring{n}-2}^{(0)}$, $L_{\mathring{n}}^{(0)}$ and ${L}_{\mathring{n}}^{(2)}$ denote universal bundles over $\LFo_{1,3}(0)_2$ such that
    \begin{equation}\label{eq:4.9'}
        \mathring{V}_1^\perp \supset  L_{\mathring{n}}^{(0)} \supset  P_{\mathring{n}-2}^{(0)} \subset {L}_{\mathring{n}}^{(2)} \subset \mathring{V}_3^\perp.  \end{equation}
    Combining the relations \eqref{eq:4.6'} and \eqref{eq:4.9'}, we see that $U_c = U_1 \times_{\LF_0} \LFo_{1,3}(0)_2 $ is the scheme with universal bundles $P_{\mathring{n}-1}^{(0)}$, $L_{\mathring{n}}^{(0)}$,  $L_{\mathring{n}}^{(1)}$,  $P_{\mathring{n}-2}^{(0)}$, and ${L}_{\mathring{n}}^{(2)}$ such that
    \begin{equation}\label{eq:4.10'}  		\mathring{V}_2^\perp \not\supset  L_{\mathring{n}}^{(0)} \supset P_{\mathring{n}-1}^{(0)} \subset L_{\mathring{n}}^{(1)} \subset \mathring{V}_2^\perp, \quad  \mathring{V}_1 \subset  P_{\mathring{n}-1}^{(0)}, \quad L_{\mathring{n}}^{(0)} \supset  P_{\mathring{n}-2}^{(0)} \subset {L}_{\mathring{n}}^{(2)} \subset \mathring{V}_3^\perp
    \end{equation}
    where we note the additional relation  $P_{\mathring{n}-2}^{(0)} \subset P_{\mathring{n}-1}^{(0)} \subset L_{\mathring{n}}^{(0)}$.
    Define $W$ to be the closed subscheme of $U_c \times_{U_1} U_b$ such that $\tilde{L}_{\mathring{n}}^{(2)} = {L}_{\mathring{n}}^{(2)}$.

    Alternatively, the scheme $W$ can be viewed as the closed subscheme of $\Gr_{{U}_d}(\mathring{n}-2, P_{\mathring{n}-1}^{(0)})$ determined by the zero locus of the morphism
    $
        s_1:P_{\mathring{n}-2}^{(0)} \to (P_{\mathring{n}-1}^{(1)})^{\perp} / L_{\mathring{n}}^{(2)}
    $, or equivalently the closed subscheme of $\Gr_{{U}_c}^{\mathring{V}_1}(\mathring{n}-1,L_{\mathring{n}}^{(2)})$ defined by
    $
        s_2:P_{\mathring{n}-1}^{(1)}/\mathring{V}_1 \to (P_{\mathring{n}-2}^{(0)})^{\perp} / (P_{\mathring{n}-1}^{(0)})^\perp
    $. (To see that $s_1$ and $s_2$ are locally defined by regular sequences, we prove that $W \to {U}_c$ and $W\to U_d$ are both of relative dim $0$. Note that $\dim {U}_c = \dim {U}_d = \dim U_1$.  On the one hand, $W$ is a closed subscheme of the fiber product ${U}_c \times_{U_1} {U}_d $, and therefore  each irreducible component of $W$ has dimension less or equal to $ \dim {U}_d$. On the other hand, the scheme $W$ is a closed subscheme of $\Gr_{{U}_d}(n-2,P_{\mathring{n}-1}^{(0)})$ defined by the morphism $s_1$ which can be regarded as a section of a rank $n-2$ bundle. Hence, the dimension of each irreducible component of $W$ must be greater or equal to $\dim {U}_d$.) 	Therefore, we obtain
    \begin{align*}
        \omega_{W/{U}_d} & = \det (P_{\mathring{n}-2}^{(0)})^{\otimes \mathring{n}} \det (P_{\mathring{n}-1}^{(0)})^{\otimes 2 - \mathring{n}} \det (P_{\mathring{n}-1}^{(1)})^{\otimes 2 - \mathring{n}} \det (L_{\mathring{n}}^{(2)})^{\otimes \mathring{n}-2},  \\
        \omega_{W/{U}_c} & = \det (P_{\mathring{n}-2}^{(0)})^{\otimes 2 - \mathring{n}} \det (P_{\mathring{n}-1}^{(0)})^{\otimes \mathring{n} - 2} \det (P_{\mathring{n}-1}^{(1)})^{\otimes \mathring{n}} \det (L_{\mathring{n}}^{(2)})^{\otimes 2 - \mathring{n}}
    \end{align*}
    which implies that $\omega_{W/{U}_d} $ and $\omega_{W/{U}_c} $ are both trivial in the Picard group of $W$ modulo two. Applying the base change formula \cite[Theorem 5.2, Corollary 9.13]{huang2023the} and Theorem \ref{thm:flat_pullback_finite_pushforward}, we see that
    \begin{equation}\label{eq:pull-push}
        v_{1}^* \acute{\pi}_* \acute{\alpha}^* =  (\pi_a\circ \pi_{c})_* \big( \wp^*(-) \cup \langle a \rangle \big), \quad  \quad \vartheta^* \bar{\pi}'_* \bar{\alpha}'^{*} = (\pi_b\circ \pi_{d})_* \big( \wp^*(-) \cup \langle b \rangle\big)
    \end{equation}
    for some units $a,b \in \mathbb{G}_m(W) $ by Corollary \ref{cor:push-pull-identiy}. Note that $\pi_a\circ \pi_{c} = \pi_b\circ \pi_{d}$. The result follows if we can show that $\wp^*(-) \cup \langle a \rangle  = \wp^*(- \cup \langle a' \rangle)   $ for some $a' \in \mathbb{G}_m(\LF_2)$. It suffices to prove the surjectivity of the maps $\wp^*: \mathbb{G}_m(\LF_2) \to \mathbb{G}_m(W)$ on group of multiplicative units. Let us argue by the following commutative diagram
    \[ \xymatrix{\mathbb{G}_m(S) \ar[d] \ar[r] & \mathbb{G}_m(\LFo_{2,3}(1)) \ar[r]^-{\vartheta_d^*} &  \mathbb{G}_m(U_d) \ar[d]^-{\pi_d^*} \\
        \mathbb{G}_m(\LF_2)  \ar[rr]^-{\wp^*} && \mathbb{G}_m(W) } \]
    where the two unlabeled maps are bijective by Lemma \ref{lma:lf_de_structure_sheaf}. The map $\vartheta_d^*$ is bijective since $\vartheta_d$ is an affine bundle over a reduced scheme (cf. Lemma \ref{lma:affine_bundle_unit}). The map $\pi_d^*$ is bijective since it is a local complete intersection inside a projective bundle, (cf. Lemma \ref{lma:push-pull-identiy-2}). Hence, the map  $\wp^*$ is bijective.
    \footnote{
        It is worth noting that the unit $\langle a \rangle$ appearing in $\GW^{[0]}_0(W)$ represents a specific type of alignment discussed in \cite{balmer2012bases}. Although $W$ is not regular and therefore lies beyond the scope of \textit{loc. cit.}, the technique developed there remains applicable due to the surjectivity of the map $\wp^* : \mathbb{G}_m(\LF_2) \to \mathbb{G}_m(W)$.
    }

    (iii) The right square in Diagram \eqref{eq:map-of-dis-trian} commutes. Observe that $ \partial \mathbf{e} = \partial (\vartheta^*)^{-1} v_1^* \mathbf{h} = 0$. Therefore, it suffices to show that the formula
    $$	\tilde{\pi}_* \tilde{\alpha}^* \iota'_{1*} =  \pi'_* \alpha'^{*}$$
    holds.  Form the following diagram
    $$
        \xymatrix@C=40pt{
        \LF_1 \ar@{=}[d] & \LF_{1,2}(1) \ar[l]_-{\pi'} \ar[r]^-{\alpha'} \ar[d] \ar@{}[dr]|{\diagram \label{diag:lg_odd_so_car_6}} & \LF_2 \ar[d]_-{\iota_{1}'}\\
        \LF_1 & E_2 \ar[l]_-{\tilde{\pi}} \ar[r]^- {\tilde{\alpha}} & \LF_1
        }
    $$
    where all the squares are commutative, $\hat{\iota}$ is a codimension one regular closed embedding, and Diagram $\diag{\ref{diag:lg_odd_so_car_6}}$ is a fiber square. The key point is to apply the usual base change formula (cf. \cite[Corollary 9.13]{huang2023the}) applied to Square $\diag{\ref{diag:lg_odd_so_car_6}}$.
\end{proof}

\subsection{Proof of Theorem \ref{thm:lg_induc} (d)}
The proof of this case is very similar to the case (c) above. Let us keep using the symbols in \eqref{eqn:lg_3_loc},  \eqref{eqn:lg_4_bott} and \eqref{eq:map-of-dis-trian}, but instead with the following notation
\begin{align*}
     & \mathbf{X} = 	\GW^{[m-n]}(\mathbb{LF}_1), \quad	\mathbf{Y} =  \GW^{[m]}(\widetilde{\mathbb{LF}}),  \quad	\mathbf{Z} =  \GW^{[m]}(\widetilde{\mathbb{LF}}_1) ; \\
     & \mathbf{A} = \GW^{[m-n]}(\mathbb{LF}_2), \quad \mathbf{B} = K(\LF_2), \quad \mathbf{C} = \GW^{[m-n+1]}(\mathbb{LF}_2);                                        \\
     & \mathbf{D} = \GW^{[m-2n+1]}(\widetilde{\mathbb{LF}}_2), \quad  \mathbf{E} = \GW^{[m]}(\widetilde{\mathbb{LF}}_2);
\end{align*}
\[\mathbf{a} = \bar{\pi}_* \bar{\alpha}^*,\quad \mathbf{d} = \iota'_{1*}, \quad \mathbf{c} = \iota'_{1*},\quad \mathbf{e} = \pi'_{\ast} \alpha'^{*}, \quad \mathbf{f}=\hat{\pi}_*H\hat{\alpha}^*,\quad \mathbf{g} = \iota_*,\quad \mathbf{h}=\grave{\pi}_* \grave{\alpha}^*.\]
Theorem \ref{thm:two_step_blow_up} (iii) applies to this case ($\lambda_1(\Delta) \equiv c_1  \mod 2, \, \lambda_2(\Delta) \equiv c_2-1 \mod 2$), and we get
\[ \partial = \eta \cup \tilde{\pi}_{2*}\tilde{\iota}_1^*\pi_{2*} \alpha^*= \eta \cup \check{\pi}_* \check{\alpha}^* \]
where the last equality is obtained by the base change formula.
Under these new notation, we need to prove that Sublemma \ref{sublem:case-(c)} still holds, and the result follows. (i) The left square of Diagram \eqref{eq:map-of-dis-trian} commutes (i.e. $	\iota_{1\ast} \bar{\pi}_* \bar{\alpha}^* = \hat{\pi}_*H\hat{\alpha}^* F \label{eqn:lg_odd_delta_1}$) for a similar reason as in case (c) in view of the following diagram
\[
    \xymatrix@C=40pt{
    \LF_0& \mathrm{LF}_{0,2}(0) \ar[l]_-{\hat{\pi}} \ar[r]^-{\hat{\alpha}} & \LG^{2}\\
    \LF_1 \ar[u]^-{\iota_1} & \mathrm{L}{\mathrm{F}}_{1,2}(0) \ar[l]_-{\bar{\pi}} \ar[r]^-{\bar{\alpha}} \ar[u]^-{\iota} & \LF_2 \ar@{=}[u]
    }
\]
(ii) The middle square of Diagram \eqref{eq:map-of-dis-trian} also commutes (i.e. $	v_{1}^* \hat{\pi}_*H\hat{\alpha}^* = \vartheta^* \iota'_{1*} H $ and $v_{1}^* \grave{\pi}_* \grave{\alpha}^* = \vartheta^* \pi'_{\ast} \alpha'^{*}$). The first formula is deduced similarly to the proof of Sublemma \ref{sublem:case-(c)}. The second formula can be obtained from the following diagram
\[
    \xymatrix{
    \LF_0 & U_{1} \ar[l]_-{v_1} \ar@{=}[r]& U_{1} \ar[r]^-{\vartheta} & \LF_1 \\
    \mathrm{LF}_{0,2}(0)_2 \ar[u]^-{\grave{\pi}} \ar[dr]_-{\grave{\alpha}} & {U}_a \ar[r]^-{\Pi} \ar[d]_-{\alpha_a}\ar[u]^-{\pi_a} \ar[l]_-{v_a} & U_b \ar[d]_-{\alpha_b}  \ar[u]^-{\pi_b}\ar[r]^-{\vartheta_b} & \mathrm{LF}_{1,2}(1) \ar[u]_-{{\pi}'} \ar[dl]^-{{\alpha}'} \\
    & \LF_2 \ar@{=}[r] & \LF_2
    }
\]
where the top left and right squares are both fiber squares. Then we note that $U_1 := \LF_0- \LF_1 \cong \LF_{0,1}(0) - \LF_{1,1}(0) $. In the latter model, we let $P_{{n}-1}^{(0)}$, $L_{{n}}^{(0)}$ and $L_{{n}}^{(1)}$ be the universal bundles on $U_1$ satisfying the relation
\begin{equation}\label{eq:4.6}
    V_1^\perp \not \supset  L_{{n}}^{(0)} \supset P_{{n}-1}^{(0)} \subset L_{{n}}^{(1)} \subset V_1^\perp.
\end{equation}
Recall that $ P_{n-1}^{(0)} = L_{{n}}^{(0)} \cap V_1^\perp  $ and $L_n^{(1)} =(P_{n-1}^{(0)})^\perp \cap V_1^\perp $.
Let $P_{{n}-1}^{(1)}$, $L_{{n}}^{(1)}$ and $L_{{n}}^{(2)}$ be universal bundles over $\LF_{1,2}:= \LF_{1,2}(1)$ such that
\begin{equation}\label{eq:4.7}
    V_1 \subset   P_{{n}-1}^{(1)} \subset  L_n^{(1)} \subset V_1^\perp,  \quad  P_{{n}-1}^{(1)} \subset L_n^{(2)} \subset V_2^\perp.  \end{equation}
Combining the relations \eqref{eq:4.6} and \eqref{eq:4.7}, we see that $U_b = U_1 \times_{\LF_1} \LF_{1,2}(1) $ is the scheme with universal bundles $P_{{n}-1}^{(0)}$, $L_{{n}}^{(0)}$,  $L_{{n}}^{(1)}$,  $P_{{n}-1}^{(1)}$, and $L_{{n}}^{(2)}$ such that
\begin{equation}\label{eq:4.8}    V_1^\perp \not \supset  L_{{n}}^{(0)} \supset P_{{n}-1}^{(0)} \subset L_{{n}}^{(1)} \subset V_1^\perp,  \quad  V_1 \subset   P_{{n}-1}^{(1)} \subset  L_n^{(1)} \subset V_1^\perp,  \quad  P_{{n}-1}^{(1)} \subset L_n^{(2)} \subset V_2^\perp. \end{equation}
Furthermore, ${U}_a = \LF_{0,2}(0)_2 - \LF_{1,2}(0)_2 $ is isomorphic to $(\LF_{0,1}(0) - \LF_{1,1}(0)) \times_{\LF_0} \mathrm{LF}_{0,2}(0)_2 $ with universal bundles $ L_n^{(0)}$, $P_{n-2}^{(0)}$, $L_n^{(1)} $, $P_{{n}-1}^{(0)}$, and $L_{{n}}^{(2)}$ such that
\begin{equation}\label{eq:4.10} V_1^\perp \not \supset L_n^{(0)} \supset P_{n-1}^{(0)} \supset P_{n-2}^{(0)} \subset L_n^{(2)} \subset V_2^\perp, \quad V_1^\perp \supset L_n^{(1)} \supset P_{n-1}^{(0)}  \end{equation}
Observe that, over $(\LF_{0,1}(0) - \LF_{1,1}(0)) \times_{\LF_0} \mathrm{LF}_{0,2}(0)_2 $, there are more  hidden relations that
\begin{equation}\label{eq:4.11} P_{n-2}^{(0)} \subset P_{n-1}^{(1)} \subset L_n^{(1)}, \quad V_1 \subset P_{n-1}^{(1)} \subset L_{n}^{(2)}.  \end{equation}
where $P_{n-1}^{(1)} = V_1 \oplus P_{n-2}^{(0)}$ is the unique admissible subbundle of rank $n-1$ satisfying these properties, since $V_1 \not \subset P_{n-2}^{(0)}$ and $V_1 \cap P_{n-2}^{(0)} = 0$. Construct the morphism $\Pi: {U}_a \to U_b$ by forgetting the strata $P_{n-2}^{(0)}$. The morphism $\Pi: {U}_a \to U_b$ is projective. To illustrate, consider the Grassman bundle $\Gr_{U_b}(n-2, P_{n-1}^{(0)})$ and its universal bundle $P_{n-2}^{(0)}$. Now, the scheme ${U}_a$ is the closed subscheme of $\Gr_{U_b}(n-2, P_{n-1}^{(0)})$ defined by the zero locus of the morphism
\begin{equation}\label{eq:4.12}  P_{n-2}^{(0)} \to L_n^{(1)}/ P_{n-1}^{(1)} \end{equation}
which is a regular section by our usual dimension count. Therefore, $\Pi$ is a projective morphism and
\begin{equation}\label{eq:4.13} \omega_{\Pi} = \det(P_{n-2}^{(0)})^{n-2} \cdot \det(P_{n-1}^{(0)})^{2-n} \cdot  \det (L_{n}^{(1)})^{n-2} \otimes \det(P_{n-1}^{(1)})^{2-n}  \end{equation}
By the relations \eqref{eq:4.10} and \eqref{eq:4.11}, we get a fiber square of vector bundles
\begin{equation}\label{eq:4.14}
    \xymatrix{
    P_{n-2}^{(0)} \ar[d] \ar[r] & P_{n-1}^{(0)} \ar[d]\\
    P_{n-1}^{(1)}  \ar[r] &  L_n^{(1)}
    }
\end{equation}
over ${U}_a$, and hence

\begin{equation}\label{eq:4.15}
    P_{n-1}^{(0)}/ P_{n-2}^{(0)}  \cong L_n^{(1)}/ P_{n-1}^{(1)}
\end{equation}

Putting \eqref{eq:4.13} and \eqref{eq:4.15} together, we  conclude that the canonical sheaf $\omega_\Pi$ is trivial in the Picard group of ${U}_a$ modulo two.
By Example \ref{exm:LF012-132-022}, we see that the maps $\grave{\pi}_*\grave{\alpha}^*$ and $\pi'_*(\alpha')^*$ are both well-defined. By the base change formula, we see that $(\pi_a)_*\alpha_a^* = v_1^* \grave{\pi}_*\grave{\alpha}^* $ and $(\pi_b)_*\alpha_b^* = \vartheta^* \pi'_*\alpha'^* $ are also well-defined. It follows that $(\pi_a)_*\alpha_a^* = (\pi_b)_*\alpha_b^* $, which is obtained from $\Pi_*\Pi^* = 1$ (cf. Remark  \ref{rmk:alignment-remark} or \cite[Proposition 3.15]{balmer2012witt}).

(iii) The right square of Diagram \ref{eq:map-of-dis-trian} commutes. This requires showing that $\check{\pi}_* \check{\alpha}^* \iota_{1*}' = \bar{\pi}_*\bar{\alpha}^*$ which is a consequence of the base change formula (cf. Theorem \ref{thm:flat_pullback_finite_pushforward}).

\section{Proof of the main theorem: The additive basis} \label{sec:proof_of_the_main_theorem}

\subsection{Almost even and \texorpdfstring{$K$}{K}-even shifted Young diagrams}
In this section, we provide alternative ways of describing almost even and $K$-even Young diagrams.
\begin{definition}[\cite{martirosian2021witt}]
    A shifted Young diagram $\Lambda$ in $\digamma_{n}$ is called \textit{almost even} if all the inner segments have even length, except that the last inner segment $s_{l_{\Lambda}}$ has odd length of $s_{l_{\Lambda}}$ is an inner segments. (Equivalently, the segments satisfy: (i) $|s_i|$ is even for every $i$ with $3 \leq i \leq l_\Lambda - 1$, (ii) $|s_2|$ is even if $|s_1| \neq 0 $, (iii) $|s_{l_\Lambda}|$ is odd). This is also equivalent to requiring that the index $w_\Lambda$ of $\Lambda$ (Definition \ref{def:shifted_young_index}) equals $l_{\Lambda}$.  Let $\mathfrak{A}_n$ denote the set of all almost even shifted Young diagrams in $\digamma_n$.
\end{definition}

\begin{definition}[See also: Figure \ref{fig:K_even}]\label{def:K_even}
    A shifted Young diagram $\Lambda$ in $\digamma_{n}$ is called \textit{$K$-even} if its index $w_\Lambda$ (Definition \ref{def:shifted_young_index}) is even. Equivalently, there exists an even integer $w=w_\Lambda$ such that
    \begin{enumerate}
        \item $|Oa_{t}| \equiv n-1 \pmod 2$ for each $2 \leq t \leq w$ (whenever $|Oa_t|\neq 0$), and
        \item $|Oa_{w+1}| \equiv n \pmod 2$.
    \end{enumerate}
    Let $\mathfrak{E}_n$ denote the set of all $K$-even shifted Young diagrams in $\digamma_n$.
\end{definition}

\begin{figure}[!ht]
    \begin{tikzpicture}[scale=0.3]
        \draw[blue, ultra thick] (19,16) -- (19,13);
        \draw[blue, ultra thick] (19,13) -- (11,13) -- (11,9) -- (5,9) -- (5,7) -- (3,7);

        \filldraw [red] (19,13) circle (10pt);
        \filldraw [teal] (11,13) circle (10pt);
        \filldraw [red] (11,9) circle (10pt);
        \filldraw [teal] (5,9) circle (10pt);
        \filldraw [red] (5,7) circle (10pt);
        \filldraw [yellow] (19,16) circle (10pt);

        \path (18.5,16.5)
        node[text=black,anchor=base west] {\tiny $O$};
        \path (18.8,14.3)
        node[text=black,anchor=base west] {\tiny $s_1$};
        \path (18.8,12.3)
        node[text=red,anchor=base west] {\tiny $a_{2}$};
        \path (10.7,13.3)
        node[text=teal,anchor=base east] {\tiny $a_{3}$};
        \path (13.7,12.1)
        node[text=black,anchor=base west] {\tiny $s_2$};
        \path (11,10.6)
        node[text=black,anchor=base west] {$\cdots$};
        \path (10.9,8.2)
        node[text=red,anchor=base west] {\tiny $a_{w_{\Lambda}}$};
        \path (6.7,8.2)
        node[text=black,anchor=base west] {\tiny $s_{w_{\Lambda}}$};
        \path (4.8,9)
        node[text=teal,anchor=base east] {\tiny $a_{w_{\Lambda}+1}$};

        \node[draw, align=center] at (12,4.2)
        {\tiny when $|Oa_{2}| = |s_{1}| \neq 0$ and $|Oa_{2}| \equiv n-1\pmod 2$};
    \end{tikzpicture} \quad
    \begin{tikzpicture}[scale=0.3]
        \draw[blue, ultra thick] (19,16) -- (17,16);
        \draw[blue, ultra thick] (17,16) -- (17,13) -- (11,13) -- (11,9) -- (5,9) -- (5,7) -- (3,7);

        \filldraw [teal] (17,16) circle (10pt);
        \filldraw [red] (17,13) circle (10pt);
        \filldraw [teal] (11,13) circle (10pt);
        \filldraw [red] (11,9) circle (10pt);
        \filldraw [teal] (5,9) circle (10pt);
        \filldraw [red] (5,7) circle (10pt);
        \filldraw [yellow] (19,16) circle (10pt);

        \path (18.5,16.5)
        node[text=black,anchor=base west] {\tiny $O$};
        \path (17,16.5)
        node[text=black,anchor=base west] {\tiny $s_2$};
        \path (16.8,14.3)
        node[text=black,anchor=base west] {\tiny $s_3$};
        \path (16.7,16.3)
        node[text=teal,anchor=base east] {\tiny $a_{3}$};
        \path (16.8,12.3)
        node[text=red,anchor=base west] {\tiny $a_{4}$};
        \path (10.7,13.3)
        node[text=teal,anchor=base east] {\tiny $a_{5}$};
        \path (13.7,12.1)
        node[text=black,anchor=base west] {\tiny $s_4$};
        \path (11,10.6)
        node[text=black,anchor=base west] {$\cdots$};
        \path (10.9,8.2)
        node[text=red,anchor=base west] {\tiny $a_{w_{\Lambda}}$};
        \path (6.7,8.2)
        node[text=black,anchor=base west] {\tiny $s_{w_{\Lambda}}$};
        \path (4.8,9)
        node[text=teal,anchor=base east] {\tiny $a_{w_{\Lambda}+1}$};

        \node[draw, align=center] at (12,4.2)
        {\tiny when $|Oa_{2}| = |s_{1}| = 0$};
    \end{tikzpicture}
    \caption{Both pictures show the boundaries of $K$-even shifted Young diagrams. Definition \ref{def:K_even} also requires that $|Oa_{3}|, |Oa_{4}|, \ldots, |Oa_{w_\Lambda}|\equiv n-1 \pmod 2$
    and $ |Oa_{w_\Lambda+1}| \equiv n \pmod 2$.}
    \label{fig:K_even}
\end{figure}
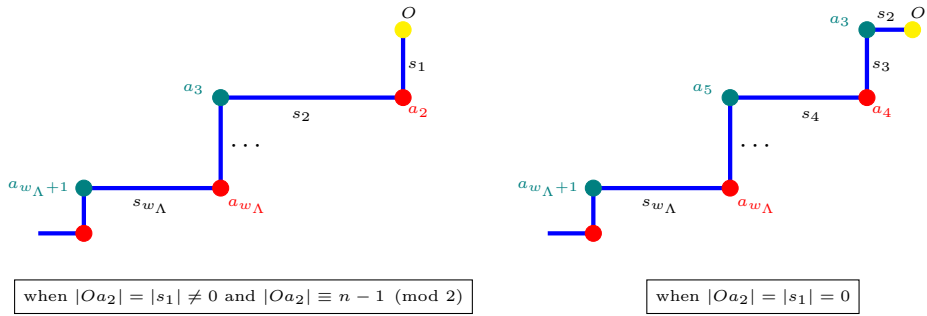

\subsection{Construction of the maps} The aim of this section is to construct the underlying map in the statement of our theorem explicitly. The goal is to construct generalized Lagrangian flags with regard to the almost even and $K$-even Young diagrams. Our models are found by inductively applying Theorem \ref{thm:lg_induc}.

\subsubsection{ }   	Let $\Lambda$ be a shifted Young diagram in the shifted $n$-frame $ \digamma_{n}$.\ Let $s_t$ be a horizontal segment of $\Lambda$, and let $a_{t,1} =a_t$ be its convex corner. Let us order any set
\begin{equation*}
    \{a_{t,1}, a_{t,2}, \ldots, a_{t,v_t}\}
\end{equation*}
of special marked points on $s_t$ from right to left in view of Figure  \ref{fig:boundary-shifted-young}. Let $\mathcal{S}_t := \{a_{t,1}, a_{t,2}, \ldots, a_{t,v_t}\}$ be a set of special marked points for each horizontal segment $s_t$ with $1 \leq t \leq l$. Let $\mathcal{S} $ denote the union of all such sets $\mathcal{S}_t$, and let $\Sigma = |\mathcal{S}|$ and $\tau_\Lambda:= 2\lfloor\frac{l_\Lambda}{2} \rfloor$. Let
$\mathbf{{d}}_{\Lambda} = (d_0, d_1, \ldots d_{\Sigma-1})$ (resp. $\mathbf{{d}}_{\Lambda} = (d_0, d_1, \ldots d_\Sigma)$) ($0 \leq d_i \leq n$) denote the $\Sigma$-tuple (resp. ($\Sigma+1$)-tuple)
\[
    \mathbf{d}_\Lambda:= \begin{cases}
        \big(|Oa_{2,1}|,\ldots, |Oa_{2,v_1}|,|Oa_{4,1}|,\ldots, |Oa_{4,v_4}|, \ldots \ldots,|Oa_{\tau_\Lambda,1}|,\ldots, |Oa_{\tau_\Lambda,v_{\tau_\Lambda}}| \big)    & \textnormal{if $l_\Lambda$ is even} \\
        \big(|Oa_{2,1}|,\ldots, |Oa_{2,v_1}|,|Oa_{4,1}|,\ldots, |Oa_{4,v_4}|, \ldots \ldots,|Oa_{\tau_\Lambda,1}|,\ldots, |Oa_{\tau_\Lambda,v_{\tau_\Lambda}}|, n \big) & \textnormal{if $l_\Lambda$ is odd}
    \end{cases} \]
Define $\mathbf{t}_{\Lambda}$ to be the following $(\Sigma-1)$-tuple (resp. $\Sigma$-tuple)
\[
    \mathbf{t}_{\Lambda}:= \begin{cases}
        \big(|a_{2,1}a_{2,2}|, |a_{2,2}a_{2,3}|, \ldots, |a_{2,v_{2}-1}a_{2,v_2}|, |a_{2,v_2}a_{3}|,|a_3a_{3,1}| \ldots, |a_{\tau,v_{\tau}-1}a_{\tau,v_\tau}|\big) & \textnormal{if $l_\Lambda$ is even} \\
        \big(|a_{2,1}a_{2,2}|, |a_{2,2}a_{2,3}|, \ldots, |a_{2,v_{2}-1}a_{2,v_2}|, |a_{2,v_2}a_{3}|,|a_3a_{3,1}| \ldots, |a_{\tau,v_{\tau}}a_{\tau+1}|\big)     & \textnormal{if $l_\Lambda$ is odd}
    \end{cases} \]
Define $\mathbf{e}_{\Lambda}$ to be the $(\Sigma-1)$-tuple (resp. $\Sigma$-tuple) obtained from $\mathbf{d}_\Lambda$ by deleting the last digit.  Geometrically speaking, $\mathbf{{d}}_{\Lambda}$ captures the lattice length from the origin to the special marked points and $\mathbf{t}_{\Lambda}$ collects the distances between two consecutive special marked points (including terminal points) horizontally.

\subsubsection{ } Construct the following schemes
\[\LF_{\mathcal{S}}^0{(\Lambda)} := \LF_{\mathbf{{d}}_{\Lambda}+\mathbf{1}}(\mathbf{e}_{\Lambda}+\mathbf{2}- \mathbf{t}_{\Lambda};\mathring{V}_\bullet)_{\mathbf{t}_{\Lambda}}, \quad
    \LF_{\mathcal{S}}^1(\Lambda) := \LF_{\mathbf{d}_{\Lambda}+\mathbf{1}}(\mathbf{e}_{\Lambda}+\mathbf{2}- \mathbf{t}_{\Lambda} - \epsilon_1, \mathring{V}_\bullet)_{\mathbf{t}_{\Lambda}} \]
where $\epsilon_1$ is defined in Section \ref{sec:tuples}.
These two types of schemes will be important for constructing generalized Lagrangian flags showed up in our additive basis for Hermitian $K$-theory of Lagrangian Grassmannians. There are canonical maps
\[  S \stackrel{p_\Lambda}\longleftarrow \LF^0_{\mathcal{S}}{(\Lambda)} \stackrel{f_\Lambda}\longrightarrow \LF_0, \quad S \stackrel{p_\Lambda}\longleftarrow \LF^1_{\mathcal{S}}{(\Lambda)} \stackrel{f_\Lambda}\longrightarrow \LF_0\]
where $p_\Lambda$ is the projection and $f_\Lambda$ is the composition of the projection $\LF^i_{\mathcal{S}}{(\Lambda)} \xrightarrow{\pi}  \LF_{d_0} $ followed by the inclusion $ \iota: \LF_{d_0} \hookrightarrow \LF_0 $ ($i =0,1$).

\subsubsection{}
A special marked point $a$ on the segment $s_t$ is called \textit{even} (resp. \textit{odd}) if the lattice length between $a$ and the corner $a_t$ is even (resp. odd).
An ordered set $\mathcal{S}_t:=\{a_{t,1}, a_{t,2}, \ldots, a_{t,v_t}\}$ of special marked points on the segment $s_t$ is called of type $(n)$ ($n \in \{1,2,3\}$) if it satisfies the rule ($n$) below:
\begin{enumerate}
    \item [(1)] $\mathcal{S}_t$ consists of all the special marked points on $s_t$.
    \item [(2)] $\mathcal{S}_t$ consists of all the even special marked points on $s_t$.
    \item [(3)] $\mathcal{S}_t$ consists of all the odd special marked points together with $a_{t,1}$ on $s_t$.
\end{enumerate}
See Figure \ref{fig:boundary}.

\begin{figure}[!ht]
    \centering
    \begin{tikzpicture}[scale=0.3,>=Stealth]
        \coordinate (O)  at (19,15);
        \coordinate (a2) at (19,13);
        \coordinate (a3) at (13,13);
        \coordinate (a4) at (13,10);
        \coordinate (a5) at (9,10);
        \coordinate (a6) at (9,8);
        \coordinate (a7) at (7,8);

        \draw[blue, ultra thick] (O)--(a2)--(a3)--(a4)--(a5)--(a6)--(a7);

        \fill[yellow] (O)  circle (10pt);
        \foreach \P in {a2,a4,a6} \fill[red]  (\P) circle (10pt);
        \foreach \P in {a3,a5} \fill[teal] (\P) circle (10pt);

        \foreach \X/\name in {17/{a_{2,3}},15/{a_{2,5}}}{
        \fill[purple] (\X,13) circle (8pt);
        \node[anchor=north,color=red] at (\X,13) {\tiny $\name$};
        }
        \fill[purple] (11,10) circle (8pt);
        \node[anchor=north,color=red] at (11,10) {\tiny $a_{4,3}$};

        \node[anchor=west]        at (19,15.5) {\tiny $O$};
        \node[anchor=north west,color=red] at (a2) {\tiny $a_2=a_{2,1}$};
        \node[anchor=north west,color=red] at (a4) {\tiny $a_4=a_{4,1}$};

        \node[anchor=west]  at (19,14) {\tiny $s_1$};
        \node[anchor=south] at (16,13)   {\tiny $s_2$};
        \node[anchor=west]  at (13,11.5) {\tiny $s_3$};
        \node[anchor=south] at (11,10)   {\tiny $s_4$};
        \node[anchor=west]  at (9,8.5)     {\tiny $\ldots$};
    \end{tikzpicture}
    \quad
    \begin{tikzpicture}[scale=0.3,>=Stealth]
        \coordinate (b2) at (18,13);
        \coordinate (a3) at (13,13);
        \coordinate (a4) at (13,10);
        \coordinate (a5) at (9,10);
        \coordinate (a6) at (9,8);
        \coordinate (a7) at (7,8);

        \draw[blue, ultra thick] (b2)--(a3)--(a4)--(a5)--(a6)--(a7);

        \fill[red]  (b2) circle (10pt);
        \fill[red]  (a4) circle (10pt);
        \fill[red]  (a6) circle (10pt);
        \foreach \P in {a3,a5} \fill[teal] (\P) circle (10pt);

        \foreach \X/\name in {17/{a_{2,2}},15/{a_{2,4}}}{
        \fill[purple] (\X,13) circle (8pt);
        \node[anchor=north,color=red] at (\X,13) {\tiny $\name$};
        }
        \fill[purple] (11,10) circle (8pt);
        \node[anchor=north,color=red] at (11,10) {\tiny $a_{4,2}$};

        \node[anchor=south] at (16,13)   {\tiny $s_2$};
        \node[anchor=west]  at (13,11.5) {\tiny $s_3$};
        \node[anchor=south] at (11,10)   {\tiny $s_4$};
        \node[anchor=west]  at (9,8.5)     {\tiny $\ldots$};

        \node[anchor=north west,color=red] at (b2) {\tiny $a_2=a_{2,1}$};
        \node[anchor=north west,color=red] at (a4) {\tiny $a_4=a_{4,1}$};
    \end{tikzpicture}

    \caption{The first picture illustrates that $s_2$ is of type (2). The second picture illustrates that $s_2$ is of type (3).}
    \label{fig:boundary}
\end{figure}
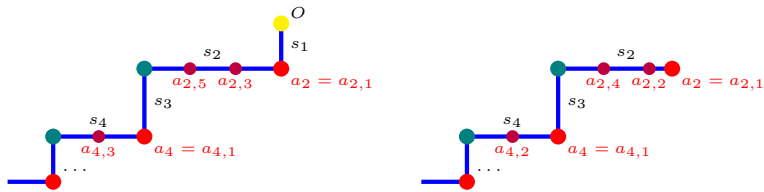

\begin{remark}
    One immediately gets $v_t = |s_t|$ for type (1), $v_t = \lceil \frac{|s_t|}{2} \rceil$ for type (2) and (3).
\end{remark}
\begin{remark}
    If $\mathcal{S}_t$ are all of type (1), then $\LF^0_{\mathcal{S}}{(\Lambda)}  \cong \LF{(\Lambda)}$.
\end{remark}

\subsubsection{ }		Let $w$ be an even integer. Let $\Lambda$ be a shifted Young diagram in $\digamma_n$. Define
\[
    \mathrm{L}{\mathrm{F}}^{a}_w{(\Lambda)}:=
    \LF_{\mathcal{S}{(w)} }^{0}{(\Lambda)}, \quad \mathrm{L}{\mathrm{F}}^b_w{(\Lambda)}:=  \LF_{\mathcal{\widetilde{S}}{(w)} }^{1}{(\Lambda)}\]
where
\begin{itemize}
    \item [a)] $\mathcal{S}{(w)}$ is the set of special marked points such that  $\mathcal{S}(w)_t$ is of type (1) for $t > w$ and $\mathcal{S}(w)_t$ is of type (2) for $t \leq w$;
    \item [b)] $\mathcal{\widetilde{S}}{(w)}$ is the set of special marked points such that  $\mathcal{S}(w)_t$ is of type (1) for $t > w$ and $\mathcal{S}(w)_t$ is of type (2) for $t \leq w$, except for $S(w)_2$ which is of type (3).
\end{itemize}

\subsubsection{ }
Let $\Lambda$ be an almost even Young diagram in $\digamma_{n}$. Construct the maps
\[\begin{aligned}
         & \xi_\Lambda^0: \GW^{[m]}(S) \xrightarrow{p_\Lambda^*} \GW^{[m-|\Lambda|]}(\LF^a({\Lambda}), \omega_{f_\Lambda}) \xrightarrow{f_{\Lambda * }} \GW(\LF_0)                           & \textnormal{if $n$ is odd;}                \\
         & \xi_\Lambda^1: \GW^{[m]}(S) \xrightarrow{p_\Lambda^*} \GW^{[m-|\Lambda|]}(\LF^a({\Lambda}), \omega_{f_\Lambda}\otimes \Delta) \xrightarrow{f_{\Lambda * }} \GW(\widetilde{\LF}_0) & \textnormal{if $n$ is even and $s_1 > 0$;} \\
         & \xi_\Lambda^0: \GW^{[m]}(S, V_1) \xrightarrow{p_\Lambda^*}  \GW^{[m-|\Lambda|]}(\LF^b{(\Lambda)}, \omega_{f_\Lambda}) \xrightarrow{f_{\Lambda * }} \GW(\LF_0)                     & \textnormal{if $n$ is even and $s_1 = 0$;}
    \end{aligned}\]
where $\LF^a(\Lambda) := \LF^a_{l_\Lambda}(\Lambda) $ and $\LF^b(\Lambda):= \LF^b_{l_\Lambda}(\Lambda) $. These maps are well-defined by Corollary \ref{coro:lfl_canonical_divisor}.
\subsubsection{ }
Let $\Lambda$ be a $K$-even Young diagram in $\digamma_{n}$ of index $\omega_\Lambda$. Construct the maps
\[\begin{aligned}
         & \mu_\Lambda^1: K(S) \xrightarrow{p_\Lambda^*} K(\mathrm{L}{\mathrm{F}}^a{(\Lambda)}) \xrightarrow{f_{\Lambda*}} K(\LF_0) \xrightarrow{ H} \GW^{[m]}(\widetilde{\LFb}_0) & \textnormal{for any $n$;}                                  \\
         & \mu_\Lambda^0: K(S) \xrightarrow{p_\Lambda^*} K(\LF^a{(\Lambda)}) \xrightarrow{f_{\Lambda*}} K(\LF_0) \xrightarrow{ H} \GW^{[m]}({\LFb}_0)                              & \textnormal{if n is odd, or if $n$ is even and $s_1 > 0$;} \\
         & \mu_\Lambda^0: K(S) \xrightarrow{p_\Lambda^*} K(\LF^b{(\Lambda)}) \xrightarrow{f_{\Lambda*}} K(\LF_0) \xrightarrow{ H} \GW^{[m]}({\LFb}_0)                              & \textnormal{if $n$ is even and $s_1 = 0$;}
    \end{aligned}\]
where $\LF^a(\Lambda) := \LF^a_{w_\Lambda}(\Lambda) $ and $\LF^b(\Lambda) := \LF^b_{w_\Lambda}(\Lambda)$, and where $H$ is the hyperbolic map.
\subsection{The main theorem}
Recall notation in Section \ref{sect:iota-c}. Let $\mathfrak{H}$ be a subset of $\mathfrak{U}_n$. Further set
\[\mathfrak{H}^r :=\mathfrak{H}\cap \iota^{-1}\mathfrak{U}_n  \quad \mathfrak{H}^c :=  \mathfrak{H} \cap v^{-1}\mathfrak{U}_{n-1}  \]
\[ \mathfrak{H}^{rr} :=\mathfrak{H}\cap \iota^{-1}\iota^{-1}\mathfrak{U}_{n-2} \quad \mathfrak{H}^{cr} :=\mathfrak{H}\cap v^{-1}\iota^{-1}\mathfrak{U}_{n-2} \quad \mathfrak{H}^{cc} :=\mathfrak{H}\cap v^{-1}v^{-1}\mathfrak{U}_{n-2}. \]
The case $\mathfrak{H}= \mathfrak{E}_{n}$ or $\mathfrak{A}_n$ will be of particular interest.
\begin{theorem}\label{thm:main-theo} The following statements hold:
    \begin{enumerate}[leftmargin=20pt,label={\rm (\alph*)},ref=(\alph*)]
        \item Suppose that $n$ is even, then the map
              \[   (\Theta^1, \Omega^1): \bigoplus_{\Lambda \in \mathfrak{E}_n \backslash \mathfrak{A}_n^r } K(S) \oplus \bigoplus_{\Lambda \in \mathfrak{A}_{n}^r} \GW^{[m-|\Lambda|]}(S) \xrightarrow{ (\sum \mu_\Lambda^1, \sum\xi^1_\Lambda)} \GW^{[m]}(\widetilde{\LFb}_0)
              \]
              is an equivalence.
        \item Suppose that $n$ is even, then the map
              \[
                  (\Theta^0, \Omega^0): \bigoplus_{\Lambda \in \mathfrak{E}_n \backslash \mathfrak{A}_n^c } K(S) \oplus \bigoplus_{\Lambda \in \mathfrak{A}_{n}^c} \GW^{[m-|\Lambda|]}(S, V_1) \xrightarrow{ (\sum \mu_\Lambda^0, \sum \xi_\Lambda^0)} \GW^{[m]}(\LFb_0)
              \]
              is an equivalence.
        \item Suppose that $n$ is odd, then the map
              \[
                  (\Theta^0, \Omega^0): \bigoplus_{\Lambda \in \mathfrak{E}_n \backslash \mathfrak{A}_n} K(S) \oplus \bigoplus_{\Lambda \in \mathfrak{A}_n} \GW^{[m-|\Lambda|]}(S) \xrightarrow{ (\sum \mu_\Lambda^0, \sum \xi_\Lambda^0)} \GW^{[m]}(\mathbb{LF}_0)
              \]
              is an equivalence.
        \item Suppose that $n$ is odd, then the map
              \[
                  \Theta^1: \bigoplus_{\Lambda \in \mathfrak{E}_n } K(S) \xrightarrow{ \sum \mu_\Lambda^1} \GW^{[m]}(\widetilde{\mathbb{LF}}_0)
              \]
              is an equivalence.
    \end{enumerate}
\end{theorem}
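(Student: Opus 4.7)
I would prove all four parts (a)--(d) simultaneously by induction on $n$. The base cases $n=1$ and $n=2$ rest on the identification $\LG(1,V) \cong \mathbb{P}(V)$ together with the Hermitian $K$-theory of projective bundles, which gives the required rank-one decompositions indexed by the two Young diagrams in $\digamma_1$; the case $n=2$ follows by applying Theorem \ref{thm:lg_induc}(a),(b) and feeding in the already-known $n=1$ decomposition.

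For the inductive step the engine is Theorem \ref{thm:lg_induc}. When $n$ is even, parts (a) and (b) of that theorem split $\GW^{[m]}(\widetilde{\LFb}_0)$ and $\GW^{[m]}(\LFb_0)$ into two summands living on $\mathbb{LF}_1, \widetilde{\mathbb{LF}}_1$; the isomorphism $\rho : \LF_1 \xrightarrow{\cong} \LG(n-1, V^1)$ from \S\ref{sect:padding} lets me invoke the induction hypothesis at rank $n-1$ (odd). When $n$ is odd, parts (c) and (d) give three summands, two of which live on $\mathbb{LF}_2 \cong \LG(n-2, V^2)$ and the middle one is $K(\LF_2)$, which I would feed back into itself via the $K$-theory computation of Section \ref{sec:K_theory_of_lg}.

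Combinatorially I would split the index sets $\mathfrak{A}_n, \mathfrak{E}_n$ using the operations $\iota$ (remove the topmost row, applicable when $\Lambda_n = n$) and $v$ (remove the rightmost column, applicable when $\Lambda_n < n$) from \S\ref{sect:iota-c}, tracking how the index $w_\Lambda$ of Definition \ref{def:shifted_young_index} transforms and how the parity constraints defining $\mathfrak{A}_n$ and $\mathfrak{E}_n$ become the opposite-parity constraints in $\digamma_{n-1}$ or $\digamma_{n-2}$. The subsets $\mathfrak{A}_n^r, \mathfrak{A}_n^c, \mathfrak{E}_n \setminus \mathfrak{A}_n$ will, under $\iota$ and $v$ (and their iterations for odd $n$), bijectively match the pieces of the direct sum produced by Theorem \ref{thm:lg_induc}, with the subset $\mathfrak{U}_{n-2}$ feeding the $K(\LF_2)$ summand through the hyperbolic map $H$. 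For each $\Lambda$, fiber squares analogous to Lemma \ref{lem:lf_lambda} will identify $\LF^a(\Lambda), \LF^b(\Lambda)$ with the pullback of the corresponding scheme for $\iota(\Lambda)$ or $v(\Lambda)$; the padding of \S\ref{sect:padding} is precisely what ensures the forgetful projections $\bar\pi, \bar\alpha, \acute\pi, \acute\alpha, \grave\pi, \grave\alpha, \hat\pi, \hat\alpha$ appearing in Theorem \ref{thm:lg_induc} really correspond to stripping the outermost stratum of the flag scheme. A diagram chase analogous to the one carried out in Section \ref{sec:K_theory_of_lg} then reduces each part of Theorem \ref{thm:main-theo} to the induction hypothesis at rank $n-1$ or $n-2$.

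The main obstacle will be the bookkeeping of twists and parity: I must check not only that the Young-diagram combinatorics splits correctly, but that the line bundle $\omega_{f_\Lambda}$ computed in Corollary \ref{coro:lfl_canonical_divisor} agrees, modulo $2$ in the Picard group, with the twist appearing on the corresponding summand of Theorem \ref{thm:lg_induc} once transported via $\rho$ to $\LG(n-1,V^1)$ or $\LG(n-2,V^2)$; this is where the condition $\mathbf{d}^{<k} - \mathbf{e} \leq 1$ (and the subtle difference between $\LF^a_w$ and $\LF^b_w$, and between the $\xi^0_\Lambda$ and $\xi^1_\Lambda$ constructions) does its work. A secondary but essential complication is that the schemes $\LF^a_w(\Lambda), \LF^b_w(\Lambda)$ are only equidimensional Gorenstein and often reducible (Theorem \ref{prop:lf_regular_gorenstein}, Remark \ref{rmk:lf_irr_comp}), so the base-change, excess-intersection, and projection formulas invoked in the diagram chase must be drawn from the enhanced Gorenstein versions in Appendices \ref{sec:base_change_gorenstein}--\ref{sec:pushforward}, and the possibility of nontrivial alignment units (as encountered in the proof of Theorem \ref{thm:lg_induc}(c)) must be controlled using surjectivity of $\wp^*$ on multiplicative units, via Lemma \ref{lma:lf_de_structure_sheaf}.
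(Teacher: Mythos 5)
Your plan is essentially the paper's own proof: induction on $n$ driven by Theorem \ref{thm:lg_induc}, with $n$ even reduced to $\LF_1\cong\LG(n-1,V^1)$ and $n$ odd reduced to $\LF_2\cong\LG(n-2,V^2)$ plus a $K(\LF_2)$ summand handled by the Section \ref{sec:K_theory_of_lg} computation, the index sets split by the $\iota$/$v$ operations, and the commutativity of the comparison diagrams checked by base change on fiber squares of the type in Lemma \ref{lem:lf_lambda} together with the twist bookkeeping from Corollary \ref{coro:lfl_canonical_divisor} and the Gorenstein/alignment tools of the appendices. The paper carries this out exactly as you outline, verifying formulas such as \eqref{5.1}--\eqref{5.3} and \eqref{eqn:5_11}--\eqref{eqn:5_13} via those fiber squares, so your proposal matches its route.
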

\begin{proof}
    The proof of this result is parallel to the proof provided in \cite{huang2023the}, which adopt induction and base-change formula repetitively.

    (d). Note that $\mathfrak{E}_n = \mathfrak{E}_n^{rr} \cup \mathfrak{E}_n^{cr} \cup \mathfrak{E}_n^{cc}$. Form the following diagram
    \[
        \xymatrix{
        \big( \bigoplus\limits_{\Lambda \in \mathfrak{E}_n^{rr}} K(S) \big) \oplus  \big( \bigoplus\limits_{\Lambda \in \mathfrak{E}_n^{cr}} K(S) \big) \oplus \big( \bigoplus\limits_{\Lambda \in \mathfrak{E}_n^{cc}} K(S) \big)
        \ar[rrr]^-{	=} \ar[dd]^-{\mathrm{diag}(\sum \rho\mu_{\iota\iota(\Lambda)}^1, \sum \rho\phi_{\iota v(\Lambda)} , \sum \rho \mu_{vv(\Lambda)}^1)} &&&  \bigoplus\limits_{\Lambda \in \mathfrak{E}_n} K(S) \ar[dd]^-{\sum \mu^1_\Lambda} \\ \\
        \GW^{[m-2n+1]}(\widetilde{\mathbb{LF}}_{2}) \oplus K(\LF_{2}) \oplus \GW^{[m]}(\widetilde{\mathbb{LF}}_{2})
        \ar[rrr]^-{	(\iota_{*} \,\,\, \hat{\pi}_* H \hat{\alpha}^* \,\,\,  \grave{\pi}_* \grave{\alpha}^*)} &&& \GW^{[m]}(\widetilde{\mathbb{LF}}_0)
        }\]
    where the bottom map is an equivalence by Theorem \ref{thm:lg_induc} and the left map is an equivalence by the induction hypothesis. The technical part is to show that this diagram is commutative. It suffices to prove the following formulas hold true:
    \begin{align}
        \iota_* \rho \mu_{\iota \iota(\Lambda)}^{1} = \mu_{\Lambda}^{1}               & \quad  \textnormal{if $\Lambda \in \mathfrak{E}_n^{rr}$}  \label{5.1} \\
        \hat{\pi}_* H \hat{\alpha}^* \rho \phi_{\iota v(\Lambda)} = \mu_{\Lambda}^{1} & \quad  \textnormal{if $\Lambda \in \mathfrak{E}_n^{cr}$}  \label{5.2} \\
        \grave{\pi}_* \grave{\alpha}^* \rho \mu_{vv(\Lambda)}^{1} = \mu_{\Lambda}^{1} & \quad  \textnormal{if $\Lambda \in \mathfrak{E}_n^{cc}$}  \label{5.3}
    \end{align}
    The formula \eqref{5.1} can be observed directly from the diagram
    \begin{equation}\label{eq:diag-ii}
        \xymatrix{
        & S  &  \\
        \LF^a(\iota \iota(\Lambda); V^2) \ar[ur]^-{p_{\iota \iota(\Lambda)}} \ar[d]_-{\rho  f_{\iota \iota(\Lambda)}}  \ar[rr] & &\LF^a(\Lambda;V) \ar[d]^-{f_{\Lambda}} \ar[ul]_-{p_{\Lambda}} \\
        \LF_2(V)  \ar[rr]^-{\iota}   &&\LF_0(V)
        }		\end{equation}
    where the lower square is a fiber square.
    The formulas \eqref{5.2} and \eqref{5.3} are proved by applying the base-change formulas to the following diagrams
    \begin{equation}\label{eq:diag-iv-vv}
        \xymatrix{
        & S  &  \\
        \LF(\iota v(\Lambda);V^2) \ar[ur]^-{p_{\iota v(\Lambda)}} \ar[d]_-{\rho f_{\iota v(\Lambda)}}  &  \LF^a(\Lambda;V) \ar[r]^-{=}  \ar[d] \ar[l] &\LF^a(\Lambda;V) \ar[d]^-{f_{\Lambda}} \ar[ul]_-{p_{\Lambda}} \\
        \LF_2 & \ar[l]_-{\hat{\alpha}} \LF_{0,2}(0) \ar[r]^-{\hat{\pi}} &\LF_0
        } \quad
        \xymatrix{
        & S  &  \\
        \LF^a(vv(\Lambda);V^2) \ar[ur]^-{p_{vv(\Lambda)}} \ar[d]_-{\rho f_{vv(\Lambda)}}  &  \LF^a(\Lambda;V) \ar[r]^-{=}  \ar[d] \ar[l] &\LF^a(\Lambda;V) \ar[d]^-{f_{\Lambda}} \ar[ul]_-{p_{\Lambda}} \\
        \LF_2 & \ar[l]_-{\grave{\alpha}} \LF_{0,2}(0)_2 \ar[r]^-{\grave{\pi}}   &\LF_0
        }
    \end{equation}
    respectively where the left squares in both diagrams are fiber squares.

    (c) Observe that
    $\mathfrak{A}_{n} = \mathfrak{A}_{n}^{rr} \cup \mathfrak{A}_{n}^{cc}$ and
    $\mathfrak{E}_{n} \backslash \mathfrak{A}_n = (\mathfrak{E}_{n}\backslash \mathfrak{A}_n)^{rr} \cup \mathfrak{E}_n^{cr} \cup (\mathfrak{E}_{n,l}\backslash \mathfrak{A}_n)^{cc}. $
    Form the diagram
    \[
        \xymatrix{
        \big( \bigoplus\limits_{\Lambda \in \mathfrak{A}_n^{rr}} \GW^{[m-2n+1-|\Lambda|]}(S) \big) \oplus 0 \oplus  \big( \bigoplus\limits_{\Lambda \in \mathfrak{A}_n^{cc}} \GW^{[m-|\Lambda|]}(S)\big)
        \ar[rrr]^-{	=} \ar[dd]^-{\mathrm{diag}\big(\sum \rho\xi_{\iota\iota(\Lambda)}^0, 0,\sum \rho \xi_{vv(\Lambda)}^0\big)} &&&   \bigoplus\limits_{\Lambda \in \mathfrak{A}_n} \GW^{[m-|\Lambda|]}(S) \ar[dd]^-{\sum \xi^0_\Lambda} \\ \\
        \GW^{[m-2n+1]}({\mathbb{LF}}_{2}) \oplus K(\LF_{2}) \oplus \GW^{[m]}({\mathbb{LF}}_{2})
        \ar[rrr]^-{	(\iota_{*} \,\,\, \hat{\pi}_* H \hat{\alpha}^* \,\,\,  \acute{\pi}_* \acute{\alpha}^*)} &&& \GW^{[m]}({\mathbb{LF}}_0)  \\\\
        \big( \bigoplus\limits_{\Lambda \in (\mathfrak{E}_{n}\backslash \mathfrak{A}_n)^{rr}} K(S)
        \oplus \big) \oplus  \big( \bigoplus\limits_{\Lambda \in \mathfrak{E}_n^{cr}} K(S) \big) \oplus \big( \bigoplus\limits_{\Lambda \in (\mathfrak{E}_{n}\backslash \mathfrak{A}_n)^{cc}} K(S) \big)
        \ar[rrr]^-{	=} \ar[uu]_-{\mathrm{diag}(\sum \rho\mu_{\iota\iota(\Lambda)}^0 , \sum \rho\phi_{\iota v(\Lambda)}, \sum \rho \mu_{vv(\Lambda)}^0)} &&&  \bigoplus\limits_{\Lambda \in \mathfrak{E}_n} K(S)   \ar[uu]_-{\sum \mu^0_\Lambda}
        }
    \]
    where the middle map is a stable equivalence by Theorem \ref{thm:lg_induc} and where summing up the upper-left and the lower-left maps yields a stable equivalence by the induction hypothesis.  Thus, summing the top and the bottom squares yields the results, if we can prove the commutativity of both squares. It suffices to prove the following formulas hold true:
    \begin{align}
        \iota_* \rho \xi_{\iota \iota(\Lambda)}^{0} = \xi_{\Lambda}^{0}               & \quad  \textnormal{if $\Lambda \in \mathfrak{A}_n^{rr}$}\label{5.6}                               \\
        \acute{\pi}_* \acute{\alpha}^* \rho \xi_{vv(\Lambda)}^{0} = \xi_{\Lambda}^{0} & \quad  \textnormal{if $\Lambda \in \mathfrak{A}_n^{cc}$} \label{5.7}                              \\
        \iota_* \rho \mu_{\iota \iota(\Lambda)}^{0} = \mu_{\Lambda}^{0}               & \quad  \textnormal{if $\Lambda \in (\mathfrak{E}_n\backslash \mathfrak{A}_n)^{rr}$}  \label{5.8}  \\
        \hat{\pi}_* H \hat{\alpha}^* \rho \phi_{\iota v(\Lambda)} = \mu_{\Lambda}^{0} & \quad  \textnormal{if $\Lambda \in \mathfrak{E}_n^{cr}$}  \label{5.9}                             \\
        \acute{\pi}_* \acute{\alpha}^* \rho \mu_{vv(\Lambda)}^{0} = \mu_{\Lambda}^{0} & \quad  \textnormal{if $\Lambda \in (\mathfrak{E}_n\backslash \mathfrak{A}_n)^{cc}$}  \label{5.10}
    \end{align}
    The formulas \eqref{5.6} and \eqref{5.8} (resp. \eqref{5.9}) are obtained from the same way as in the case (d), using the base-change formulas applied to Diagram \eqref{eq:diag-ii} (resp. the left diagram in \eqref{eq:diag-iv-vv}). The formulas \eqref{5.8} and \eqref{5.10} can be proved similarly as in the case (d) but with the following modified diagram instead
    \[
        \xymatrix{
        & S  &  \\
        \LF^a(vv(\Lambda);V^2) \ar[ur]^-{p_{vv(\Lambda)}} \ar[d]_-{\rho  f_{vv(\Lambda)}}  &  \LF^a(\Lambda;V) \ar[r]^-{=}  \ar[d] \ar[l] &\LF^a(\Lambda;V) \ar[d]^-{f_{\Lambda}} \ar[ul]_-{p_{\Lambda}} \\
        \LF_2 & \ar[l]_-{\acute{\alpha}} \LFo_{1,3}(0)_2 \ar[r]^-{\acute{\pi}}   &\LF_0
        }		\]
    where the left square is a fiber square.

    (b)	Note that $\mathfrak{E}_n \backslash \mathfrak{A}_n^c = \mathfrak{E}_n^r \cup (\mathfrak{E}_n \backslash \mathfrak{A}_n)^c$ and $\mathfrak{E}_n \backslash \mathfrak{A}_n^r = \mathfrak{E}_n^r \cup (\mathfrak{E}_n \backslash \mathfrak{A}_n)^r$. Let us construct the diagram
    \[
        \xymatrix@C=16pt{
        \big(\bigoplus\limits_{\Lambda \in \mathfrak{E}_n^r} K(S) \big) \oplus \big( \bigoplus\limits_{\Lambda \in (\mathfrak{E}_n \backslash \mathfrak{A}_n)^c} K(S)  \oplus \bigoplus\limits_{\Lambda \in \mathfrak{A}_n^c} \GW^{[m-|\Lambda|]}(S) \big) \ar[dd]^-{\mathrm{diag}\big(\sum \rho \mu^0_{ \iota(\Lambda)}, \big(\sum \rho \mu^0_{v(\Lambda)}, \sum \rho \xi^0_{v(\Lambda)}\big)\big)} \ar[rr] &&  \bigoplus\limits_{\Lambda \in \mathfrak{E}_n \backslash \mathfrak{A}_n^c} K(S)  \oplus \bigoplus\limits_{\Lambda \in \mathfrak{A}_n^c} \GW^{[m]}(S)  \ar[dd]^-{(\sum \mu^0_\Lambda,  \sum\xi^0_\Lambda)} \\ \\
        \GW^{[m-n]}(\widetilde{\mathbb{LF}}_{1}) \oplus \GW^{[m]}({\mathbb{LF}}_{1})  \ar[rr]^-{ (\iota_{1*} \,\,\, \bar{\pi}_*\bar{\alpha}^*)} &&
        \GW^{[m]}({\mathbb{LF}}_0)}
    \]
    where the bottom map is an equivalence and the left map is an equivalence. The key step is to show that the diagram is commutative, that is, the following formulas hold true:
    \begin{align}
        \bar{\pi}_* \bar{\alpha}^* \rho \xi^0_{v(\Lambda)} =\xi^0_\Lambda & \quad  \textnormal{if } \Lambda \in \mathfrak{A}_n^c, \label{eqn:5_11}                             \\
        \bar{\pi}_* \bar{\alpha}^* \rho \mu^0_{v(\Lambda)} =\mu^0_\Lambda & \quad  \textnormal{if } \Lambda \in (\mathfrak{E}_n \backslash \mathfrak{A}_n)^c, \label{eqn:5_12} \\
        \iota_{1*} \rho \mu^0_{\iota(\Lambda)} =\mu^0_\Lambda             & \quad  \textnormal{if } \Lambda \in \mathfrak{E}_n^r. \label{eqn:5_13}
    \end{align}
    The formulas \eqref{eqn:5_11} and \eqref{eqn:5_12} are proved by applying the base-change formulas to the following diagram
    \[
        \xymatrix{
        & S  &  \\
        \LF^b(v(\Lambda);V^1) \ar[d]_-{f_{v(\Lambda)}} \ar[ur]^-{p_{v(\Lambda)}}  &  \LF^b(\Lambda;V) \ar[r]^-{=}  \ar[d] \ar[l] &\LF^b(\Lambda;V) \ar[d]^-{f_{\Lambda}} \ar[ul]_-{p_{\Lambda}} \\
        \LF_1 & \ar[l]_{\bar{\alpha}} \LF_{1,2}(0) \ar[r]^-{\bar{\pi}}   &\LF_0
        }		\]
    where the left square is a fiber product. The formula \eqref{eqn:5_13} is derived from the following diagrams, using base-change formulas
    \[
        \xymatrix{
        & S  &  \\
        \LF^b(\iota (\Lambda); V^1) \ar[ur]^-{p_{\iota (\Lambda)}} \ar[d]_-{\rho f_{\iota (\Lambda)}}  \ar[rr] & &\LF^b(\Lambda;V) \ar[d]^-{f_{\Lambda}} \ar[ul]_-{p_{\Lambda}} \\
        \LF_1(V)  \ar[rr]^-{\iota}   &&\LF_0(V)
        }		\]
    where the bottom square is a fiber product.

    (a) The main step is to construct the diagram
    \[
        \xymatrix@C=16pt{
        \big(	\bigoplus\limits_{\Lambda \in (\mathfrak{E}_n \backslash \mathfrak{A}_n)^r} K(S) \oplus \bigoplus\limits_{\Lambda \in \mathfrak{A}_n^r} \GW^{[m-|\Lambda|]}(S) \big) \oplus \big( \bigoplus\limits_{\Lambda \in \mathfrak{E}_n^c} K(S) \big) \ar[dd]^-{\mathrm{diag}\big(\big(\sum \rho \mu^1_{ \iota(\Lambda)},  \sum \rho \mu^1_{v(\Lambda)}\big), \sum \rho \xi^1_{v(\Lambda)}\big)} \ar[rr] &&  \bigoplus\limits_{\Lambda \in \mathfrak{E}_n \backslash \mathfrak{A}_n^{r}} K(S)  \oplus \bigoplus\limits_{\Lambda \in \mathfrak{A}_n^{r}} \GW^{[m]}(S)  \ar[dd]^-{(\sum \mu^1_\Lambda , \sum \xi^1_\Lambda)} \\ \\
        \GW^{[m-n]}(\mathbb{LF}_{1}) \oplus \GW^{[m]}(\widetilde{\mathbb{LF}}_{1})  \ar[rr]^-{ (\iota_{1*} \,\,\, \pi_*\alpha^*)} &&
        \GW^{[m]}(\widetilde{\mathbb{LF}}_0)}
    \]
    and adopt the same strategy as above. The proof of the commutativity of this diagram is similar to the case (b), but using $\LF^a(\Lambda)$ and $\LF_{0,1}(0)$ instead. We do not repeat details.
\end{proof}

\subsection{Examples of shifted Young diagrams}
Figures~\ref{fig:exp_almost_even} and~\ref{fig:exp_K_even} illustrate various examples of almost even and $K$-even shifted Young diagrams, respectively.

\begin{figure}[!h]
    \centering

    \begin{tikzpicture}[scale=0.4]
        \pgfmathsetmacro{\nnn}{3}
        \pgfmathsetmacro{\gap}{\nnn + 1.5}
        \foreach \t in {0,...,3}{
                \foreach \y in {1,...,\nnn}{
                        \draw (\gap*\t + \y-1,\nnn - \y) -- (\nnn+\gap*\t,\nnn - \y);}
                \draw (\gap*\t,\nnn ) -- (\nnn+\gap*\t,\nnn );
                \foreach \x in {1,...,\nnn}{
                        \draw (\x - 1+\gap*\t,\nnn - \x) -- (\x - 1+\gap*\t,\nnn);}
                \draw (\nnn + \gap*\t,0) -- (\nnn + \gap*\t,\nnn);
            }
        \drawalmostyoung{\gap}{\nnn}{0}{1,2,3}
        \drawalmostyoung{\gap}{\nnn}{1}{1,2,2}
        \drawalmostyoung{\gap}{\nnn}{2}{1,0,0}
    \end{tikzpicture}

    \vspace{10pt}

    \begin{tikzpicture}[scale=0.4]
        \pgfmathsetmacro{\nnn}{4}
        \pgfmathsetmacro{\gap}{\nnn + 1}
        \foreach \t in {0,...,3}{
                \foreach \y in {1,...,\nnn}{
                        \draw (\gap*\t + \y-1,\nnn - \y) -- (\nnn+\gap*\t,\nnn - \y);}
                \draw (\gap*\t,\nnn ) -- (\nnn+\gap*\t,\nnn );
                \foreach \x in {1,...,\nnn}{
                        \draw (\x - 1+\gap*\t,\nnn - \x) -- (\x - 1+\gap*\t,\nnn);}
                \draw (\nnn + \gap*\t,0) -- (\nnn + \gap*\t,\nnn);
            }
        \drawalmostyoung{\gap}{\nnn}{0}{1,2,3,4}
        \drawalmostyoung{\gap}{\nnn}{1}{1,2,3,3}
        \drawalmostyoung{\gap}{\nnn}{2}{1,2,1,1}
        \drawalmostyoung{\gap}{\nnn}{3}{1,1,1,1}
    \end{tikzpicture}

    \vspace{10pt}

    \begin{tikzpicture}[scale=0.4]
        \pgfmathsetmacro{\nnn}{4}
        \pgfmathsetmacro{\gap}{\nnn + 1}
        \foreach \t in {0,...,3}{
                \foreach \y in {1,...,\nnn}{
                        \draw (\gap*\t + \y-1,\nnn - \y) -- (\nnn+\gap*\t,\nnn - \y);}
                \draw (\gap*\t,\nnn ) -- (\nnn+\gap*\t,\nnn );
                \foreach \x in {1,...,\nnn}{
                        \draw (\x - 1+\gap*\t,\nnn - \x) -- (\x - 1+\gap*\t,\nnn);}
                \draw (\nnn + \gap*\t,0) -- (\nnn + \gap*\t,\nnn);
            }
        \drawalmostyoung{\gap}{\nnn}{0}{1,2,3,0}
        \drawalmostyoung{\gap}{\nnn}{1}{1,2,2,0}
        \drawalmostyoung{\gap}{\nnn}{2}{1,0,0,0}
    \end{tikzpicture}

    \vspace{10pt}

    From top to bottom: Diagrams in $\mathfrak{A}_{3}$, $\mathfrak{A}_{4}^{r}$, and $\mathfrak{A}_{4}^{c}$.
    \caption{Examples of almost even Young diagrams.}
    \label{fig:exp_almost_even}
\end{figure}

\begin{figure}[!h]
    \centering

    \begin{tikzpicture}[scale=0.4]
        \pgfmathsetmacro{\nnn}{3}
        \pgfmathsetmacro{\gap}{\nnn + 1.5}
        \foreach \t in {0,...,1}{
                \foreach \y in {1,...,\nnn}{
                        \draw (\gap*\t + \y-1,\nnn - \y) -- (\nnn+\gap*\t,\nnn - \y);}
                \draw (\gap*\t,\nnn ) -- (\nnn+\gap*\t,\nnn );
                \foreach \x in {1,...,\nnn}{
                        \draw (\x - 1+\gap*\t,\nnn - \x) -- (\x - 1+\gap*\t,\nnn);}
                \draw (\nnn + \gap*\t,0) -- (\nnn + \gap*\t,\nnn);
            }
        \drawalmostyoung{\gap}{\nnn}{0}{1,2}
        \drawalmostyoung{\gap}{\nnn}{1}{1,1}
    \end{tikzpicture}

    \vspace{10pt}

    \begin{tikzpicture}[scale=0.4]
        \pgfmathsetmacro{\nnn}{3}
        \pgfmathsetmacro{\gap}{\nnn + 1.5}
        \foreach \t in {0,...,3}{
                \foreach \y in {1,...,\nnn}{
                        \draw (\gap*\t + \y-1,\nnn - \y) -- (\nnn+\gap*\t,\nnn - \y);}
                \draw (\gap*\t,\nnn ) -- (\nnn+\gap*\t,\nnn );
                \foreach \x in {1,...,\nnn}{
                        \draw (\x - 1+\gap*\t,\nnn - \x) -- (\x - 1+\gap*\t,\nnn);}
                \draw (\nnn + \gap*\t,0) -- (\nnn + \gap*\t,\nnn);
            }
        \drawalmostyoung{\gap}{\nnn}{0}{1,2,2}
        \drawalmostyoung{\gap}{\nnn}{1}{1,2}
        \drawalmostyoung{\gap}{\nnn}{2}{1,1}
    \end{tikzpicture}

    \vspace{10pt}

    \begin{tikzpicture}[scale=0.4]
        \pgfmathsetmacro{\nnn}{4}
        \pgfmathsetmacro{\gap}{\nnn + 1.5}
        \foreach \t in {0,...,5}{
                \foreach \y in {1,...,\nnn}{
                        \draw (\gap*\t + \y-1,\nnn - \y) -- (\nnn+\gap*\t,\nnn - \y);}
                \draw (\gap*\t,\nnn ) -- (\nnn+\gap*\t,\nnn );
                \foreach \x in {1,...,\nnn}{
                        \draw (\x - 1+\gap*\t,\nnn - \x) -- (\x - 1+\gap*\t,\nnn);}
                \draw (\nnn + \gap*\t,0) -- (\nnn + \gap*\t,\nnn);
            }
        \drawalmostyoung{\gap}{\nnn}{0}{1,2,3,3}
        \drawalmostyoung{\gap}{\nnn}{1}{1,2,3,1}
        \drawalmostyoung{\gap}{\nnn}{2}{1,2,2,1}
        \drawalmostyoung{\gap}{\nnn}{3}{1,1,1,1}
        \drawalmostyoung{\gap}{\nnn}{4}{1,2}
        \drawalmostyoung{\gap}{\nnn}{5}{1,1}
    \end{tikzpicture}

    \vspace{10pt}

    \begin{tikzpicture}[scale=0.4]
        \pgfmathsetmacro{\nnn}{4}
        \pgfmathsetmacro{\gap}{\nnn + 1.5}
        \foreach \t in {0,...,5}{
                \foreach \y in {1,...,\nnn}{
                        \draw (\gap*\t + \y-1,\nnn - \y) -- (\nnn+\gap*\t,\nnn - \y);}
                \draw (\gap*\t,\nnn ) -- (\nnn+\gap*\t,\nnn );
                \foreach \x in {1,...,\nnn}{
                        \draw (\x - 1+\gap*\t,\nnn - \x) -- (\x - 1+\gap*\t,\nnn);}
                \draw (\nnn + \gap*\t,0) -- (\nnn + \gap*\t,\nnn);
            }
        \drawalmostyoung{\gap}{\nnn}{0}{1,2,3,1}
        \drawalmostyoung{\gap}{\nnn}{1}{1,2,2,1}
        \drawalmostyoung{\gap}{\nnn}{2}{1,2,2,0}
        \drawalmostyoung{\gap}{\nnn}{3}{1,2}
        \drawalmostyoung{\gap}{\nnn}{4}{1,1}
    \end{tikzpicture}

    \vspace{10pt}

    From top to bottom: Diagrams in $\mathfrak{E}_{3} \setminus \mathfrak{A}_{3}$, $\mathfrak{E}_{3}$, $\mathfrak{E}_{4} \setminus \mathfrak{A}_{4}^{c}$, and $\mathfrak{E}_{4} \setminus \mathfrak{A}_{4}^{r}$.

    \caption{Examples of $K$-even Young diagrams.}
    \label{fig:exp_K_even}
\end{figure}

\newpage
\thispagestyle{plain}
{\tiny
    \begin{landscape}
        \centering
        \vspace*{\fill}
        \begin{figure}[htpb]
            \centering
            \[
                \begingroup \setlength\arraycolsep{1pt} \renewcommand{\arraystretch}{1}
                \vcenter{\xymatrix@C=20pt@R=100pt{
                &{\left\{L_n \mid V_{1} \subset L_n\right\}} \ar[r]^-{\iota_1} &\{L_n \mid L_n \subset V \} & \ar[l]_-{v_1} \{ L_n \mid V_{1} \nhd\, {L_n} \} \ar[ld]_-{\tilde{v}_1} \ar@{=}[r] & \{ L_n \mid V_{1} \nhd\, {L_n} \} \ar[dl]_-{w} \ar[dd]^{\vartheta}\\
                {\left\{{ (P_{n-1},L_n)} \middle \vert \begin{matrix}
                    V_{1}                   \\
                    \cap                    \\
                    P_{n-1} & \subset & L_n
                \end{matrix}\right\}} \ar[r]^-{\kappa}  & {\left\{{ (P_{n-1},L_n)} \middle \vert \begin{matrix}
                            P_{n-1} & \subset & L_n          \\
                                    &         & \cap         \\
                                    &         & V_{1}^{\bot}
                        \end{matrix}\right\}} \ar[r]^-{\tilde{\iota}_1} \ar[u]^-{\tilde{\pi}_1} & {\left\{{ (P_{n-1},L_n)} \middle \vert \begin{matrix}
                            P_{n-1} & \subset & L_n \\
                            \cap                    \\
                            V_{1}^{\bot}
                        \end{matrix}\right\}}  \ar[u]^-{\pi_1} & \ar[l]_-{v_2} {\left\{{ (P_{n-1},L_n)} \middle \vert \begin{matrix}
                    V_{1} & \nhd & P_{n-1}      & \subset & L_n \\
                          &      & \cap                         \\
                          &      & V_{1}^{\bot}
                \end{matrix}\right\}} \ar[ld]_-{\tilde{v}_2}\\
                {\left\{{ (P_{n-1},L_n,L'_n)} \middle \vert \begin{matrix}
                    V_{1} & \subset & P_{n-1} & \subset & L_n       & \subset & V_1^\perp \\
                          &         & \cap                                                \\
                          &         & L'_n    & \subset & V_1^\perp
                \end{matrix}\right\}}  \ar[u]^-{\tilde{\pi}_2} \ar[r] &   {\left\{{ (P_{n-1},L_n,L'_n)} \middle \vert \begin{matrix}
                             &  & P_{n-1} & \subset & L_n       & \subset & V_1^\perp \\
                             &  & \cap                                                \\
                             &  & L'_n    & \subset & V_1^\perp
                        \end{matrix}\right\}}  \ar[r]& {\left\{{ (P_{n-1},L_n,L'_n)} \middle \vert \begin{matrix}
                             &  & P_{n-1} & \subset & L_n         \\
                             &  & \cap                            \\
                             &  & L'_n    & \subset & V_{1}^\perp
                        \end{matrix}\right\}} \ar[u]^-{\pi_2}\ar@{-->}[rr]^-{\alpha} && {\{L'_n \mid V_{1} \subset L'_n\}}
                }}
                \endgroup
            \]
            \caption{Functor of points of Diagram \ref{eqn:two_step_blow_up_lg}}
            \label{fig:lg_blow_up_functor_of_points}
        \end{figure}
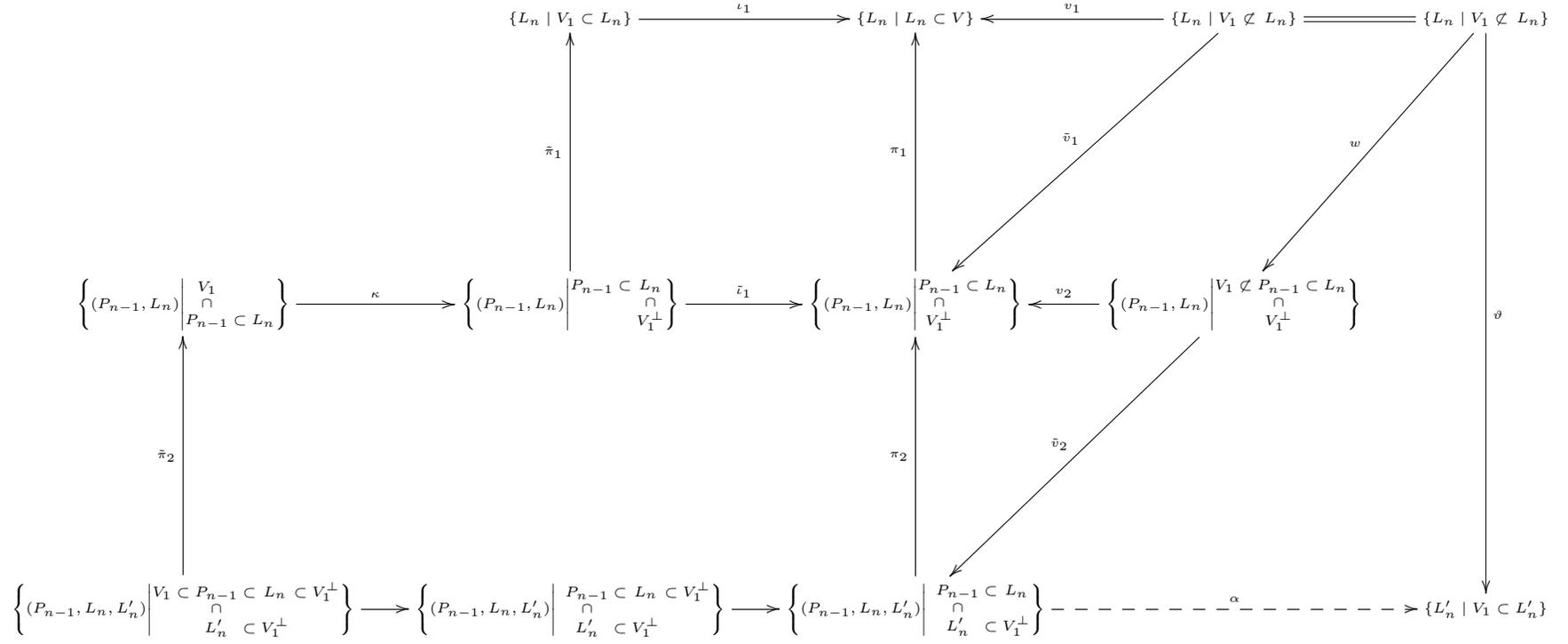
        \vspace*{\fill}
    \end{landscape}
}
\appendix

\section{Regular embeddings and fiber squares}
Suppose that $X$ is a Cohen-Macaulay scheme.  Let $i:Z\hookrightarrow X$ and $j:W\hookrightarrow X$ be two regular embeddings of codimension $d$ and $d'$ respectively. Form the following fiber square.
\begin{equation}\label{eq:fiber}
    \xymatrix{T \ar[d]^-{v}\ar[r]^-{u} & W \ar[d]^-{j} \\
    Z \ar[r]^-{i} & X
    }
\end{equation}

\begin{lemma}\label{lma:meet-proper-tor-independent}
    Assume that $Z$ and $W$ meet properly in $X$, i.e., $\mathrm{codim}_W T = \mathrm{codim}_X Z $ (or equivalently, $ \mathrm{codim}_Z T = \mathrm{codim}_W X$). Then, the following statements hold true.
    \begin{enumerate}
        \item The square $\eqref{eq:fiber}$ is tor-independent in the sense of \cite[Definition 3.10.2]{lipman2009notes}.
        \item The closed embeddings $u$ and $v$ are regular embeddings of codimension $d$ and $d'$ respectively.
    \end{enumerate}
\end{lemma}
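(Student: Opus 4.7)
The plan is to reduce both statements to commutative algebra by working Zariski-locally on $X$, where regular embeddings are precisely ideals generated by regular sequences. Choose an affine open $\mathrm{Spec}(A) \subset X$ meeting $T$, with $Z$ cut out by $I = (f_1, \ldots, f_d)$ for an $A$-regular sequence $f_1, \ldots, f_d$, and $W$ cut out by $J = (g_1, \ldots, g_{d'})$ for an $A$-regular sequence $g_1, \ldots, g_{d'}$. Since $A$ is Cohen--Macaulay and these ideals are generated by regular sequences, both $A/I$ and $A/J$ are again Cohen--Macaulay of dimensions $\dim A - d$ and $\dim A - d'$ respectively.

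I would prove (ii) first. The proper intersection hypothesis translates to $\dim A/J - \dim A/(I+J) = d$, so $\dim A/(I+J) = \dim A - d - d'$, and symmetrically $\dim A/I - \dim A/(I+J) = d'$. Equivalently, $(I+J)/I$ has height $d'$ in the Cohen--Macaulay ring $A/I$, and is generated by the $d'$ elements $\bar g_1, \ldots, \bar g_{d'}$. The standard criterion \cite[Theorem 2.1.2(c)]{bruns1993cohen} states that in a Cohen--Macaulay ring, an ideal of height $k$ generated by $k$ elements is generated by a regular sequence. Applying this shows that $\bar g_1, \ldots, \bar g_{d'}$ is an $A/I$-regular sequence, i.e.\ $v$ is a regular embedding of codimension $d'$. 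The symmetric argument with $\bar f_1, \ldots, \bar f_d$ in $A/J$ shows $u$ is a regular embedding of codimension $d$.

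For part (i), I would compute $\mathrm{Tor}$ by means of the Koszul resolution. Since $f_1, \ldots, f_d$ is an $A$-regular sequence, the Koszul complex $K_\bullet(f_1, \ldots, f_d;\, A)$ is a free resolution of $A/I$. Tensoring with $A/J$ gives
\[
\mathrm{Tor}_p^{A}(A/I,\, A/J) \;\cong\; H_p\bigl(K_\bullet(\bar f_1, \ldots, \bar f_d;\, A/J)\bigr),
\]
where $\bar f_i$ is the image of $f_i$ in $A/J$. By part (ii) just established, $\bar f_1, \ldots, \bar f_d$ is an $A/J$-regular sequence, so its Koszul complex is acyclic in positive degrees. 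Hence $\mathrm{Tor}_p^{A}(A/I, A/J) = 0$ for all $p \geq 1$, which is precisely tor-independence of the square.

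There is no real obstacle here; the only thing to keep in mind is that the proper intersection hypothesis, combined with the Cohen--Macaulay dimension bookkeeping, is genuinely what licenses the quoted regular-sequence criterion, so parts (ii) and (i) must be carried out in that order rather than trying to establish tor-independence first.
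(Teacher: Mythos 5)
Your proof is correct and takes essentially the same route as the paper's: reduce to local commutative algebra over a Cohen--Macaulay ring, use the proper-intersection dimension count together with \cite[Theorem 2.1.2(c)]{bruns1993cohen} to produce regular sequences, and conclude tor-independence from acyclicity of a Koszul complex. The only difference is organizational: you prove (ii) first and then resolve one factor by its Koszul complex, whereas the paper first shows the concatenated sequence is regular, gets (i) from the tensor product of the two Koszul complexes, and deduces (ii) afterwards -- both orderings are valid.
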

\begin{proof}
    Let $x\in X$ be a point, and let $\mathrm{Spec}(R) \subset X$ be an open neighborhood of $x$. Recall the definition of regular embedding, cf. \cite[Appendix B.7.1]{fulton1998intersection}. There are regular sequences $\mathbf{s} = \{s_1, s_2 ,\ldots , s_{d}\}$ and $\mathbf{t} = \{t_1, t_2 ,\ldots , t_{d'}\}$ in $R$ such that $i^{-1}(\mathrm{Spec}(R)) = \mathrm{Spec}(R/(\mathbf{s}))$ and $j^{-1}(\mathrm{Spec}(R)) = \mathrm{Spec}(R/(\mathbf{t}))$. Note that
    \[
        v^{-1} i^{-1}(\mathrm{Spec}(R)) = u^{-1} j^{-1}(\mathrm{Spec}(R)) = \mathrm{Spec}(R/(\mathbf{s}, \mathbf{t})).
    \]
    Since $Z$ and $W$ meet properly in $X$, we conclude that $T$ has codimension $d+d'$ in $X$. Therefore
    \[
        \mathrm{dim} R/(\mathbf{s}, \mathbf{t}) = \mathrm{dim}\mathrm{R} - d - d'
    \]
    and since $R$ is Cohen-Macaulay, $\mathbf{s} \cup \mathbf{t}$ is a regular sequence in $R$, cf. \cite[Theorem 2.1.2(c)]{bruns1993cohen}. Then the Koszul complex $K^{\bullet}(R;\mathbf{s}, \mathbf{t}) $ provides a projective resolution of $  R/(\mathbf{s}, \mathbf{t})$ (See \cite[Corollary 1.6.14(b)]{bruns1993cohen}). It follows  that
    \[
        \mathrm{Tor}^{\bullet}_{R}(R/(\mathbf{s}), R/(\mathbf{t})) \simeq K^{\bullet}(R;\mathbf{s}) \otimes_{R} K^{\bullet}(R;\mathbf{t}) \simeq K^{\bullet}(R;\mathbf{s}, \mathbf{t})
    \]
    whose cohomology is non-trivial only in  degree $0$. Hence, $\mathrm{Tor}_i^R(R/(\mathbf{s}), R/(\mathbf{t}))=0$ for $i > 0$, and the square \eqref{eq:fiber} is tor-independent. Since  $\mathbf{s} \cup \mathbf{t}$ is a regular sequence in $R$,  the image of $\mathbf{s}$ in $R/(\mathbf{t})$ is also a regular sequence of length $d$. Therefore, $u$ is a regular embedding of codimension $d$. A similar proof yields that $v$ is also a regular embedding of codimension $d'$.
\end{proof}

\begin{lemma}\label{lma:tor-independent-eclate}
    Let $g:X_1 \to X_2$ be a morphism of schemes. Let $Z_2 \subset X_2$ be a closed subscheme such that the following fiber product
    \begin{equation}\label{eqn:tor-independent-eclate}
        \xymatrix{
        Z_1 \ar[r] \ar[d] & X_1 \ar[d]^-{g} \\
        Z_2 \ar[r] & X_2
        }
    \end{equation}
    is tor-independent, where $Z_1 := g^{-1}(Z_2)$. Let $X'_{i}$ be the blowup of $Z_i$ in $X_i$ with exceptional fiber $E_i$ ($i=1,2$). Then there exists a cube
    \begin{equation}\label{eqn:cube-tor-independent-eclate}
        \xymatrix@R=10pt@C=10pt{
        & Z_1 \ar[rr] \ar'[d][dd]
        & & X_1 \ar[dd]^-{g}
        \\
        E_1 \ar[ur]\ar[rr]\ar[dd]
        & & X'_1 \ar[ur]\ar[dd]
        \\
        & Z_2 \ar'[r][rr]
        & & X_2
        \\
        E_2 \ar[rr]\ar[ur]
        & & X_2' \ar[ur]
        }
    \end{equation}

    of schemes with all the faces are fiber products.
\end{lemma}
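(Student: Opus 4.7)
My approach is to reduce the cube to verifying a single face, namely the right face asserting $X'_1 \cong X_1 \times_{X_2} X'_2$. The back face is the given tor-independent square, while the top and bottom faces are pullbacks by the very definition $E_i = \pi_i^{-1}(Z_i) = X'_i \times_{X_i} Z_i$ of the exceptional fibers. Once the right face is established, the front and left faces will follow by pasting pullback squares.

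To prove the right face, I would use the Rees algebra description of blow-ups: $X'_2 = \mathrm{Proj}_{X_2}\bigl(\bigoplus_{n \geq 0} \mathcal{I}^n\bigr)$, where $\mathcal{I}$ is the ideal sheaf of $Z_2$, and similarly $X'_1 = \mathrm{Proj}_{X_1}\bigl(\bigoplus_{n \geq 0} \mathcal{I}_{Z_1}^n\bigr)$ with $\mathcal{I}_{Z_1} = \mathcal{I}\cdot\mathcal{O}_{X_1}$. Compatibility of $\mathrm{Proj}$ with base change identifies the pullback $X_1 \times_{X_2} X'_2$ with $\mathrm{Proj}_{X_1}\bigl(\bigoplus_{n \geq 0} g^{\ast}\mathcal{I}^n\bigr)$. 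Therefore the isomorphism $X_1 \times_{X_2} X'_2 \cong X'_1$ would reduce to producing a natural identification $g^{\ast}\mathcal{I}^n \cong \mathcal{I}_{Z_1}^n$ of subsheaves of $\mathcal{O}_{X_1}$, for every $n \geq 0$.

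This is where I expect the main obstacle to lie. For $n = 1$, the identification is immediate from the tor-independence applied to $0 \to \mathcal{I} \to \mathcal{O}_{X_2} \to \mathcal{O}_{Z_2} \to 0$. For $n \geq 2$, I would induct using $0 \to \mathcal{I}^n \to \mathcal{I}^{n-1} \to \mathcal{I}^{n-1}/\mathcal{I}^n \to 0$, which reduces the inductive step to the tor-vanishing $\mathrm{Tor}^{\mathcal{O}_{X_2}}_1\bigl(\mathcal{O}_{X_1},\, \mathcal{I}^{n-1}/\mathcal{I}^n\bigr) = 0$. When $Z_2 \hookrightarrow X_2$ is locally a regular embedding (as in Lemma~\ref{lma:meet-proper-tor-independent} and its applications in the body of the paper), each graded piece $\mathcal{I}^{n-1}/\mathcal{I}^n \cong \mathrm{Sym}^{n-1}(\mathcal{I}/\mathcal{I}^2)$ is locally free over $\mathcal{O}_{Z_2}$, and the required vanishing follows from the given tor-independence of $\mathcal{O}_{X_1}$ with $\mathcal{O}_{Z_2}$ over $\mathcal{O}_{X_2}$ via a standard change-of-rings argument. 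In full generality one may wish to invoke instead the universal property of the blow-up, checking directly that the ideal $\mathcal{I}\cdot\mathcal{O}_{X_1 \times_{X_2} X'_2}$ is invertible; tor-independence is again the precise hypothesis ensuring that the Cartier-divisor structure on $E_2 \subset X'_2$ survives the base change.

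Finally, with the right face in hand, the remaining faces are obtained by iterated pasting of pullback squares. Using that the back face is a pullback (so $Z_1 = X_1 \times_{X_2} Z_2$) and that $E_2 = X'_2 \times_{X_2} Z_2$, one computes
$$E_1 \;=\; X'_1 \times_{X_1} Z_1 \;=\; (X_1 \times_{X_2} X'_2) \times_{X_1} Z_1 \;=\; X'_2 \times_{X_2} Z_1 \;=\; E_2 \times_{Z_2} Z_1,$$
which yields the left face, and similarly
$$E_1 \;=\; X'_1 \times_{X_1} Z_1 \;=\; X'_1 \times_{X_2} Z_2 \;=\; X'_1 \times_{X'_2} (X'_2 \times_{X_2} Z_2) \;=\; X'_1 \times_{X'_2} E_2,$$
which yields the front face. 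All six faces of the cube \eqref{eqn:cube-tor-independent-eclate} are then pullbacks, completing the plan.
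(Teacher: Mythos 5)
Your overall architecture matches the paper's: both proofs concentrate on the right face, identifying $X_1'$ with $X_1 \times_{X_2} X_2'$ through the $\mathrm{Proj}$ of the (pulled-back) Rees algebra, and then obtain the remaining faces formally (the paper gets the front face by identifying $\SO_{X_1'}(-1)$ with the pullback of $\SO_{X_2'}(-1)$ and the left face by pasting, which is equivalent to your pasting computations; your observation that the top and bottom faces are fiber squares by the definition of the exceptional fiber is also implicitly used there). Your degree-one step is exactly the paper's: tor-independence applied to $0 \to \mathcal{I} \to \SO_{X_2} \to \SO_{Z_2} \to 0$ gives injectivity of $g^{\ast}\mathcal{I} \to \SO_{X_1}$ with image the ideal of $Z_1$.

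The gap is in the higher-degree identification $g^{\ast}(\mathcal{I}^n) \cong \mathcal{I}_{Z_1}^n$ for $n \geq 2$, which is the actual content of the lemma. Your induction needs $\mathrm{Tor}^{\SO_{X_2}}_1\bigl(\SO_{X_1}, \mathcal{I}^{n-1}/\mathcal{I}^n\bigr) = 0$, and you only justify this under the \emph{additional} hypothesis that $Z_2 \hookrightarrow X_2$ is locally a regular embedding, so that $\mathcal{I}^{n-1}/\mathcal{I}^n$ is locally free over $\SO_{Z_2}$; but the lemma is stated for an arbitrary closed subscheme $Z_2$ with only the tor-independence of the given square as hypothesis. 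Your proposed escape for the general case --- ``invoke the universal property of the blow-up, checking directly that $\mathcal{I}\cdot\SO_{X_1\times_{X_2}X_2'}$ is invertible; tor-independence is again the precise hypothesis ensuring that the Cartier-divisor structure on $E_2$ survives the base change'' --- is an assertion of precisely what must be proved, not an argument: the pullback of an effective Cartier divisor along the non-flat projection $X_1 \times_{X_2} X_2' \to X_2'$ is Cartier only if a local equation remains a nonzerodivisor, and establishing this requires a Tor computation on $X_2'$ (equivalently, control of the higher graded pieces of the Rees algebra), which is not a formal consequence of $\mathrm{Tor}_{>0}^{\SO_{X_2}}(\SO_{X_1},\SO_{Z_2})=0$. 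The paper instead argues that the injectivity of $\sigma: g^\ast\mathcal{I} \to \SO_{X_1}$, obtained from $\mathbf{L}g^{\ast}(\SO_{X_2}/\mathcal{I})$ being concentrated in degree zero via a flat resolution, propagates to the maps $\sigma^{\otimes n}$ in all degrees (modelled on the flat-base-change argument of Stacks 0805), keeping the hypothesis as stated. So: in the regular-embedding situation --- which is the only one used in the body of the paper, e.g.\ for $\Gr(n,V_1^\perp)\subset\Gr(n,V)$ --- your induction on the graded pieces is a complete and careful argument; but as a proof of the lemma in its stated generality it is incomplete, and you should either supply the degree-$n$ injectivity from tor-independence alone (as the paper attempts) or add the regular-embedding hypothesis to the statement you prove.
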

\begin{proof}
    The proof is similar to the one of \cite[\href{https://stacks.math.columbia.edu/tag/0805}{Lemma 0805}]{stacks-project}. Let $\mathcal{I}_2$ be the ideal sheaf of $Z_2 \subset X_2$. Then the ideal sheaf $\mathcal{I}_1$ of $Z_1 \subset X_1$ is the image of $\sigma: g^* \mathcal{I}_2 \to \SO_{X_1}$. Note that $X_1 \times _{X_2} X_2'$ is the relative Proj of $\bigoplus _{n \geq 0} g^*\mathcal{I}_2^{\otimes n}$. Let $\mathcal{F}_\bullet$ be a flat resolution of $\mathcal{I}_2$. Since the square \eqref{eqn:tor-independent-eclate} is tor-independent, in derived category $\mathrm{D}(X_1)$,
    \[
        \SO_{X_1}/\mathcal{I}_1 \cong \mathbf{L} g^\ast (\SO_{X_2} / \mathcal{I}_2) \cong (\ldots \xrightarrow{d_{-3}} g^\ast\mathcal{F}_{-1} \xrightarrow{d_{-2}} g^\ast\mathcal{F}_{0} \xrightarrow{d_{-1}} \SO_{X_1})
    \]
    is concentrated at degree $0$, therefore the map $\sigma: g^\ast \mathcal{I}_2 \cong g^\ast\mathcal{F}_{0}/\mathrm{im}(d_{-2}) \to \SO_{X_1}$ with the image $\mathcal{I}_1$ is injective. Then the maps $\sigma^{\otimes n}: g^* \mathcal{I}_2^{\otimes n} \cong (g^* \mathcal{I}_2)^{\otimes n} \to \SO_{X_1}$ are also injective. Thus, we see that $X'_1$ can be identified with $X_1 \times _{X_2} X_2'$. Moreover, $\SO_{X_1'}(E_1) \cong \SO_{X_1'}(-1)$ can be identified with the pullback of $\SO_{X_2'}(E_2) \cong \SO_{X_2'}(-1)$ along $X_1' \to X_2'$, so $E_1 = X_1' \times _{X_2'} E_2$. Therefore, we have shown that all faces of the cube \eqref{eqn:cube-tor-independent-eclate}, except the left one, are fiber squares. The result then follows from \cite[Exercises 5.2(b), p.115]{awodey2010category}.
\end{proof}

\section{Base change formula for the Gorenstein case}\label{sec:base_change_gorenstein}

Denote by $\GW(X, I)$ the coherent Grothendieck–Witt theory for any scheme $X$ with residue complex $I$; cf. \cite[Definition 2.6]{huang2023the}. Suppose that all the schemes are Gorenstein divisorial throughout this section.

\subsection{The map \texorpdfstring{$\rho$}{rho}}\label{sect:rho}
The functorial injective resolution $\rho: L \to EL$ (cf. \cite[Remark 4.10]{huang2023the})  induces a map
\[
    \rho:\GW^{[n]}(S,L) \to \GW^{[n]}(S,EL)
\]
for any line bundle $L$ over $S$. This map is also denoted by $\rho$. When $S$ is regular, the map $\rho$ is an equivalence.

\subsection{Pushforward map}
Let $Y$ be a Gorenstein divisorial scheme, and $\iota: X \to Y$ be a regular embedding of codimension $c$. By \cite[Proposition III.7.2, p.179]{hartshorne1966residues}(or \cite[\href{https://stacks.math.columbia.edu/tag/0BVA}{Lemma 0BVA}]{stacks-project}), the scheme $X$ is Gorenstein. The scheme $X$ is divisorial, since any subscheme of a divisorial scheme is still divisorial by definition (cf. \cite[Definition II.2.2.5]{SGA6}).
Assume that $L$ is a line bundle on $Y$. Then there is an isomorphism $\chi: E\iota^{\natural}L \xrightarrow{\cong} \iota^{\Delta}EL[c]$, cf. \cite[Theorem VI.3.1(c), p.318 and Corollary III.7.3, p.180]{hartshorne1966residues}.
\begin{definition}
    Construct the pushforward map
    \[\iota_*:\GW^{[n-c]} (X, \iota^{\natural}L) \to \GW^{[n]}(Y,EL)\]
    as the following composition
    \[\GW^{[n-c]} (X, \iota^{\natural}L) \xrightarrow{\rho} \GW^{[n-c]} (X, E\iota^{\natural}L) \xrightarrow[\simeq]{\chi} \GW^{[n]} (X, \iota^{\Delta}EL) \xrightarrow{\iota_*} \GW^{[n]}(Y,EL).   \]
\end{definition}

\begin{remark}
    The map $\iota_*:\GW^{[n-c]} (X, \iota^{\natural}L) \to \GW^{[n]}(Y,EL)$ roots in the non-singular exact dg form functor
    \[
        (\iota_\ast, \tilde{\zeta}) : (\Ch^b(\V(X)), \sharp_{\iota^{\natural}L[-c]}, \mathrm{quis}, \can) \to ( \Ch^b_c(\M(Y)) , \sharp_{EL}, \mathrm{quis}, \can),
    \]
    with $\tilde{\zeta} : \iota_\ast \sharp_{\iota^{\natural}L[-c]} \to \sharp_{EL}\iota_\ast $ the composition
    \[
        \iota_\ast\left[-, \iota^{\natural}L[-c]\right]_X \xrightarrow{\iota_\ast(\rho)} \iota_\ast\left[-, E\iota^{\natural}L[-c]\right]_X \xrightarrow{\iota_*(\chi)} \iota_\ast\left[-, \iota^{\Delta}EL\right]_X \xrightarrow{\phi} \left[\iota_\ast (-), \iota_\ast \iota^{\Delta}EL\right]_Y \xrightarrow{\mathrm{Tr}} \left[\iota_\ast (-), EL\right]_Y.
    \]
\end{remark}
\subsection{Projection formula} The projection formula for regular schemes has been proved in \cite[Proposition 6.6]{huang2023the}. The proof in \textit{loc.\ cit.}\ can be applied to the singular case, for which we record the following result:
\begin{proposition}[Projection formula]\label{prop:projection_formula_finite}
    Let $\iota: Z \to X$ be a finite morphism of Gorenstein divisorial schemes. Let $I$ be a residue complex on $X$ and let $L$ be a line bundle on $X$.
    Then the following diagram of Grothendieck-Witt spectra
    \[
        \xymatrix{
        \GW^{[n]}(Z, \iota ^\Delta I) \times \GW^{[m]}(X, L) \ar[rr]^-{\iota_\ast \times \mathrm{id}} \ar[d]_-{\cup \circ (\mathrm{id} \times \iota^\ast)} && \GW^{[n]}(X, I) \times \GW^{[m]}(X, L) \ar[d]_-{\cup} \\
        \GW^{[n+m]}(Z, \iota ^\Delta I \otimes \iota^\ast L) \ar[r]^-{\simeq} & \GW^{[n+m]}(Z, \iota ^\Delta (I \otimes L)) \ar[r]^-{\iota_\ast} & \GW^{[n+m]}(X, I\otimes L)
        }
    \]
    commutes up to homotopy. In a formula, we have that
    \[ \iota_* (\alpha \cup \iota^* \beta) = \iota_*(\alpha) \cup \beta \]
    for any $\alpha \in \GW^{[n]}_i(Z, \iota^\Delta I)$ and $\beta \in \GW^{[m]}_j(X,L)$.
\end{proposition}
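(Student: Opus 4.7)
The plan is to follow the proof of \cite[Proposition 6.6]{huang2023the} essentially verbatim, since that argument takes place at the level of non-singular exact duality-preserving dg form functors, and the regularity of $X$ and $Z$ is used only to identify $\GW$ built on the vector-bundle category $\V$ with $\GW$ built on the coherent category $\M$. Once we work throughout with the coherent model together with the functorial injective resolution $\rho$ of \S\ref{sect:rho}, exactly as in the definition of $\iota_*$ recalled above, the regularity hypothesis can be dropped and only the Gorenstein and divisorial assumptions remain in force.

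First I would unfold both vertical composites in the square as maps of $\GW$-spectra induced by non-singular exact duality-preserving dg form functors. The left leg sends a pair of symmetric forms $(F,\varphi)$ over $Z$ with values in $\iota^\Delta I$ and $(G,\psi)$ over $X$ with values in $L$ to the form $(F\otimes_{\SO_Z}\iota^*G,\varphi\otimes\iota^*\psi)$ with values in $\iota^\Delta I\otimes\iota^*L\cong\iota^\Delta(I\otimes L)$, and then applies $(\iota_*,\tilde\zeta)$, whose structure map is the composition of $\rho$, $\chi:E\iota^\natural(-)\xrightarrow{\cong}\iota^\Delta E(-)$, the adjunction $\phi$, and the trace $\mathrm{Tr}$. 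The right leg first pushes forward $(F,\varphi)$ via $(\iota_*,\tilde\zeta)$ and then cups with $(G,\psi)$. I would then exhibit the desired homotopy via the classical projection isomorphism
\[
\theta_{F,G}:\iota_*F\otimes_{\SO_X}G\xrightarrow{\cong}\iota_*(F\otimes_{\SO_Z}\iota^*G),
\]
which exists for any affine (in particular, finite) morphism and is compatible with passage to complexes.

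The main obstacle is verifying that $\theta$ intertwines the two $\sharp$-duality structures coming from the two legs, which reduces to the commutativity of a single pentagon at the chain level. The pentagon couples $\theta$ with the internal-hom projection isomorphism $\iota_*[F,H]\otimes G\xrightarrow{\cong}\iota_*[F,H\otimes\iota^*G]$ for finite $\iota$, the coherence isomorphism $\iota^\Delta I\otimes\iota^*L\xrightarrow{\cong}\iota^\Delta(I\otimes L)$, which is a standard consequence of Grothendieck duality for finite morphisms of Gorenstein schemes (cf.\ \cite[III.6--III.7]{hartshorne1966residues}), the trace $\mathrm{Tr}$, and the injective resolution $\rho$. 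Compatibility of $\rho$ with tensoring by the line bundle $L$ is automatic from functoriality together with local freeness of $L$; compatibility of $\chi$ and $\mathrm{Tr}$ with $-\otimes L$ is part of the Grothendieck duality package; and compatibility of $\theta$ with the internal-hom projection is a standard categorical calculation. Assembling these into a homotopy of non-singular exact duality-preserving dg form functors then produces the required homotopy of $\GW$-spectra, exactly as in \cite[Proposition 6.6]{huang2023the}.
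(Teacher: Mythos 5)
Your proposal matches the paper's proof, which likewise just observes that the argument of \cite[Proposition 6.6]{huang2023the} is formal at the level of duality-preserving dg form functors and carries over verbatim once regularity is replaced by the Gorenstein and divisorial hypotheses (using the coherent model with the functorial injective resolution $\rho$). Your additional unfolding of the two legs and the pentagon check via the projection isomorphism is a correct elaboration of the same route; the paper records no more detail than the citation itself.
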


\subsection{Base-change formula}

Let $L$ be a line bundle on $Y$. Assume that $Y$ is a regular scheme. Suppose that we have a fiber square of schemes
\[\xymatrix{X \ar[r]^-{\iota} & Y  \\
    V \ar[r]^-{\tilde{\iota}} \ar[u]^-{\tilde{\pi}} & Z \ar[u]^-{\pi}
    }\]
Assume that $X,V,Z$ are Gorenstein schemes and that $\iota$ is a local complete intersection (cf. \cite[p.138]{hartshorne1966residues}). Suppose further that the square is tor-independent.
\begin{proposition}\label{prop:base_change_formula_regular_and_locus}
    Assume that $\iota$ is a regular embedding of codimension $c$, and that $\pi: Z\to Y$ is the zero locus of a regular section $s: \mathcal{E}^\vee \to \SO_Y$, where $\mathcal{E}$ is a vector bundle on $Y$ of rank $d$. Suppose further that $X$ and $Z$ meet properly in $Y$.
    For any line bundle $L$ over $Y$, the following diagram
    \[
        \xymatrix{
        \GW^{[n - c]}(X, \iota^{\natural} L) \ar[r]^-{\iota_*\rho} \ar[d]_-{\gamma\tilde{\pi}^*} & \GW^{[n]}(Y, EL) \ar[d]_-{\rho\pi_*\rho^{-1}} \\
        \GW^{[n-c]}(V, \tilde{\iota}^{\natural}\pi^* L) \ar[r]^-{\tilde{\iota}_*\rho} & \GW^{[n]}(Z, E\pi^*L),
        }
    \]
    commutes up to homotopy where $\rho: 1 \to E$ is the natural isomorphism sending any line bundles $L$ to its associated residue complex $EL$ and where 	$
        \gamma: \tilde{\pi}^*\iota^{\natural} L \to \tilde{\iota}^{\natural}\pi^*L
    $ is the natural isomorphism. By safely omitting the mentions of $\rho$ and $\gamma$, the formula
    \[
        \pi^* \iota_* = \tilde{\iota}_* \tilde{\pi}^*
    \]
    holds.
\end{proposition}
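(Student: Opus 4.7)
The plan is to reduce the identity to the regular-scheme base change treated in \cite[Section 9]{huang2023the} by systematically using the extra hypothesis that $\pi$ is the zero locus of a regular section, together with Grothendieck duality in the Gorenstein setting. I would work at the level of non-singular exact dg form functors, as was done in \cite{huang2023the}, rather than directly at the level of spectra.

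First, I would apply Lemma A.1 to the fiber square. Since $X$ and $Z$ meet properly in $Y$ and $Y$ is Cohen--Macaulay, the embedding $\tilde{\iota}: V \to Z$ is a regular embedding of codimension $c$, and $\tilde{\pi}: V \to X$ is the zero locus of the pulled-back regular section $\iota^\ast s : \iota^\ast \mathcal{E}^\vee \to \SO_X$, so $\tilde{\pi}$ is regularly embedded of codimension $d$ as well. In particular, $\tilde{\iota}^{\natural}$ and $\tilde{\pi}^\natural$ make sense, the comparison isomorphism $\tilde\chi: E\tilde\iota^\natural \pi^\ast L \xrightarrow{\cong} \tilde\iota^\Delta E\pi^\ast L[c]$ of \cite[VI.3.1(c)]{hartshorne1966residues} is available, and the natural transformation $\gamma: \tilde{\pi}^*\iota^{\natural} L \to \tilde{\iota}^{\natural}\pi^*L$ is the identification coming from compatibility of relative dualizing sheaves under tor-independent base change.

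Next, I would pass to the underlying triangulated level. Because the square is tor-independent, the classical base-change isomorphism $\pi^\ast \iota_\ast \cong \tilde{\iota}_\ast \tilde{\pi}^\ast$ on perfect complexes follows from the projection formula (Proposition B.3) combined with the Koszul resolution of $\pi_\ast \SO_Z$, which resolves $\pi_\ast \SO_Z$ by a bounded complex of vector bundles on $Y$ since $s$ is regular of rank $d$. The key point is that $\iota^\ast$ takes the Koszul resolution of $s$ on $Y$ to the Koszul resolution of $\iota^\ast s$ on $X$, so the derived base change is witnessed by an explicit (strictly commuting) diagram of dg-functors.

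Finally, to lift this to the Grothendieck--Witt level, I would identify the four dualities $\iota^\natural L$, $\tilde\iota^\natural \pi^\ast L$, $\iota^\Delta EL$, $\tilde\iota^\Delta E\pi^\ast L$ by chasing the square
\[
\xymatrix{
\tilde\pi^\ast E\iota^\natural L \ar[r]^-{\tilde\pi^\ast \chi} \ar[d]_-{E\gamma} & \tilde\pi^\ast \iota^\Delta EL[c] \ar[d] \\
E\tilde\iota^\natural \pi^\ast L \ar[r]^-{\tilde\chi} & \tilde\iota^\Delta \pi^\ast EL[c]
}
\]
where the right vertical is the standard base-change morphism for $\Delta$-pullback. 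Commutativity of this square is exactly the compatibility of Grothendieck's residue complex with tor-independent base change along a regular embedding and a locally complete intersection, which is classical (cf. \cite[VI.3]{hartshorne1966residues}; here the Gorenstein hypothesis ensures that $E$ commutes with $\iota^\natural$ up to the shift encoded in $\chi$, and regularity of $Y$ ensures that $\rho$ on $Y$ is an equivalence). Combining this square with the trace compatibility $\tilde\iota_\ast \tilde\iota^\Delta \to 1 \leftarrow \iota_\ast \iota^\Delta$ under the tor-independent base change square produces the needed homotopy in the diagram of Grothendieck--Witt spectra.

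The main obstacle, and the step I would expect to occupy most of the technical writing, is verifying that the identification $E\gamma$ agrees strictly with the composite of $\tilde\chi$, the base-change map, and $\chi^{-1}$ on the level of dg form functors (not merely up to a sign or a unit). Any discrepancy there would have to be absorbed into an explicit alignment, in the sense of \cite{balmer2012bases}, and controlled by showing that $\mathbb{G}_m$ of the relevant schemes is generated by pullback from $Y$—exactly the style of argument already used in the proof of Theorem 4.1(c). Since $Y$ is regular and the square is tor-independent, no such alignment is actually needed once the comparison isomorphisms are set up functorially, and the proof reduces to a diagram chase.
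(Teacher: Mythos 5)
Your route is genuinely different from the paper's, and as written it has a gap at its central step. The paper never compares duality data across the non-flat base change at all: it invokes the d\'evissage theorem of \cite{xie20a} to reduce the assertion to the single identity $\pi_*\rho\pi^*\rho^{-1}\iota_*\rho=\pi_*\tilde\iota_*\rho\tilde\pi^*$ in $\GW^{[n]}(Y,EL)$, and then evaluates both sides by the projection formula (Proposition \ref{prop:projection_formula_finite}) together with the Koszul/Euler-class identities $\pi_*\rho(1)=\rho(\kappa(\mathcal{E}))$ and $\tilde\pi_*\rho(1)=\rho(\kappa(\iota^*\mathcal{E}))$ from \cite[\S 6.7]{huang2023the}; since $\iota^*\kappa(\mathcal{E})\cong\kappa(\iota^*\mathcal{E})$, both sides become $\kappa(\mathcal{E})\cup\iota_*\rho$ and the proof closes without any residue-complex base-change statement. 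Your preliminary steps do agree with what the paper uses implicitly: Lemma \ref{lma:meet-proper-tor-independent} gives that $\tilde\iota$ is regular of codimension $c$, that $\tilde\pi$ is cut out by the regular section $\iota^*s$, and tor-independence, and the triangulated-level base change via the Koszul resolution is fine.

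The gap is in the lifting step. You treat ``the standard base-change morphism for $\Delta$-pullback'' and the compatibility of the residue complex with tor-independent base change as classical, citing \cite[VI.3]{hartshorne1966residues}; but $\pi$ and $\tilde\pi$ are closed immersions, hence not flat, so $\tilde\pi^*E\iota^{\natural}L$ is not a residue (or even Cousin) complex, the derived pullback of an injective resolution along $\tilde\pi$ is not its termwise pullback, and Hartshorne's compatibilities (VI.3, VII.3.4 (TRA4)) cover only flat, respectively residually stable, base change. The paper's own comparison of this kind, Lemma \ref{lma:gamma} and Theorem \ref{thm:flat_pullback_finite_pushforward}, is constructed only for a \emph{flat} base change of a finite morphism, using flatness essentially (to make $Eg^*$ a residue complex and $\varepsilon$ an isomorphism), and the remark following Theorem \ref{thm:flat_pullback_finite_pushforward} records that even nearby trace compatibilities cannot be sourced in full rigor. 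Thus the ``main obstacle'' you yourself flag --- that $E\gamma$ agree on the nose with $\chi$, $\tilde\chi$ and the putative base-change map --- is exactly the unproven core, and your concluding claim that no alignment is needed ``once the comparison isomorphisms are set up functorially'' presupposes precisely those isomorphisms and their coherence, which are not constructed. To repair the argument you would either have to build a non-flat (tor-independent) analogue of Lemma \ref{lma:gamma} at the Cousin-complex level, or, more economically, bypass the comparison entirely as the paper does, via d\'evissage, the projection formula, and the explicit Koszul forms.
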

\begin{proof}
    By the d\'evissage theorem (cf. \cite[Theorem 5.1]{xie20a}), it suffices to prove that
    \begin{equation}\label{eqn:base_change_two_regular}
        \pi_* \rho \pi^* \rho^{-1} \iota_* \rho = \pi_*  \tilde{\iota}_* \rho \tilde{\pi}^*
    \end{equation}
    The projection formula (cf. Proposition \ref{prop:projection_formula_finite}) with $\alpha=\rho(1)$ implies that the left hand side of \eqref{eqn:base_change_two_regular} is equal to
    \[
        \pi_* \rho \pi^* \rho^{-1} \iota_* \rho = \rho (\kappa(\mathcal{E}))\cup (\rho^{-1} \iota_* \rho ) = \kappa(\mathcal{E})\cup \iota_* \rho,
    \]
    where $\kappa(\mathcal{E})$ is the Koszul complex associated to the section $s: \mathcal{E}^\vee \to \SO_Y$. The first equality holds by the projection formula and $\iota_*(\rho(1)) = \rho(\kappa(\mathcal{E}))$, which follows from \cite[\S 6.7]{huang2023the}, even though $Z$ is not assumed to be regular.
    By the projection formula and \cite[\S 6.7]{huang2023the} again, the right-hand side of \eqref{eqn:base_change_two_regular} is equal to
    \[
        \pi_*  \tilde{\iota}_* \rho \tilde{\pi}^* = \iota_* \tilde{\pi}_* \rho \tilde{\pi}^* = \iota_* (\rho (\kappa(\iota^*\mathcal{E}))\cup -) = \iota_* ( \kappa(\iota^*\mathcal{E})\cup \rho) = \iota_* ( \iota^*\kappa(\mathcal{E})\cup \rho) = \kappa(\mathcal{E}) \cup \iota_* \rho.
    \]
    The result follows.
\end{proof}

\subsection{On the flat base change formula}
Let
\[
    \xymatrix{{X'} \ar[r]^-{\bar{g}} \ar[d]_-{\bar{f}}& X \ar[d]_-{f}\\
    Y' \ar[r]^-{g}& Y}
\]
be a fiber square of schemes, where $f$ is finite and $g$ is flat. We have a canonical natural transformation
$$\varepsilon : g^* f_* \to \bar{f}_*\bar{g}^*$$
from $\M(X)$ to $\M(Y')$, which becomes isomorphic when restricted to the subcategory of quasi-coherent sheaves, cf. \cite[Section 12.2, p.327]{gortz2020algebraic}.

\begin{lemma}\label{lma:gamma}
    Suppose that there are residue complex on $X$, and the pullback $g^*, \bar{g}^*$ preserve dualizing complexes (e.g. when all the four schemes are Gorenstein).
    Let $\mathrm{Res}(Y)$(resp. $\mathrm{Res}(X')$) be the additive category of residue complexes on $Y$ (resp. $X'$), cf. \cite[p.304]{hartshorne1966residues} (see also \cite[Definition 2.2]{huang2023the}). There is a natural isomorphism
    \[
        \gamma: E\bar{g}^\ast f^\Delta \xrightarrow{\cong} \bar{f}^\Delta Eg^\ast
    \]
    from $\mathrm{Res}(Y)$ to $\mathrm{Res}(X')$ such that the following diagram
    \[
        \xymatrix{
        \bar{f}_\ast \bar{g}^\ast f^\Delta \ar[d]_-{\rho} \ar[r]^-{\varepsilon^{-1}}_-{\cong}& g^\ast f_\ast f^\Delta \ar[r]^-{\mathrm{Tr}} & g^\ast \ar[d]^-{\rho}  \\
        \bar{f}_\ast E \bar{g}^\ast f^\Delta \ar[r]^-{\gamma} & 	\bar{f}_\ast \bar{f}^\Delta Eg^\ast  \ar[r]^-{\mathrm{Tr}} & Eg^\ast
        }
    \]
    commutes.
\end{lemma}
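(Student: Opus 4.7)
The plan is to construct $\gamma$ in two passes---first a base-change quasi-isomorphism in the derived category, then a canonical lift to residue complexes---and then to reduce the commutativity of the displayed square to the standard compatibility between Grothendieck's trace and flat base change for finite morphisms.

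I would begin by recalling that, for a finite morphism $f$, the functor $f^\Delta$ acts on residue complexes by $f^\Delta I \cong f^{-1}\mathcal{H}om_{\SO_Y}(f_\ast\SO_X, I)$ (cf.\ \cite[III.6, VI.3]{hartshorne1966residues}), so $f^\Delta I$ is itself a residue complex on $X$; the same is true for $\bar f^\Delta$. Using flatness of $g$ together with the natural isomorphism $\bar f_\ast \bar g^\ast \cong g^\ast f_\ast$ on quasi-coherent sheaves (valid because $f$ is affine), one defines through the $(\bar f_\ast,\bar f^\Delta)$-adjunction a natural transformation
\[
\beta_I \colon \bar g^\ast f^\Delta I \longrightarrow \bar f^\Delta g^\ast I.
\]
Under the hypothesis that $g^\ast$ and $\bar g^\ast$ preserve dualizing complexes, the formal flat base-change theorem in Grothendieck duality (\cite[III.7]{hartshorne1966residues}) guarantees that $\beta_I$ is a quasi-isomorphism.

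Next, I would promote $\beta_I$ to the desired isomorphism $\gamma_I$ of residue complexes. The target $\bar f^\Delta E g^\ast I$ is a residue complex on $X'$, because $E g^\ast I$ is a residue complex on $Y'$ by definition of $E$ and $\bar f$ is finite. The source $E \bar g^\ast f^\Delta I$ is a residue complex on $X'$ again by definition. Both represent the same derived-category isomorphism class via the zig-zag $\rho \cdot \beta_I \cdot \rho^{-1}$, and for residue complexes on Gorenstein schemes the uniqueness-up-to-homotopy of representatives (\cite[VI.1]{hartshorne1966residues}) yields a canonical homotopy class of isomorphisms between them lifting this zig-zag. I take $\gamma_I$ to be this lift; naturality in $I$ is immediate from the naturality of $\beta_I$ and $\rho$.

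For the displayed square, I would apply $\bar f_\ast$ to $\gamma_I$ and chase. By construction the lower composite $\mathrm{Tr}_{\bar f} \circ \bar f_\ast(\gamma_I) \circ \rho$ agrees, up to the homotopies controlled in the previous step, with $\mathrm{Tr}_{\bar f} \circ \bar f_\ast(\bar f^\Delta \rho) \circ \bar f_\ast(\beta_I)$; unwinding the adjoint definition of $\beta_I$ identifies $\bar f_\ast(\beta_I)$ with $g^\ast \mathrm{Tr}_f$ composed with $\varepsilon^{-1}$. The required identity then reduces to the flat base-change compatibility of Grothendieck's trace, $\mathrm{Tr}_{\bar f} \circ \varepsilon = g^\ast \mathrm{Tr}_f$, which is standard for finite morphisms (\cite[III.7.4]{hartshorne1966residues}). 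The main obstacle will be keeping the derived-category and strict residue-complex levels in sync, since $\rho$ and $E$ are canonical only up to homotopy; at each step one must invoke the uniqueness theorem for residue complexes to pass from a quasi-isomorphism to an honest isomorphism. Once that framework is in place, no further geometric input is needed, and the verification is entirely formal---though the bookkeeping of injective resolutions and homotopies demands some care.
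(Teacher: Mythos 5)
Your construction of $\gamma$ follows the same route as the paper: the derived-category base-change isomorphism for the finite morphism $f$ (duality for finite morphisms, \cite[Theorem III.6.3]{hartshorne1966residues}) lifts uniquely to an isomorphism of residue complexes, since both $E\bar{g}^\ast f^\Delta I$ and $\bar{f}^\Delta Eg^\ast I$ are residue complexes; and the derived-level input for the square is the compatibility of the finite trace with flat base change, which is \cite[Proposition III.6.6(2)]{hartshorne1966residues} (your citation of III.7.4 points at the wrong place, but the statement you need does exist at the derived level for finite $f$, so this is only a citation slip).

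The genuine gap is in your final step, where you must upgrade derived-level (homotopy) commutativity of the square to strict commutativity of a diagram of chain maps -- and strict commutativity is what the lemma asserts and what its application in Theorem \ref{thm:flat_pullback_finite_pushforward} requires, since the square enters a diagram of duality-compatibility morphisms for dg form functors. The two composites to be compared are chain maps out of $\bar{f}_\ast\bar{g}^\ast f^\Delta I\cong g^\ast f_\ast f^\Delta I$, and this source is \emph{not} a residue complex (its terms are flat pullbacks of injectives, hence in general not injective), so the uniqueness theorem for residue complexes that you invoke to "pass from a quasi-isomorphism to an honest isomorphism" does not control this Hom set. What is needed is the rigidity statement the paper uses: by \cite[Lemma VI.4.1]{hartshorne1966residues}, $f_\ast f^\Delta I$ is a Cousin complex with respect to the same filtration as $I$, so its flat pullback is Cousin with respect to the filtration of $g^\ast I$, and then (the Cousin-complex extension of \cite[Lemma 4.8]{huang2023the}) chain-level morphisms $g^\ast f_\ast f^\Delta I\to Eg^\ast I$ coincide with morphisms in the derived category. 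With that identification, the derived-level diagram of \cite[Proposition III.6.6(2)]{hartshorne1966residues} lifts uniquely to the strictly commuting square of the lemma; without it, your homotopy bookkeeping has no mechanism to terminate and yields only commutativity up to unspecified homotopy.
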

\begin{proof}
    Note that $f$ is finite, by \cite[Theorem III.6.3]{hartshorne1966residues}, the isomorphism exists in derived categories, and can be uniquely lifted to an isomorphism between residue complexes. The statement and its proof in \cite[Lemma 4.8]{huang2023the} remain valid when $\tilde{J}$ is more generally considered a Cousin complex with respect to the same filtration associated with $J$. By \cite[Lemma VI.4.1]{hartshorne1966residues}, for any residue complex $I$ on Y, $f_\ast f^\Delta I$ is a Cousin complex with respect to the same filtration associated with $I $. Therefore there is an isomorphism
    \[
        \mathrm{Hom}_{Z^0\Ch^b_c(\M(X))}(g^\ast f_\ast f^\Delta I, Eg^\ast I) \cong \mathrm{Hom}_{D^b_c(\M(X))}(g^\ast f_\ast f^\Delta I, Eg^\ast I).
    \]
    This implies that the commutative diagram in \cite[Proposition III.6.6(2)]{hartshorne1966residues} can be uniquely lifted to the diagram given in this lemma.
\end{proof}

\begin{theorem}[Flat base-change formula]\label{thm:flat_pullback_finite_pushforward}
    With the assumption in Lemma \ref{lma:gamma}, the diagram
    \[
        \xymatrix{\GW^{[n]}\left(X^\prime, \bar{f}^\Delta Eg^\ast   I\right) \ar[d]_{\bar{f}_\ast} & \ar[l]_{\gamma}^{\simeq} \GW^{[n]}\left(X^\prime, E\bar{g}^\ast f^\Delta  I\right) & \ar[l]_(0.43){\bar{g}^\ast} \GW^{[n]}(X, f^\Delta I)  \ar[d]_{f_\ast} \\
        \GW^{[n]}(Y', Eg^\ast I) && \ar[ll]_-{g^\ast} \GW^{[n]}(Y, I)}
    \]
    of Grothendieck-Witt spectra commutes up to homotopy.
\end{theorem}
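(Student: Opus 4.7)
The plan is to realize both legs of the square as non-singular exact dg form functors
\[
\bigl(\Ch^b(\V(X)), \sharp_{f^\Delta I}\bigr) \longrightarrow \bigl(\Ch^b_c(\M(Y')), \sharp_{Eg^\ast I}\bigr),
\]
and to exhibit a natural isomorphism between them whose duality-comparison is precisely the commutative square furnished by Lemma \ref{lma:gamma}. The functoriality of the Grothendieck--Witt construction (cf.\ \cite{schlichting2017hermitian} and \cite[\S 4]{huang2023the}) then turns this into a homotopy between the induced maps of spectra.

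More concretely, I would first package the ``bottom-right'' leg $g^\ast \circ f_\ast$ as a form functor. Its underlying exact dg functor sends $F \mapsto g^\ast f_\ast F$, and its duality comparison is the composition
\[
g^\ast f_\ast [-, f^\Delta I]_X \longrightarrow g^\ast [f_\ast -, I]_Y \longrightarrow [g^\ast f_\ast -, g^\ast I]_{Y'} \xrightarrow{\rho} [g^\ast f_\ast -, E g^\ast I]_{Y'},
\]
built from the $\phi$-adjunction, the trace $\mathrm{Tr}: f_\ast f^\Delta I \to I$, and the canonical map $\rho$. Symmetrically, the ``top-left'' leg $\bar f_\ast \circ \gamma \circ \bar g^\ast$ gives a form functor $F \mapsto \bar f_\ast \bar g^\ast F$ whose duality comparison factors as $\bar g^\ast$ applied to the duality on $X$, followed by $\rho$, by $\gamma: E \bar g^\ast f^\Delta I \xrightarrow{\cong} \bar f^\Delta E g^\ast I$, and finally by the $\bar f$-trace.

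Second, the natural transformation $\varepsilon: g^\ast f_\ast \to \bar f_\ast \bar g^\ast$ of Lemma \ref{lma:gamma} is an isomorphism on quasi-coherent sheaves, hence on $\Ch^b(\V(X))$ after applying $f_\ast$, and so provides a natural equivalence between the underlying exact dg functors. The content to verify is that $\varepsilon$ intertwines the two duality comparisons described above; unwinding definitions, this is exactly the commutativity asserted by the square in Lemma \ref{lma:gamma}. Once this compatibility is in hand, $\varepsilon$ is a natural isomorphism of non-singular exact dg form functors, and applying the GW functor yields the desired homotopy of maps of spectra.

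The main obstacle is the bookkeeping of the four structural natural transformations involved (the two traces, the resolution $\rho$, and $\gamma$), and in particular the verification that Lemma \ref{lma:gamma} produces exactly the coherence required at the level of duality comparisons rather than merely at the level of underlying functors. Note that although the schemes in question need not be regular, all appearing categories remain Gorenstein divisorial, so the duality formalism of \cite[\S 4]{huang2023the} applies and no further d\'evissage argument is needed beyond the flat base change on quasi-coherent sheaves.
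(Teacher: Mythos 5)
Your proposal follows essentially the same route as the paper's proof: both legs are packaged as non-singular exact dg form functors, the base-change transformation $\varepsilon$ is shown to intertwine the two duality comparisons with Lemma \ref{lma:gamma} supplying the key compatibility, and functoriality of the Grothendieck--Witt construction then yields the homotopy of spectra. The only minor caveats are that the source category should be $\Ch^b_c(\M(X))$ (coherent complexes with the duality $\sharp_{f^\Delta I}$, since $X$ need not be regular and $f^\Delta I$ is a residue complex) rather than $\Ch^b(\V(X))$, and that the actual diagram chase also uses the compatibility of $\varepsilon$ with the pullback and pushforward comparison maps (\cite[Lemma 4.6.5]{lipman2009notes}) in addition to Lemma \ref{lma:gamma}, not Lemma \ref{lma:gamma} alone.
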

\begin{proof}
    All we need to do is demonstrate that the diagram
    \[
        \xymatrix{\left(\Ch^b_{c}(\M(X')), \sharp_{\bar{f}^\Delta Eg^\ast I} \right) \ar[d]_{(\bar{f}_\ast, \zeta)} & \ar[l]_{(\mathrm{id}, \gamma)} \left((\Ch^b_{c}(\M(X')), \sharp_{E\bar{g}^\ast f^\Delta  I}\right) & \ar[l]_-{(\bar{g}^*, \widetilde{\beta})} \left(\Ch^b_{c}(\M(X)) \sharp_{f^\Delta I}\right)  \ar[d]_{(f_\ast, \zeta)} \\
        \left(\Ch^b_{c}(\M(Y')), \sharp_{Eg^\ast I}\right) && \ar[ll]_-{(g^\ast, \widetilde{\beta})} \left(\Ch^b_{c}(\M(Y)), \sharp_{I}\right)}
    \]
    commutes up to the natural quasi-isomorphism $\varepsilon$.\ The argument is finished, if we can prove the diagram
    \[
        \xymatrix{g^\ast f_\ast [A, f^\Delta I] \ar[r]^-{\zeta} \ar[d]_-{\varepsilon} & g^\ast [f_\ast A, I] \ar[rr]^-{\widetilde{\beta}} && [g^\ast f_\ast A, Eg^\ast I] \\
        \bar{f}_\ast \bar{g}^\ast[A, f^\Delta I] \ar[r]^-{\widetilde{\beta}} & \bar{f}_\ast [\bar{g}^\ast A, E\bar{g}^\ast f^\Delta I] \ar[r]^-{\gamma} & \bar{f}_\ast[\bar{g}^\ast A, \bar{f}^\Delta Eg^\ast I]  \ar[r]^-{\zeta} & [\bar{f}_\ast \bar{g}^\ast A, Eg^\ast I] \ar[u]^-{\varepsilon^{\sharp}}
        }
    \]
    commutes for every $A\in \Ch^b_{c}(\M(X))$. This follows from the commutative diagram
    $$
        \xymatrix@C=14pt{\bar{f}_\ast \bar{g}^\ast[A, f^\Delta I] \ar[d]_-{\beta} \ar@{}[rrrrd]!UL|-{\diagram \label{diag:base_dual_1}} &&&& g^\ast f_\ast [A, f^\Delta I] \ar[d]_-{\phi} \ar[llll]_-{\varepsilon}\\
        \bar{f}_\ast [\bar{g}^\ast A, \bar{g}^\ast f^\Delta I] \ar[d]_-{\rho} \ar[r]^-{\phi} & [\bar{f}_\ast \bar{g}^\ast A, \bar{f}_\ast \bar{g}^\ast f^\Delta I] \ar[d]_-{\rho} \ar[r]^-{\varepsilon^{-1}} \ar@{}[ddr]|-{\diagram \label{diag:base_dual_2}} & [\bar{f}_\ast \bar{g}^\ast A, g^\ast f_\ast f^\Delta I] \ar[d]_-{\mathrm{Tr}} \ar[r]^-{\varepsilon^{\sharp}} & [g^\ast f_\ast A, g^\ast f_\ast f^\Delta I] \ar[d]_-{\mathrm{Tr}} & g^\ast [f_\ast A, f_\ast f^\Delta I] \ar[d]_-{\mathrm{Tr}} \ar[l]_-{\beta}\\
        \bar{f}_\ast[\bar{g}^\ast A, E\bar{g}^\ast f^\Delta I] \ar[r]^-{\phi} \ar[d]_-{\gamma} & [\bar{f}_\ast \bar{g}^\ast A, \bar{f}_\ast E\bar{g}^\ast f^\Delta I] \ar[d]_-{\gamma} & [\bar{f}_\ast \bar{g}^\ast A, g^\ast I] \ar[r]^-{\varepsilon^{\sharp}} \ar[d]_-{\rho} & [g^\ast f_\ast A, g^\ast I] \ar[d]_-{\rho} & g^\ast [f_\ast A, I] \ar[l]_-{\beta} \\
        \bar{f}_\ast[\bar{g}^\ast A, \bar{f}^\Delta Eg^\ast I] \ar[r]^-{\phi} & [\bar{f}_\ast \bar{g}^\ast A, \bar{f}_\ast \bar{f}^\Delta Eg^\ast I] \ar[r]^-{\mathrm{Tr}} & [\bar{f}_\ast \bar{g}^\ast A, Eg^\ast I] \ar[r]^-{\varepsilon^{\sharp}} & [g^\ast f_\ast A, Eg^\ast I]}
    $$
    where $\diag{\ref{diag:base_dual_1}}$ commutes by \cite[Lemma 4.6.5]{lipman2009notes}, $\diag{\ref{diag:base_dual_2}}$ commutes by Lemma \ref{lma:gamma}, and other squares commute by naturality.
\end{proof}
\begin{remark}
    We conjecture that Theorem \ref{thm:flat_pullback_finite_pushforward} remains valid when the map $f$ is more generally proper and the map $g$ is flat. However, we have not yet been able to locate a fully rigorous proof of the base change compatibility of the trace map, as stated in \cite[Corollary VII.3.4(b)(TRA4), p.383]{hartshorne1966residues}, which plays a crucial role in the argument. There are partial results in this direction, such as \cite[Theorem 3.6.5]{conrad2000grothendieck} - which is a central goal of that work - for the case where $f$ is a proper Cohen–Macaulay map. However, the general case where $f$ is proper remains out of reach.
\end{remark}
\begin{lemma}\label{lma:fh_codim_one}
    Let $X,Z$ and $S$ be regular schemes. Consider the diagram
    \[
        \xymatrix{
        & E \ar[d]_-{f} \ar[dl]_-{g} \ar[dr]^-{p}\\
        X & Z \ar[l]_-{h} \ar[r]^-{q}& S
        }
    \]
    where both triangles are commutative. Assume that $f$ is a regular embedding of codimension one, and that $g,h$ are proper morphisms of relative dimension $n,n+1$ respectively.
    Suppose that $f^*:\Pic(Z)/2 \to \Pic(E)/2$ is injective and there is a line bundle $L$ over $X$ and a line bundle $L'$ over $S$ such that $p^*L' = g^{\natural}L$ in $\Pic(E)/2$. Then $q^*L' = h^{\natural}L \otimes \SO(E)$ in $\Pic(Z)/2$
    and the following diagram
    \[
        \xymatrix{
        \GW^{[m+n]}(S,L') \ar[r]^-{q^*} \ar[d]_-{p^*} & \GW^{[m+n]}(Z,q^*L') \ar[r]^-{F} & K(Z) \ar[r]^-{H} & \GW^{[m+n+1]}(Z, h^{\natural}L) \ar[d]_-{h_*}\\
        \GW^{[m+n]}(E,p^*L') \ar[r]^-{\simeq} & \GW^{[m+n]}(E,g^{\natural}L)  \ar[r]^-{\rho} & \GW^{[m+n]}(E,g^{\Delta}EL) \ar[r]^-{g_*}& \GW^{[m]}(X, L)
        }
    \]
    commutes.
\end{lemma}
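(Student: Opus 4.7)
The twist identity is immediate bookkeeping: since $f$ is a codimension one regular embedding, $\omega_f \cong f^{\ast}\SO_{Z}(E)$, and $g = h \circ f$ gives $g^{\natural}L = \omega_f \otimes f^{\ast}(\omega_h \otimes h^{\ast}L) \cong f^{\ast}\bigl(h^{\natural}L \otimes \SO_{Z}(E)\bigr)$. The hypothesis $p^{\ast}L' = g^{\natural}L$ in $\Pic(E)/2$, together with $p^{\ast} = f^{\ast}q^{\ast}$ and the injectivity of $f^{\ast}$ on $\Pic/2$, then forces $q^{\ast}L' = h^{\natural}L \otimes \SO_{Z}(E)$ in $\Pic(Z)/2$.

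For the commutativity of the outer rectangle, I would decompose through $f$: since $p^{\ast} = f^{\ast} q^{\ast}$ and $g_{\ast} = h_{\ast} f_{\ast}$, the diagram reduces, after canceling the common $q^{\ast}$ and $h_{\ast}$, to the intermediate identity
\[
    H \circ F \;=\; f_{\ast} \circ \rho \circ \iota \circ f^{\ast}
\]
of maps $\GW^{[m+n]}(Z, q^{\ast}L') \to \GW^{[m+n+1]}(Z, h^{\natural}L)$, where $\iota$ is the canonical twist identification provided by the first part (a square root of $q^{\ast}L' \otimes (h^{\natural}L)^{\vee} \otimes \SO_{Z}(-E)$). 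My plan is to express both sides as cup product with the Koszul class $\kappa = [K(s)] \in \GW_{0}^{[1]}(Z, \SO_{Z}(E))$ of the regular section $s\colon \SO_{Z} \to \SO_{Z}(E)$ cutting out $E$. For the right-hand side, the projection formula of Proposition \ref{prop:projection_formula_finite} applied to $f$ gives $f_{\ast}\rho\iota f^{\ast}(\alpha) = (f_{\ast}\rho\iota)(1) \cup \alpha = \kappa \cup \alpha$, since $f_{\ast}\rho\iota(1)$ is by construction the Koszul class.

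The main obstacle is the matching identity $H \circ F = \kappa \cup (-)$, which says that the composition forget--then--hyperbolize--into--shifted--twist coincides with cup product by the Bott-type Koszul class of the divisor. The subtle point is that in the standard Karoubi--Bott cofiber sequence $\GW^{[i]}(Z,M) \xrightarrow{F} K(Z) \xrightarrow{H} \GW^{[i+1]}(Z,M)$ with matching twists the composition vanishes; here the twist jump by $\SO_{Z}(E)$ between source and target $\GW$ spaces exactly accounts for the nontriviality, and $\kappa$ encodes this defect. I would prove this by first verifying it on the unit $1 \in \GW^{[0]}(Z, \SO)$, where both sides have underlying $K$-class $1 - [\SO_{Z}(-E)]$ and the two symmetric forms are compared via an explicit metabolic reduction of the hyperbolic on $\SO_{Z} \oplus \SO_{Z}(E)[1]$ to the Koszul complex $[\SO_{Z} \xrightarrow{s} \SO_{Z}(E)]$. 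The extension to arbitrary $\alpha$ then follows from the $\GW^{\ast}(Z)$-module structure: $H$ is a module map over cup product by $\GW(Z)$ (via $F$) and $F$ is multiplicative, so both sides, being $\GW^{\ast}(Z)$-linear and agreeing on the unit, must coincide. Post-composing this intermediate identity with $h_{\ast}$ and using $h_{\ast} f_{\ast} = g_{\ast}$ then yields the commutativity of the outer rectangle.
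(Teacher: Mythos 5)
Your proposal is correct and takes essentially the same route as the paper: both reduce, via $p^*=f^*q^*$, $g_*=h_*f_*$ and the projection formula for the codimension-one embedding $f$, to cup product with the class $f_*(1)$, identify that Koszul/Euler class of $\SO(E)$ with $H(1)$, and then convert $H(1)\cup q^*(-)$ into $h_*HFq^*$ using the $H$--$F$ projection formula (your module-linearity argument). The only cosmetic difference is that you re-derive the hyperbolicity $f_*(1)=e(\SO(E))=H(1)$ and the module property by hand, where the paper simply cites these facts, e.g.\ \cite[Lemma 2.18]{huang2023the}.
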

\begin{proof}
    Firstly, we have
    \[
        f^* q^* L' = p^*L' = g^{\natural}L = f^*h^{\natural}L \otimes \omega_{f}  = f^*(h^{\natural}L \otimes \SO(E))
    \]
    in $\Pic(E)/2$. Secondly, by injectivity assumption of $f^*$, we have $q^*L' = h^{\natural}L \otimes \SO(E)$ in $\Pic(Z)/2$.
    By Proposition \ref{prop:projection_formula_finite}, we observe that
    \[
        g_* \rho p^* = h_* f_* \rho f^* q^* = h_* (f_*(1) \cup q^*)
    \]
    in $\GW^{[m]}(X,L)$.\ Since $f$ is a regular embedding of codimension one,
    \[
        f_*(1) = e(\SO(E)) = H(1)
    \]
    in $\GW^{[1]}_{0}(Z, \SO(E)^{\vee})$. Finally, by \cite[Lemma 2.18]{huang2023the}, we conclude
    \[
        h_* (f_*(1) \cup q^*) = h_* (H(1) \cup q^*) = h_* HF q^*.
    \]
    The result follows.
\end{proof}

\section{On the connecting homomorphism via two step blow-up}\label{sec:two_step_blow_up}

The connecting homomorphism for Witt groups via a one-step blow-up was originally studied in \cite{balmer2009geometric} (see also \cite{huang2023the} for the corresponding case in Hermitian $K$-theory). However, in practice, many examples do not satisfy the one-step blow-up assumption, for instance, the Lagrangian Grassmannians considered in this paper. To overcome this issue, Martirosian \cite{martirosian2021witt} extended the setup to two-step blow-ups in the context of Witt groups. In this section, we undertake a further study of the connecting homomorphism in Hermitian $K$-theory via two-step blow-ups.

\subsection{Setup}\label{sect:Setup} Let $\iota: Z_1 \hookrightarrow X$ be a regular immersion of codimension $c_1 \geq 2$. Let $B_1$ be the blow-up of $X$ along $Z_1$, and $E_1$ be the exceptional fiber. Let $U_1 := X - Z_1 \cong B_1 - E_1$ be the unaltered open complement. Furthermore, we assume that $\kappa:Z_2 \to E_1$ is a closed embedding such that the composition $\tilde{\iota} \circ \kappa: Z_2 \hookrightarrow B_1$ is a regular immersion  of codimension $c_2 \geq 2$. Let $B_2$ be the blow-up of $B_1$ along $Z_2$, and $E_2$ be the exceptional fiber. Define $U_1 := B - Z_2 \cong B_2 - E_2$ as the unaltered open complement. We have a commutative diagram
\[
    \xymatrix{
    &Z_1 \ar[r]^-{\iota_1} &X & \ar[l]_-{v_1} U_1 \ar[ld]^(.45){\tilde{v}_1} \ar@{=}[r] & U_{1} \ar[ld]^-{w}\\
    Z_2 \ar[r]^-{\kappa} \ar@/_1pc/[rr]_-{\iota_2} & E_1 \ar[r]^-{\tilde{\iota}_1} \ar[u]^-{\tilde{\pi}_1} & B_1 \ar[u]^-{\pi_1} & \ar[l]^-{v_2} U_2 \ar[ld]^-{\tilde{v}_2}\\
    E_2 \ar[rr]^-{\tilde{\iota}_2} \ar[u]^-{\tilde{\pi}_2} && B_2 \ar[u]^-{\pi_2}
    }
\]
with the usual morphisms.

\subsection{Hypothesis}\label{sect:hypothesis}
Suppose that there exists a scheme $Y$ and an auxiliary  morphism $\alpha: B_2 \to Y$ such that the composition $\vartheta := \alpha \circ \tilde{v}_2 \circ  w_1$ forms an affine bundle, i.e. a Zariski locally trivial bundle. This setup can be illustrated in the following diagram:
\begin{equation}\label{eq:XUBY}
    \xymatrix{
    &Z_1 \ar[r]^-{\iota_1} &X &\ar[l]_-{v_1} U_1 \ar[ld]^(.45){\tilde{v}_1} \ar@{=}[r] & U_{1} \ar[ld]^-{w} \ar[dd]^{\vartheta} \\
    Z_2 \ar[r]^-{\kappa} \ar@/_1pc/[rr]_-{\iota_2} & E_1 \ar[r]^-{\tilde{\iota}_1} \ar[u]^-{\tilde{\pi}_1} & B_1 \ar[u]^-{\pi_1} & \ar[l]^-{v_2} U_2 \ar[ld]^-{\tilde{v}_2}\\
    E_2 \ar[rr]^-{\tilde{\iota}_2} \ar[u]^-{\tilde{\pi}_2} && B_2 \ar[u]^-{\pi_2}  \ar@{-->}[rr]^-{\alpha}&& Y
    }
\end{equation}
Set $\pi= \pi_1 \circ \pi_2$ and $\tilde{\pi} = \alpha \circ \tilde{\iota}_2$.
\begin{proposition}[Oriented cohomology theory]\label{prop:oriented-cohomology}
    Under Setup \ref{sect:Setup} and Hypothesis \ref{sect:hypothesis}.\ Assume that we have an oriented cohomology theory with support $H^*$, which is homotopy invariant for regular schemes. Suppose further that $H^*$ admits push-forward along proper morphisms satisfying the flat base-change formula.
    Then, the localization long exact sequence  \vspace{-7pt}
    \begin{equation}\label{eq:local-X-Z}
        \xymatrix{ \cdots \ar[r]^-{\partial} & H^*_{Z_1}(X) \ar[r] & H^*(X) \ar[r]^-{v_1^*} & H^*(U_1) \ar[r]^-{\partial} & H^{*+1}_{Z_1}(X) \ar[r] & \cdots \cdots }
    \end{equation}
    reduces to split short exact sequences
    \vspace{-7pt}
    $$\xymatrix{0 \ar[r] & H^*_{Z_1}(X) \ar[r] & H^*(X) \ar[r]^-{v_1^*} & H^*(U_1) \ar[r] & 0 } \vspace{-7pt} $$
    with a right splitting given by $ \pi_{*} \circ \alpha^* \circ (\vartheta^*)^{-1}$.
\end{proposition}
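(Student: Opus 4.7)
The plan is to verify directly that $\pi_{\ast} \circ \alpha^{\ast} \circ (\vartheta^{\ast})^{-1}$ is a right splitting of $v_1^{\ast}$. Once the identity $v_1^{\ast} \pi_{\ast} \alpha^{\ast} (\vartheta^{\ast})^{-1} = \mathrm{id}$ is in hand, $v_1^{\ast}$ is surjective in every degree, the connecting homomorphism $\partial$ in \eqref{eq:local-X-Z} vanishes, and the long exact sequence reduces to the split short exact sequences asserted in the statement.

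The first step is the geometric observation that $\pi = \pi_1 \circ \pi_2 : B_2 \to X$ restricts to an isomorphism above $U_1 = X - Z_1$. Indeed, $\pi_1 : B_1 \to X$ is an isomorphism over $U_1$ by the definition of the blow-up, and since $Z_2 \subset E_1$ one has $U_1 = B_1 - E_1 \subset B_1 - Z_2 = U_2$, so $\pi_2$ is also an isomorphism over $\pi_1^{-1}(U_1) \cong U_1$. Under this identification the canonical open immersion $\pi^{-1}(U_1) \hookrightarrow B_2$ coincides with $\tilde{v}_2 \circ w$, producing the fibre square
\[
\xymatrix{
U_1 \ar[r]^-{\tilde{v}_2 w} \ar[d]_-{\mathrm{id}} & B_2 \ar[d]^-{\pi} \\
U_1 \ar[r]^-{v_1} & X.
}
\]
Since $v_1$ is an open immersion (hence flat) and $\pi$ is proper (as a composition of two blow-ups), the flat base-change formula assumed for $H^{\ast}$ supplies $v_1^{\ast} \pi_{\ast} = (\tilde{v}_2 w)^{\ast}$.

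Combining this base-change identity with the tautological factorization $\alpha \circ \tilde{v}_2 \circ w = \vartheta$ then yields
\[
v_1^{\ast} \pi_{\ast} \alpha^{\ast} (\vartheta^{\ast})^{-1} \;=\; (\tilde{v}_2 w)^{\ast} \alpha^{\ast} (\vartheta^{\ast})^{-1} \;=\; \vartheta^{\ast} (\vartheta^{\ast})^{-1} \;=\; \mathrm{id},
\]
where $(\vartheta^{\ast})^{-1}$ is well-defined because $\vartheta : U_1 \to Y$ is an affine bundle and $H^{\ast}$ is homotopy invariant on regular schemes, so $\vartheta^{\ast}$ is an isomorphism. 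The hard part is not really hard: the only point that needs genuine care is confirming that the flat base-change formula posited for $H^{\ast}$ applies to the square above in the stated form $v_1^{\ast} \pi_{\ast} = (\tilde{v}_2 w)^{\ast}$; once that is secured, the rest is purely formal manipulation of the hypotheses.
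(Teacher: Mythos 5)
Your proof is correct and takes essentially the same route as the paper: you establish $v_1^*\pi_* = (\tilde{v}_2 w)^*$ by one application of flat base change to the composite $\pi=\pi_1\pi_2$, which is an isomorphism over $U_1$, whereas the paper obtains the same identity by applying base change separately to each blow-up ($v_1^*\pi_{1*}=\tilde{v}_1^*$ and $v_2^*\pi_{2*}=\tilde{v}_2^*$, with $\tilde{v}_1 = v_2\circ w$). In both arguments the conclusion then follows from $v_1^*\pi_*\alpha^*(\vartheta^*)^{-1}=(\alpha\tilde{v}_2 w)^*(\vartheta^*)^{-1}=\vartheta^*(\vartheta^*)^{-1}=\mathrm{id}$ together with homotopy invariance for the affine bundle $\vartheta$.
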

\begin{proof}
    Note that
    $$v_1^* \circ \pi_* = v_1^* \circ {\pi_1}_* \circ {\pi_2}_* = \tilde{v}_1^* \circ {\pi_2}_* = w^* \circ v_2^* \circ {\pi_2}_* = w^* \circ \tilde{v}_2^*$$
    where for the second and last equality we use the base change formula. It follows that
    $ v_1^* \circ \pi_* \circ \alpha^* = w^* \circ \tilde{v}_2^*  \circ \alpha^* = \vartheta^*.$ The result follows since $\vartheta$ is an affine bundle, whence $\vartheta^*$ is an isomorphism.
\end{proof}

\begin{lemma}
    The following statements hold for Picard groups of the blow-ups $B_1$ and $B_2$.
    \begin{itemize}
        \item [(i)] The map
              \[
                  (f, \pi_1^*) : \mathbb{Z} \oplus \Pic(X)  \rightarrow  \Pic(B_1)
              \]
              is an isomorphism, where the first map $f$ sends an integer $n$ to $\SO(E_1)^{\otimes n}$.
        \item [(ii)] The map
              \[
                  (f,g,\pi^*): \mathbb{Z} \oplus \mathbb{Z} \oplus \Pic(X)  \rightarrow  \Pic(B_2)
              \]
              is an isomorphism, where the first map $f$ sends an integers $n$ to $\pi_2^*\SO(E_1)^{\otimes n}$, while the second map $g$ sends an integer $m$ to $\SO(E_2)^{\otimes m}$.
    \end{itemize}
\end{lemma}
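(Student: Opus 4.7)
The plan is to treat (i) as the base case of the standard Picard-group formula for a blow-up, and deduce (ii) by iterating (i) on the two successive blow-ups $\pi_1$ and $\pi_2$. Throughout I would use that $X$ is regular (the implicit running assumption of the setup in this paper; in the applications $X$ is a Lagrangian Grassmannian or a smooth $B_i$), so $B_1$ is also regular because $Z_1 \hookrightarrow X$ is regularly embedded, and similarly $B_2$ is regular. This is what allows one to reapply (i) in the second step.

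For part (i), I would first show surjectivity of $(f,\pi_1^*)$ via the Weil-divisor localization sequence attached to the effective Cartier divisor $E_1 \subset B_1$:
\[
\mathbb{Z}\cdot [E_1] \xrightarrow{\,f\,} \Pic(B_1) \xrightarrow{\,v_1^*\,} \Pic(U_1) \to 0.
\]
Because $Z_1 \hookrightarrow X$ has codimension $c_1 \geq 2$ and $X$ is regular (hence locally factorial), the codimension-two purity principle gives $\Pic(U_1) = \Pic(X\setminus Z_1) = \Pic(X)$, and $\pi_1^*$ splits $v_1^*$ since $\pi_1$ is an isomorphism over $U_1$. For injectivity of $f$, I would restrict $\SO(E_1)$ along the inclusion of any fiber $F \cong \mathbb{P}^{c_1-1}$ of the projective bundle $\tilde{\pi}_1 : E_1 \to Z_1$. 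The restriction of $\SO(E_1)$ to $E_1$ is the relative $\SO(-1)$ of $E_1 = \mathbb{P}(N_{Z_1/X}) \to Z_1$, so its further restriction to $F$ is $\SO_F(-1)$, a generator of $\Pic(F) \cong \mathbb{Z}$. Hence $n \mapsto \SO(E_1)^{\otimes n}$ restricts to $n \mapsto \SO_F(-n)$, which is injective since $c_1 \geq 2$.

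For part (ii), I apply (i) to $\pi_2 : B_2 \to B_1$, noting that $\iota_2 = \tilde{\iota}_1 \circ \kappa$ is by hypothesis a regular immersion of codimension $c_2 \geq 2$ into the regular scheme $B_1$. This yields
\[
\Pic(B_2) \cong \mathbb{Z}\cdot [E_2] \oplus \pi_2^* \Pic(B_1).
\]
Substituting the decomposition from (i) applied to $\pi_1$ and using $\pi = \pi_1 \circ \pi_2$ gives $\Pic(B_2) \cong \mathbb{Z}\cdot [E_2] \oplus \mathbb{Z}\cdot \pi_2^*[E_1] \oplus \pi^* \Pic(X)$, which is the asserted isomorphism.

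The only nontrivial ingredient is the injectivity of $f$ in (i), and this is secured by the fiberwise computation above; the rest of the argument is formal, combining purity with iteration. The other point that deserves a brief check is the regularity of $B_1$, needed so that (i) can legitimately be reapplied in (ii), but this is standard for blow-ups of a regular scheme along a regularly embedded subscheme. Consequently no serious obstacle arises, and the lemma reduces to the familiar Picard formula for a single blow-up applied twice.
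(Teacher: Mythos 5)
Your treatment of (ii) coincides with the paper's: there, (ii) is obtained by applying (i) to the second blow-up $\pi_2\colon B_2\to B_1$ along the regularly immersed $Z_2$ of codimension $c_2\ge 2$ and invoking compatibility of pullbacks with composition, which is exactly your iteration step (together with the observation that $B_1$ inherits the needed hypotheses). For (i) the routes differ: the paper does not reprove the one-step formula but cites it (\cite[Proposition 6.7, p.~115]{fulton1998intersection}, \cite[Appendix A.6]{balmer2009geometric}), whereas you give a self-contained argument via the excision sequence for the Cartier divisor $E_1\subset B_1$, codimension-$\ge 2$ purity for $\Pic(X)\to\Pic(U_1)$, and restriction of $\SO(E_1)$ to the $\mathbb{P}^{c_1-1}$-fibers of $E_1\to Z_1$. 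That argument is correct as far as it goes, and the fiberwise computation plus purity also gives full injectivity of $(f,\pi_1^*)$ (restrict a relation $\SO(E_1)^{\otimes n}\otimes\pi_1^*M\cong\SO$ first to $U_1$, then to a fiber), so the ``formal'' part you wave at does close.

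The one substantive caveat is your appeal to regularity of $X$ as ``the implicit running assumption of the setup.'' It is not: the lemma is stated under Setup \ref{sect:Setup} and Hypothesis \ref{sect:hypothesis}, which only require the immersions $Z_1\hookrightarrow X$ and $Z_2\hookrightarrow B_1$ to be regular of codimension $\ge 2$, and the paper elsewhere insists on working with non-regular (Gorenstein, even reducible) schemes. Both your purity step $\Pic(U_1)\cong\Pic(X)$ and your identification of $\Pic(B_1)$ with a divisor class group genuinely use local factoriality, so your proof establishes the lemma only under this extra hypothesis. In every place the lemma is actually used ($X=\LG(n,V)$ over a regular base, $B_1$ its blow-up, both regular), the hypothesis holds, so nothing downstream is affected; but to get the stated generality you should replace purity by the standard argument through $\mathbf{R}\pi_{1*}\SO_{B_1}\cong\SO_X$ (cf.\ \cite[Lemme VII.3.5]{SGA6}, already invoked in Appendix \ref{sec:pushforward}) combined with the projective-bundle description of $\Pic(E_1)$, or simply quote the references the paper uses.
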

\begin{proof}
    For the proof of (i), we refer to \cite[p. 115 Proposition 6.7]{fulton1998intersection}, in view of the condition $c_1 \geq 2$. See also \cite[Appendix A.6]{balmer2009geometric}. For the proof of (ii), note that the map $(g,\pi_2^*): \mathbb{Z} \oplus \Pic(B_1) \to \Pic(B_2) $ is an isomorphism by $(i)$, and we use the compatibility of pullbacks with compositions.
\end{proof}
The right-hand square of Diagram (\ref{eq:XUBY}) yields the following diagram on Picard groups
\[
    \Pic\left( \vcenter{\xymatrix@R=35pt@C=35pt{X & U_1 \ar@{_{(}->}[l]_-{v_1} \ar[d]^-{\vartheta} \ar@{_{(}->}[dl]|-*+{\scriptstyle{\tilde{v}_{2}\circ w}} \\
            B_2 \ar[u]^-{\pi} \ar[r]_-{\alpha} & Y}} \right) \cong \raisebox{-9pt}{$\vcenter{\xymatrix@R=35pt{\Pic(X)\ar@{=}[r] \ar[d]_{\left(\begin{smallmatrix}1\\0\\0\end{smallmatrix}\right)} & \Pic(X) \ar@{=}[d]  \\ \Pic(X)\oplus \mathbb{Z} \oplus \mathbb{Z} \ar[ur]|-{(\begin{smallmatrix}1&0&0\end{smallmatrix})} & \Pic(X) \ar[l]^-{\left(\begin{smallmatrix}1\\ \lambda_1 \\ \lambda_2 \end{smallmatrix}\right)}}}$}
\]
where the map $\Pic(X) \cong \Pic(Y) \xrightarrow{\alpha^*} \Pic(B_2) \cong \Pic(X)\oplus \mathbb{Z} \oplus \mathbb{Z}$ must be of the form $\left(\begin{smallmatrix}1\\ \lambda_1\\ \lambda_2\end{smallmatrix}\right)$.

In this paper, we fix the following notation on various line bundles. If $v:U\to X$ is a morphism of schemes and $L$ is a line bundle on $X$, then we define $L_{\mid U}:= v^*L$. Furthermore, we additionally set
$$L_Y := (\vartheta^*)^{-1} L_{\mid U_1}, \quad\quad L_{B_2} := \alpha^*L_Y , \quad\quad L_{E_2} := \tilde{\pi}^*L_Y $$
where $Y,B_2,E_2$ do not admit a morphism to $X$. Note that
\begin{equation}\label{eqn:two_step_blow_up_lambda}
    L_{B_2} \cong \pi^* L \otimes \pi_2^*\SO(E_1)^{\otimes \lambda_1(L)} \otimes \SO(E_2)^{\otimes \lambda_2(L)} .
\end{equation}

\subsection{On the localization sequences}  The blow-up setup (Setup \ref{sect:Setup}) together with pushforwards and pullbacks bring us the following sequence of maps of localization sequences of Grothendieck-Witt spectra

\begin{equation}\label{eqn:seq-of-loc-seq}
    \xymatrix@C=15pt{
    \GW^{[m-c_1]}(Z_1, \iota_1^{\natural}L) \ar[r]^-{\iota_{1*}} & \GW^{[m]}(X, L) \ar[r]^-{v_1^*} & \GW^{[m]}(U_1, L_{\mid U_1}) \ar[r]^-{\partial_{1}} & \GW^{[m-c_1]}(Z_1, \iota_1^{\natural}L)[1]\\
    \GW^{[m-1]}(E_1, \tilde{\iota}_1^{\natural}\pi^{\natural}_1L) \ar[r]^-{\tilde{\iota}_{1*}} \ar[u]^-{\tilde{\pi}_{1\ast}} & \GW^{[m]}(B_1, \pi_1^{\natural}L) \ar[r]^-{\tilde{v}_1^*} \ar[u]^-{\pi_{1\ast}} & \GW^{[m]}(U_1, (\pi_1^{\natural}L)_{\mid U_1}) \ar[r]^-{\tilde\partial_{1}} \ar[u]^-{\simeq} & \GW^{[m-1]}(E_1, \tilde{\iota}_1^{\natural}\pi^{\natural}_1L)[1] \ar[u]^-{\tilde{\pi}_{1\ast}} \\
    \GW^{[m-c_2]}(Z_2, \iota_2^{\natural}\pi_1^{\natural}L) \ar[r]^-{\iota_{2*}} \ar[u]^-{\kappa_{\ast}} & \GW^{[m]}(B_1, \pi_1^{\natural}L) \ar[r]^-{v_2^*} \ar@{=}[u] & \GW^{[m]}(U_2, (\pi_1^{\natural}L)_{\mid U_2}) \ar[r]^-{\partial_{2}} \ar[u]^-{w^{\ast}} & \GW^{[m-c_2]}(Z_2, \iota_2^{\natural}\pi_1^{\natural}L) [1] \ar[u]^-{\kappa_{\ast}} \\
    \GW^{[m-1]}(E_2, \tilde{\iota}_2^{\natural} \pi^{\natural}L) \ar[r]^-{\tilde{\iota}_{2*}} \ar[u]^-{\tilde{\pi}_{2\ast}} & \GW^{[m]}(B_2, \pi^{\natural}L) \ar[r]^-{\tilde{v}_2^*} \ar[u]^-{\pi_{2\ast}} & \GW^{[m]}(U_2,(\pi^{\natural}L)_{\mid U_2}) \ar[r]^-{\tilde\partial_{2}} \ar[u]^-{\simeq} & 	\GW^{[m-1]}(E_2, \tilde{\iota}_2^{\natural} \pi^{\natural}L)[1] \ar[u]^-{\tilde{\pi}_{2\ast}}
    }
\end{equation}
\begin{lemma}\label{lma:seq-of-loc-seq}
    Every square in Diagram \eqref{eqn:seq-of-loc-seq} commutes up to homotopy.
\end{lemma}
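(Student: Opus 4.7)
The plan is to verify commutativity of all nine squares in Diagram~\eqref{eqn:seq-of-loc-seq} by classifying them by type. The diagram is organized in three horizontal layers (rows~1--2, 2--3, and 3--4), and in each layer the three squares split into: a left square involving two proper pushforwards; a middle square combining a proper pushforward with an open pullback; and a right square involving a boundary map. The same three-step pattern handles all three layers, so the work reduces to three prototypical verifications.

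The left squares commute by functoriality of the hermitian pushforward, applied to the scheme-theoretic identities $\iota_1 \circ \tilde{\pi}_1 = \pi_1 \circ \tilde{\iota}_1$ (layer 1--2), $\tilde{\iota}_1\circ \kappa = \iota_2$ (layer 2--3), and $\iota_2 \circ \tilde{\pi}_2 = \pi_2 \circ \tilde{\iota}_2$ (layer 3--4). What actually has to be checked is that the duality isomorphisms $\chi$, $\gamma$ and the injective resolution $\rho$ used in defining the individual pushforwards (see Appendix~\ref{sec:base_change_gorenstein}) fit together coherently when composed. Since all the schemes involved are Gorenstein divisorial, the compositions of dualizing functors and traces behave as expected, and the required coherence is exactly the one set up in \S\ref{sec:base_change_gorenstein}.

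The middle squares in layers 1--2 and 3--4 are direct instances of the flat base change formula of Theorem~\ref{thm:flat_pullback_finite_pushforward}, applied to the fiber squares coming from the blow-ups $\pi_1$ and $\pi_2$ restricted over the open sets $U_1$ and $U_2$; the vertical map is the proper pushforward $\pi_{i*}$ and the horizontal map is the pullback along the open immersion $v_i$. The middle square in layer~2--3 has the identity as vertical map, so commutativity reduces to the identity of pullbacks $\tilde{v}_1^* = w^* \circ v_2^*$, which follows from the factorization $\tilde{v}_1 = v_2 \circ w$ visible in Diagram~\eqref{eq:XUBY}.

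The right squares, involving the boundary maps $\partial$ and $\tilde{\partial}$, are derived from naturality of the localization sequence. Once the left and middle squares have been verified, each layer defines a morphism between two cofibre sequences of dg form functors in the Grothendieck--Witt setting, and the commutativity of the right squares is the automatic naturality of the connecting morphism in such a map of distinguished triangles. The principal subtlety, and the main obstacle, is that several of the intermediate schemes ($B_1$, $B_2$, $E_1$, $E_2$, and the Lagrangian flag targets to which this diagram is later applied) are only Gorenstein rather than regular, so the flat base change theorem and functoriality of pushforward must be invoked in the Gorenstein form developed in Appendix~\ref{sec:base_change_gorenstein} rather than the regular form of \cite{huang2023the}; once this is in place, the proof reduces to routine diagram chasing.
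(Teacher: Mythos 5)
Your decomposition into three ladders matches the paper's, and your treatment of the left squares (functoriality of pushforward along the composites $\iota_1\tilde{\pi}_1=\pi_1\tilde{\iota}_1$, $\tilde{\iota}_1\kappa=\iota_2$, $\iota_2\tilde{\pi}_2=\pi_2\tilde{\iota}_2$) and of the middle square in the layer $2$--$3$ ($\tilde{v}_1=v_2\circ w$) is fine. But the argument for the right squares is a genuine gap. You claim that once the left and middle squares commute, ``each layer defines a morphism between two cofibre sequences'' and the boundary squares then commute ``automatically'' by naturality of the connecting morphism. This is exactly the step that cannot be deduced formally: in a map of distinguished triangles, commutativity of the first two squares only guarantees the \emph{existence} of some filler on the third terms, not that the \emph{given} vertical maps $\tilde{\pi}_{1\ast}$, $\kappa_\ast$, $\tilde{\pi}_{2\ast}$ are compatible with $\partial$. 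To get the whole ladder one must realize the vertical maps as maps of the underlying localization data before passing to homotopy groups; this is precisely what the paper does, citing \cite[Lemma 7.8]{huang2023the} for the two blow-up ladders and, for the layer $2$--$3$, constructing the cube of dg categories with nested sets of weak equivalences (extension of support $\GW_{Z_2}(B_1)\to\GW_{E_1}(B_1)$) and then invoking compatibility of pushforwards with composition to match $\kappa_\ast$ with the d\'evissage identifications $\iota_{2\ast}$, $\tilde{\iota}_{1\ast}$. Your proposal never addresses this last point, which is the only non-routine square in the diagram.

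A secondary problem is the justification of the middle squares in layers $1$--$2$ and $3$--$4$ and your closing remark about Gorenstein schemes. Theorem \ref{thm:flat_pullback_finite_pushforward} is a base-change statement for \emph{finite} $f$, whereas $\pi_1,\pi_2$ are blow-ups (projective, not finite), so it does not apply; the paper even remarks that the proper Gorenstein case is only conjectural. In the setting of Diagram \eqref{eqn:seq-of-loc-seq} all schemes involved ($X$, $B_1$, $B_2$, $E_1$, $E_2$, $Z_1$, $Z_2$) are handled with the regular-scheme machinery of \cite{huang2023the} (the appendix's Gorenstein tools are needed for the generalized Lagrangian flag schemes elsewhere, not here), and the middle squares are part of what \cite[Lemma 7.8]{huang2023the} already provides. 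So the correct route is either to cite that lemma for the two blow-up ladders, or to redo its model-level construction; neither is supplied by the base-change theorem you invoke.
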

\begin{proof}
    The first and the last horizontal ladder diagrams commute by \cite[Lemma 7.8]{huang2023the}. We explain the commutativity of the second horizontal ladder diagram, which stems from the following ladder diagram
    \begin{equation}\label{eq:ladder-categorical}
        \xymatrix@C=20pt{
        \GW^{[m]}_{E_1}(B_1, \pi^{\natural}_1L) \ar[r]^-{\mathrm{ext}} & \GW^{[m]}(B_1, \pi^{\natural}_1L) \ar[r]^-{\tilde{v}_1^*} & \GW^{[m]}(U_1, (\pi^{\natural}_1L)_{\mid U_1}) \ar[r]^-{\partial} & \GW^{[m]}_{E_1}(B_1, \pi^{\natural}_1L)[1] \\
        \GW^{[m]}_{Z_2}(B_1, \pi^{\natural}_1L) \ar[r]^-{\mathrm{ext}} \ar[u]^-{\mathrm{ext}} & \GW^{[m]}(B_1, \pi^{\natural}_1L) \ar[r]^-{v_2^*} \ar@{=}[u] & \GW^{[m]}(U_2, (\pi^{\natural}_1L)_{\mid U_2}) \ar[r]^-{\partial} \ar[u]^-{w^{\ast}} & \GW^{[m]}_{Z_2}(B_1, \pi^{\natural}_1L)[1] \ar[u]^-{\mathrm{ext}}
        }
    \end{equation}
    of localization sequences, where the maps $\mathrm{ext}$ represent the extension of support. All the squares in the ladder diagram \eqref{eq:ladder-categorical} commute for categorical reasons similar to those in the proof of \cite[Lemma 7.8]{huang2023the}. To illustrate, given a pretriangulated dg category with weak equivalences and duality $(\mathcal{A},\mathrm{quis})$ and let $\mathrm{quis} \subset v \subset w \subset Z^{0}\mathcal{A}$ be two larger set of weak equivalences such that both $(\mathcal{A},w)$ and $(\mathcal{A},v)$ are also pretriangulated dg categories with weak equivalences and dualities. Then we have the following canonical map of squares:

    \[
        \xymatrix@R=10pt@C=10pt{
        & (\mathcal{A}, \mathrm{quis}) \ar[rr]^-{=} \ar'[d][dd]
        & & (\mathcal{A}, \mathrm{quis}) \ar[dd]
        \\
        (\mathcal{A}^{v}, \mathrm{quis}) \ar[ur]\ar[rr]\ar[dd]
        & & (\mathcal{A}^w, \mathrm{quis}) \ar[ur]\ar[dd]
        \\
        & (\mathcal{A}, v) \ar'[r][rr]
        & & (\mathcal{A}, w)
        \\
        (\mathcal{A}^{v},v) \ar[rr]\ar[ur]
        & & (\mathcal{A}^w,w) \ar[ur]
        }
    \]

    The commutativity of squares in Diagram \eqref{eq:ladder-categorical} can be obtained by setting $\mathcal{A}=\Ch^b(\V(B_1))$, defining $\mathrm{quis} \subset Z^0 \mathcal{A}$ as the set of quasi-isomorphisms, and letting $v \subset Z^0 \mathcal{A}$ (resp. $w \subset Z^0 \mathcal{A}$) be the set of morphisms which become quasi-isomorphisms after pullback along $v_2$ (resp. $\tilde{v}_1$).

    In particular, the commutativity of the third square in the second row of Diagram \eqref{eqn:seq-of-loc-seq} is a consequence of the commutativity of the following squares:
    \[
        \xymatrix{
        \GW^{[m]}(U_1, (\pi^{\natural}_1L)_{\mid U_1}) \ar[r]^-{\partial} & \GW^{[m]}_{E_1}(B_1, \pi^{\natural}_1L)[1] & \GW^{[m-1]}(E_1, \tilde{\iota}_{1}^{\natural}\pi^{\natural}_1L)[1] \ar[l]_(.49){\tilde{\iota}_{1*}}^(.49){\simeq}\\
        \GW^{[m]}(U_2, (\pi^{\natural}_1L)_{\mid U_2}) \ar[r]^-{\partial} \ar[u]^-{w^*} & \GW^{[m]}_{Z_2}(B_1, \pi^{\natural}_1L)[1] \ar[u]^-{\mathrm{ext}} & \GW^{[m-c_2]}(Z_2, \iota_{2}^{\natural}\pi^{\natural}_1L)[1] \ar[l]_(.49){\iota_{2*}}^(.49){\simeq} \ar[u]^-{\kappa_*}
        }
    \]
    where the right square commutes by the fact that pushforwards are compatible with composition.
\end{proof}

\subsection{On the geometric description of the connecting homomorphism}
The numbers $\lambda_1$ and $\lambda_2$, defined in Equation \eqref{eqn:two_step_blow_up_lambda}, allow us to study the connecting homomorphism $\partial_1$ in Diagram \eqref{eqn:seq-of-loc-seq}, which is the aim of this section. We record the following result:

\begin{theorem}\label{thm:two_step_blow_up}
    Suppose that the assumptions of Setup \ref{sect:Setup} and Hypothesis \ref{sect:hypothesis} hold. Then the following statements hold true.
    \begin{enumerate}[leftmargin=30pt]
        \item If $\lambda_1(L)\equiv c_1-1 \bmod 2$ and $\lambda_2(L)\equiv c_2-1 \bmod 2$, then the sequence of abelian groups
              \[\xymatrix{0 \ar[r] & \GW^{[m-c_1]}_i(Z_1, \iota_{1}^{\natural} L) \ar[r]^-{\iota_{1*}} & \GW^{[m]}_i(X,L) \ar[r]^-{v_1^*} & \GW^{[m]}_i(U_1,L_{\mid U_1}) \ar[r] & 0 }\]
              is split exact for each $i \in \mathbb{Z}$, with an explicit right splitting given by $\pi_{\ast} \circ \alpha^{\ast} \circ (\vartheta^*)^{-1}$.
        \item If $\lambda_1(L)\equiv c_1-1 \bmod 2$ and $\lambda_2(L)\equiv c_2 \bmod 2$, then the diagram

              \[
                  \xymatrix{
                  \GW^{[m]}(U_1, L_{\mid U_1}) \ar[r]^-{\partial_1} \ar[d]_-{(\vartheta^*)^{-1}}^-{\simeq} & \GW^{[m-c]}(Z_1, \iota_{1}^{\natural} L)[1] & \GW^{[m-c_1+1]}(Z_1, \iota_{1}^{\natural} L) \ar[l]_-{\eta \cup}\\
                  \GW^{[m]}(Y, L_Y) \ar[r]^-{\tilde{\pi}^{\ast}} & \GW^{[m]}(E_{2}, L_{E_2}) \ar[r]^-{\simeq} & \GW^{[m]}(E_{2}, \tilde{\iota}_2^{\natural} \pi^{\natural}L) \ar[u]^-{(\tilde{\pi}_1 \kappa \tilde{\pi}_2)_{\ast}}
                  }
              \]
              commutes.
        \item If $\lambda_1(L)\equiv c_1 \bmod 2$ and $\lambda_2(L)\equiv c_2-1 \bmod 2$, then the diagram
              \[
                  \xymatrix{
                  \GW^{[m]}(U_1, L_{\mid U_1}) \ar[r]^-{\partial_1} \ar[d]_-{\alpha^{\ast} (\vartheta^*)^{-1}} & \GW^{[m-c_1]}(Z_1, \iota_{1}^{\natural} L)[1] & \GW^{[m-c_1+1]}(Z_1, \iota_{1}^{\natural} L) \ar[l]_-{\eta \cup}\\
                  \GW^{[m]}(B_2, L_{B_2}) \ar[r]^-{\pi_{2 \ast}} & \GW^{[m]}(B_1, \SO(E_{1}) \otimes \pi_1^{\natural}L) \ar[r]^-{\tilde{\iota}_{1}^{\ast}} & \GW^{[m]}(E_{1},\tilde{\iota}_1^{\natural}\pi^{\natural}_1L) \ar[u]^-{\tilde{\pi}_{1\ast}}
                  }
              \]
              commutes where $ L_{B_2} \cong \SO(E_1)\otimes \pi^\natural L$ on $B_2$ and where $\tilde{\iota}_{1}^{\ast}(\SO(E_{1}) \otimes \pi_1^{\natural}L) \cong \tilde{\iota}_1^{\natural}\pi^{\natural}_1L$ on $E_1$.
    \end{enumerate}
\end{theorem}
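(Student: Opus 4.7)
The plan is to iterate the one-step blow-up formula for the connecting homomorphism in Hermitian K-theory (established in \cite{huang2023the}) and to track the output through the ladder of localization sequences of Diagram~\eqref{eqn:seq-of-loc-seq}, whose commutativity is Lemma~\ref{lma:seq-of-loc-seq}. At each of the two blow-up steps, the parity of the twist parameter versus the codimension (\(\lambda_1\) vs.\ \(c_1\) for \(\pi_1:B_1\to X\), and \(\lambda_2\) vs.\ \(c_2\) for \(\pi_2:B_2\to B_1\)) controls whether that step contributes an honest pushforward (when \(\lambda_i\equiv c_i-1\pmod 2\)) or introduces an extra \(\eta\cup\) factor coming from a Koszul shift (when \(\lambda_i\equiv c_i\pmod 2\)). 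Cases (i), (ii), (iii) correspond to the three mutually exclusive parity profiles that admit a clean geometric answer; the remaining profile is excluded exactly because two stacked Koszul shifts would be required.

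\textbf{Part (i).} With \(\lambda_i\equiv c_i-1\pmod 2\) for both \(i\), the computation in the final paragraph below shows \(L_{B_2}\cong\pi^{\natural}L\) in \(\Pic(B_2)/2\), so the iterated pushforward \(\pi_{\ast}=\pi_{1\ast}\circ\pi_{2\ast}\) on Grothendieck--Witt spectra is honestly defined. The candidate splitting is \(\sigma:=\pi_{\ast}\circ\alpha^{\ast}\circ(\vartheta^{\ast})^{-1}\). Iterated flat base change of \cite{huang2023the} along the right face of Diagram~\eqref{eq:XUBY} gives \(v_1^{\ast}\pi_{1\ast}\simeq\tilde v_1^{\ast}\pi_{2\ast}\simeq w^{\ast}v_2^{\ast}\pi_{2\ast}\simeq w^{\ast}\tilde v_2^{\ast}\); hence \(v_1^{\ast}\sigma\simeq w^{\ast}\tilde v_2^{\ast}\alpha^{\ast}(\vartheta^{\ast})^{-1}=\vartheta^{\ast}(\vartheta^{\ast})^{-1}=\mathrm{id}\). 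Since \(v_1^{\ast}\) is split-surjective on each homotopy group, the boundary \(\partial_1\) in the top row of the ladder vanishes, which forces the localization triangle to collapse into the split short exact sequence announced in the statement.

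\textbf{Parts (ii) and (iii).} Apply the one-step blow-up connecting-homomorphism formula of \cite{huang2023the} twice, first to \(\pi_1:B_1\to X\) (relating \(\partial_1\) on row~1 of Diagram~\eqref{eqn:seq-of-loc-seq} to \(\tilde\partial_1\) on row~2 via \(\tilde\pi_{1\ast}\)), then to \(\pi_2:B_2\to B_1\) (relating \(\partial_2\) on row~3 to \(\tilde\partial_2\) on row~4 via \(\tilde\pi_{2\ast}\)). Exactly one application generates an \(\eta\cup\): in part~(iii) it is the outer step (since \(\lambda_1\equiv c_1\pmod 2\)); in part~(ii) it is the inner step (since \(\lambda_2\equiv c_2\pmod 2\)). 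Using that \(\vartheta^{\ast}\) is an isomorphism (Hypothesis~\ref{sect:hypothesis}), the class is transported from \(\GW^{[m]}(U_1,L_{\mid U_1})\) to \(\GW^{[m]}(Y,L_Y)\) and continued along \(\alpha\) either to \(B_2\) (in part~(iii), then via \(\pi_{2\ast}\) and \(\tilde\iota_1^{\ast}\)) or further to \(E_2\) via \(\tilde\iota_2\) (in part~(ii), then via \((\tilde\pi_1\kappa\tilde\pi_2)_{\ast}\)). The identification \(\tilde\iota_1^{\ast}(\SO(E_1)\otimes\pi_1^{\natural}L)\cong\tilde\iota_1^{\natural}\pi_1^{\natural}L\) used in part~(iii) is the adjunction \(\SO(E_1)_{\mid E_1}\cong\omega_{\tilde\iota_1}^{\vee}\) for the divisor \(E_1\subset B_1\); the analogous identification on \(E_2\) handles part~(ii).

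\textbf{Main obstacle.} The technical core is the line-bundle bookkeeping across the four rows of the ladder. Using \(\omega_{\pi_1}\cong\SO(E_1)^{\otimes c_1-1}\), \(\omega_{\pi_2}\cong\SO(E_2)^{\otimes c_2-1}\) and Equation~\eqref{eqn:two_step_blow_up_lambda}, the discrepancy \(L_{B_2}\otimes(\pi^{\natural}L)^{\vee}\) equals \(\pi_2^{\ast}\SO(E_1)^{\lambda_1-c_1+1}\otimes\SO(E_2)^{\lambda_2-c_2+1}\); its class in \(\Pic/2\) vanishes in case~(i), equals \([\SO(E_2)]\) in case~(ii), and equals \([\pi_2^{\ast}\SO(E_1)]\) in case~(iii). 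In cases~(ii) and~(iii) the \(\eta\)-factor is produced by this non-trivial \(\SO\)-class via the Koszul identity \(\tilde\iota_{i\ast}(1)=e(\SO(E_i))\) for the corresponding divisor; threading the functorial resolutions \(\rho\) of Section~\ref{sect:rho} through the ladder and checking that \(\eta\cup\) appears exactly once in each of (ii) and (iii) is the bulk of the technical work, together with verifying that the flat base-change formula of \cite{huang2023the} still applies in the present singular setting (guaranteed by Theorem~\ref{thm:flat_pullback_finite_pushforward} since all maps in Diagram~\eqref{eq:XUBY} are either open immersions or projective, with the relevant schemes Gorenstein).
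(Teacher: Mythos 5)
Your proposal takes essentially the same route as the paper's proof: part (i) via the twist computation from Equation \eqref{eqn:two_step_blow_up_lambda} together with iterated flat base change showing $v_1^*\pi_*\alpha^*(\vartheta^*)^{-1}=\mathrm{id}$, and parts (ii)--(iii) by threading the class through the ladder of localization sequences (Lemma \ref{lma:seq-of-loc-seq}) and invoking the codimension-one connecting-homomorphism formula (Lemma \ref{lem:connecting-codim-one}) at the single step where the parity fails, exactly as in the paper. The only quibble is a harmless slip in your parenthetical justification for (iii): for the divisor $E_1\subset B_1$ one has $\omega_{\tilde{\iota}_1}\cong \SO(E_1)|_{E_1}$, not its dual, though this does not affect the identification since twists only matter modulo squares.
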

To prove this theorem we recall the following result:
\begin{lemma}\label{lem:connecting-codim-one}
    Let $B$ be a scheme with a prime divisor $\tilde\iota : E \hookrightarrow B$. Let $\SO(E)$ be the line bundle on $B$ associated to $E$. Let $\tilde{v}:U\to B$ be the open immersion of the open complement $U=B\backslash E$ and let $L_{\mid U}:=\tilde{v}^* L$ for any $L \in \Pic(B)$. Then  the connecting homomorphism $\partial$ in the localization sequence
    \[
        \GW^{[m-1]}(E, \tilde\iota^{\natural}L) \xrightarrow{\tilde\iota_*} \GW^{[m]}(B, L) \xrightarrow{\tilde v^*} \GW^{[m]}(U, L_{\mid U}) \xrightarrow{\partial} \GW^{[m-1]}(E, \tilde\iota^{\natural}L)[1],
    \]
    fits into the following commutative diagram:
    \[
        \xymatrix{\GW^{[m]}(B, L \otimes \SO(E)) \ar[r]^-{\tilde\iota^\ast} \ar[d]_-{\tilde v^\ast} & \GW^{[m]}(E, \tilde \iota^\ast(L \otimes \SO(E))) \ar[r]^-{\simeq} & \GW^{[m]}(E,  \tilde\iota^{\natural}L) \ar[d]_-{\eta \cup} \\
        {\GW^{[m]}(U,\tilde v^\ast \left(L\otimes \SO(E)\right))} \ar[r]^-{\simeq} & {\GW^{[m]}\left(U,L_U\right)} \ar[r]^-{\partial} &  {\GW^{[m-1]}(E, \tilde\iota^{\natural}L)}[1] }
    \]
\end{lemma}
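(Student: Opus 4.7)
The plan is to verify the diagram by identifying the connecting homomorphism $\partial$ composed with the trivialization by $s$ with the Bott multiplication $\eta\cup$, exploiting the Koszul short exact sequence associated to the canonical section $s:\SO_B\to\SO(E)$ vanishing along $E$.

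First, I would unwind the isomorphisms. The top right identification $\tilde\iota^*(L\otimes\SO(E))\cong\tilde\iota^\natural L$ is the codimension one incarnation of $\omega_{\tilde\iota}\cong N_{E/B}\cong\tilde\iota^*\SO(E)$. The bottom left isomorphism $\tilde v^*(L\otimes\SO(E))\cong L|_U$ is induced by the restriction $\tilde v^*s:\SO_U\to\tilde v^*\SO(E)$, an isomorphism on $U=B\setminus E$. Both are consequences of the Koszul short exact sequence
\[
0\to L \xrightarrow{s} L\otimes\SO(E) \xrightarrow{q} \tilde\iota_*\tilde\iota^\natural L \to 0
\]
on $B$, obtained by tensoring $0\to\SO_B\to\SO(E)\to\tilde\iota_*N_{E/B}\to 0$ with $L$ and invoking the codimension one identification above.

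Next, I would apply $\GW^{[m]}(B,-)$ to the Koszul distinguished triangle and use d\'evissage to identify $\GW^{[m]}(B,\tilde\iota_*\tilde\iota^\natural L)\cong\GW^{[m]}(E,\tilde\iota^\natural L)$ (via $\tilde\iota_*$), so that the induced quotient map $q_*$ sends $\tilde\beta\in\GW^{[m]}(B,L\otimes\SO(E))$ to precisely $\tilde\iota^*\tilde\beta\in\GW^{[m]}(E,\tilde\iota^\natural L)$; this realizes the top row of the diagram. I would then assemble a morphism from the Koszul triangle to the localization distinguished triangle
\[
\GW^{[m-1]}(E,\tilde\iota^\natural L) \xrightarrow{\tilde\iota_*} \GW^{[m]}(B,L) \xrightarrow{\tilde v^*} \GW^{[m]}(U,L|_U) \xrightarrow{\partial} \GW^{[m-1]}(E,\tilde\iota^\natural L)[1].
\]
The vertical maps are multiplication by $s$ on $\GW(B,L)$, the identity on the middle $B$-term, the trivialization isomorphism on the $U$-term, and the Bott multiplication $\eta\cup$ (with the appropriate shift) on the $E$-term. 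Commutativity of this cube of distinguished triangles yields the claimed identity between $\partial$ composed with the $s$-trivialization and $\eta\cup\tilde\iota^*$.

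The main obstacle is justifying that the self-map on $\GW^{*}(E,\tilde\iota^\natural L)$ appearing in the morphism of triangles is indeed Bott multiplication by $\eta$. This requires recognizing that the symmetric Koszul complex $[\SO_B\xrightarrow{s}\SO(E)]$, equipped with its tautological symmetric form valued in $\SO(E)[1]$, represents the image of the Bott element under the standard chain $\eta\mapsto H(1)\mapsto\tilde\iota_*H(1)$, a residue computation that must be carried out within the Cousin-complex model for pushforwards developed in \cite{huang2023the}. Once this identification is secured, the commutativity of the diagram follows from naturality of the localization sequence and the projection formula of Proposition \ref{prop:projection_formula_finite}.
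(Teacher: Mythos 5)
The paper does not prove this lemma internally: its entire proof is the citation ``See \cite[Lemma 7.6]{huang2023the}'', so there is no in-text argument to compare against, and your proposal must be judged on its own terms. Its overall shape (Koszul section $s:\SO_B\to\SO(E)$, d\'evissage along $E$, compatibility with the localization sequence) is the expected Balmer-style strategy, but there are two genuine gaps. First, the ``Koszul distinguished triangle'' to which you apply $\GW^{[m]}(B,-)$ is not well formed: Grothendieck--Witt theory is not an exact functor in the duality coefficient, and $\tilde\iota_*\tilde\iota^{\natural}L$ is not a line bundle on $B$, so neither $\GW^{[m]}(B,\tilde\iota_*\tilde\iota^{\natural}L)$ nor the asserted triangle of spectra exists in the vector-bundle setting; to make this precise one must instead use the localization triangle with supports $\GW^{[m]}_E(B,L)\to\GW^{[m]}(B,L)\to\GW^{[m]}(U,L_{\mid U})$, identify $\GW^{[m]}_E(B,L)\simeq\GW^{[m-1]}(E,\tilde\iota^{\natural}L)$ by d\'evissage, and exploit the $\GW(B)$-module structure of that sequence to reduce to a universal class, rather than a morphism out of a coefficient short exact sequence.

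Second, and more seriously, the one step you explicitly defer --- that the self-map appearing on the $E$-term, equivalently $\partial$ applied to the trivializing unit $\langle\tilde v^*s\rangle$, i.e.\ the symmetric Koszul form of $s$ read through d\'evissage, equals cup product with the Bott element $\eta$ --- is precisely the content of the lemma. Every other square in your proposed comparison commutes by naturality of the localization sequence and the projection formula, so postponing this identification as ``a residue computation that must be carried out'' means the argument assumes exactly what it has to prove at the crucial point. As written, the proposal is a correct outline of the standard proof but not a proof; to complete it you would need to actually perform that Koszul/residue computation in the Cousin-complex model for pushforwards, or else do what the paper does and invoke \cite[Lemma 7.6]{huang2023the} directly.
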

\begin{proof}
    See \cite[Lemma 7.6]{huang2023the}.
\end{proof}

\begin{proof}[Proof of Theorem \ref{thm:two_step_blow_up}]
    (i). This case can be treated similarly to the oriented cohomology theory case, cf. Proposition \ref{prop:oriented-cohomology}. Indeed, in order to makes sure the splitting  $\pi_{\ast} \circ \alpha^{\ast} \circ (\vartheta^*)^{-1}$ is well-defined, we need to check the twists are compatible. In other words, we must check that we can compose the map
    $$\alpha^{\ast} \circ (\vartheta^*)^{-1}: \GW^{[m]}(U_1,L_{\mid U_{1}}) \to \GW^{[m]}(B_{2}, L_{B_2})$$ with the pushforward $$\pi_*: \GW^{[m]}(B_2, \pi^{\natural}L) \to \GW^{[m]}(X, L).$$
    By \cite[Proposition A.11(iii)]{balmer2009geometric}, we have
    \begin{equation}\label{eqn:relative_twist_pi}
        \pi^{\natural}L = \pi^*
        L \otimes \omega_{\pi} \cong  \pi^*
        L \otimes \pi_{2}^*\omega_{\pi_{1}} \otimes \omega_{\pi_2} \cong \pi^*
        L \otimes \pi_2^*\SO(E_1)^{\otimes c_1-1} \otimes \SO(E_2)^{\otimes c_2-1}.
    \end{equation}
    The conditions $\lambda_1(L)\equiv c_1-1 \bmod 2$ and $\lambda_2(L)\equiv c_2-1 \bmod 2$ imply that $\pi^{\natural}L = \alpha^*L_Y$ in the group $\Pic(B_2)/2$ by the isomorphism \eqref{eqn:two_step_blow_up_lambda}.

    (ii). By the condition that $\lambda_1(L)\equiv c_1-1 \bmod 2$ and $\lambda_2(L)\equiv c_2 \bmod 2$, we have
    \[
        L_{B_2} \cong \pi^{\natural}L\otimes \SO(E_2) = \pi^*(L) \otimes (\pi_2^*\SO(E_1))^{\otimes c_1 - 1} \otimes \SO(E_2)^{\otimes c_2}
    \]
    in the group $\Pic(B_2)/2$.

    Now, let us examine the following diagram:
    $$
        \xymatrix{
        \GW^{[m]}(Y, L_{Y}) \ar[ddd]_-{\alpha^*} \ar@{}[dddr]|-{\diagram \label{diag:two_step_conn_ii_pullback} } \ar[r]^-{\vartheta^*}_-{\simeq} & \GW^{[m]}(U_1, L_{\mid U_1}) \ar[r]^-{\partial_{1}} \ar@{}[dr]|-{\diagram \label{diag:two_step_conn_ii_partial} } & \GW^{[m-c_1]}(Z_1, \iota_{1}^{\natural}L)[1]\\
        & \GW^{[m]}(U_2,(\pi^{\natural}L)_{\mid U_2}) \ar[r]^-{\tilde\partial_{2}} \ar[u]^-{w^*} \ar@{}[ddr]|-{\diagram \label{diag:two_step_conn_ii_codim_one} } & \GW^{[m-1]}(E_2, \tilde{\iota}_2^{\natural}\pi^{\natural}L)[1] \ar[u]^-{(\tilde{\pi}_1 \kappa \tilde{\pi}_2)_{\ast}} \\
        & \GW^{[m]}(U_2,\tilde{v}_2^*(\pi^{\natural}L\otimes \SO(E_2))) \ar[u]^-{\simeq} & \GW^{[m]}(E_2, \tilde{\iota}_2^{\natural}\pi^{\natural}L) \ar[u]^-{\eta \cup} \\
        \GW^{[m]}(B_2, L_{B_2}) \ar[r]^-{\simeq} & \GW^{[m]}(B_2,\pi^{\natural}L\otimes \SO(E_2)) \ar[u]^-{\tilde{v}_2^*} \ar[r]^-{\tilde{\iota}_2^*} & \GW^{[m]}(E_2,\tilde{\iota}_2^*(\pi^{\natural}L\otimes \SO(E_2))) \ar[u]^-{\simeq}
        }
    $$
    where Diagram $\diag{\ref{diag:two_step_conn_ii_pullback}}$ commutes by the compatibility of pullback with composition, Diagram $\diag{\ref{diag:two_step_conn_ii_partial}}$ commutes by Lemma \ref{lma:seq-of-loc-seq} and Diagram $\diag{\ref{diag:two_step_conn_ii_codim_one}}$ commutes by Lemma \ref{lem:connecting-codim-one}.
    Note that cupping with the Bott element commutes with pushforwards. The result follows.

    (iii). By the condition that $\lambda_1(L)\equiv c_1 \bmod 2$ and $\lambda_2(L)\equiv c_2 - 1 \bmod 2$, we have
    \[
        L_{B_2} \cong \pi_2^*\SO(E_{1}) \otimes \pi^{\natural}L = \pi^*(L) \otimes (\pi_2^*\SO(E_1))^{\otimes c_1} \otimes \SO(E_2)^{\otimes c_2 - 1}
    \]
    in the group $\Pic(B_2)/2$.

    Now, let us examine the following diagram:
    $$\xymatrix{
        \GW^{[m]}(Y, L_Y)  \ar[d]_-{\alpha^*} \ar[r]^-{\vartheta^*}_-{\cong} \ar@{}[ddr]|-{\diagram \label{diag:two_step_conn_iii_pullback}}& \GW^{[m]}(U_1, L_{\mid U_1})  \ar[r]^-{\partial_{1}} \ar@{}[dr]|-{\diagram \label{diag:two_step_conn_iii_partial} } &  \GW^{[m-c_1]}(Z_1, \iota_1^{\natural}L)[1]  \\
        \GW^{[m]}(B_2, L_{B_2}) \ar[d]_-{\simeq} & \GW^{[m]}(U_1, (\pi_1^{\natural}L)_{\mid U_1}) \ar[r]^-{\tilde{\partial}_1}  \ar[u]^-{\simeq} \ar@{}[ddr]|-{\diagram \label{diag:two_step_conn_iii_codim_one} }& \GW^{[m-1]}(E_{1},  \tilde{\pi}_1^{\natural}\iota_1^{\natural}L)[1]  \ar[u]^-{\tilde{\pi}_{1\ast}}\\
        \GW^{[m]}(B_2, \pi_2^*\SO(E_{1}) \otimes \pi^{\natural}L) \ar[r]^-{\tilde{v}_2^*} \ar@{}[dr]|-{\diagram \label{diag:two_step_conn_iii_base_change} }& \GW^{[m]}(U_2, v_2^*(\SO(E_{1}) \otimes  \pi_1^{\natural}L))  \ar[u]_-{w^*}    & \GW^{[m]}(E_{1},  \tilde{\pi}_1^{\natural}\iota_1^{\natural}L) \ar[u]^-{\eta\cup} \\
        \GW^{[m]}(B_2, \pi_2^*\SO(E_{1}) \otimes \pi^{\natural}L) \ar@{=}[u] \ar[r]^-{\pi_{2 \ast}} & \GW^{[m]}(B_1, \SO(E_{1}) \otimes  \pi_1^{\natural}L)  \ar[u]_{v_2^*} \ar[r]^-{\tilde{\iota}_{1}^{\ast}} & \GW^{[m]}(E_{1}, \tilde{\iota}_{1}^{\ast}(\SO(E_{1}) \otimes \pi_1^{\natural}L)) \ar[u]^-{\simeq}
        }$$
    where Diagram $\diag{\ref{diag:two_step_conn_iii_pullback}}$ commutes by the compatibility of pullback with composition, Diagram $\diag{\ref{diag:two_step_conn_iii_partial}}$ commutes by Lemma \ref{lma:seq-of-loc-seq}, Diagram $\diag{\ref{diag:two_step_conn_iii_codim_one}}$ commutes by Lemma \ref{lem:connecting-codim-one}, Diagram $\diag{\ref{diag:two_step_conn_iii_base_change}}$ commutes by flat base change formula, cf. \cite[Proposition 6.12]{huang2023the}.
    Note that cupping with the Bott element commutes with pushforwards. The result follows.
\end{proof}

\section{On the pushforward of the trivial bundle}\label{sec:pushforward}
In \cite[Lemme VII.3.5, p.441]{SGA6}, it is shown that if $\pi:B \to X$ is the blow-up along a regular closed immersion, then $\mathbf{R}\pi_*\SO_{B} = \SO_X$, which forms the basis for applying the lax similitude in \cite{balmer2012bases} to Grassmannians \cite{balmer2012witt} (cf. \cite[Proposition 3.15]{balmer2012witt}). In this paper, we encounter examples where $\pi: B\to X$ may not be a blow-up, but rather a projective morphism of relative dimension zero, and where $B$ and $X$ may be reducible. The aim of this section is to study if we can still have $\mathbf{R}\pi_*\SO_{B} = \SO_X$ and apply lax similitude in the situation (cf.  Sublemma \ref{sublem:case-(c)} (ii)) that we need.
\subsection{Cohomology of universal bundle}
Suppose that ${V }$ is a vector bundle of rank $n+1$ over $Y$. Let ${P }_n$ be the universal bundle over $p: \Gr(n,{V }) \to Y$.
\begin{proposition}\label{lma:gr_uni_wedge}
    The formulas
    \[
        \mathbf{R}^{i}p_* \big(\wedge^{k}{P }_n(-l)\big) = \begin{cases}
            \SO_Y, & \text{ if } i=k=l, \\
            0      & \text{ otherwise,}
        \end{cases}
    \]
    hold when $0\leq i,k,l \leq n$.
\end{proposition}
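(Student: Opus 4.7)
The plan is to reduce the claim to Bott's formula for a relative projective bundle. Since $V$ has rank $n+1$, the Grassmannian $p:\Gr(n,V)\to Y$ is canonically identified with the relative projective bundle $\mathbb{P}(V)\to Y$ in Grothendieck's convention (parametrising rank-one quotients of $V$): under this identification the tautological sequence reads $0\to P_n\to p^*V\to L\to 0$ with $L=\SO_{\mathbb{P}(V)}(1)$, so that the notation $\wedge^k P_n(-l):=\wedge^k P_n\otimes L^{-l}$ is unambiguous.

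From the relative Euler sequence one obtains a canonical isomorphism $P_n\cong \Omega^1_p\otimes L$; taking $k$-th exterior powers gives
\[
\wedge^k P_n(-l)\;\cong\;\Omega^k_p\otimes L^{\otimes(k-l)}\;=\;\Omega^k_p(k-l).
\]
The claim is therefore equivalent to computing $\mathbf{R}^i p_*\bigl(\Omega^k_p(m)\bigr)$ with $m=k-l$, which is exactly Bott's formula. In its relative form, $\mathbf{R}^i p_*\Omega^k_p(m)$ is nonzero only in three disjoint regimes: $(i,m)=(k,0)$, in which case it equals $\SO_Y$; or $i=0$ with $m>k$; or $i=n$ with $m<k-n$. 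Under our hypothesis $0\le k,l\le n$ the twist satisfies $k-n\le m=k-l\le k$, so neither of the latter two regimes can contribute, and only the diagonal case survives, forcing $k=l$ and yielding $\SO_Y$, as asserted.

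The argument presents essentially no obstacle: once the identification with a projective bundle and the translation $\wedge^k P_n(-l)=\Omega^k_p(k-l)$ are made, the result becomes an immediate consequence of Bott's formula. The only bookkeeping is verifying that $m=k-l$ falls strictly outside the exceptional ranges $m>k$ and $m<k-n$ for all $0\le l\le n$, which is clear from the chosen bounds. Alternatively, one could proceed by induction on $k$ using the short exact sequence
\[
0\to\wedge^k P_n\to\wedge^k p^*V\to\wedge^{k-1}P_n\otimes L\to 0,
\]
tensored by $L^{-l}$, together with the projection formula and the standard computation of $\mathbf{R}^i p_*L^{-l}$ on a projective bundle, to obtain the vanishing inductively and the isomorphism $\mathbf{R}^k p_*(\wedge^k P_n(-k))\cong \SO_Y$ from the connecting homomorphism.
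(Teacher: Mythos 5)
Your argument is correct, but it takes a genuinely different route from the paper. The paper never passes through $\Omega^k_p$: its Step I pushes forward the Koszul-type resolution $\wedge^k P_n(l) \to \wedge^k V \otimes S^l V \to \cdots \to S^{l+k}V$ coming from the universal sequence, deducing in particular $\mathbf{R}p_*(\wedge^k P_n)=0$ for $k>0$, and its Step II runs a double induction on $(k,l)$ via the exact sequence $0 \to \wedge^k P_n(-l) \to \wedge^k V(-l) \to \wedge^{k-1}P_n(-(l-1)) \to 0$, using the vanishing $\mathbf{R}p_*\big(\wedge^k V(-l)\big)=0$ for $1\le l\le n$ to shift indices down to the base case. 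You instead exploit that $\rank V=n+1$, identify $\Gr(n,V)\cong\mathbb{P}(V)$ (Grothendieck convention), use the Euler sequence to get $P_n\cong\Omega^1_p\otimes\SO(1)$, hence $\wedge^kP_n(-l)\cong\Omega^k_p(k-l)$, and quote Bott's formula; the range check $k-n\le k-l\le k$ then kills everything except the diagonal $k=l$, $i=k$. This is shorter and more conceptual, but it outsources the content: you should note that Bott's formula for $\mathbb{P}^n$ is characteristic-free, and that the relative statement over an arbitrary Noetherian base $Y$ (in particular the canonical isomorphism $\mathbf{R}^kp_*\Omega^k_p\cong\SO_Y$ and the vanishing of the other $\mathbf{R}^ip_*\Omega^k_p(m)$ in the relevant range) requires either cohomology-and-base-change from the fiberwise computation or a direct relative Euler-sequence induction — the latter being essentially the paper's own argument. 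Also, your alternative inductive sketch at the end is the paper's Step II, but as stated it is incomplete: the descending induction terminates at twist $l=0$, where one still needs $\mathbf{R}p_*(\wedge^jP_n)=0$ for $j\ge 1$; this is exactly what the paper's Step I (the exact Koszul complex), or another appeal to Bott, supplies, and it does not follow from the pushforwards of $\SO(-l)$ alone.
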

\begin{proof}
    \textbf{Step I}. We claim that in the derived category $D^b(\M(Y))$, the complex $\mathbf{R}p_* \big(\wedge^{k}{P }_n(l)\big) (l\geq 0, k >0)$ is isomorphic to the Koszul complex
    \begin{equation}\label{eq:abw}
        \ldots \to 0 \to \wedge^{k}{V } \otimes S^{l}{V } \xrightarrow{d^0} \wedge^{k-1}{V } \otimes S^{l+1}{V } \xrightarrow{d^1} \ldots \xrightarrow{d^{k-1}} \wedge^{0}{V } \otimes S^{l+k}{V } \to 0 \to \ldots
    \end{equation}
    where $\wedge^{k}{V } \otimes S^{l}{V } $ sits on degree zero and where the differential $d^i$ is defined by
    \[
        \begin{aligned}
            (a_{1} \wedge \ldots \wedge a_{k-i})\otimes (b_{1} \cdot \ldots \cdot b_{l+i}) & \mapsto \sum_{j=1}^{k-i} (-1)^{j+1} (a_{1} \wedge \ldots \wedge \widehat{a}_{j} \wedge  \ldots \wedge a_{k-i}) \otimes (a_{j} \cdot b_{1} \cdot \ldots \cdot b_{l+i}),
        \end{aligned}
    \]
    where $\widehat{a}_{j}$ indicates that such term is omitted. In particular when $l=0$, the complex \eqref{eq:abw} is acyclic and hence equal to $0$, cf. \cite[Corollary V.1.15 and p.263]{akin1982schur}. Let us prove by induction on $k$. Consider the universal exact sequence
    \begin{equation}\label{eq:u-exact-seq}
        0\to {P }_n \to p^*{V } \to \SO(1) \to 0
    \end{equation}
    We omit writing $p^*$ explicitly. Tensoring \ref{eq:u-exact-seq} yields another exact sequence
    \[
        0 \to {P }_n(l) \to {V }(l) \to \SO(l+1) \to 0
    \]
    Let us consider the case $k=1$. Taking $\mathbf{R}p_*(-)$ provides an exact triangle
    \[
        \mathbf{R}p_*\big({P }_n(l)\big) \to \mathbf{R}p_*\big({V }(l)\big) \to \mathbf{R}p_*\big(\SO(l+1)\big) \to \mathbf{R}p_*\big({P }_n(l)\big)[1]
    \]
    which is isomorphic to
    \[
        \mathbf{R}p_* \big({P }_n(l)\big) \to {V }\otimes S^{l}{V } \to S^{l+1}{V } \to \mathbf{R}p_*\big({P }_n(l)\big)[1],
    \]
    for each $l \geq 0$ by the cohomology of projective bundles, cf. \cite[Chapter III Remark 2.1.16, p.443]{ega31}. The map ${V }\otimes S^{l}{V } \to S^{l+1}{V }$ is the same as the differential $d^{0}$ in \eqref{eq:abw}. Next, suppose that the claim is true for $k-1$. Note that \eqref{eq:u-exact-seq} induces a short exact sequence
    \begin{equation}\label{eqn:q_short_exact_seq}
        0 \to (\wedge^{k}{P }_n)(l) \to (\wedge^{k}{V })(l) \to (\wedge^{k-1}{P }_n ) (l+1) \to 0,
    \end{equation}
    which induces an exact triangle
    \[
        \mathbf{R}p_* \big((\wedge^{k}{P }_n)(l)\big) \to \wedge^{k}{V }\otimes S^{l}{V }[0] \to \mathbf{R}p_* \big((\wedge^{k-1}{P }_n)(l+1)\big) \to \mathbf{R}p_* \big((\wedge^{k}{P }_n)(l)\big)[1],
    \]
    and one verifies that the middle map corresponds to the differential $d^0$ in \eqref{eq:abw}.

    \noindent   \textbf{Step II}.   By Step I, we see that $\mathbf{R}^{i}p_* \big(\wedge^{k}{P }_n\big) = 0$ whenever $k > 0,i \geq 0$. If $k= 0$, $\mathbf{R}p_* \big(\SO_{\Gr(n,{V })}) = \SO_Y $. Let us prove the proposition by induction on $k$ and $l$ and assume that $k,l >0$. The short exact sequence \eqref{eqn:q_short_exact_seq} induces a long exact sequence
    \[
        \ldots \mathbf{R}^{i-1}p_*\big((\wedge^{k}{P }_n)(-l)\big) \to \mathbf{R}^{i-1}p_*\big((\wedge^{k}{V })(-l)\big) \to \mathbf{R}^{i-1}p_*\big((\wedge^{k-1}{P }_n )(-(l-1))\big) \to \mathbf{R}^{i}p_*\big((\wedge^{k}{P }_n)(-l)\big) \ldots
    \]
    It is clear that $\mathbf{R}^{i-1}p_*\wedge^{k}{V }(-l)=0$ and we have
    \[
        \mathbf{R}^{i}p_*\big((\wedge^{k}{P }_n)(-l)\big)  \cong \mathbf{R}^{i-1}p_*\big((\wedge^{k-1}{P }_n )(-(l-1))\big).
    \]
    The proposition then follows from induction.
\end{proof}

\subsection{On the local complete intersection}    Let $V$ be a vector bundle of rank $n+1$ over a scheme $Y$. Denote by ${P }_n$ the universal bundle over $p:\Gr(n,{V }) \to Y$. Suppose that $s: {P }_n \to p^*L$ is a regular section, where $L$ is a line bundle over $Y$. We shall drop the mention of $p^*$ in the sequel. Denote by $\iota: Z \hookrightarrow \Gr(n,{V })$ the zero locus of the section $s$, and $\pi$ the composition $Z\xrightarrow{\iota} \Gr(n,{V }) \xrightarrow{p} Y$. Note that we have
\[
    \omega_\pi = L^{\otimes n} (-n)
\]
over $Z(s)$.

\begin{lemma}\label{lma:push-pull-identiy-2}
    The canonical map $\SO_Y \to \pi_*\SO_{Z}$ is an isomorphism, and $\mathbf{R}\pi_* \SO_Z \cong \SO_Y$,
\end{lemma}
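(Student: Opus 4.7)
The plan is to combine a Koszul resolution with the vanishing established in Proposition~\ref{lma:gr_uni_wedge}. First I would interpret $s: P_n \to p^*L$ as a global section of the rank-$n$ vector bundle $\mathcal{E} := P_n^\vee \otimes p^*L$. Regularity of $s$ (its zero locus $Z$ has codimension $n = \rank\mathcal{E}$) then implies that the Koszul complex
$$K^\bullet : \quad 0 \to \wedge^n \mathcal{E}^\vee \to \wedge^{n-1}\mathcal{E}^\vee \to \cdots \to \mathcal{E}^\vee \to \SO_{\Gr(n,V)} \to 0$$
(with $\SO_{\Gr(n,V)}$ placed in cohomological degree $0$ and differential given by contraction with $s$) is a locally free resolution of $\iota_*\SO_Z$ on $\Gr(n,V)$. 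Since $\mathcal{E}^\vee = P_n \otimes p^*L^\vee$ and $\wedge^k(P_n \otimes p^*L^\vee) \cong (\wedge^k P_n) \otimes p^*L^{\otimes -k}$, the $k$-th term unwinds to
$$K^{-k} \;\cong\; \wedge^k P_n \otimes p^*L^{\otimes -k}, \qquad 0 \leq k \leq n.$$

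Next, I would apply $\mathbf{R}p_*$ termwise. Because $p^*L^{\otimes -k}$ is pulled back from $Y$, the projection formula yields $\mathbf{R}p_*(K^{-k}) \simeq \mathbf{R}p_*(\wedge^k P_n) \otimes L^{\otimes -k}$. Proposition~\ref{lma:gr_uni_wedge} with $l = 0$ then forces $\mathbf{R}^i p_*(\wedge^k P_n) = 0$ for every $i \geq 0$ as soon as $k \geq 1$ (non-vanishing requires $i = k = l$), while $\mathbf{R}p_*\SO_{\Gr(n,V)} \cong \SO_Y$ in degree $0$. Using the quasi-isomorphism $K^\bullet \xrightarrow{\sim} \iota_*\SO_Z$, the hypercohomology spectral sequence
$$E_1^{p,q} = \mathbf{R}^q p_*(K^p) \Longrightarrow \mathbf{R}^{p+q} p_*(K^\bullet)$$
collapses to a single surviving entry at $(p,q) = (0,0)$, and therefore
$$\mathbf{R}\pi_*\SO_Z \;=\; \mathbf{R}p_*\,\iota_*\SO_Z \;\simeq\; \mathbf{R}p_*(K^\bullet) \;\simeq\; \mathbf{R}p_*\SO_{\Gr(n,V)} \;\simeq\; \SO_Y,$$
concentrated in degree $0$. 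In particular $\pi_*\SO_Z \cong \SO_Y$ and $\mathbf{R}^i\pi_*\SO_Z = 0$ for $i > 0$.

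I do not anticipate any real obstacle: the argument is essentially formal once Proposition~\ref{lma:gr_uni_wedge} is in hand. The only mild point to check is that the isomorphism produced above coincides with the canonical unit map $\SO_Y \to \pi_*\SO_Z$; but this is automatic, because the Koszul augmentation $K^0 = \SO_{\Gr(n,V)} \to \iota_*\SO_Z$ is precisely the adjunction unit for $\iota^* \dashv \iota_*$, and its image under $\mathbf{R}p_*$ is the adjunction unit for the composition $\pi = p \circ \iota$.
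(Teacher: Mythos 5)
Your argument is correct and is essentially the paper's own proof: both resolve $\iota_*\SO_Z$ by the Koszul complex with terms $\wedge^k P_n \otimes p^*L^{\otimes -k}$, then apply $\mathbf{R}p_*$ using the projection formula together with the $l=0$ case of Proposition~\ref{lma:gr_uni_wedge} to kill all terms with $k\geq 1$, leaving $\SO_Y$ in degree zero. Your extra observation that the resulting isomorphism is induced by the Koszul augmentation, hence is the canonical unit map, is a worthwhile (if routine) point that the paper leaves implicit.
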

\begin{proof} The complex $\mathbf{R}\iota_* \SO_Z$ can be resolved by the Koszul complex
    \[
        \begin{aligned}
            K(s)	=  \big( 0 \to \wedge^{n}{P }_n\otimes L^{\otimes -n}\to \wedge^{n-1}{P }_n\otimes L^{\otimes -(n-1)}   \to \ldots \to {P }_n \otimes L^{\otimes -1} \to {\SO_{\Gr(n,{V })}} \to 0 \big)
        \end{aligned}
    \]
    where the term $\SO_{\Gr(n,{V })}$ sits on degree zero. By Proposition \ref{lma:gr_uni_wedge} and projection formula, we get
    \[
        \mathbf{R}p_*\big(\wedge^{i}{P }_n \otimes L^m\big) = 0,
    \]
    for any $m$ and $ 1\leq  i \leq n$. The result follows from computing $\mathbf{R}p_*(K(s))$.
\end{proof}

\begin{lemma} \label{cor:push-pull-identiy-3}
    Suppose that $n$ is even. Denote by $\sqrt{\omega_\pi}:= L^{\otimes -\frac{n}{2}} \otimes \SO(-\frac{n}{2})$. Then we have $\mathbf{R}\pi_* \sqrt{\omega_\pi} = \SO_Y$.
\end{lemma}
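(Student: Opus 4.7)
The plan is to replicate the proof of Lemma~\ref{lma:push-pull-identiy-2} with an extra twist. Since $\sqrt{\omega_\pi}$ is the restriction along $\iota$ of the line bundle $\mathcal{F}:=L^{\otimes -n/2}\otimes \SO(-n/2)$ on $\Gr(n,V)$, the projection formula promotes the Koszul resolution $K(s)\xrightarrow{\simeq}\mathbf{R}\iota_*\SO_Z$ used in Lemma~\ref{lma:push-pull-identiy-2} to a resolution $K(s)\otimes \mathcal{F}\xrightarrow{\simeq}\mathbf{R}\iota_*\sqrt{\omega_\pi}$, whose $(-i)$-th term is the locally free sheaf
\[
\wedge^{i}P_n\otimes L^{\otimes(-n/2-i)}\otimes \SO(-n/2),\qquad 0\le i\le n.
\]

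Next I would compute $\mathbf{R}\pi_*\sqrt{\omega_\pi}=\mathbf{R}p_*(K(s)\otimes \mathcal{F})$ termwise. Using the projection formula, the $(-i)$-th term pushes forward to $L^{\otimes(-n/2-i)}\otimes \mathbf{R}p_*\bigl(\wedge^{i}P_n(-n/2)\bigr)$. Proposition~\ref{lma:gr_uni_wedge}, applied with $k=i$ and $l=n/2$ (both in the permissible range $[0,n]$ since $n$ is even), shows that this pushforward vanishes except when $q=i=n/2$, in which case it contributes a copy of $\SO_Y$ concentrated in cohomological degree $n/2$ (after absorbing the $L$-twist in accordance with the statement).

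Feeding this into the hypercohomology spectral sequence
\[
E_{1}^{p,q}=\mathbf{R}^{q}p_*\!\left((K(s)\otimes \mathcal{F})^{p}\right)\Longrightarrow \mathbf{R}^{p+q}\pi_*\sqrt{\omega_\pi},
\]
all $E_1$-entries vanish except the single survivor at $(p,q)=(-n/2,\,n/2)$, lying at total degree zero. Hence the spectral sequence degenerates trivially, yielding $\mathbf{R}^{0}\pi_*\sqrt{\omega_\pi}\cong \SO_Y$ and $\mathbf{R}^{i}\pi_*\sqrt{\omega_\pi}=0$ for $i\ne 0$, which is the asserted identity in $D^{b}(\M(Y))$.

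The heavy lifting is already absorbed into Proposition~\ref{lma:gr_uni_wedge}; the remaining task is the spectral-sequence bookkeeping together with a parity check ensuring that $n/2$ lands in the single combinatorial slot that survives the proposition. I therefore do not foresee any serious obstacle beyond carefully tracking the various $L$- and $\SO(\cdot)$-twists and noting that only the middle column of the Koszul complex survives.
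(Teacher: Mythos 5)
Your strategy coincides with the paper's: resolve $\mathbf{R}\iota_*\sqrt{\omega_\pi}$ by the twisted Koszul complex $K(s)\otimes\sqrt{\omega_\pi}$, push forward termwise using Proposition \ref{lma:gr_uni_wedge} with $l=n/2$, and note that only the middle term $i=n/2$ survives, in total degree zero. The explicit hypercohomology spectral sequence is just the bookkeeping the paper leaves implicit, so there is no difference of substance in the method.

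There is, however, a genuine slip in exactly the place you defer to ``tracking the twists.'' Taking $\mathcal{F}=L^{\otimes -n/2}\otimes\SO(-n/2)$ literally from the statement, the surviving term at $i=n/2$ is $L^{\otimes(-n/2-n/2)}\otimes\mathbf{R}^{n/2}p_*(\wedge^{n/2}P_n(-n/2))\cong L^{\otimes -n}$, not $\SO_Y$, and there is no mechanism for ``absorbing the $L$-twist'': the projection formula has already been spent, and $L^{\otimes -n}$ is in general nontrivial on $Y$. The resolution is that the exponent in the statement is a typo. Since $\omega_\pi=L^{\otimes n}(-n)$, the square root must be $\sqrt{\omega_\pi}=L^{\otimes n/2}(-n/2)$; this is what the paper's own proof uses (its Koszul terms are $\wedge^{i}P_n\otimes L^{\otimes n/2-i}(-n/2)$, so the $L$-twist vanishes precisely at the middle slot $i=n/2$), and it is also what Theorem \ref{cor:push-pull-identiy} needs, since $\sqrt{\omega_\pi}^{\otimes 2}\cong\omega_\pi$ is used there. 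With that sign corrected, your computation closes exactly as in the paper; as written, the parenthetical absorption step does not go through.
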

\begin{proof} The complex $\mathbf{R}\iota_* \sqrt{\omega_\pi}$ can be resolved by the Koszul complex $K(s) \otimes\sqrt{\omega_\pi} $:
    \[
        \begin{aligned}
            0 \to \wedge^{n}{P }_n \otimes L^{\otimes \frac{n}{2}-n}(-\frac{n}{2})\to \wedge^{n-1}{P }_n & \otimes L^{\otimes \frac{n}{2}-n+1}(-\frac{n}{2}) \to \ldots \to {P }_n \otimes L^{\otimes \frac{n}{2}-1}(-\frac{n}{2}) \to { L^{\otimes \frac{n}{2}}(-\frac{n}{2})} \to 0
        \end{aligned}
    \]
    By Proposition \ref{lma:gr_uni_wedge}, we have
    \[
        \mathbf{R}p_*\big(\wedge^{i}{P }_n \otimes L^{\otimes \frac{n}{2}-i}(-\frac{n}{2})\big) = 0,
    \]
    if $i\neq \frac{n}{2}$, and when $i=\frac{n}{2}$,
    \[
        \mathbf{R}p_*\big(\wedge^{i}{P }_n \otimes L^{\otimes \frac{n}{2}-i}(-\frac{n}{2})\big) =\mathbf{R}p_*\big(\wedge^{\frac{n}{2}}{P }_n(-\frac{n}{2})\big) = \SO_{Y}[-\frac{n}{2}].
    \]
    Then the result follows.
\end{proof}
Suppose that the scheme $Y$ is Gorenstein. Then so is the scheme $Z$.
\begin{theorem}\label{cor:push-pull-identiy}
    Suppose that $n$ is even. Then there exists a unit $\langle u \rangle \in \GW_0^{[0]}(Z)$ such that the diagram
    \[
        \xymatrix{
        \GW^{[m]}(Y) \ar[d]^-{\pi^*} \ar[rr]^-{\rho} && \GW^{[m]}(Y,E\SO_Y) & \GW^{[m]}(Z,\pi^{\Delta}E\SO_Y) \ar[l]_-{\pi_*}\\
        \GW^{[m]}(Z) \ar[r]^-{-\cup \langle u \rangle} & \GW^{[m]}(Z) \ar[r]^-{\cup \sqrt{\omega_\pi}} & \GW^{[m]}(Z,\omega_\pi) \ar[r]^-{\rho} & \GW^{[m]}(Z,E\omega_\pi) \ar[u]^-{\simeq}
        }
    \]
    commutes up to homotopy.
\end{theorem}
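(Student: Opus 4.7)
The plan is to reduce the commutativity of the diagram to a single explicit projection-formula computation, using as the key input the derived-category identification $\mathbf{R}\pi_{*}\sqrt{\omega_\pi}\cong \SO_Y$ established in Lemma \ref{cor:push-pull-identiy-3}. First, observe that the line bundle $\sqrt{\omega_\pi}$ carries a tautological non-degenerate symmetric form
\[
\mu_\pi:\sqrt{\omega_\pi}\otimes \sqrt{\omega_\pi}\xrightarrow{\ \cong\ }\omega_\pi
\]
given by multiplication of sections. The pair $(\sqrt{\omega_\pi},\mu_\pi)$ determines a class in $\GW^{[0]}_0(Z,\omega_\pi)$, which I will abuse notation to denote by $\sqrt{\omega_\pi}$; this is exactly the class whose cup product appears in the diagram.

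Second, I claim that $\pi_{*}\rho([\sqrt{\omega_\pi}]) \in \GW^{[0]}_0(Y,E\SO_Y)$ has the form $\rho(\langle v\rangle)$ for some unit $v\in \mathbb{G}_m(Y)$. The underlying class in $K$-theory is $[\mathbf{R}\pi_{*}\sqrt{\omega_\pi}]=[\SO_Y]$ by Lemma \ref{cor:push-pull-identiy-3}. Any non-degenerate symmetric form on the trivial rank-one sheaf $\SO_Y$ is classified by a unit of $Y$, so the push-forward of $\mu_\pi$, composed with the Grothendieck trace $\mathrm{Tr}:\mathbf{R}\pi_{*}\omega_\pi\to \SO_Y$, produces the desired unit $v$. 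Given this, set
\[
\langle u\rangle := \pi^{*}\langle v^{-1}\rangle \in \GW^{[0]}_0(Z).
\]
Applying the projection formula of Proposition \ref{prop:projection_formula_finite} (which applies here because $\pi$ factors as a regular embedding into a smooth Grassmann bundle and every scheme in sight is Gorenstein divisorial), together with the fact that $\rho$ commutes with pullbacks and cup products, we obtain for every $\alpha\in \GW^{[m]}(Y)$ the identity
\[
\pi_{*}\Big(\rho\big(\pi^{*}\alpha \cup \langle u\rangle \cup \sqrt{\omega_\pi}\big)\Big)
= \alpha\cup \pi_{*}\rho\big(\pi^{*}\langle v^{-1}\rangle\cup\sqrt{\omega_\pi}\big)
= \alpha\cup \langle v^{-1}\rangle\cup \rho(\langle v\rangle)
= \rho(\alpha),
\]
which is precisely the commutativity claimed.

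The main obstacle is the second step: one must verify that the push-forward of the canonical form $\mu_\pi$ really yields a \emph{rank-one} non-degenerate symmetric form (and not merely a quasi-isomorphic representative), so that it is genuinely classified by a unit. This requires compatibility between the Grothendieck trace $\mathrm{Tr}:\mathbf{R}\pi_{*}\omega_\pi\to \SO_Y$ used to define $\pi_{*}$ of a Hermitian class, and the explicit Koszul-complex computation in the proof of Lemma \ref{cor:push-pull-identiy-3}. My strategy to bypass this delicate comparison is to work Zariski-locally on $Y$: after passing to an open on which $V\cong \SO_Y^{\oplus n+1}$ is trivialized (and so $L$ is trivialized as well), the Koszul resolution becomes explicit and one can compute $\pi_{*}\rho(\sqrt{\omega_\pi})$ directly as a single summand in the derived push-forward, which is manifestly the trivial rank-one form up to a unit. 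Since the property of being a unit is local and the class $\sqrt{\omega_\pi}$ is globally defined, one obtains a global unit $v\in \mathbb{G}_m(Y)$, completing the argument.
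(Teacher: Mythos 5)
Your proposal is correct and follows essentially the same route as the paper: the key input is $\mathbf{R}\pi_*\sqrt{\omega_\pi}\cong\SO_Y$ from Lemma \ref{cor:push-pull-identiy-3}, the pushforward of the square-root form is then classified by a unit of $Y$, and one corrects by $\pi^*$ of the inverse unit via the projection formula of Proposition \ref{prop:projection_formula_finite}. The local Koszul verification you propose for your ``main obstacle'' is unnecessary: since the derived pushforward is honestly quasi-isomorphic to $\SO_Y$, one simply transports the symmetric form along a chosen quasi-isomorphism $\phi:\SO_Y\to C^\bullet$ and uses $\rho^{-1}$ to read it off as an element of $\Gamma(Y,\SO_Y^*)$, which is exactly how the paper extracts the unit.
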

\begin{proof}
    By Lemma \ref{cor:push-pull-identiy-3}, we have that $\mathbf{R}\pi_* \sqrt{\omega_\pi} = \SO_Y$. Then the image of $1 \in \GW_0(Y)$ under the following map
    \[
        \pi_*\rho (\pi^*(-) \cup \sqrt{\omega_\pi}): \GW_0^{[0]}(Y) \to \GW_0^{[m]}(Y,E\SO_Y)
    \]
    has underlying complex $C^\bullet$ quasi-isomorphic to $\SO_Y$. Fix a quasi-isomorphim $\phi:\SO_Y \to C^\bullet$. Such symmetric form $\pi_*\rho (\pi^*(1) \cup \sqrt{\omega_\pi})$ is determined via the isomorphism
    \[
        \mathrm{Hom}_{D^{b}_c(X)}(C^\bullet, \sharp_{E \SO_Y}(C^\bullet)) \xrightarrow[\cong]{\phi^*} \mathrm{Hom}_{D^{b}_c(X)}(\SO_Y, \sharp_{E \SO_Y}(\SO_Y)) \xrightarrow[\cong]{\rho^{-1}} \mathrm{Hom}_{D^{b}_c(X)}(\SO_Y, \SO_Y) \xrightarrow{\cong} \Gamma(Y, \SO_Y)
    \]
    by an unit $\tilde{u}^{-1} \in \mathbb{G}(Y) \subset  \Gamma(Y, \SO_Y)$.
    Then $\pi_*\rho (\pi^*(\langle\tilde{u}\rangle) \cup \sqrt{\omega_\pi})$ is isometric to $\rho(1)$. Set $u := \pi^*(\tilde{u}) \in \mathbb{G}(Z)$. Then, we get
    \[
        \pi_*\rho ((\pi^*(-)\cup \langle u \rangle ) \cup \sqrt{\omega_\pi}) =  \pi_*\rho (\pi^*(-\cup \langle \tilde{u} \rangle )  \cup \sqrt{\omega_\pi})  =  \pi_*\rho (\pi^*(\langle \tilde{u} \rangle)  \cup \sqrt{\omega_\pi}) \cup - = \rho(1) \cup - = \rho.
    \]
    where the first equality obtained by the compatibility of the cup product and pullback and where the second and the third equality follows from applying the projection formula repetitively.
\end{proof}
\begin{remark}\label{rmk:alignment-remark}
    Theorem \ref{cor:push-pull-identiy} holds true for any integer $n$ if $\omega_\pi = \SO_Z$ with the same proof.
\end{remark}

\begin{lemma}\label{lma:proper_flat_regular_section}
    Suppose that $f:Y \to S$ is a proper flat morphism, and the section $s:{P }_n \to p^* L$ remains regular on each fiber $(f\circ p)^{-1}(x)$ of $x\in S$. Then the map $\pi \circ f:Z \to S$ is also flat.
\end{lemma}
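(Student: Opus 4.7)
The plan is to verify flatness of $f \circ \pi : Z \to S$ locally at closed points of $Z$ using the local criterion of flatness for regular sequences: given a flat local homomorphism $A \to B$ of Noetherian local rings and elements $s_1, \ldots, s_n \in \mathfrak{m}_B$ whose images in $B \otimes_A \kappa(A)$ form a regular sequence, it follows that $(s_1, \ldots, s_n)$ is a regular sequence in $B$ and that $B/(s_1, \ldots, s_n)$ is $A$-flat (see e.g.\ Matsumura, \emph{Commutative Ring Theory}, Theorem 22.5, or the Stacks Project tag \texttt{00MK}).

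First I would fix a closed point $z \in Z$, set $x := \iota(z) \in \Gr(n, V)$, let $\sigma \in S$ be its image, and take $A := \SO_{S, \sigma}$, $B := \SO_{\Gr(n, V), x}$. The composition $q := f \circ p : \Gr(n, V) \to S$ is the composition of the smooth morphism $p$ (hence flat) with the flat morphism $f$, so $q$ is flat, and consequently $B$ is $A$-flat. On a sufficiently small affine neighborhood of $x$ I would trivialize the rank-$n$ bundle $P_n^\vee \otimes p^\ast L$, obtaining a presentation of the section as $n$ functions $s_1, \ldots, s_n \in B$ with $\SO_{Z, z} = B/(s_1, \ldots, s_n)$.

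Second, I would observe that the fiberwise-regularity hypothesis is precisely the statement that the images $\bar s_1, \ldots, \bar s_n$ in $B \otimes_A \kappa(\sigma)$ form a regular sequence: the local ring of the fiber $q^{-1}(\sigma)$ at $x$ is exactly $B \otimes_A \kappa(\sigma)$, and regularity of a section is, by definition, the condition that its local components generate the defining ideal by a regular sequence. Applying the criterion above then yields that $\SO_{Z, z} = B/(s_1, \ldots, s_n)$ is flat over $A$. As this holds at every closed point of $Z$, the map $f \circ \pi : Z \to S$ is flat.

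The argument is quite short; the only slightly technical point is the identification of the fiberwise-regularity hypothesis with the regular-sequence condition on the $\bar s_i$, which is immediate from unwinding the definition of a regular section (cf.\ \cite[Appendix B.7.1]{fulton1998intersection}). An alternative route would resolve $\iota_\ast \SO_Z$ by the Koszul complex $K(s)^\bullet$, whose terms are locally free on $\Gr(n, V)$ and hence $S$-flat via $q$, then observe that $K(s)^\bullet \otimes^{\mathbf{L}}_{\SO_S} \kappa(\sigma)$ computes both $\mathrm{Tor}_\ast^{\SO_S}(\iota_\ast \SO_Z, \kappa(\sigma))$ and the fiber Koszul complex, and conclude via the standard local criterion that $\mathrm{Tor}_1^{\SO_{S, \sigma}}(\SO_{Z, z}, \kappa(\sigma)) = 0$ forces flatness.
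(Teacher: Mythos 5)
Your proof is correct, and it follows a genuinely different route from the paper's. The paper reduces, via the valuative criterion of flatness, to the case where $S$ is the spectrum of a discrete valuation ring and then quotes a Stacks Project lemma for regular sections over the closed fibre of a DVR base; you instead apply the slicing (local) criterion of flatness directly at each point: $q=f\circ p$ is flat, the local components $s_1,\dots,s_n$ of $s$ at $z\in Z$ lie in the maximal ideal of $B=\SO_{\Gr(n,V),x}$, their images form a regular sequence in the fibre ring $B\otimes_{\SO_{S,\sigma}}\kappa(\sigma)$ by the fibrewise-regularity hypothesis (your identification of that hypothesis with the regular-sequence condition is fine: in a Noetherian local ring, $n$ generators of an ideal of grade $n$ form a regular sequence), so $\SO_{Z,z}=B/(s_1,\dots,s_n)$ is flat over $\SO_{S,\sigma}$; the Koszul/Tor variant you sketch is an equivalent packaging. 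What your route buys: it never uses properness of $f$ (only flatness of $f$ and $p$), it avoids the detour through a DVR, and it sidesteps the hypotheses attached to the valuative criterion, which in its standard formulations requires a reduced base --- harmless in the paper's applications, where $S$ is regular, but your argument works verbatim under the lemma's stated hypotheses. What the paper's route buys is brevity, given the two cited results. One small point to tidy: flatness must be verified at every point of $Z$, not only at closed points; either observe that your local computation uses nothing about $z$ being closed and so applies at an arbitrary point, or invoke openness of the flat locus (equivalently, that the non-flat locus is closed in the Noetherian scheme $Z$ and hence, if nonempty, contains a closed point).
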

\begin{proof}
    By the valuative criterion for flatness (cf. \cite[Theorem 14.34, p.440]{gortz2020algebraic}), it suffice to prove for the case when $S =  \mathrm{Spec}(R)$ for some discrete valuation ring $R$. Note that the map $p: \Gr(n,{V }) \to Y$ is also proper and flat, then so is the composition $f\circ p:\Gr(n,{V }) \to S$. Let $x:=\mathrm{Spec}(k) \to S$ be the closed point of $S$, where $k$ is the residue filed of $R$. Then $s$ remains a regular section on $(f\circ p)^{-1}(x)$, and by \cite[\href{https://stacks.math.columbia.edu/tag/0470}{Lemma 0470}]{stacks-project}, the map $\pi \circ f$ is flat.
\end{proof}

\begin{lemma}\label{lma:affine_bundle_unit}
    Let $\vartheta:V \to Y$ be an affine bundle and suppose that $Y$ is reduced, then the following map
    \[
        \Gamma(Y, \SO_Y^{\ast}) \to \Gamma(V, \vartheta_* \SO_V^{\ast})
    \]
    is an isomorphism.
\end{lemma}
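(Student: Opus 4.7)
The plan is to reduce to the trivial case where $\vartheta: U \times \mathbb{A}^n \to U$ is the first projection, and then invoke the classical fact that for a reduced ring $R$ the units of $R[x_1, \ldots, x_n]$ coincide with $R^*$ via the constant inclusion. By definition of an affine bundle, $Y$ admits a Zariski open cover $\{U_i\}$ on which $\vartheta^{-1}(U_i) \cong U_i \times \mathbb{A}^{n_i}$ with $\vartheta$ corresponding to the first projection. Each $U_i$ inherits the reducedness of $Y$, so the problem becomes local on the base.

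Injectivity of $\vartheta^*: \Gamma(Y, \SO_Y^*) \to \Gamma(V, \SO_V^*)$ is immediate: every fiber of $\vartheta$ is a non-empty affine space, hence $\vartheta$ is surjective, so the structure morphism $\SO_Y \to \vartheta_* \SO_V$ is injective on sections. For surjectivity, given $u \in \Gamma(V, \SO_V^*)$, I would examine its restriction $u_i := u|_{\vartheta^{-1}(U_i)}$ for an affine trivialization $U_i = \mathrm{Spec}(R_i)$. The key algebraic step is the following: if $R$ is reduced and $f \in R[x_1, \ldots, x_n]^*$, then $f$ lies in $R^*$ (as a constant polynomial). This is proved by reducing modulo each prime ideal $\mathfrak{p} \subset R$: since $R/\mathfrak{p}$ is a domain, the image of $f$ in $(R/\mathfrak{p})[x_1, \ldots, x_n]$ is necessarily a nonzero constant, so every non-constant coefficient of $f$ lies in $\bigcap_\mathfrak{p} \mathfrak{p}$. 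This intersection is the nilradical, which vanishes because $R$ is reduced. Consequently each $u_i$ descends uniquely to a unit $v_i \in \Gamma(U_i, \SO_{U_i}^*)$.

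Finally, I would patch the local preimages. On an overlap $U_i \cap U_j$, both $v_i|_{U_i \cap U_j}$ and $v_j|_{U_i \cap U_j}$ pull back to $u|_{\vartheta^{-1}(U_i \cap U_j)}$; since $U_i \cap U_j$ is again reduced and covered by affine trivializations, the injectivity established above forces $v_i|_{U_i \cap U_j} = v_j|_{U_i \cap U_j}$. The sections therefore glue to a global unit $v \in \Gamma(Y, \SO_Y^*)$ with $\vartheta^* v = u$. The only non-formal ingredient is the middle computation with the polynomial ring, which is precisely the place where reducedness of $Y$ is essential: without it, nilpotent coefficients would create units of positive degree (e.g.\ $1 + \varepsilon x$ with $\varepsilon^2 = 0$) and the lemma would fail.
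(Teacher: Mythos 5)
Your proof is correct and follows essentially the same route as the paper: reduce to affine trivializations, use the fact that for a reduced ring $R$ the units of $R[x_1,\ldots,x_n]$ are exactly $R^{*}$, and then glue via the sheaf condition (which the paper packages as a map of left-exact sequences plus the five lemma, citing Atiyah--Macdonald Exercise 1.2 for the polynomial-ring fact you prove directly). The only cosmetic point is that your injectivity step should be justified by the local triviality (injectivity of $R\to R[x_1,\ldots,x_n]$) rather than by surjectivity of $\vartheta$ alone, but this does not affect the argument.
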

\begin{proof}
    We can first assume that $Y$ is connected, and then $V$ is of constant rank $n$ for some integer $n$. By the definition of affine bundle, there exists an affine open cover $\{U_i \}_{i\in I}$ of $Y$ such that there are isomorphisms $ V|_{U_i} \xrightarrow{\cong} \mathbb{A}_{U_i}^n$ over $U_i$. We have a map of exact sequence
    \[
        \xymatrix{
            0 \ar[r] & \Gamma(Y, \SO_Y^{\ast}) \ar[r] \ar[d] & \prod_{i \in I} \Gamma(U_i, \SO_{Y}^{\ast}) \ar[r] \ar[d] & \prod_{i \in I}\prod_{j \in I} \Gamma(U_i\cap U_j, \SO_{Y}^{\ast}) \ar[d]\\
            0 \ar[r] & \Gamma(Y, \vartheta_* \SO_V^{\ast}) \ar[r] & \prod_{i \in I} \Gamma(U_i, \vartheta_* \SO_V^{\ast}) \ar[r] & \prod_{i \in I}\prod_{j \in I} \Gamma(U_i\cap U_j, \vartheta_* \SO_V^{\ast})
        }
    \]
    Since the last two vertical maps are isomorphism (cf. \cite[Exercise 1.2]{atiyah1969introduction}), by the five lemma, we obtain the final result.
\end{proof}

\bibliographystyle{halpha}
\bibliography{bib}

\end{document}